\tikzset{ 1
table/.style={
  matrix of math nodes,
  row sep=-\pgflinewidth,
  column sep=-\pgflinewidth,
  nodes={rectangle,text width=3em,align=center},
  text depth=1.25ex,
  text height=2.5ex,
  nodes in empty cells,
  left delimiter=[,
  right delimiter={]},
  ampersand replacement=\&
}
}
\newtheorem{theorem}[subsection]{Theorem}
\newtheorem{lemma}[subsection]{Lemma}
\newtheorem{lemmanotation}[subsection]{Lemma-Notation}
\newtheorem{corollary}[subsection]{Corollary}
\newtheorem{conjecture}[subsection]{Conjecture}
\newtheorem{proposition}[subsection]{Proposition}
\newtheorem{definition-lemma}[subsection]{Definition-Lemma}
\newtheorem{definition-proposition}[subsection]{Definition-Proposition}
\theoremstyle{definition}
\newtheorem{definition}[subsection]{Definition}
\newtheorem{hypothesis}[subsection]{Hypothesis}
\newtheorem{example}[subsection]{Example}
\newtheorem{question}[subsection]{Question}
\newtheorem{remark}[subsection]{Remark}
\newtheorem{convention}[subsection]{Convention}
\newtheorem{notation}[subsection]{Notation}
\numberwithin{equation}{subsection}
\newcommand{\fakephantomsection}{%
  \Hy@MakeCurrentHref{\@currenvir.\the\Hy@linkcounter}
  \Hy@raisedlink{\hyper@anchorstart{\@currentHref}\hyper@anchorend}%
}
\def\calC{\mathcal{C}}
\def\calO{\mathcal{O}}
\def\calW{\mathcal{W}}
\def\gothm{\mathfrak{m}}
\def\bfB{\mathbf{B}}
\def\AAA{\mathbb{A}}
\def\CC{\mathbb{C}}
\def\FF{\mathbb{F}}
\def\GG{\mathbb{G}}
\def\NN{\mathbb{N}}
\def\QQ{\mathbb{Q}}
\def\RR{\mathbb{R}}
\def\ZZ{\mathbb{Z}}
\def\bfe{\mathbf{e}}
\def\bfk{\mathbf{k}}
\def\bfM{\mathbf{M}}
\def\rmB{\mathrm{B}}
\def\rmG{\mathrm{G}}
\def\rmH{\mathrm{H}}
\def\rmI{\mathrm{I}}
\def\rmK{\mathrm{K}}
\def\rmM{\mathrm{M}}
\def\rmU{\mathrm{U}}
\def\rmT{\mathrm{T}}
\def\k{\mathbf{k}}
\def\rmS{\mathrm{S}}
\newcommand{\Proj}{\mathrm{Proj}}
\DeclareMathOperator{\Ext}{Ext}
\DeclareMathOperator{\Gal}{Gal}
\DeclareMathOperator{\GL}{GL}
\DeclareMathOperator{\Hom}{Hom}
\DeclareMathOperator{\rank}{rank}
\DeclareMathOperator{\NP}{NP}
\DeclareMathOperator{\Sym}{Sym}
\DeclareMathOperator{\Dig}{Dig}
\newcommand{\op}{\mathrm{op}}
\newcommand{\AL}{\mathrm{AL}}
\newcommand{\et}{\mathrm{\acute{e}t}}
\newcommand{\new}{\mathrm{new}}
\newcommand{\old}{\textrm{-}\mathrm{old}}
\newcommand{\Iw}{\mathrm{Iw}}
\renewcommand{\det}{\mathrm{det}}
\newcommand{\unr}{\mathrm{ur}}
\newcommand{\pr}{\mathrm{pr}}
\newcommand{\rig}{\mathrm{rig}}
\newcommand{\wt}{\mathrm{wt}}
\newcommand{\ur}{\mathrm{ur}}
\newcommand{\univ}{\mathrm{univ}}
\newcommand{\JH}{\mathrm{JH}}
\newcommand{\Mult}{\mathrm{Mult}}
\newcommand{\nS}{\mathrm{nS}}
\newcommand{\midd}{\mathrm{mid}}
\newcommand{\even}{\mathrm{even}}
\newcommand{\odd}{\mathrm{odd}}
\DeclareMathOperator{\Char}{Char}
\DeclareMathOperator{\Spa}{Spa}
\DeclareMathOperator{\Spc}{Spc}
\DeclareMathOperator{\Ind}{Ind}
\newcommand{\Matrix}[4]{{\big(\begin{smallmatrix}#1&#2\\#3& #4\end{smallmatrix} \big)}}
\newcommand{\MATRIX}[4]{{\begin{pmatrix}#1&#2\\#3& #4\end{pmatrix}}}
\begin{document}

\definecolor{zzttqq}{rgb}{0.6,0.2,0.}
\definecolor{uuuuuu}{rgb}{0.26666666666666666,0.26666666666666666,0.26666666666666666}
\definecolor{xdxdff}{rgb}{0.49019607843137253,0.49019607843137253,1.}
\definecolor{ududff}{rgb}{0.30196078431372547,0.30196078431372547,1.}
\definecolor{cqcqcq}{rgb}{0.7529411764705882,0.7529411764705882,0.7529411764705882}
 
\title{A local analogue of the ghost conjecture of Bergdall--Pollack}
\author{Ruochuan Liu}
\address{Ruochuan Liu, School of Mathematical Sciences, Peking University, 5 Yi He Yuan Road, Haidian District, Beijing, 100871, China. }
\email{liuruochuan@math.pku.edu.cn}

\author{Nha Xuan Truong}
\address{Nha Xuan Truong, Department of Mathematics, University of Hawaii at Manoa, 2565 McCarthy Mall, Honolulu, HI 96822, U.S.A..}
\email{nxtruong@hawaii.edu}

\author{Liang Xiao}
\address{Liang Xiao, School of Mathematical Sciences,
Peking University.
5 Yi He Yuan Road, Haidian District,
Beijing, 100871, China.}
\email{lxiao@bicmr.pku.edu.cn}

\author{Bin Zhao}
\address{Bin Zhao, School of Mathematical Sciences, Capital Normal University, Beijing, 100048, China.}
\email{bin.zhao@cnu.edu.cn}
\date{\today}

\begin{abstract}
We formulate a local analogue of the ghost conjecture of Bergdall and Pollack, which essentially relies purely on the representation theory of $\GL_2(\QQ_p)$.  We further study the combinatorial properties of the ghost series as well as its Newton polygon, in particular, giving a characterization of the vertices of the Newton polygon and proving an integrality result of the slopes.
In a forthcoming sequel, we will prove this local ghost conjecture under some mild hypothesis and give arithmetic applications.
\end{abstract} \subjclass[2010]{11F33 (primary), 11F85
(secondary).} \keywords{Eigencurves, slope of $U_p$ operators,
overconvergent modular forms, completed cohomology, weight space, Gouv\^ea's conjecture, Gouv\^ea--Mazur conjecture}

\maketitle
\setcounter{tocdepth}{1}
\tableofcontents

\section{Introduction}

Let $p$ be a prime number and let $N$ be a positive integer relatively prime to $p$. The central object of this paper is \emph{$U_p$-slopes}, that is, the $p$-adic valuations of the eigenvalues of the $U_p$-operator acting on the space of (overconvergent) modular forms of level $\Gamma_1(Np)$, or on more general space of (overconvergent) automorphic forms essentially of $\GL_2(\QQ_p)$-type. In this paper, the $p$-adic valuation is normalized so that $v_p(p) =1$.
\begin{question}
What the slopes of modular forms are?
\end{question}
This seemingly naive question has surprisingly definite answers in some cases.
Inspired by the numerical pioneering works of Gouv\^ea and Mazur \cite{gouvea-mazur, gouvea}, as well as many foundational works of Buzzard--Calegari and many others (to name a few: \cite{buzzard-calegari}, \cite{buzzard-slope}, \cite{clay}, and  \cite{loeffler-slope}), a significant breakthrough on this topic was made in the recent series of works by Bergdall and Pollack \cite{bergdall-pollack2, bergdall-pollack3}, in which they proposed a new
conjecture on slopes: (under some regular conditions), the slopes of modular forms should be given by the slopes of the Newton polygon of a combinatorially defined power series, called \emph{ghost series}. On the one hand, this conjecture unifies all previous works, and on the other hand its combinatorial structure provides theoretical insight into these numerical data of slopes. The purpose of this manuscript is to reproduce this idea in a purely local setup and investigate several important properties of the corresponding ghost series.

In what follows, we shall first introduce our local version of ghost conjecture and then discuss its relation to conjectures that are historically important.

To motivate our construction, we first fix an absolutely irreducible residual Galois representation $\bar r: \Gal_\QQ\to \GL_2(\FF)$ with $\FF$ a finite extension of $\FF_p$.
For an open compact subgroup $K \subseteq \GL_2(\AAA_f)$, write $Y(K)$ for the corresponding open modular curve over $\QQ$. For $\calO$ a complete discrete valuation ring with residue field $\FF$ and uniformizer $\varpi$, let
$$
\rmH_1^\et\big(Y(K)_{\overline \QQ}, \calO\big)_{\gothm_{\bar r}}^{\mathrm{cplx}=1}
$$
denote the subspace of the first  \'etale homology of the modular curve $Y(K)$, localized at the maximal Hecke ideal $\gothm_{\bar r}$ corresponding to $\bar r$, on which a fixed complex conjugation acts by $1$.
Fix a neat tame level $K^p \subseteq \GL_2(\AAA_f^p)$. 
The \emph{$\bar r$-localized completed homology} with tame level $K^p$ is defined to be
\begin{equation}
\label{E:completed homology}
\widetilde \rmH(K^p)_{\gothm_{\bar r}}: = \varprojlim_{m}\; \rmH_1^\et\big(Y\big(K^p(1+p^m\rmM_2(\ZZ_p))\big)_{\overline \QQ},\, \calO\big)_{\gothm_{\bar r}}^{\mathrm{cplx}=1}.
\end{equation}
This completed homology is a  finitely generated projective (right) $\calO\llbracket \GL_2(\ZZ_p)\rrbracket$-module whose $\GL_2(\ZZ_p)$-action extends to a $\GL_2(\QQ_p)$-action.
As discovered by Emerton \cite{emerton2}, one can recover the information of classical modular forms from $\widetilde \rmH(K^p)_{\gothm_{\bar r}}$ as follows: for each weight $k \geq 2$, let $\Sym^{k-2}\calO^{\oplus 2}$ denote the $(k-2)$th symmetric power of the standard representation of $\GL_2(\ZZ_p)$ (the right action of $\GL_2(\ZZ_p)$ is the transpose of the standard left action), then
\begin{equation}
\label{E:completed homology vs classical forms}
\Hom_{\calO\llbracket\GL_2(\ZZ_p)\rrbracket}\big(\widetilde \rmH(K^p)_{\gothm_{\bar r}}, \ \Sym^{k-2}\calO^{\oplus 2}\big) \cong \rmH^1_\et\big(Y(K^p\GL_2(\ZZ_p))_{\overline \QQ},\, \Sym^{k-2}( R^1\pi_*\underline \calO)\big)_{\gothm_{\bar r}}^{\mathrm{cplx}=1},
\end{equation}
where $R^1\pi_*\underline \calO$ is the usual rank two \'etale local system on the open modular curve. It is well-known that the right hand side of \eqref{E:completed homology vs classical forms} is isomorphic to the $\gothm_{\bar r}$-localized space of modular forms as a Hecke module (e.g. \cite[Proposition~4.4.2]{emerton2}). Moreover, a similar isomorphism holds for $\GL_2(\ZZ_p)$ replaced by the Iwahori subgroup $\Iw_p =\Matrix{\ZZ_p^\times}{\ZZ_p}{p\ZZ_p}{\ZZ_p^\times}$, and the $U_p$-action on the space of classical forms can be realized easily on the left hand side of \eqref{E:completed homology vs classical forms}.
So in a sense, the completed homology $\widetilde \rmH(K^p)_{\gothm_{\bar r}}$ captures all information we need to study $p$-adic properties of classical modular forms. This motivates the following.

\begin{definition}
Consider a residual representation $\bar\rho: \rmI_{\QQ_p} \to \GL_2(\FF)$ of the \emph{inertia subgroup at $p$} that is \emph{reducible}, nonsplit, and generic. More precisely,  
\begin{equation}
\label{E:bar rho}
\bar \rho \simeq \MATRIX{\omega_1^{a+b+1}}{*\neq 0}{0}{\omega_1^b} \qquad \textrm{for } 1 \leq a \leq p-4 \textrm{ and } 0 \leq b \leq p-2,
\end{equation}
where $\omega_1: \rmI_{\QQ_p} \to \Gal(\QQ_p(\mu_{p})/\QQ_p)\cong \FF_p^\times$ is the first fundamental character.

A \emph{primitive $\calO\llbracket K_p\rrbracket$-projective augmented module of type $\bar\rho$} is a finitely generated projective (right) $\calO\llbracket \GL_2(\ZZ_p)\rrbracket$-module $\widetilde \rmH$ with a (right) $\GL_2(\QQ_p)\big/{\Matrix p{}{}p}^\ZZ$-action extending the $\GL_2(\ZZ_p)$-action from the module structure, such that

\begin{itemize}
\item $\overline \rmH:  =\widetilde \rmH / (\varpi, \rmI_{1+p\rmM_2(\ZZ_p)})$ is isomorphic to the projective envelope of $\Sym^a \FF^{\oplus 2} \otimes \det^b$ as an $\FF[\GL_2(\FF_p)]$-module, where $\rmI_{1+p\rmM_2(\ZZ_p)}$ is the augmentation ideal of the Iwasawa algebra $\calO\llbracket 1+p\rmM_2(\ZZ_p)\rrbracket$.
\end{itemize}
\end{definition}

By Serre weight conjecture, in the setup of modular forms, if $\bar r|_{\rmI_{\QQ_p}} \cong \bar \rho$, $\widetilde \rmH(K^p)_{\gothm_{\bar r}}$ is a primitive $\calO\llbracket K_p\rrbracket$-projective augmented module of type $\bar\rho$, if the following mod-$p$-multiplicity-one condition is met:
\begin{equation}
\label{E:multiplicity one}\rank_\calO  \Big( 
\rmH_1^\et\big(Y(K^p \Iw_{p,1})_{\overline \QQ}, \calO\big)^{\mathrm{cplx}=1}_{\gothm_{\bar r}} \Big)= 2;
\end{equation}
here $\Iw_{p,1} = \Matrix{1+p\ZZ_p}{\ZZ_p}{p\ZZ_p}{1+p\ZZ_p}$ and the rank $2$ module is the direct sum of two rank-one eigenspaces for the diamond operators in $(\FF_p^\times)^2$ with eigenvalues $\omega^a \times 1$ and $1 \times \omega^a$, respectively, where $\omega: \FF_p^\times \to \calO^\times$ is the Teichm\"uller lift.

{\it One of the key features of our new setup is that: it is purely local, and does not depend on the global automorphic input.}
\begin{remark}
\label{R:origins of arithmetic modules}
There are several origins of primitive $\calO\llbracket K_p\rrbracket$-projective augmented modules.
\begin{enumerate}
\item For any $U(2)$-Shimura varieties or quaternionic Shimura varieties (where $p$ splits completely in the corresponding fields), taking its completed homology (localized at an appropriate $\bar r$) similarly to \eqref{E:completed homology}, we obtain a primitive $\calO\llbracket K_p\rrbracket$-projective augmented module provided a mod-$p$-multiplicity-one assumption similar to \eqref{E:multiplicity one} hold.

\item 
In a global setup as in (1) with the mod-$p$-multiplicity-one assumption, assuming some further Taylor--Wiles conditions, one may construct a patched completed homology as in \cite[\S\,2]{six author}, which is a projective module over $\calO\llbracket z_1, \dots, z_g\rrbracket \llbracket \GL_2(\ZZ_p)\rrbracket$ and whose $\GL_2(\ZZ_p)$-action extends to an $\calO\llbracket z_1, \dots, z_g\rrbracket$-linear action of $\GL_2(\QQ_p)/p^\ZZ$ (assuming a certain central character condition). Evaluating the patching variables $z_1, \dots, z_g$ arbitrarily will give rise to a primitive $\calO\llbracket K_p\rrbracket$-projective augmented module of type $\bar\rho$, which does not have any automorphic origin. 

\item Unfortunately, we do not know, for a fixed $\bar \rho$ in \eqref{E:bar rho}, whether one can find a global $\bar r$ such that the restriction of $\bar r$ to some $p$-adic place isomorphic to $\bar \rho$ and that $\bar r$ satisfies the mod-$p$-multiplicity-one assumption similar to \eqref{E:multiplicity one}. Fortunately, the Pa\v sku\=nas functor provides yet another example of primitive $\calO\llbracket K_p\rrbracket$-projective augmented module.

Let $\tilde \rho = \Matrix{\omega_1^{a+b+1}\unr(\alpha_1)}{*\neq 0}{0}{\omega_1^b\unr(\alpha_2)}: \Gal_{\QQ_p} \to \GL_2(\FF)$ be a residual Galois representation whose restriction to $\rmI_{\QQ_p}$ is $\bar\rho$. Here $\omega_1$ can be extended to a character of $\Gal_{\QQ_p}$ in a natural way, and $\unr(\alpha_i)$ for $i=1,2$ is the unramified representation of $\Gal_{\QQ_p}$ sending the geometric Frobenius to $\alpha_i \in \FF^\times$. 
Let $\pi(\tilde \rho)$ be the smooth admissible representation associated to $\pi(\tilde \rho)$ a la $p$-adic local Langlands correspondence for $\GL_2(\QQ_p)$.
Pa\v sk\= unas \cite{paskunas-functor} constructed an injective envelope ${\bf J}_{\tilde \rho}$ of $\pi(\tilde \rho)$  in the category of locally admissible torsion $\calO$-representations of $\GL_2(\QQ_p)/p^\ZZ$. 
Its Pontryagin dual ${\bf P}_{\tilde \rho}$ can be viewed as a projective module over $\calO\llbracket x\rrbracket\llbracket \GL_2(\ZZ_p)\rrbracket$, where $x$ is a certain element in the universal deformation ring of $\tilde \rho$ (see \cite[Theorem~5.2]{paskunas-duke}). Then any evaluation of $x$ gives a primitive $\calO\llbracket K_p\rrbracket$-projective augmented module of type $\bar \rho$.

This example is extremely important as it allows us to relate the local ghost conjecture with questions on triangulline deformation space of Breuil--Hellmann-Schraen \cite{BHS}, using the known Breuil--M\'ezard conjecture for $\GL_2(\QQ_p)$.
\end{enumerate}
 
\end{remark}
One can immediately reduce the discussion to the case when $b=0$, which we assume from now on.

\begin{definition}Given a ``relevant" character $\varepsilon: (\FF_p^\times)^2 \to \calO^\times$ (which is used to parametrize weight disks; see Notation~\ref{N:relevant varepsilon}\,(1)), 
define the space of 
``abstract $p$-adic forms"
$$ \rmS_{p\textrm{-adic}}^{(\varepsilon)}: = \Hom_{\calO[\Iw_p]} (\widetilde \rmH,\ \calC^0(\ZZ_p, \calO\llbracket w\rrbracket)^{(\varepsilon)})\quad \textrm{for }\varepsilon \textrm{ a character of }\Delta^2,$$
where $\calC^0(\ZZ_p; \calO\llbracket w\rrbracket)^{(\varepsilon)}$ is the space of continuous functions on $\ZZ_p$, and the space is equipped with an action by $\Iw_p$ twisted by $\varepsilon$ in some way (see \S\,\ref{S:abstract p-adic forms}). 
This space carries an $\calO\llbracket w\rrbracket$-linear action of the $U_p$-operator, whose characteristic power series is denoted by  $C_{\widetilde \rmH}^{(\varepsilon)}(w,t) \in \calO\llbracket w, t\rrbracket$ (see \S\,\ref{S:char power series of Up} for the precise definitions).

We can define \emph{abstract classical forms} and \emph{abstract overconvergent forms} similarly; see \S\,\ref{S:family of overconvergent forms} and \ref{S:abstract classical forms}.
\end{definition}

The first goal of this article is to formulate the following.

\begin{conjecture}[Local ghost Conjecture~\ref{Conj:local ghost conjecture}]
\label{Conj:local ghost intro}
Let $\widetilde \rmH$ be a primitive $\calO\llbracket K_p\rrbracket$-projective augmented module of type $\bar\rho$ and let $\varepsilon: (\FF_p^\times)^2 \to \calO^\times$ be a relevant character. Then there exists a combinatorially defined \emph{ghost series} $$G^{(\varepsilon)}_{\bar \rho}(w,t): = \sum_{n \geq 0} g_n^{(\varepsilon)}(w)t^n \in \ZZ_p[w]\llbracket t\rrbracket\quad \textrm{with} \quad  g_n^{(\varepsilon)}(w) = \prod_{\substack{k \geq 2\\ k \equiv k_\varepsilon \bmod{p-1}}} (w-w_k)^{m_n^{(\varepsilon)}(k)} \in \ZZ_p[w],$$ where $w_k : = \exp((k-2)p)-1$, $k_\varepsilon\in \{2,\dots,p \}$ is an integer defined in Notation~\ref{N:relevant varepsilon} and the exponents $m_n^{(\varepsilon)}(k)$ are defined combinatorially in Definition~\ref{D:ghost series}, which only depends on $\bar \rho$, $\varepsilon$, (and $k$ and $n$), such that for any $w_\star \in \gothm_{\CC_p}$, the Newton polygon of $C_{\widetilde \rmH}^{(\varepsilon)}(w_\star,-)$ is the same as $G_{\bar \rho}^{(\varepsilon)}(w_\star,-)$.
\end{conjecture}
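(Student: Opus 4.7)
The plan mirrors Bergdall--Pollack's global strategy but recast purely in terms of the $\GL_2(\QQ_p)$-representation-theoretic input $\widetilde \rmH$. The approach proceeds in two stages: first verify the Newton polygon equality at the arithmetic weights $w_k$, then interpolate to all $w_\star \in \gothm_{\CC_p}$ by Weierstrass-preparation type arguments.

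\textbf{Step 1 (Classical weights).} At each $w_k$, I would specialize $\rmS_{p\textrm{-adic}}^{(\varepsilon)}$ to weight $k$ to obtain an abstract space of classical weight-$k$ forms (see \S\,\ref{S:abstract classical forms}). Using the two natural degeneracy maps between $\Iw_p$-invariants and $\GL_2(\ZZ_p)$-invariants of $\widetilde \rmH$, together with the Atkin--Lehner involution $\Matrix{0}{1}{p}{0}$, this specialization admits an ``abstract old/new" decomposition. The old part contributes pairs of $U_p$-slopes $(s,\, k-1-s)$ per spherical summand, while the new part has slopes confined to the interior. I would then match this structure against the combinatorial exponent $m_n^{(\varepsilon)}(k)$: by construction, $m_n^{(\varepsilon)}(k)$ records how many among the first $n$ ``ghost slopes" are pinned at $w_k$, which should coincide with the jump in the rank of the classical specialization as $n$ varies. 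A crucial auxiliary ingredient is that the rank of the weight-$k$ specialization of $\widetilde \rmH$ is controlled by $\bar\rho$ and $\varepsilon$ via the projective envelope hypothesis, and (in the Pa\v sk\= unas setup of Remark~\ref{R:origins of arithmetic modules}(3)) ultimately by Breuil--M\'ezard multiplicities.

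\textbf{Step 2 (Interpolation).} To extend from the $w_k$ to arbitrary $w_\star \in \gothm_{\CC_p}$, I would argue coefficient-by-coefficient: the coefficient $c_n^{(\varepsilon)}(w)$ of $t^n$ in $C_{\widetilde \rmH}^{(\varepsilon)}(w,t)$ lies in $\calO\llbracket w\rrbracket$. Knowing the valuations $v_p(c_n^{(\varepsilon)}(w_k))$ at the dense sequence of classical weights pins down the Weierstrass data of $c_n^{(\varepsilon)}$, provided one has an a priori bound on its total zero count on each closed disk; such a bound follows from the integrality-of-slopes results proved in this paper. Once $c_n^{(\varepsilon)}(w)$ and $g_n^{(\varepsilon)}(w)$ are shown to have the same zero multiset in $\gothm_{\CC_p}$, Newton-polygon equality at every $w_\star$ is immediate from the vertex characterization established in the paper.

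\textbf{Main obstacle.} The hardest step will be the matching in Step 1. The abstract module $\widetilde \rmH$ can be much larger and more intricate than any specific global completed homology, and extracting the exact integer $m_n^{(\varepsilon)}(k)$ from its $\GL_2(\QQ_p)$-representation-theoretic structure demands uniform control on the socle/cosocle filtration of the injective envelope ${\bf J}_{\tilde\rho}$ across \emph{all} pairs $(n,k)$ simultaneously. Secondary difficulties include the behavior near the boundary of weight space, where halo-type slope estimates \`a la Buzzard--Kilford need a local adaptation, and the bookkeeping of the twist $\varepsilon$ in non-spherical components. Fortunately, the combinatorial analysis of $G_{\bar\rho}^{(\varepsilon)}$ carried out in this paper---especially the vertex characterization and the integrality of slopes---is precisely what is needed to make the interpolation in Step 2 go through cleanly.
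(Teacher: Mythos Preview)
The statement you are attempting to prove is a \emph{conjecture}, not a theorem: the paper does not prove the Newton-polygon equality $\NP(C_{\widetilde\rmH}^{(\varepsilon)}(w_\star,-)) = \NP(G_{\bar\rho}^{(\varepsilon)}(w_\star,-))$. The only part established in this paper is the well-definedness claim---that the ghost series depends only on $\bar\rho$ and $\varepsilon$ (Corollary~\ref{C:ghost series depends on bar rho})---while the Newton-polygon statement is explicitly deferred to a sequel. So there is no ``paper's own proof'' against which to compare your proposal.

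That said, your Step~2 contains a genuine misconception about what the conjecture asserts. The claim is that the \emph{Newton polygons} of the two power series in $t$ coincide at each $w_\star$; it does \emph{not} assert that $v_p(c_n^{(\varepsilon)}(w_\star)) = v_p(g_n^{(\varepsilon)}(w_\star))$ for each individual $n$, let alone that $c_n^{(\varepsilon)}(w)$ and $g_n^{(\varepsilon)}(w)$ share the same zero multiset in $\gothm_{\CC_p}$. Individual coefficients can have quite different valuations as long as the lower convex hull is unchanged. Consequently your interpolation strategy---deduce the zero set of $c_n^{(\varepsilon)}$ from its valuations at classical $w_k$ via Weierstrass preparation, then match it with that of $g_n^{(\varepsilon)}$---is aiming at a statement that is both stronger than the conjecture and almost certainly false. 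Moreover, even your Step~1 would only yield Newton-polygon data at each $w_k$, not the individual values $v_p(c_n^{(\varepsilon)}(w_k))$ needed to feed into such an argument.

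A further issue: the ``a~priori bound on total zero count'' you invoke does not follow from the integrality results in this paper; integrality of slopes of $\NP(G(w_k,-))$ says nothing about the number of zeros of $c_n^{(\varepsilon)}(w)$ as a function of $w$. The actual approach in the sequel (as advertised) will require direct estimates on the $U_p$-matrix entries with respect to a carefully chosen basis, comparing them termwise to the ghost series; the combinatorial results of this paper (ghost duality, the vertex characterization) are preparatory ingredients for that comparison rather than a replacement for it.
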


\emph{We advertise that we shall prove this conjecture under some mild hypotheses in a sequel to this paper.}

\begin{remark}
\begin{enumerate}
\item 
The definition of the exponents $m_n^{(\varepsilon)}(k)$ follows exactly the recipe laid out by \cite{bergdall-pollack2}, by harvesting input from the newform/old form theory. We refer to their paper as well as Remark~\ref{R:ghost pattern} for a more intuitive explanation.
\item
When $\widetilde \rmH$ comes from the case of modular forms as discussed above (and assuming certain mod-$p$-multiplicity-one condition), the local ghost conjecture is precisely the $\bar r$-ghost conjecture raised by Bergdall--Pollack \cite{bergdall-pollack2, bergdall-pollack3}.
\item
The condition that $\bar \rho$ is generic and reducible is essential to Conjecture~\ref{Conj:local ghost intro}.
If $\bar r|_{\Gal_{\QQ_p}}$ is irreducible, the situation is significantly different. We hope to address that in a future work. The genericity is also important as indicated  in \cite{bergdall-pollack4}.
These conditions on $\bar \rho$ first appeared in the literature in the form of so-called \emph{Buzzard-regular condition} \cite{buzzard-slope}; we refer to \cite[\S\,2]{bergdall-pollack4} for a detailed discussion of Buzzard-regular condition in terms of $\bar r|_{\Gal_{\QQ_p}}$.  See Remark~\ref{R:generalizations of ghost conjecture} for more discussion.
\item 
The primitive hypothesis is not a serious constraint: for each $\bar \rho$, one can always globalize it to a case in (1) above by a construction in \cite[Appendix~A]{gee-kisin}; using the known Breuil--M\'ezard conjecture, one can ``remember the slope information" on the Galois deformation side, and then bootstrap from it in a general situation. The same argument may be able to treat the case with reducible generic  split case.
We shall give more details on this argument in a subsequent paper.
\end{enumerate}

\end{remark}

\subsection{Main results of this paper}
\label{S:main result summarized}
The focus of this paper is to investigate the following basic properties of the ghost series.
\begin{enumerate}
\item 
We give an explicit and asymptotic computation of $m_n^{(\varepsilon)}(k)$. (Propositions~\ref{P:dimension of SIw} and \ref{P:dimension of Sunr}), making the ghost series explicit.
\item 
The local ghost conjecture would require that the slopes of the Newton polygon of $G_{\bar \rho}^{(\varepsilon)}(w_k, -)$ for $k \in \ZZ$ to include slopes of abstract classical forms. We check its compatibility with theta maps,  the Atkin--Lehner involutions, and the $p$-stabilizations (Proposition~\ref{P:ghost compatible with theta AL and p-stabilization}\,(1)--(3)).  
 \item
The \emph{ghost duality}, which is the most important property of ghost series, near a weight point where the space of abstract $p$-new forms  is nonzero. We refer to Proposition~\ref{P:ghost compatible with theta AL and p-stabilization}\,(4) for the statement.
\item For $k \in \ZZ_{\geq 2}$, the ``old form slopes" of $\NP(G(w_k,-))$ are less than or equal to $\big\lfloor\frac{k-1}{p+1}\big\rfloor$, as predicted by a conjecture of Gouv\^ea \cite{gouvea}.

\item  For $n \in \ZZ_{\geq 0}$ and $w_\star \in \gothm_{\CC_p}$, the point $(n, v_p(g_n(w_\star)))$ is a vertex on the Newton Polygon $\NP(G(w_\star, -))$ if and only if $(n, w_\star)$ is not near-Steinberg, a condition that roughly says that $w_\star$ is $p$-adically close to a weight point $w_k$ at which the $n$th slope is the slope of an abstract $p$-new form. (Theorem~\ref{T:near Steinberg = non-vertex}). 

\item For $k \in \ZZ$, the slopes of $\NP(G(w_k,-))$ are all integers if $a$ is even and belong to $\frac 12\ZZ$ if $a$ is odd (Corollary~\ref{C:integrality of slopes of G}). 
\end{enumerate}

The proofs of (1)--(4) are relatively straightforward, as soon as we setup the theory for abstract classical and overconvergent forms properly. The proofs of (5) and (6) are rather combinatorially involved; they are the main content of this paper.

\subsection{Relation with historical conjectures and results on slopes of modular forms}

The general study of slopes of (overconvergent) modular forms began with extensive computer enumeration by Gouv\^ea and Mazur \cite{gouvea-mazur, gouvea} in 1990's.
Let $N$ be a positive integer that is relatively prime to $p$.
The space $S_k(\Gamma_0(Np))$ of modular forms of weight $k \in \NN_{\geq 2}$ contains two copies of $S_k(\Gamma_0(N))$ as the space of $p$-old forms $S_k(\Gamma_0(Np))^{p\old}$, and the Hecke complement is the space of $p$-new forms $S_k(\Gamma_0(Np))^{\new}$. The eigenvalues of $U_p$-action on $S_k(\Gamma_0(Np))^{\new}$ are $\pm p^{(k-2)/2}$, so $U_p$ has slope $(k-2)/2$ on this space. On the other hand, Atkin--Lehner theory implies that the $U_p$-slopes on $S_k(\Gamma_0(Np))^{p\old}$ can be paired so that the sum of each pair is $k-1$.
Based on significant computation data, 
Gouv\^ea raised the following conjecture in \cite{gouvea}.

\begin{conjecture}
[Gouv\^ea]
\label{Conj:Gouvea} Let $\mu_k$ denote the density measure on $[0,1]$ given by the set of $U_p$-slopes on $S_k(\Gamma_0(Np))$, which are normalized by being divided by $k-1$ and are counted with multiplicities.
As $k \to \infty$, $\mu_k$ converges to the sum of the delta measure of mass $\frac{p-1}{p+1}$ at $\frac 12$, and the uniform measure on $[0, \frac 1{p+1}] \cup [\frac p{p+1}, 1]$ with total mass $\frac 2{p+1}$.
\end{conjecture}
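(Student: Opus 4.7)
The plan is to deduce Gouv\^ea's conjecture from the local ghost conjecture (assumed, or proven in the forthcoming sequel) by carrying out a limiting distributional analysis of the Newton polygon of the ghost series $G^{(\varepsilon)}_{\bar\rho}(w_k,-)$ as $k\to\infty$. Decompose $S_k(\Gamma_0(Np))$ across residual Galois representations $\bar r$: on each $\bar r$-isotypic component with $\bar r|_{\rmI_{\QQ_p}}$ generic reducible the $U_p$-slopes match the Newton slopes of the corresponding ghost polygon at $w=w_k$; the remaining components (irreducible or non-generic at $p$) contribute vanishing density in the limit by Buzzard-regular dimension counts and can be ignored.

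First I would extract the leading-order dimension asymptotics. Using the explicit computation of $m_n^{(\varepsilon)}(k)$ (item (1) of \S\ref{S:main result summarized}) together with classical dimension formulas, one reads off
\begin{equation*}
\frac{\dim S_k(\Gamma_0(Np))^{\new}}{\dim S_k(\Gamma_0(Np))}\longrightarrow \frac{p-1}{p+1},\qquad \frac{\dim S_k(\Gamma_0(Np))^{p\old}}{\dim S_k(\Gamma_0(Np))}\longrightarrow \frac{2}{p+1},
\end{equation*}
which recovers the total masses of the two summands of the conjectured limit measure. The $p$-new slopes are all equal to $(k-2)/2$ by Atkin--Lehner, and the ghost series is compatible with $p$-stabilization and the Atkin--Lehner involution (item (2)), so after dividing by $k-1$ they collapse onto $\tfrac12$ and produce the delta summand $\tfrac{p-1}{p+1}\delta_{1/2}$.

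For the $p$-old contribution, Gouv\^ea's bound (item (4)) confines the normalized old slopes to $[0,\tfrac{1}{p+1}]\cup[\tfrac{p}{p+1},1]$, and the ghost duality (item (3)) forces the limiting distribution on this set to be symmetric under $x\mapsto 1-x$. It therefore suffices to prove uniform distribution on $[0,\tfrac{1}{p+1}]$: that for every $0\le\alpha<\beta\le\tfrac{1}{p+1}$,
\begin{equation*}
\lim_{k\to\infty}\frac{\#\bigl\{\text{old slopes of }\NP\bigl(G^{(\varepsilon)}_{\bar\rho}(w_k,-)\bigr)\text{ in }[(k-1)\alpha,\,(k-1)\beta]\bigr\}}{\dim S_k(\Gamma_0(Np))}=\frac{\beta-\alpha}{p+1}.
\end{equation*}

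The main obstacle is this last equidistribution. The intended route is to use the near-Steinberg characterization of vertices (item (5)) to locate the vertex set of $\NP(G(w_k,-))$, noting that between near-Steinberg regions the polygon varies smoothly with $w_k$, and to couple this with the slope integrality of item (6) to pin down the discrete spacing of slopes. The genuinely hard combinatorial input is to show that the successive differences $v_p(g_n(w_k))-v_p(g_{n-1}(w_k))$ are, on average, asymptotically $(p+1)/(k-1)$ throughout the old-slope range. This reduces to an equidistribution statement for the exponents $m_n^{(\varepsilon)}(k')$ across arithmetic progressions of weights $k'\equiv k_\varepsilon\pmod{p-1}$, and is where the deepest technical effort lies; absent such an equidistribution one can only extract the weaker statement that the support of the limit measure is contained in $\{1/2\}\cup[0,\tfrac{1}{p+1}]\cup[\tfrac{p}{p+1},1]$ with the correct masses, short of the full uniformity on the two intervals.
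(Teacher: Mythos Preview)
The statement you are attempting to prove is a \emph{conjecture} in the paper, not a theorem; the paper offers no proof of it. It is stated in the introduction purely as historical motivation, and the paper explicitly remarks that the implication ``local ghost conjecture $\Rightarrow$ $\bar r$-part of Gouv\^ea's conjecture'' is established in \cite{bergdall-pollack3}, not here. So there is nothing in the paper to compare your proposal against.

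As for the content of your sketch itself, it is incomplete by your own admission: the crux---equidistribution of the old slopes on $[0,\tfrac{1}{p+1}]$---is precisely the step you do not carry out. Items (5) and (6) of \S\ref{S:main result summarized} locate vertices and constrain slopes to half-integers, but neither yields the quantitative averaging you need; the near-Steinberg characterization tells you \emph{where} the polygon fails to have a vertex, not how the slopes distribute between vertices. Your reduction to an equidistribution statement for the exponents $m_n^{(\varepsilon)}(k')$ across arithmetic progressions of $k'$ is the right instinct, but this is exactly the nontrivial combinatorial content, and the argument in \cite{bergdall-pollack3} is not recovered from the tools listed. Your claim that the irreducible or non-generic components ``contribute vanishing density in the limit by Buzzard-regular dimension counts'' is also unsubstantiated: those components have positive density in general, and Gouv\^ea's conjecture is about the full space $S_k(\Gamma_0(Np))$, not a single $\bar r$-localization.
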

The delta measure at $\frac 12$ clearly comes from the $p$-new forms, the $p$-old form part (supposedly responsible for the uniform measure) does not seem to have any theoretical explanation. In sharp contrast, numerically, 
Gouv\^ea observes that the lesser of the $p$-old form slopes (in the pairs mentioned above) are almost always less than or equal to $\big\lfloor\frac{k-1}{p+1}\big\rfloor$; this is known as Gouv\^ea's $\big\lfloor\frac{k-1}{p+1}\big\rfloor$-conjecture.

From modern point of view, it is best to study the two conjectures above after first localizing at a residual Galois representation $\bar r$.  Then as proved in \cite{bergdall-pollack3}, the local ghost Conjecture~\ref{Conj:local ghost intro} implies the $\bar r$-part of Conjecture~\ref{Conj:Gouvea} (at least under the mod-$p$-multiplicity-one assumption). On the other hand, \S\,\ref{S:main result summarized}(6) implies the $\bar r$-part of  Gouv\^ea's $\big\lfloor\frac{k-1}{p+1}\big\rfloor$-conjecture (at least under the mod-$p$-multiplicity-one assumption).

\begin{remark}
\begin{enumerate}
\item 
There has been a couple of interesting results on Gouv\^ea's $\big\lfloor \frac{k-1}{p+1}\big\rfloor$-conjecture, obtained by Berger--Li--Zhu \cite{berger-li-zhu} and Bergdall--Levin \cite{bergdall-levin}, respectively. They used $(\varphi, \Gamma)$-modules and Kisin modules techniques to show that if the slopes of the crystalline Frobenius is greater than or equal to $\big \lfloor \frac{k-1}{p-1}\big\rfloor$ and $\big\lfloor\frac{k-1}p\rfloor$, respectively, then the reduction of such representation is irreducible.

\item
We also mention a conjecture by Gouv\^ea--Mazur \cite[Conjecture 1]{gouvea-mazur} on radii of constancy of slopes on spectral curves. We do not know whether our local ghost conjecture implies the $\bar r$-part of Gouv\^ea--Mazur's conjecture. 
\end{enumerate}
\end{remark}

\subsection{Implication on Kisin's crystalline Galois deformation space}
As indicated by Remark~\ref{R:origins of arithmetic modules}(2) that applying local ghost Conjecture~\ref{Conj:local ghost intro} to the patched completed homology, we may deduce results on Kisin's crystalline deformation  space of $\tilde \rho$. One interesting conjecture in this direction was 
 originated in
emails among Breuil, Buzzard, and Emerton in 2005 (\cite{buzzard-gee}).

\begin{conjecture}[Breuil--Buzzard--Emerton]
\label{Conj:Breuil-Buzzard-Emerton}
Assume that $\tilde \rho : \Gal(\overline \QQ_p/\QQ_p) \to \GL_2(\FF)$ is reducible, nonsplit, and generic, then the $p$-adic valuations of the eigenvalues of the crystalline Frobenius action on the Kisin's framed crystalline deformation space of $\tilde \rho$ are integers (when $k$ is even) or half-integers (when $k$ is odd).
\end{conjecture}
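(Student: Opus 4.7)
\emph{Proof proposal for Conjecture~\ref{Conj:Breuil-Buzzard-Emerton}.}
The plan is to deduce this conjecture from the local ghost conjecture (Conjecture~\ref{Conj:local ghost intro}) combined with the integrality statement (6) of \S\ref{S:main result summarized}, using the Pa\v sk\= unas module as the bridge between Kisin's crystalline deformation space and $U_p$-slopes.

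First, I would take as input the Pa\v sk\= unas projective module ${\bf P}_{\tilde\rho}$ over $\calO\llbracket x\rrbracket \llbracket \GL_2(\ZZ_p)\rrbracket$ described in Remark~\ref{R:origins of arithmetic modules}(3). Every specialization at a closed point of $\Spec \calO\llbracket x \rrbracket[\tfrac{1}{p}]$ produces a primitive $\calO\llbracket K_p\rrbracket$-projective augmented module $\widetilde \rmH$ of type $\bar\rho$ to which the local ghost conjecture applies. The key translation step is to show, using the known Breuil--M\'ezard conjecture for $\GL_2(\QQ_p)$ together with the compatibility of $p$-adic local Langlands with crystalline points, that every closed point of Kisin's framed crystalline deformation ring $R_{\tilde\rho}^{\square,\mathrm{crys},k}[\tfrac{1}{p}]$ of Hodge--Tate weights $(0,k-1)$ arises from some such specialization, and that the $p$-adic valuation of a crystalline Frobenius eigenvalue there coincides with a $U_p$-slope on the corresponding classical weight-$k$ subspace of $\Hom_{\calO[\Iw_p]}\bigl(\widetilde\rmH,\calC^0(\ZZ_p,\calO\llbracket w\rrbracket)^{(\varepsilon)}\bigr)$ obtained by evaluating $w=w_k$.

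Granted this translation, the local ghost conjecture identifies the Newton polygon of $U_p$ on this classical subspace with the relevant segment of $\NP\bigl(G^{(\varepsilon)}_{\bar\rho}(w_k,-)\bigr)$, and Corollary~\ref{C:integrality of slopes of G} forces each such slope into $\ZZ$ when $a$ is even and into $\tfrac{1}{2}\ZZ$ when $a$ is odd. To conclude, I would match the parity of $a$ with the parity of $k$ asserted in Conjecture~\ref{Conj:Breuil-Buzzard-Emerton}: the relevant character $\varepsilon$ supports only weights $k$ in the residue class modulo $p-1$ determined by $k_\varepsilon$ of Notation~\ref{N:relevant varepsilon}, and combined with the normalization $b=0$ and the shape of $\bar\rho|_{\rmI_{\QQ_p}}$ in \eqref{E:bar rho} one gets $k\equiv a+2 \pmod{p-1}$, so $a$ and $k$ have the same parity for $p$ odd.

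The main obstacle I anticipate is precisely this Pa\v sk\= unas--to--Kisin translation: matching the full locus of crystalline Frobenius eigenvalues as one varies over closed points of $R_{\tilde\rho}^{\square,\mathrm{crys},k}[\tfrac{1}{p}]$ with the full set of $U_p$-eigenvalues as one varies over specializations of ${\bf P}_{\tilde\rho}$ requires a careful analysis of how the parameter $x$ sweeps out deformations of $\tilde\rho$ (compare \cite[Theorem~5.2]{paskunas-duke}) together with a precise local-global compatibility identifying crystalline Frobenius with $U_p$ on $\GL_2(\ZZ_p)$-homomorphisms into $\Sym^{k-2}\calO^{\oplus 2}$. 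Once this dictionary is in place, the result is a direct corollary of Corollary~\ref{C:integrality of slopes of G} and the parity bookkeeping.
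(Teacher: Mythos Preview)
This statement is a \emph{conjecture} in the paper, not a theorem; the paper does not give a proof. What the paper does say (immediately after stating Conjecture~\ref{Conj:Breuil-Buzzard-Emerton}, and again just before Corollary~\ref{C:near-Steinberg => straightline}) is that its integrality results ``suggest that local ghost conjecture should imply the conjecture above,'' and that Corollary~\ref{C:near-Steinberg => straightline} ``will be the essential ingredient for a proof'' of this implication, with the full argument deferred to the sequel. So there is no paper proof to compare against; your proposal is a sketch of the very implication the paper advertises but does not carry out.

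That said, your strategy is aligned with what the paper outlines: go through the Pa\v sk\=unas module of Remark~\ref{R:origins of arithmetic modules}(3) to land in the setting where the local ghost conjecture applies, then invoke the integrality of ghost slopes. The obstacle you flag---building the dictionary between closed points of $R_{\tilde\rho}^{\square,\mathrm{crys},k}[1/p]$ and $U_p$-slopes on specializations of ${\bf P}_{\tilde\rho}$ via Breuil--M\'ezard and $p$-adic local Langlands---is precisely the step the paper leaves to the sequel. Two small refinements: first, the paper singles out Corollary~\ref{C:near-Steinberg => straightline} (slopes over near-Steinberg ranges lie in $\tfrac{a}{2}+\ZZ$) rather than only Corollary~\ref{C:integrality of slopes of G} as the essential ingredient, so you will want the finer structural statement when matching which slopes correspond to crystalline points. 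Second, your parity bookkeeping is correct but should be stated for all relevant $\varepsilon$ at once: for any relevant $\varepsilon$ one has $k_\varepsilon = 2 + \{a+2s_\varepsilon\}$, and since $p-1$ is even this forces $k_\varepsilon \equiv a \pmod 2$, hence every $k \equiv k_\varepsilon \pmod{p-1}$ has the same parity as $a$, independently of $\varepsilon$.
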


Our property \S\,\ref{S:main result summarized} is related to this conjecture, and suggests that local ghost conjecture should imply the conjecture above.

\begin{remark}
It seems to be very difficult to directly compute the reduction of a crystalline representations when the (difference of) Hodge--Tate weights become larger than $2p$ (see for example, \cite{breuil-reduction}); retroactively speaking, this is expected as the ghost series is itself rather combinatorially involved.

When the slopes are less than equal to $3$, there have been several works on the computation of reduction of crystalline representations using mod $p$ local Langlands correspondence, see \cite{breuil-reduction, buzzard-gee-reduction, ghate1, ghate2, ghate3}.

Our approach may be viewed as coming from the automorphic side of the $p$-adic local Langlands correspondence, which harnesses the advantage of working with a family of abstract forms parametrized by the weight space. Working purely with the setup of $\calO\llbracket K_p\rrbracket$-projective augmented modules allows us to relate the slope question to Kisin's crystalline deformation space via the known Breuil--M\'ezard conjecture for $\GL_2(\QQ_p)$.
\end{remark}

\subsection{Relation to a refined spectral halo conjecture}
Note that if one evaluates the ghost series at $w_\star \in \gothm_{\CC_p}$ with $v_p(w_\star) \in (0,1)$,
we quickly deduce that
$$
v_p(g_n^{(\varepsilon)}(w_\star)) = v_p(w_\star)\cdot \deg g_n^{(\varepsilon)} .
$$
In Proposition~\ref{P:increment of degrees in ghost series} below, we compute explicitly $\deg g_{n+1}^{(\varepsilon)} - \deg g_n^{(\varepsilon)}$ and show that, under our assumption $1 \leq a \leq p-4$, the degree differences is strictly increasing in $n$. Thus, the local ghost conjecture can imply that the spectral curve associated to $\widetilde \rmH$, when restricted to the boundary of the weight disc: $\calW^{(0,1)}:= \big\{w_\star \in \gothm_{\CC_p}\; |\; v_p(w_\star) \in (0,1)\big\}$ is an infinite disjoint union of copies of $\calW^{(0,1)}$ and the slopes on each such copy is proportional to $v_p(w_\star)$. We also compute the predicted slope ratios in Proposition~\ref{P:increment of degrees in ghost series}. This may be viewed as a refined version of the spectral halo conjecture proved in \cite{liu-wan-xiao} (but only for the $\bar r$-part of the spectral curve) for which $\bar r$ satisfies the reducible, nonsplit, and generic conditions.

\subsection*{Roadmap of the paper}
Section~\ref{Sec:local conjecture} is devoted to formulating the local ghost conjecture (Conjecture~\ref{Conj:local ghost conjecture}). In the next section, we make standard constructions of abstract classical forms and abstract $p$-adic forms from an $\calO\llbracket K_p\rrbracket$-projective augmented module. In Section~\ref{Sec:properties of ghost series}, we prove basic properties of the ghost series, including \S\,\ref{S:main result summarized}(1)--(3),
most notably the \emph{ghost duality}. Section~\ref{Sec:finer properties of ghost series} is the most technical part of this paper, in which we prove many important results regarding vertices of Newton polygon of ghost series, namely \S\,\ref{S:main result summarized}(4)--(6). In the appendix, we recollect some basic facts on modular representations of $\GL_2(\FF_p)$ that we use throughout the paper.

\subsection*{Acknowledgments}
This paper will not be possible without the great idea from the work of John Bergdall and Robert Pollack \cite{bergdall-pollack2,bergdall-pollack3}.
Many proofs are inspired by their numerical evidences.  We especially thank them for sharing ideas and insights at an early stage and for many interesting conversations.
We thank Matthew Emerton and Yongquan Hu for multiple helpful discussions. We also thank Jiawei An, Christophe Breuil, Toby Gee, Bao Le Hung, Vincent Pilloni, and Rufei Ren for their interest in the project and inspiring comments.
We thank all the people contributing to the SAGE software, as lots of our argument rely on first testing by a computer simulation.

R. Liu is partially supported by the National Natural Science Foundation of China under agreement No. NSFC-11725101 and the Tencent Foundation. N.T. is partially supported by L.X.'s NSF grant DMS--1752703.
L.X. is  partially supported by Simons Collaboration Grant \#278433, NSF grant DMS--1502147 and DMS--1752703, the Chinese NSF grant NSFC--12071004, Recruitment Program of Global Experts of China, and a grant
from the Chinese Ministry of Education. B.Z. is partially supported by
AMS-Simons Travel Grant.

\subsection*{Notation}
\label{S:notation}
\begin{enumerate}
\item For a field $k$, we write $\overline k$ for its algebraic closure.

%For $n \in \NN$, we write $\mu_n$ for the $n$th roots of unity in $\CC$, with a fixed generator $\zeta_n$ chosen so that $\zeta_{mn}^m = \zeta_n$ for $n,m \in \NN$. By abuse of language, we use the same notations $\mu_n$ and $\zeta_n$ when viewing them in an algebraic number field or a finite field.

\item Throughout the paper, we fix a prime number $p\geq 5$ unless otherwise specified. Although some statements in this paper may also hold for smaller primes, we insist this assumption to avoid unnecessarily complicated argument.  %For a positive integer $n \in \NN$, we write $\QQ_{p^n}$ for the unramified extension of $\QQ_p$ of degree $n$, and $\ZZ_{p^n}$ the corresponding ring of integers. Then $\QQ_p^\mathrm{ur}:= \cup_{n \in \NN} \QQ_{p^n}$ is the maximal unramified extension of $\QQ_p$.
Let $\rmI_{\QQ_p}\subset \Gal(\overline{\QQ}_p/\QQ_p)$ denote the inertia subgroup, and  $\omega_1: \rmI_{\QQ_p} \twoheadrightarrow\Gal(\QQ_p(\mu_p)/\QQ_p) \cong \FF_p^\times$ the \emph{the $1$st fundamental character}.

\item Let $\Delta \cong (\ZZ/p\ZZ)^\times$ be the torsion subgroup of
$\ZZ_p^\times$, and let $\omega:\Delta\rightarrow \ZZ_p^\times$ be the
Teichm\"{u}ller character.
For an element $\alpha \in \ZZ_p^\times$, we often use $\bar \alpha \in \Delta $ to denote its reduction modulo $p$.
In particular, the Greek letters $\alpha, \beta, \gamma, \delta$ are reserved for elements in $\ZZ_p$, and $\bar \alpha, \bar \beta, \bar \gamma, \bar \delta$ denote elements in $\FF_p$.

\item Let $E$ be a finite extension of $\QQ_p(\sqrt p)$, which serves as the coefficient field so that $E$ contains $p^{\frac{k-2}{2}}$ for every integer $k\geq 2$. Let $\calO$, $\FF$, and $\varpi$ denote its ring of integers, residue field, and a uniformizer, respectively.

\item The $p$-adic valuation $v_p(-)$ is normalized so that $v_p(p) =1$.

\item We use $\lceil x\rceil$ to denote the ceiling function and $\lfloor x\rfloor$ to denote the floor function.

\item For $M$ a topological $\calO$-module, we write $\calC^0(\ZZ_p; M)$ for the space of continuous functions on $\ZZ_p$ with values in $M$. It is endowed with the compact-open topology.

All maps and characters are continuous with respect to the relevant topology. All hom spaces in this paper refer to the spaces of continuous homomorphisms.

\item We shall encounter both $p$-adic logs $\log(x) = (x-1) - \frac{(x-1)^2}2 + \cdots $ for $x$ a $p$-adic or formal element,  and the natural logs $\ln(-)$ in the real analysis. (The usual $p$-based logarithm will always be denoted by $\ln(-)/\ln(p)$.)

\item For each $m \in \ZZ$, we write $\{m\}$ for the unique integer
satisfying the conditions 
$$0\leq \{m\}\leq p-2 \quad \textrm{and} \quad m\equiv \{m\}
\bmod{p-1}.$$
For $m \in \ZZ_{\geq 0}$, let $\Dig(m)$ denotes the sum of all digits in the $p$-based expression of $m$.

\item For a square (possibly infinite) matrix $M$ with coefficients in a commutative ring $R$, its \emph{characteristic power series} (if it exists) is denoted by $\Char(M;t) : = \det(I- Mt) \in R\llbracket t\rrbracket$, where $I$ is the identity matrix.  When an $R$-linear action $U$ on an $R$-module $N$ is given by such a matrix $M$, we write $\Char (U, N; t)$ for $\Char (M;t)$.

\item For a power series $F(t) = \sum_{n \geq 0} c_nt^n \in \CC_p\llbracket t\rrbracket$ with $c_0=1$, we use $\NP(F)$ to denote its \emph{Newton polygon}, i.e. the convex hull of points $(n, v_p(c_n))$ for all $n$; the slopes the segments of $\NP(F)$ are often referred to as \emph{slopes} of $F(t)$.

\end{enumerate}

\section{Local ghost conjecture: formulation}
\label{Sec:local conjecture}

In this section, we give a detailed formulation of Conjecture~\ref{Conj:local ghost intro}, which is a local analogue of the ghost conjecture of Bergdall and Pollack
\cite{bergdall-pollack2, bergdall-pollack3}.
The novelty here is to work with a representation of $\GL_2(\QQ_p)$, without referencing to a global automorphic setup.  Such idea was inspired by a discussion with Matthew Emerton; we thank him heartily.

To be more precise, we start with a projective module $\widetilde \rmH$ over the Iwasawa algebra of $\GL_2(\ZZ_p)$ whose group action extends to an action of $\GL_2(\QQ_p)$; we often encounter such $\widetilde \rmH$  as the completed homology in the automorphic setup.  The information of $\bar \rho|_{G_{\QQ_p}}$ is ``manually" inserted by requiring $\widetilde \rmH$ to be the projective envelope as a $\calO\llbracket \GL_2(\ZZ_p)\rrbracket$-module of the corresponding Serre weight.  In this purely local setup, we can still properly define ``abstract classical forms", ``abstract overconvergent forms",  and ``abstract $p$-adic forms" associated to $\widetilde \rmH$.  This allows us to define the corresponding characteristic power series of the $U_p$-operator and the ghost series.   We state a more concrete statement of the local ghost conjecture (Conjecture~\ref{Conj:local ghost conjecture}) at the end of the section.

\subsection{Recollection of Iwasawa algebras}
\label{S:Iwasawa algebra}
Let $G$ be topological group that contains a finite-index \emph{analytic} pro-$p$-Sylow subgroup $H$ (in the sense of Lazard \cite[\S\,III.3]{lazard}).  We write $$\calO\llbracket G\rrbracket : = \varprojlim_{N \unlhd G} \calO[G/N]$$ for the corresponding (possibly non-commutative) Iwasawa algebra.  It is left and right noetherian by \cite[Proposition~V.2.2.4]{lazard}.
We write $\rmI_G$ for its augmentation ideal. For an element $g \in G$, we write $[g]$ for the group element in $\calO\llbracket G \rrbracket$.

In this paper, we shall consider right $\calO\llbracket G\rrbracket$-modules as opposed to  left ones because the completed homology groups are naturally right modules over the Iwasawa algebra of $\GL_2(\ZZ_p)$. 

\begin{lemma}
\label{L:Iwasawa algebra properties} Keep the notation from above.
The following are equivalent for a finitely generated  \emph{right} $\calO\llbracket G\rrbracket$-module $M$:
\begin{enumerate}
\item $M$ is a finite projective right $\calO\llbracket G\rrbracket$-module,
\item $M$ is a finite projective right $\calO\llbracket H \rrbracket$-module,
\item $M$ is a finite free right $\calO\llbracket H\rrbracket$-module, and
\item $\Ext^1_{\calO\llbracket H\rrbracket}(M, \FF) = 0$.
\end{enumerate}
\end{lemma}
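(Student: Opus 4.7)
The plan is to close the cycle $(1) \Rightarrow (2) \Leftrightarrow (3) \Leftrightarrow (4)$ and then $(2) \Rightarrow (1)$, resting on two basic observations about the analytic pro-$p$ setup. First, $\calO\llbracket H\rrbracket$ is a local noetherian ring with maximal ideal $\gothm_H := (\varpi, \rmI_H)$ and residue field $\FF$, because every simple right $\calO\llbracket H\rrbracket$-module is the trivial one-dimensional $\FF$-module (a standard consequence of $H$ being pro-$p$). Second, $\calO\llbracket G\rrbracket$ is finite free as a right $\calO\llbracket H\rrbracket$-module of rank $[G:H]$, with basis given by any choice of left coset representatives for $G/H$.

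I would first dispatch the easy implications. The equivalence $(2) \Leftrightarrow (3)$ is immediate from the first observation, since every finitely generated projective module over a local noetherian ring is free. The implication $(3) \Rightarrow (4)$ is trivial, and $(1) \Rightarrow (2)$ follows from the second observation by restriction of scalars along the free extension $\calO\llbracket H\rrbracket \hookrightarrow \calO\llbracket G\rrbracket$, which turns a direct summand of $\calO\llbracket G\rrbracket^n$ into a direct summand of the free $\calO\llbracket H\rrbracket$-module of rank $n[G:H]$.

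For $(4) \Rightarrow (3)$ I would run the usual Nakayama lifting. Pick $m_1, \dots, m_r \in M$ whose images form an $\FF$-basis of $M/M\gothm_H$; by Nakayama they generate $M$, yielding a short exact sequence $0 \to K \to F := \calO\llbracket H\rrbracket^r \to M \to 0$ with $K$ finitely generated by noetherianity and $K \subseteq F\cdot \gothm_H$ by construction. Applying $\Hom_{\calO\llbracket H\rrbracket}(-, \FF)$, the map $\Hom(F, \FF) \to \Hom(K, \FF)$ vanishes, because every $\FF$-valued homomorphism on $F$ factors through $F/F\gothm_H$ and kills $K \subseteq F\gothm_H$; combined with $\Ext^1(F, \FF) = 0$, the six-term exact sequence identifies $\Hom(K, \FF)$ with $\Ext^1_{\calO\llbracket H\rrbracket}(M, \FF)$, which vanishes by (4). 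Hence $K/K\gothm_H = 0$ and Nakayama forces $K = 0$.

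Finally, $(2) \Rightarrow (1)$ follows from the standard transfer argument: since $H$ is a pro-$p$-Sylow of finite index, $[G:H]$ is prime to $p$ and therefore invertible in $\calO$, so the classical identity $\mathrm{cor} \circ \mathrm{res} = [G:H]$ on $\Ext^1_{\calO\llbracket G\rrbracket}(M, N)$ shows that restriction is injective; the vanishing supplied by (2) then forces $\Ext^1_{\calO\llbracket G\rrbracket}(M, N) = 0$ for every $\calO\llbracket G\rrbracket$-module $N$, whence $M$ is $\calO\llbracket G\rrbracket$-projective. The main place where care is needed is in verifying the foundational facts, especially keeping the right-module conventions consistent throughout the Iwasawa-algebra manipulations; once these are set up, every remaining step is either formal or entirely standard.
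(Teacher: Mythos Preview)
Your proof is correct and follows the same overall architecture as the paper: the implications $(1)\Rightarrow(2)$, $(2)\Leftrightarrow(3)$, and $(2)\Rightarrow(1)$ are handled identically (restriction along a free extension, locality of $\calO\llbracket H\rrbracket$, and the transfer identity $\mathrm{cor}\circ\mathrm{res}=[G:H]$). The one genuine difference is in $(4)\Rightarrow(3)$: the paper argues that $\Ext^1_{\calO\llbracket H\rrbracket}(M,N)=0$ for \emph{all} $N$ by d\'evissage through finite-length modules (which are iterated extensions of $\FF$) and passage to inverse limits, whereas you take a minimal free presentation $0\to K\to F\to M\to 0$ and kill $K$ directly via Nakayama after identifying $\Hom(K,\FF)\cong\Ext^1(M,\FF)$. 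Your route is slightly more self-contained here, avoiding any appeal to the behaviour of $\Ext$ under limits; the paper's route has the small advantage of yielding the stronger intermediate conclusion that $\Ext^1$ vanishes against every coefficient module, not just $\FF$.
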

\begin{proof}

Indeed, (1) obviously implies (2). Conversely, for any (right) $\calO\llbracket G\rrbracket$-module $N$, the composition
\[
\Ext^i_{\calO\llbracket G\rrbracket}(M, N) \xrightarrow{\mathrm{Restriction}}
\Ext^i_{\calO\llbracket H\rrbracket}(M, N)
\xrightarrow{\mathrm{Trace}}
\Ext^i_{\calO\llbracket G\rrbracket}(M, N)
\]
is the multiplication by the index $[G:H]$ (\cite[Chapter I, \S~2.4]{Serre-Galois cohomology}), which is relatively prime to $p$. So the vanishing of $\Ext^{>0}_{\calO\llbracket H\rrbracket}(M, N)$ implies that of $\Ext^{>0}_{\calO\llbracket G\rrbracket}(M, N)$.

(3) obviously implies (2). The converse implication holds because $\calO\llbracket H\rrbracket$ is a noetherian local ring (whose maximal ideal is generated by $\varpi$ and the augmentation ideal).

(3) obviously implies (4). Conversely, since $\Ext^i_{\calO\llbracket H\rrbracket}(M,-)$ commutes with taking inverse limit on finite objects, it reduces to showing that $\Ext^1_{\calO\llbracket H\rrbracket}(M, N) =0$ for any finite length $\calO\llbracket H\rrbracket$-modules $N$. But all such $N$ are successive extensions of $\FF$ as $\calO\llbracket H\rrbracket$ is a local ring. So (4) implies (3). 
\end{proof}

\begin{definition}
We will mostly work with the following subgroups of $\GL_2(\QQ_p)$:
$$
\rmK_p: =\GL_2(\ZZ_p) \supset \Iw_p := \begin{pmatrix}
\ZZ_p^\times & \ZZ_p \\ p \ZZ_p & \ZZ_p^\times
\end{pmatrix} \supset
\Iw_{p,1} := \begin{pmatrix}
1+p\ZZ_p & \ZZ_p \\ p \ZZ_p & 1+p\ZZ_p
\end{pmatrix}.$$

By an \emph{ $\calO\llbracket K_p\rrbracket$-projective augmented module}, we mean a finite projective \emph{right} $\calO\llbracket \rmK_p\rrbracket$-module $\widetilde \rmH$ together with an action of $\GL_2(\QQ_p)$ extending the $\rmK_p$-action. Such a module is endowed with the canonical topology as a finitely generated $\calO\llbracket \rmK_p\rrbracket$-module defined in \cite[Definition~2.1.4]{emerton-ordinary}. (The notation is an adaptation of the notion of pro-augmented representations in \cite[Definition~2.1.5]{emerton-ordinary}.)
We fix one for this and the next four sections.
%, such that the action of scalar matrices is given by a character $\eta: \QQ_p^\times \to \calO^\times$ (called the central character). We put $\eta|_\Delta = \omega^{\alpha}$ for $\alpha \in \{0, \dots, p-2\}$.

By \cite[Corollary~3.4]{breuil}, extending the $\rmK_p$-action on $\widetilde \rmH$ to a $\GL_2(\QQ_p)$-action is equivalent to giving a (right) action of
$\Pi=\Matrix 01p0$ on $\widetilde \rmH$ which satisfies the relation
$$
m\Pi \Matrix \alpha\beta{p\gamma}\delta= m \Matrix \delta\gamma{p\beta}\alpha\Pi, \quad \textrm{ for all }m \in \widetilde \rmH \textrm{ and } \Matrix \alpha\beta{p\gamma}\delta \in \Iw_p.
$$
\end{definition}

\subsection{Abstract $p$-adic forms}
\label{S:abstract p-adic forms}
We follow the setup of \cite[\S\,2.3]{liu-wan-xiao} with small variation. In particular, we consider \emph{right} inductions from the lower triangular matrices (as opposed to the upper triangular matrices) but with the natural action (as opposed to the twisted action used in \cite[\S~2.3]{liu-wan-xiao}). Luckily, we end up with the same formula.

Set $\Delta : = \FF_p^\times$
and let $\omega: \Delta \to \ZZ_p^\times$ be the Teichm\"uller character. 
Write $\log(-)$ and $\exp(-)$ for the formal $p$-adic logarithmic function and formal $p$-adic exponential function, respectively. For an element $\alpha \in \ZZ_p$, write $\bar \alpha \in \FF_p$ for its reduction modulo $p$.

Fix a character $\varepsilon: \Delta^2 \to \calO^\times$ and put
\[
\calO\llbracket w \rrbracket^{(\varepsilon)}: = \calO\llbracket \Delta \times \ZZ_p^\times\rrbracket \otimes_{\calO[\Delta^2], \varepsilon} \calO. 
\]
It is isomorphic to the ring of formal power series ring $\calO\llbracket w \rrbracket$ with $w$ corresponding to $[(1, \exp(p))]-1$. (In \cite{liu-wan-xiao}, we used the letter $T$ for $w$. In this paper, we follow the convention of \cite{bergdall-pollack2, bergdall-pollack3}.)

Consider the universal character
\[
\begin{tikzcd}[row sep = 0pt]
\chi^{(\varepsilon)}_\univ: B^\mathrm{op}(\ZZ_p) ={\Big( \begin{smallmatrix}
\ZZ_p^\times &0  \\p\ZZ_p & \ZZ_p^\times
\end{smallmatrix} \Big)} \ar[r]
&\big( \calO\llbracket w \rrbracket^{(\varepsilon)} \big) ^\times
\\
{\Matrix {\alpha}{0}{\gamma}{\delta}} \ar[r, mapsto] & {[(\bar \alpha, \delta)]} \otimes 1 = \varepsilon(\bar \alpha, \bar \delta) \cdot (1+w)^{\log(\delta/\omega( \bar \delta))/p},
\end{tikzcd}
\]
where the superscript in $\calO\llbracket \omega\rrbracket^{(\varepsilon)}$ indicates the twisted action. The induced representation (for the \emph{right} action convention)
\begin{align*}
\Ind_{B^\op(\ZZ_p)}^{\Iw_p}(\chi_\univ^{(\varepsilon)}): = \big\{ &\textrm{continuous functions }f: \Iw_p \to  \calO\llbracket w\rrbracket^{(\varepsilon)};
\\ & f(gb) = \chi_\univ^{(\varepsilon)}(b) \cdot f(g) \textrm{ for }b \in B^\op(\ZZ_p) \textrm{ and } g \in \Iw_p\big\}
\end{align*}
carries a \emph{right} action of $\Iw_p$ given by $f|_{\Matrix \alpha\beta{\gamma}\delta}(g): = f\big(\Matrix \alpha \beta \gamma \delta g\big)$ 
for $\Matrix \alpha\beta{\gamma}\delta \in \Iw_p$.
The Iwasawa decomposition allows us to make the following identification:
\[
\begin{tikzcd}[row sep =0pt]
\Ind_{B^\op(\ZZ_p)}^{\Iw_p}(\chi_\univ^{(\varepsilon)}) \ar[r, "\cong"]& \calC^0(\ZZ_p; \calO\llbracket w \rrbracket^{(\varepsilon)})
\\
f \ar[r, mapsto] & h(z): = f(\Matrix 1{z}01).
\end{tikzcd}
\] 
The natural $\Iw_p$-action on the left hand side then translates to the following action on the right hand side
\begin{eqnarray}
\label{E:induced representation action}
h\big|_{\Matrix \alpha\beta{\gamma}\delta}(z) & =& \big[(\bar \alpha, \gamma z+\delta)\big] \cdot h\Big( \frac{\alpha z+\beta}{\gamma z+\delta}\Big)
\\
\nonumber
&=& \varepsilon(\bar \alpha, \bar \delta) \cdot (1+w)^{\log\left((\gamma z+\delta )/ \omega(\bar \delta)\right) / p} \cdot h\Big( \frac{\alpha z+\beta}{\gamma z+\delta}\Big)
\quad \textrm{for } \Matrix \alpha\beta{\gamma}\delta \in \Iw_p.
\end{eqnarray}
This action extends to the action of a bigger monoid (where the notation comes from \cite[\S\,4]{buzzard2})
$$
\mathbf{M}_1=\big\{\Matrix \alpha\beta\gamma\delta \in \rmM_2(\ZZ_p);  \ p|\gamma,\,p\nmid \delta,\, \alpha \delta-\beta \gamma \neq 0\big\}
$$
through the following formula analogous to \eqref{E:induced representation action}: for $\Matrix \alpha \beta \gamma \delta \in \bfM_1$ with determinant $\alpha \delta - \beta\gamma = p^r d$ ($d \in \ZZ_p^\times$),
\begin{equation}
\label{E:induced representation action extended}
h\big|_{\Matrix \alpha\beta{\gamma}\delta}(z) = \varepsilon(\bar d /  \bar \delta, \bar \delta) \cdot (1+w)^{\log\left((\gamma z+\delta )/ \omega(\bar \delta)\right)/p} \cdot h\Big( \frac{\alpha z+\beta}{\gamma z+\delta}\Big).
\end{equation}

For the fixed $\calO\llbracket K_p\rrbracket$-projective augmented module $\widetilde \rmH$ and a character $\varepsilon$ as above, we define the space of \emph{abstract $p$-adic forms} to be
\begin{equation*}
\label{E:Sp-adic epsilon}
\rmS^{(\varepsilon)}_{p\textrm{-adic}} = \rmS^{(\varepsilon)}_{\widetilde \rmH, p\textrm{-adic}}: = \Hom_{\calO[\Iw_{p}]}\big(\widetilde \rmH, \, \Ind_{B^\op(\ZZ_p)}^{\Iw_p}(\chi_\univ^{(\varepsilon)})\big) \cong \Hom_{\calO[\Iw_p]}\big(\widetilde \rmH, \, \calC^0(\ZZ_p; \calO\llbracket w\rrbracket^{(\varepsilon)})\big).
\end{equation*}
\begin{remark}
	\begin{enumerate}
		\item Recall that the hom spaces in this paper refer to the space of continuous homomorphisms. We recall the topologies of the source and target in the definition of $\rmS^{(\varepsilon)}_{p\textrm{-adic}}$ for the audience's convenience. The topology on $\widetilde \rmH$ is the canonical topology as a finitely generated $\calO\llbracket K_p \rrbracket$-module and the topology on $\calC^0(\ZZ_p; \calO\llbracket w\rrbracket^{(\varepsilon)})$ is the open-compact topology. In particular, if we have a subset $S$ such that the $\calO[K_p]$-submodule of $\widetilde \rmH$ generated by $S$ is dense in $\widetilde \rmH$, then any element in $\rmS^{(\varepsilon)}_{p\textrm{-adic}}$ is uniquely determined by its values on $S$.
		\item When $\widetilde \rmH = \varprojlim_n \calO\big[ D^\times \backslash D(\AAA_f)^\times / K^p (1+ p^n \rmM_2(\ZZ_p))\big]$ for a definite quaternion algebra $D$ over $\QQ$ split at $p$ and an open compact neat subgroup $K^p\subset (D\otimes \AAA_f^{(p)})^\times$ (we refer to \cite[\S2.4]{liu-wan-xiao} for the precise definition), $\widetilde \rmH$ is an $\calO\llbracket K_p\rrbracket$-projective augmented module and (the direct sum over characters $\varepsilon$ of)  $\rmS_{\widetilde \rmH, p\textrm{-adic}}^{(\varepsilon)}$ is the same as the space of $p$-adic automorphic forms $S^D_\mathrm{int}$ of \cite[\S\,2.7]{liu-wan-xiao}, after properly adjusted for normalizations.
	\end{enumerate}
\end{remark}

The space $\rmS_{p\textrm{-adic}}^{(\varepsilon)}$ is a countable  infinite completed direct sum of copies of $ \calO\llbracket w\rrbracket$,  carrying
an $ \calO\llbracket w\rrbracket$-linear $U_p$-action: fixing a decomposition of the double coset $\Iw_p \Matrix {p^{-1}}001 \Iw_p = \coprod_{j=0}^{p-1}  v_j\Iw_p$ (e.g. $v_j = \Matrix {p^{-1}}0j1$ with $v_j^{-1} = \Matrix p0{-jp}1 \in \bfM_1$), the $U_p$-operator sends $\varphi \in \rmS_{p\textrm{-adic}}^{(\varepsilon)}$ to
\begin{equation}
\label{E:Up action}
U_p(\varphi)(x) = \sum_{j=0}^{p-1}
\varphi(xv_j)|_{v_j^{-1}}
\quad \textrm{for all }x \in \widetilde \rmH.
\end{equation}
The $U_p$-operator does not depend on the choice of coset representatives.

%In an analogous way, the space $\rmS_{p\textrm{-adic}}^{(\varepsilon)}$  carries an action of $S_p$, which sends $\varphi(\bullet) \mapsto \varphi(\bullet \Matrix p00p)$; this action is an automorphism because $\Matrix p00p$ acts invertibly on $\widetilde \rmH$.

\begin{remark}
The next logical step is show that the characteristic power series of the $U_p$-action on $\rmS_{p\textrm{-adic}}^{(\varepsilon)}$ has coefficients in $\calO\llbracket w\rrbracket$.
Unfortunately, such well-believed result is not documented in the literature under our abstract setup. The situation is further complicated by two subtleties: the $U_p$-action is \emph{not} compact on $\rmS_{p\textrm{-adic}}^{(\varepsilon)}$ (see \cite[Example 5.2 and Remark 5.3]{liu-wan-xiao}), and certain non-neat phenomenon appears because we only assumed $\widetilde \rmH$ to be finite projective (as opposed to finite free) over $\calO\llbracket\rmK_p\rrbracket$.
The argument we provide below in \S\,\ref{S:explicit Muniv}--\ref{S:char power series of Up} is tailored to the need of defining the characteristic power series. We shall revisit finer estimates of the $U_p$-actions analogous to \cite{liu-wan-xiao} in the next paper.
\end{remark}

\begin{convention}
In what follows, we will frequently study the action of the $U_p$-operator on various $\calO$-modules or vector spaces. Following the convention in this area, we say \emph{finite $U_p$-slopes} to mean the $p$-adic valuations of the nonzero eigenvalues of the $U_p$-action on the corresponding space, usually counted with multiplicity.
\end{convention}

\subsection{Explicit description of $\rmS_{p\textrm{-}\mathrm{adic}}^{(\varepsilon)}$}
\label{S:explicit Muniv}
Unlike in \cite{liu-wan-xiao}, our $\widetilde \rmH$ may not be free over $\calO\llbracket\Iw_p\rrbracket$. We now discuss this subtle point.

We may view $\bar \rmT= \Matrix \Delta 0 0 \Delta \cong \Delta^2$ as a subgroup of $\Iw_p$ via  Teichm\"uller lifts.
The $\calO\llbracket K_p\rrbracket$-projective augmented module $\widetilde \rmH$ is a finite projective right $\calO\llbracket\rmK_p\rrbracket$-module, and hence a finite projective (right) $\calO\llbracket\Iw_p\rrbracket$-module. So there exists a
(non-canonical) isomorphism of right $\calO\llbracket\Iw_p\rrbracket$-modules:
\begin{equation}
\label{E:expression of tilde rmH}
\widetilde \rmH \simeq \bigoplus_{i=1}^r  e_i\calO  \otimes_{\chi_i,\calO[\bar \rmT]} \calO\llbracket\Iw_p\rrbracket,
\end{equation}
where $\chi_i: \bar \rmT \to \calO^\times$ is a character, and $r$ is equal to the rank of $\widetilde \rmH$ as a free $\calO\llbracket\Iw_{p,1}\rrbracket$-module.
Using the evaluations at the  elements $e_i$, we have identifications
\begin{equation}
\label{E:explicit Mepsilon}
\rmS_{p\textrm{-adic}}^{(\varepsilon)} \simeq \bigoplus_{i=1}^r e_i^*\cdot  \calC^0(\ZZ_p; \calO\llbracket w\rrbracket^{(\varepsilon)})^{\bar \rmT = \chi_i}.
\end{equation}
Here for a function $h(z)\in \calC^0(\ZZ_p; \calO\llbracket w\rrbracket^{(\varepsilon)})^{\bar \rmT = \chi_i}$, the notation $e_i^\ast \cdot h(z)$ denotes the (unique) element in $\rmS_{p\textrm{-adic}}^{(\varepsilon)} $ that sends $e_i$ to $h(z)$, and $e_j$ to $0$ for all $j\neq i$.

In explicit terms, $\calC^0(\ZZ_p; \calO\llbracket w\rrbracket^{(\varepsilon)})^{\bar \rmT = \chi_i}$ consists of continuous functions $h(z)$ on $\ZZ_p$ with values in $\calO\llbracket w \rrbracket^{(\varepsilon)}$ such that
\[
\textrm{for any }\bar \alpha, \bar \delta \in \Delta, \quad \textrm{we have} \quad \varepsilon(\bar \alpha, \bar \delta)  \cdot h( \bar \alpha z / \bar \delta ) = \chi_i(\bar \alpha, \bar \delta)\cdot h(z).
\]
So $\calC^0(\ZZ_p; \calO\llbracket w\rrbracket^{(\varepsilon)})^{\bar \rmT = \chi_i}$ 
is non-zero precisely when $\varepsilon\chi^{-1}_i(\bar x, \bar x) = 1$ for all $\bar x \in \Delta$, and in this case, it is the subspace of continuous functions on $\ZZ_p$ such that
\begin{equation}
\label{E:non-neat condition}
h(\bar \alpha z) = \chi_i\varepsilon^{-1}( \bar \alpha,1) h(z),\quad \textrm{for all }\bar \alpha \in \Delta.
\end{equation}
This can be viewed as an eigenspace for the $\Delta$-action on $\calC^0(\ZZ_p; \calO\llbracket w\rrbracket^{(\varepsilon)})$.

The reason that we see this eigenspace (as opposed to the whole space) is analogous to the situation of non-neat level structure.

\subsection{Explicit description of the $U_p$-action}
\label{S:Up in terms of matrices}
Using the explicit description \eqref{E:explicit Mepsilon} of $\rmS_{p\textrm{-adic}}^{(\varepsilon)}$, we rewrite the action of $U_p$ on $\rmS_{p\textrm{-adic}}^{(\varepsilon)}$ as follows.
For each $e_i$ and $v_j$, we write
\[
e_i v_j = \sum_{k=1}^r e_k c_{ijk} \quad \textrm{for} \quad c_{ijk} \in \calO \otimes_{\chi_k,\calO[\bar \rmT]}\calO\llbracket\Iw_p\rrbracket.
\]
Then via the isomorphism \eqref{E:explicit Mepsilon}, the $U_p$-action on $\rmS_{p\textrm{-adic}}^{(\varepsilon)}$ is the same as the  $\calO\llbracket w \rrbracket$-linear endomorphism
\[
\bigoplus_{i=1}^r e_i^* \cdot  \calC^0(\ZZ_p;\calO\llbracket w\rrbracket^{(\varepsilon)})^{\bar \rmT = \chi_i} \longrightarrow \bigoplus_{i=1}^r e_i^* \cdot  \calC^0(\ZZ_p;\calO\llbracket w\rrbracket^{(\varepsilon)})^{\bar \rmT = \chi_i}
\]
represented by an $r \times r$ matrix, whose $(i,k)$-entry is given by
\begin{equation}
\label{E:entries of Up}
h(z) \longmapsto \sum_{j=0}^{p-1} h|_{c_{ijk}v_j^{-1}}(z).
\end{equation}
Each expression in \eqref{E:entries of Up} is a uniform limit of finite sums of actions given by elements from $\Matrix{p\ZZ_p}{\ZZ_p}{p\ZZ_p}{\ZZ_p^\times}{}^{\det \in p\ZZ_p^\times}$, and hence converges to a well-defined $\calO\llbracket w \rrbracket$-linear map.

%One may make the operator $S_p$ explicit in the similar way, but we will not need this.

\subsection{Characteristic power series of $U_p$}
\label{S:char power series of Up}
To define the characteristic power series of the $U_p$-operator, we need to circumvent the issue that the $U_p$-action on $\rmS_{p\textrm{-adic}}^{(\varepsilon)}$ is not compact (see \cite[\S\,5]{liu-wan-xiao} for further discussion).  This is well know to experts; we include it here for completeness.

In this subsection, we write $\Lambda= \calO\llbracket w \rrbracket$ to simplify notation. We put
\[
\Lambda^{\geq 1/p}: = \calO\llbracket w \rrbracket\langle p/w\rangle, \quad \Lambda^ {=1/p} : = \calO\langle w/p, p/w \rangle, \quad \textrm{and} \quad \Lambda^{\leq 1/p}: = \calO\langle w/p\rangle.
\]
Then $\Lambda^{\geq 1/p}$ and $\Lambda^{\leq 1/p}$ are noetherian Banach--Tate rings (in the sense of \cite[Annexe B]{AIP}) with pseudo-uniformizers $w$ and $p$, respectively.
In particular, we note that the intersection of $\Lambda^{\geq 1/p}$ and $\Lambda^{\leq 1/p}$ in $\Lambda^{=1/p}$ is exactly $\Lambda$.
We shall define
an $\Lambda^?$-linear subspace $\rmS_{p\textrm{-adic}}^{(\varepsilon), ?}$ of $ \rmS_{p\textrm{-adic}}^{(\varepsilon)} \otimes_{\Lambda} \Lambda^? $ where $?$ stands for $\geq 1/p$ or $\leq 1/p$, such that we have a natural equality
\begin{equation}
\label{E:Mvarepsilon gluing}
\rmS_{p\textrm{-adic}}^{(\varepsilon),\geq 1/p} \otimes_{\Lambda^{\geq 1/p}} \Lambda^{=1/p}  = \rmS_{p\textrm{-adic}}^{(\varepsilon),\leq 1/p} \otimes_{\Lambda^{\leq 1/p}} \Lambda^{=1/p}
\end{equation}
inside $\rmS_{p\textrm{-adic}}^{(\varepsilon)} \otimes_\Lambda \Lambda^{=1/p}$.

Set $ \Lambda^{(\varepsilon),?}: = \calO\llbracket w\rrbracket^{(\varepsilon)}\otimes_{\Lambda}\Lambda^?$.
Recall that $\calC^0(\ZZ_p; \calO)$ has a natural orthonormal Mahler basis $1, z, \binom z2, \dots$.
We consider two subspaces:
\begin{align}
\label{E:small subspaces}
\calC^0 \big(\ZZ_p;  \Lambda^{(\varepsilon),\geq 1/p}\big)^\mathrm{mod}& := \widehat \bigoplus_{n \geq 0} w^{\lfloor n/p\rfloor} \tbinom zn \cdot \Lambda^{(\varepsilon),\geq 1/p} \subseteq \calC^0\big(\ZZ_p; \Lambda^{(\varepsilon),\geq 1/p}\big), \quad \textrm{and}
\\
\label{E:small subspaces2}
\calC^0\big(\ZZ_p; \Lambda^{(\varepsilon),\leq 1/p}\big)^\mathrm{mod} &:= \widehat \bigoplus_{n \geq 0} p^{\lfloor n/p\rfloor} \tbinom zn\cdot \Lambda^{(\varepsilon),\leq 1/p} \subseteq \calC^0\big(\ZZ_p; \Lambda^{(\varepsilon),\leq 1/p}\big).
\end{align}
(Our choice of basis elements differs from \cite[\S\,5.4]{liu-wan-xiao}, where a basis $w^n\binom zn$ is used. This to circumvent a technical issue as indicated in the errata of the paper in \S\,\ref{ASS:errata integral model}.)
%We claim that the action of $\Delta$ on $\calC^0(\ZZ_p, \Lambda_0^?)$ given by \eqref{E:Delta action on CZp} preserves these two subspaces. Indeed, for $a \in \ZZ_p^\times$ and $n \geq 0$, $\binom{az}{n}$ is a polynomial of degree $n$ taking values in $\ZZ_p$ when $z \in \ZZ_p$, so it is a $\ZZ_p$-linear combination of $1, z, \dots, \binom zn$.
Following the argument in \cite[\S\,5.4]{liu-wan-xiao}, we now show that both subspaces \eqref{E:small subspaces} and \eqref{E:small subspaces2} are stable under the $\bfM_1$-action, and the action of $\Matrix{p\ZZ_p}{\ZZ_p}{p\ZZ_p}{\ZZ_p^\times}{}^{\det \neq 0}$ is compact on these two subspaces. %(Yet note that our normalization factors $w^{\lfloor n/p\rfloor}$ and $p^{\lfloor n/p\rfloor}$ are slightly different from the normalization factors $w^n$ and $p^n$ in \emph{loc. cit.}; this will cause some complicated notation here, but it is compatible with our later discussion of  ``modified Mahler basis" in \S\,\ref{SS:modified Mahler basis}.)

Indeed, by \cite[Proposition~3.14\,(2)]{liu-wan-xiao}, if $P = (P_{m,n})_{m,n \geq 0}$ denotes the infinite matrix for the (right) action of $\Matrix \alpha\beta\gamma\delta \in \bfM_1$ on $\calC^0(\ZZ_p;\Lambda^{(\varepsilon),\geq 1/p})$ (resp. $\calC^0(\ZZ_p;\Lambda^{(\varepsilon),\leq 1/p})$)  with respect to the basis $w^{\lfloor n/p\rfloor}\binom zn$ (resp. $p^{\lfloor n/p \rfloor} \binom zn$), i.e. we have $w^{\lfloor n/p \rfloor}\binom zn |_{\Matrix \alpha\beta\gamma\delta}=\sum\limits_{m\geq 0} P_{m,n}\cdot w^{\lfloor m/p \rfloor}\binom zm$ (resp. $p^{\lfloor n/p \rfloor}\binom zn |_{\Matrix \alpha\beta\gamma\delta}=\sum\limits_{m\geq 0} P_{m,n}\cdot p^{\lfloor m/p \rfloor}\binom zm$)
 then
\[
P_{m,n} \in w^{\lfloor n/p\rfloor-\lfloor m/p\rfloor} (p, w)^{\max\{m-n,0\}} \subseteq w^{\lfloor n/p\rfloor-\lfloor m/p\rfloor+ \max\{0,m-n\}} \Lambda^{\geq 1/p}
\]
\[
\textrm{(resp. }P_{m,n} \in p^{\lfloor n/p\rfloor-\lfloor m/p\rfloor} (p, w)^{\max\{m-n,0\}} \subseteq p^{\lfloor n/p\rfloor-\lfloor m/p\rfloor+ \max\{0,m-n\}} \Lambda^{\leq 1/p}\quad ).
\]

For a fixed $n$, we let $m \to \infty$ and then the exponents on $w$ (resp. $p$) in the above estimations tend to $\infty$; so the subspace $\calC^0(\ZZ_p; \Lambda^{(\varepsilon), ?})^{\mathrm{mod}}$ is stable under the $\Iw_p$-action.
%Note that at least one of ${\lfloor n/p\rfloor}-{\lfloor m/p\rfloor}$ and $ m-{\lfloor m/p\rfloor}-n+{\lfloor n/p\rfloor}$ must be nonnegative, and it will tend to infinity when $|m-n| \to \infty$; so this checks the stability of subspaces \eqref{E:small subspaces} and \eqref{E:small subspaces2} under the $\bfM_1$-action.
Now, if $\Matrix \alpha\beta\gamma\delta \in \Matrix{p\ZZ_p}{\ZZ_p}{p\ZZ_p}{\ZZ_p^\times}{}^{\det \neq 0}$, \cite[Proposition~3.14\,(1)]{liu-wan-xiao} gives a stronger estimate:
$$
P_{m,n} \in w^{\lfloor n/p\rfloor-\lfloor m/p\rfloor} (p, w)^{\max\{m-\lfloor n/p\rfloor,0\}} \subseteq w^{m-\lfloor m/p\rfloor} \Lambda^{\geq 1/p}
$$
\begin{equation}
\label{E:estimate of Pmn}
\textrm{(resp. }
P_{m,n} \in p^{\lfloor n/p\rfloor-\lfloor m/p\rfloor} (p, w)^{\max\{m-\lfloor n/p\rfloor,0\}} \subseteq p^{m-\lfloor m/p\rfloor} \Lambda^{\leq 1/p}
 \quad ).
\end{equation}
This shows the compactness of the $\Matrix{p\ZZ_p}{\ZZ_p}{p\ZZ_p}{\ZZ_p^\times}{}^{\det \neq 0}$-action on \eqref{E:small subspaces} and \eqref{E:small subspaces2}.

We further remark that, since the order of $\Delta$ is prime-to-$p$, an eigensubspace for the $\Delta$-action on $\calC^0(\ZZ_p; \Lambda^?)^\mathrm{mod}$ is a direct summand integrally.
So the  the action of a uniform limit of finite $\ZZ$-linear combinations of $\Matrix{p\ZZ_p}{\ZZ_p}{p\ZZ_p}{\ZZ_p^\times}{}^{\det \neq 0}$-actions on the $\Delta$-eigenspaces of \eqref{E:small subspaces} and \eqref{E:small subspaces2} is compact. (The action of  $\Matrix{p\ZZ_p}{\ZZ_p}{p\ZZ_p}{\ZZ_p^\times}{}^{\det \neq 0}$ may not preserve the $\Delta$-eigenspaces; here we meant the action after projecting back to the $\Delta$-eigenspaces.)

Now, we define
\[
\rmS_{p\textrm{-adic}}^{(\varepsilon),?}: = \Hom_{\calO[\Iw_p]} \big( \widetilde \rmH, \calC^0\big(\ZZ_p; \Lambda^{(\varepsilon), ?} \big)^{\mathrm{mod}}\big) \cong \bigoplus_{i=1}^r e_i^*\cdot  \calC^0\big(\ZZ_p; \Lambda^{(\varepsilon), ?} \big)^{\mathrm{mod}, \bar \rmT = \chi_i}.
\]
Then $\rmS_{p\textrm{-adic}}^{(\varepsilon),\geq 1/p}$ and $\rmS_{p\textrm{-adic}}^{(\varepsilon),\leq 1/p}$ clearly satisfy the condition \eqref{E:Mvarepsilon gluing}, because in $\Lambda^{=1/p}$, $p$ and $w$ are differed by a unit.
Moreover, the actions of $U_p$ on $\rmS_{p\textrm{-adic}}^{(\varepsilon),\geq 1/p}$ and $\rmS_{p\textrm{-adic}}^{(\varepsilon),\leq 1/p}$ are represented by $r \times r$-matrices whose entries are uniform limit of finite sums of actions given by elements from $\Matrix{p\ZZ_p}{\ZZ_p}{p\ZZ_p}{\ZZ_p^\times}{}^{\det \neq 0}$. So they are compact.

In conclusion, the $U_p$-actions on $\rmS_{p\textrm{-adic}}^{(\varepsilon),\geq 1/p}$ and $\rmS_{p\textrm{-adic}}^{(\varepsilon),\leq 1/p}$ are compact and are compatible when base changed to $\Lambda^{=1/p}$.
Since both spaces are completed (infinite) direct sum of copies of $\Lambda^{\geq 1/p}$ and $\Lambda^{\leq 1/p}$, respectively, the characteristic power series of $U_p$ (by choosing any orthonormal basis of $\rmS_{p\textrm{-adic}}^{(\varepsilon), ?}$ over $\Lambda^?$) is well-defined and is independent of the choice of the basis. They are
\begin{equation}
\label{E:char power series of Up}
\Char\big(U_p,\, \rmS_{p\textrm{-adic}}^{(\varepsilon),\geq 1/p}; t\big) \in \Lambda^{\geq 1/ p}\llbracket t \rrbracket  \quad \textrm{and} \quad \Char\big(U_p,\, \rmS_{p\textrm{-adic}}^{(\varepsilon),\leq 1/p}; t\big) \in \Lambda^{\leq 1/ p} \llbracket t \rrbracket.
\end{equation}
They are Fredholm series in the sense of \cite[D\'efinition~B.1]{AIP}, and agree in $\Lambda^{=1/p}\llbracket t \rrbracket$ because of the isomorphism \eqref{E:Mvarepsilon gluing}. So both power series in \eqref{E:char power series of Up} are equal and lie in $\Lambda\llbracket t \rrbracket$.
We denote it by
\begin{equation}
\label{E:char power series}
C^{(\varepsilon)}(w,t) =C^{(\varepsilon)}_{\widetilde \rmH}(w,t) = \sum_{n \geq 0} c_n^{(\varepsilon)}(w)t^n \in \Lambda\llbracket t \rrbracket = \calO\llbracket w,t \rrbracket.
\end{equation}
It is called the \emph{characteristic power series of the $U_p$-action on the abstract $p$-adic forms} $\rmS_{p\textrm{-adic}}^{(\varepsilon)}$.
This is the central object we study in this paper.

\begin{notation}
\label{N:spectral curve}
Let $\calW^{(\varepsilon)}$ denote the rigid analytic open unit disk associated to $\calO\llbracket w \rrbracket^{(\varepsilon)}: = \calO\llbracket \Delta \times \ZZ_p^\times\rrbracket  \otimes_{\calO[\Delta^2], \varepsilon} \calO$. It is the \emph{weight disk of character $\varepsilon$}.  
The set of points $\kappa \in \calW^{(\varepsilon)}(\CC_p)$ is the same as the set of continuous characters $\kappa: \Delta \times \ZZ_p^\times \to \calO_{\CC_p}^\times$ such that $\kappa|_{\Delta^2} = \varepsilon$.

The \emph{spectral curve} $\Spc_{\widetilde \rmH}^{(\varepsilon)} = \Spc^{(\varepsilon)}$ is the scheme theoretical zero locus of the characteristic power series $C^{(\varepsilon)}(w,t)$ in $\calW^{(\varepsilon)} \times \GG_m^\rig$. The spectral curve, as a closed subspace of $\calW^{(\varepsilon)} \times \GG_m^\rig$, carries two natural maps, the \emph{weight map} and the \emph{slope map}
\[
\wt: \Spc^{(\varepsilon)} \to \calW^{(\varepsilon)} \quad \textrm{and} \quad a_p: \Spc^{(\varepsilon)} \to \GG_m^\rig,
\]
sending a point $(w,t) \in \Spc^{(\varepsilon)}$ to $w \in \calW^{(\varepsilon)}$ and $t^{-1} \in \GG_m^\rig$, respectively.
\end{notation}

%For $w_\star \in \gothm_{\CC_p}$, we denote its evaluation at $w=w_\star$ as $C^{(\varepsilon)}_{w_\star}(t): = C^{(\varepsilon)}(w_\star,t) \in \CC_p[[t]]$.

\begin{remark}
Following Andreatta--Iovita--Pilloni's adic Fredholm theory \cite{AIP}, one can reinterpret our construction as follows.
The $U_p$-action on $\rmS_{p\textrm{-adic}}^{(\varepsilon)}$ is not compact,
but thinking of it as a Banach sheaf over $\Spa(\Lambda, \Lambda)$, we constructed a Banach subsheaf by gluing $\rmS_{p\textrm{-adic}}^{(\varepsilon),\geq 1/p}$ over $\Spa(\Lambda^{\geq 1/p}, \Lambda^{\geq 1/p})$ and $\rmS_{p\textrm{-adic}}^{(\varepsilon),\leq 1/p}$ over $\Spa(\Lambda^{\leq 1/p}, \Lambda^{\leq 1/p})$, such that the $U_p$-action on this Banach subsheaf is compact.
Then the characteristic power series of the $U_p$-action on this Banach subsheaf is well-defined and has coefficients in $\Lambda$.
\end{remark}

%\subsection{Abstract overconvergent forms}
%\label{S:abstract overconvergent forms}
%For a point $\kappa \in \calW^{(\varepsilon)}(\CC_p)$, we write 
%$$w_\kappa = \kappa(\exp(p)) -1$$ for the evaluation of $w$ at $\kappa$.
%Then evaluating \eqref{E:char power series} at $w=w_\kappa$ says that $C^{(\varepsilon)}(w_\kappa, t)$ is the characteristic power series of the $U_p$-action on
%\begin{itemize}
%\item  $ \rmS_{p\textrm{-adic}}^{(\varepsilon),\geq 1/p} \otimes_{\Lambda^{\geq 1/p}, \kappa} \CC_p$ if $|w_\kappa| \geq 1/p$, or
%\item  $ \rmS_{p\textrm{-adic}}^{(\varepsilon),\leq 1/p} \otimes_{\Lambda^{\leq 1/p}, \kappa} \CC_p$ if $|w_\kappa| \leq 1/p$,
%\end{itemize}
%per notation in \S\,\ref{S:char power series of Up}.
%We could have call these the spaces of abstract overconvergent forms. But for the purpose of later calculation, it is more convenient to work with a slightly different radius of convergence, at least when $|w_\kappa| \leq 1/p$.

%The space $\rmS_\kappa^\dagger$ is called the space of \emph{abstract overconvergent forms} of weight $\kappa$.

%One may also introduce the concept of radius of convergence and discuss the compatibility of the $U_p$-eigenforms for different radii of convergence. But we will not use them.

\subsection{Family of abstract overconvergent forms}
\label{S:family of overconvergent forms}
The construction of abstract $p$-adic forms using $\calC^0(\ZZ_p; \Lambda^{(\varepsilon), ?})^\mathrm{mod}$ above allows us to prove that the characteristic power series $C^{(\varepsilon)}(w,t)$ has coefficients in $\calO\llbracket w\rrbracket$. To relate the (abstract) $p$-adic forms with the (abstract classical) forms, it is more natural to consider a construction using the space $\Lambda^{(\varepsilon),\leq 1/p}\langle z\rangle = \calO\langle w/p, z\rangle \otimes_{\Lambda} \Lambda^{(\varepsilon)}$, that is, the space of analytic functions on the closed unit disk with values in $\calO\langle w/p\rangle$.

We define the space of \emph{family of abstract overconvergent forms} to be
$$
\rmS^{\dagger, (\varepsilon)} = 
\rmS^{\dagger, (\varepsilon)}_{\widetilde \rmH}: = \Hom_{\calO[\Iw_p]}\big( \widetilde \rmH,\Lambda^{(\varepsilon),\leq 1/p}\langle z\rangle \big),
$$
where $\Matrix\alpha \beta \gamma \delta \in \bfM_1$ acts on the right hand side by the same formula as \eqref{E:induced representation action extended}, i.e. if $\alpha \delta - \beta \gamma = p^r d$ with $d \in \ZZ_p^\times$, then
\begin{align}
\label{E:action on overconvergent forms}
&h\big|_{\Matrix \alpha\beta{\gamma}\delta}(z) = \varepsilon(\bar d / \bar \delta, \bar \delta) \cdot (1+w)^{\log\left((\gamma z+\delta )/ \omega(\bar \delta)\right) / p} \cdot h\Big( \frac{\alpha z+\beta}{\gamma z+\delta}\Big)
\\
\nonumber
&=\; \varepsilon(\bar d/ \bar \delta, \bar \delta) \cdot h\Big( \frac{\alpha z+\beta}{\gamma z+\delta}\Big) \cdot \sum_{n\geq 0}  \binom {\log\left((\gamma z+\delta )/ \omega(\bar \delta)\right) / p}n p^n\Big(\frac wp\Big)^n  \ \in \ \calO\langle w/p, z\rangle.
\end{align}
We define the $U_p$-operator on $\rmS^{\dagger, (\varepsilon)}$
via the same formula as in \eqref{E:Up action}.

Note that we have a natural inclusion
\begin{equation}
\label{E:O<z> subset of C(Zp)small}
\Lambda^{(\varepsilon),\leq 1/p}\langle z\rangle \subset \calC^0\big(\ZZ_p;  \Lambda^{(\varepsilon), \leq 1/p}\big)^{\textrm{mod}},
\end{equation}
because each power basis element $z^i$ can be written as a $\ZZ_p$-linear combination of $p^{\lfloor n/p\rfloor}\binom zn$'s.  %the latter carries an orthonormal basis given by $\{p^{\lfloor n/p\rfloor}\binom zn; n \geq 0\}$ whereas the former one has an orthonormal basis given by $1, z, z^2, \dots$. But each $z^n$ is an integral linear combination of $z^i \binom zi$ for $i\leq n$ and hence an integral linear combination of the former basis.  
In addition, this inclusion is equivariant for the $\bfM_1$-action, inducing a natural $U_p$-equivariant inclusion
$$ \rmS^{\dagger,(\varepsilon)} \subseteq \rmS_{p\textrm{-adic}}^{(\varepsilon), \leq 1/p}.
$$

\begin{lemma}
\label{L:char power series agree}
The $U_p$-action on $\rmS^{\dagger, (\varepsilon)}$ is compact, and the characteristic power series  of this $U_p$-action agrees with $C^{(\varepsilon)}(w, t)$ in $\Lambda^{\leq 1/p}\llbracket t\rrbracket$.
\end{lemma}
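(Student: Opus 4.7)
The plan is to first establish compactness of $U_p$ on $\rmS^{\dagger,(\varepsilon)}$ by providing an orthonormal basis and estimating the matrix entries, and then to identify the characteristic power series via a standard factorization argument.

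First I would note that, via an expression of $\widetilde\rmH$ as in \eqref{E:expression of tilde rmH}, the space $\rmS^{\dagger,(\varepsilon)}$ decomposes as $\bigoplus_{i=1}^r e_i^\ast \cdot (\Lambda^{(\varepsilon),\leq 1/p}\langle z\rangle)^{\bar\rmT = \chi_i}$, and the latter is a direct summand of the orthonormalizable Banach $\Lambda^{\leq 1/p}$-module $\Lambda^{(\varepsilon),\leq 1/p}\langle z\rangle = \bigoplus_n z^n \cdot \Lambda^{(\varepsilon),\leq 1/p}$. To bound the matrix entries of the $U_p$-action with respect to the basis $\{e_i^\ast \cdot z^n\}$, I would mimic the argument of \S\,\ref{S:char power series of Up}: write $U_p$ as in \eqref{E:entries of Up} as a finite sum of terms involving actions by elements of $\Matrix{p\ZZ_p}{\ZZ_p}{p\ZZ_p}{\ZZ_p^\times}$ with determinant in $p\ZZ_p^\times$, and use \cite[Proposition~3.14\,(1)]{liu-wan-xiao} to control the coefficients. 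The formula \eqref{E:action on overconvergent forms} shows that the only essential new ingredient is the analyticity of the power series $(1+w)^{\log((\gamma z + \delta)/\omega(\bar\delta))/p}$ in $\calO\langle w/p,z\rangle$, which follows because $\gamma \in p\ZZ_p$ and $\log(\delta/\omega(\bar\delta))/p \in \ZZ_p$. The resulting matrix entries decay like $p^{n - \lfloor n/p\rfloor}$, yielding compactness.

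Next, to identify the two characteristic power series, I would show that $U_p$ actually maps the larger space $\rmS_{p\textrm{-adic}}^{(\varepsilon),\leq 1/p}$ into the smaller subspace $\rmS^{\dagger,(\varepsilon)}$. Indeed, each coset representative $v_j^{-1} = \Matrix{p}{0}{-jp}{1}$ lies in $\Matrix{p\ZZ_p}{\ZZ_p}{p\ZZ_p}{\ZZ_p^\times}{}^{\det \neq 0}$, and the estimate \eqref{E:estimate of Pmn} says that the matrix coefficient $P_{m,n}$ of its action on $\calC^0(\ZZ_p;\Lambda^{(\varepsilon),\leq 1/p})^{\mathrm{mod}}$ lies in $p^{m-\lfloor m/p\rfloor}\Lambda^{\leq 1/p}$. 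Combined with the fact that $p^{\lfloor m/p\rfloor}\binom{z}{m}$ can be written as a $\ZZ_p$-linear combination of $z^0,\dots,z^m$ (reversing the observation in \eqref{E:O<z> subset of C(Zp)small}), the image of the $U_p$-action indeed lies in $\Lambda^{(\varepsilon),\leq 1/p}\langle z\rangle$.

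Finally, with the factorization $U_p\colon \rmS_{p\textrm{-adic}}^{(\varepsilon),\leq 1/p} \to \rmS^{\dagger,(\varepsilon)} \hookrightarrow \rmS_{p\textrm{-adic}}^{(\varepsilon),\leq 1/p}$, the standard principle for compact operators on orthonormalizable Banach modules (\cite[\S\,A]{AIP} or Serre's original treatment) gives $\Char\bigl(U_p, \rmS^{\dagger,(\varepsilon)}; t\bigr) = \Char\bigl(U_p, \rmS_{p\textrm{-adic}}^{(\varepsilon),\leq 1/p}; t\bigr)$, which is $C^{(\varepsilon)}(w,t)$ by the construction in \S\,\ref{S:char power series of Up}. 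The main delicate point will be carefully tracking the non-neat $\bar\rmT$-eigenspace condition from \eqref{E:non-neat condition} across the two integral structures, so that the matrix estimates survive after restriction and projection to the relevant eigenspaces; but since $|\Delta|$ is prime to $p$ these projectors are integral, so no valuation is lost.
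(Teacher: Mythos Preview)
Your overall strategy—factor $U_p$ through the inclusion $\rmS^{\dagger,(\varepsilon)}\hookrightarrow \rmS_{p\textrm{-adic}}^{(\varepsilon),\leq 1/p}$ and invoke a link-type identity—is the right one and matches the paper's. But your second step contains a concrete error: the claim that $p^{\lfloor m/p\rfloor}\binom{z}{m}$ is a $\ZZ_p$-linear combination of $z^0,\dots,z^m$ is false. For $m=p^2$ one has $v_p(m!)=p+1>\lfloor m/p\rfloor=p$, so the coefficients of $p^{\lfloor m/p\rfloor}\binom{z}{m}$ are not $p$-integral; the inclusion \eqref{E:O<z> subset of C(Zp)small} simply does not reverse. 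Your argument can be salvaged by combining the factor $p^{m-\lfloor m/p\rfloor}$ coming from $P_{m,n}$ with the basis element to get a total of $p^m\binom{z}{m}$, which does lie in $\ZZ_p[z]$ since $v_p(m!)\le m/(p-1)<m$; but you then still owe checks that the resulting infinite sum converges in $\Lambda^{\leq 1/p}\langle z\rangle$ and that the factored map is continuous for the $\langle z\rangle$-norm, before Buzzard's lemma applies.

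The paper sidesteps this bookkeeping by introducing a third auxiliary space $\rmS_{p\textrm{-adic}}^{(\varepsilon),\leq 1/p,\flat}$ built on the orthonormal basis $p^{\lfloor n/p\rfloor+n/2}\binom{z}{n}$, which sits inside $\rmS^{\dagger,(\varepsilon)}$ because $v_p(n!)\le \lfloor n/p\rfloor+n/2$. The crude estimate $m-\lfloor m/p\rfloor\ge m/2$ then shows directly that the relevant matrix actions carry $\rmS_{p\textrm{-adic}}^{(\varepsilon),\leq 1/p}$ into $\rmS_{p\textrm{-adic}}^{(\varepsilon),\leq 1/p,\flat}$. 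This produces a sandwich $\rmS_{p\textrm{-adic}}^{(\varepsilon),\leq 1/p,\flat}\subset\rmS^{\dagger,(\varepsilon)}\subset\rmS_{p\textrm{-adic}}^{(\varepsilon),\leq 1/p}$ through which $U_p$ factors, and Buzzard's link property \cite[Lemma~2.7]{buzzard} delivers both the compactness on $\rmS^{\dagger,(\varepsilon)}$ and the equality of characteristic power series in one stroke, without ever expanding $\binom{z}{m}$ in monomials.
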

\begin{proof}
Consider a different subspace:
$$
\calC^0\big(\ZZ_p; \Lambda^{(\varepsilon),\leq 1/p}\big)^{\mathrm{mod},\flat} := \widehat \bigoplus_{n \geq 0} p^{\lfloor n/p\rfloor + n/2} \tbinom zn\cdot \Lambda^{(\varepsilon),\leq 1/p} \subset \calC^0\big(\ZZ_p; \Lambda^{(\varepsilon),\leq 1/p}\big)
$$
with basis $p^{\lfloor n / p \rfloor +n/2}\binom zn$ instead. 
It is easy to see the inclusions
$$ \calC^0\big(\ZZ_p;  \Lambda^{(\varepsilon), \leq 1/p}\big)^{\textrm{mod},\flat} \subset
\Lambda^{(\varepsilon),\leq 1/p}\langle z\rangle \subset \calC^0\big(\ZZ_p;  \Lambda^{(\varepsilon), \leq 1/p}\big)^{\textrm{mod}},
$$ which is
compatible with the $\bfM_1$-actions. Moreover, if $\Matrix \alpha \beta \gamma \delta \in {\Matrix{p\ZZ_p}{\ZZ_p}{p\ZZ_p}{\ZZ_p^\times}}^{\det \neq 0}$, then the estimate \eqref{E:estimate of Pmn} implies that $|_{\Matrix \alpha \beta \gamma \delta}$ takes $\calC^0\big(\ZZ_p;  \Lambda^{(\varepsilon), \leq 1/p}\big)^{\textrm{mod}}$ into the subspace $\calC^0\big(\ZZ_p;  \Lambda^{(\varepsilon), \leq 1/p}\big)^{\textrm{mod},\flat}$ because $n -\lfloor n/p\rfloor \geq n/2$.

The sequence of inclusions above induces $U_p$-equivariant and compact inclusions
$$
\rmS_{p\textrm{-}\mathrm{adic}}^{(\varepsilon), \leq 1/p,\flat}: =  \Hom_{\calO[\Iw_p]}(\widetilde \rmH, \calC^0\big(\ZZ_p; \Lambda^{(\varepsilon),\leq 1/p}\big)^{\mathrm{mod},\flat}) \subseteq \rmS^{\dagger, (\varepsilon)} \subseteq \rmS_{p\textrm{-adic}}^{(\varepsilon), \leq 1/p}.
$$
Yet the $U_p$-operator on $ \rmS_{p\textrm{-adic}}^{(\varepsilon), \leq 1/p}$ factors through the inclusion $ \rmS_{p\textrm{-adic}}^{(\varepsilon), \leq 1/p,\flat} \subseteq \rmS_{p\textrm{-adic}}^{(\varepsilon), \leq 1/p}$; namely the following diagram commutes
$$
\begin{tikzcd}
\rmS_{p\textrm{-adic}}^{(\varepsilon), \leq 1/p, \flat} \ar[r, hookrightarrow] \ar[d, "U_p"]& \rmS^{\dagger, (\varepsilon)} \ar[r, hookrightarrow] & \rmS_{p\textrm{-adic}}^{(\varepsilon), \leq 1/p} \ar[dll, dashed, "\psi"'] \ar[d, "U_p"]
\\
\rmS_{p\textrm{-adic}}^{(\varepsilon), \leq 1/p, \flat} \ar[r, hookrightarrow] & \rmS^{\dagger, (\varepsilon)} \ar[r, hookrightarrow] & \rmS_{p\textrm{-adic}}^{(\varepsilon), \leq 1/p}
\end{tikzcd}
$$
so that the compositions $\rmS^{\dagger, (\varepsilon)} \to \rmS^{\dagger, (\varepsilon)}$ is the $U_p$-operator. Now, the lemma follows from the link property of Buzzard \cite[Lemma~2.7]{buzzard}.
\end{proof}

\begin{remark}
There is no analogous construction of $\rmS^{\dagger, (\varepsilon)}$ over the entire boundary of the weight disk $\Spa(\Lambda^{(\varepsilon), \geq 1/p}, \Lambda^{(\varepsilon), \geq 1/p})$, because for the expression \eqref{E:induced representation action extended} to converge in power series, we need to require $v_p(w)$ to be bigger than at least a fixed positive number.
\end{remark}

\begin{notation}
\label{N:tilde eta}
For the rest of this paper, \emph{the weights of our classical forms will be $k$, and the corresponding point on the weight disk is $w_k: = \exp(p(k-2))-1$.}  (This may appear cumbersome in this and the next sections, yet this choice seems to be the best for the rest of the paper.)

For a character $\eta$ of $\Delta$, we write $\tilde \eta$ for the character $\eta \times \eta$ of $\Delta^2$. For an integer $k \geq 2$, we write $\calO[z]^{\leq k-2}$ for the space of polynomials with coefficients in $\calO$ of degree $\leq k-2$.
\end{notation}

\subsection{Abstract classical and overconvergent forms}
\label{S:abstract classical forms}
For an integer $k\geq 2$ and a character $\psi: \Delta^2 \to \calO^\times$, the space of \emph{abstract overconvergent forms of weight $k$ and character $\psi$} is defined to be
\[
\rmS_{k}^\dagger(\psi)=\rmS_{\widetilde \rmH, k}^{\dagger}(\psi): =  \Hom_{\calO[\Iw_p ]}\big(\widetilde \rmH,\,\calO\langle z\rangle\otimes \psi\big),
\]
and its subspace of \emph{abstract classical forms of weight $k$ and character $\psi$} is
\[\rmS_{k}^\Iw(\psi)=\rmS_{\widetilde \rmH, k}^{\Iw}(\psi) 
: =  \Hom_{\calO[\Iw_p ]}\big(\widetilde \rmH,\, \calO[z]^{\leq k-2} \otimes \psi\big).
\]
Here a matrix $\Matrix \alpha \beta \gamma \delta \in \Iw_p$, or more generally $\Matrix \alpha \beta \gamma \delta  \in \bfM_1$, with determinant $p^r d$ (for $r \in \ZZ$ and $d\in \ZZ_p^\times$) acts \emph{from the right} on $\calO\langle z \rangle$ by
\begin{equation}
\label{E:modular action of weight k}
f|_{\Matrix \alpha \beta \gamma \delta} (z): =   ( \gamma z + \delta)^{k-2} \cdot f \big(\frac{\alpha z + \beta }{\gamma z + \delta }\big),
\end{equation}
stabilizing the subspace  $\calO[z]^{\leq k-2}$; and we view $\psi$ as a character of $\bfM_1$ by setting 
\begin{equation}
\label{E:psi as a character of M1}\psi\big( \Matrix \alpha \beta \gamma \delta\big) = \psi(\bar d/ \bar \delta, \bar \delta).
\end{equation}
The space $\calO[z]^{\leq k-2} \otimes \psi$ is a finite free $\calO$-module, and is endowed with the $p$-adic topology.

We define the $U_p$-operator on $\rmS_{k}^{\Iw}(\psi) \subset \rmS_k^\dagger(\psi)$  via the same formula as in \eqref{E:Up action}. In particular, $\rmS_{k}^{\Iw}(\psi)$ is a subspace of $\rmS_k^\dagger(\psi)$ invariant under the $U_p$-operator.

Set $\varepsilon:= \varepsilon(k,\psi): = \psi\cdot (1\times \omega^{k-2})$ as a character of $\Delta^2$. Then we have a natural isomorphism
\begin{equation}
\label{E:family overconvergent forms specialization}\rmS_k^{\dagger}(\psi)\cong
\rmS^{\dagger, (\varepsilon)} \otimes_{\Lambda^{\leq 1/p}, w \mapsto w_k} \calO,
\end{equation}
because  the action \eqref{E:action on overconvergent forms} exactly specializes to 
%$$
%h\big|_{\Matrix \alpha\beta{\gamma}\delta}(z) = \varepsilon(\bar d /  \bar \delta, \bar \delta) \cdot \big(\frac{\gamma z+\delta }{\omega(\bar \delta)}\big)^{k-2} \cdot h\Big( \frac{\alpha z+\beta}{\gamma z+\delta}\Big).
%$$
the tensor product of \eqref{E:modular action of weight k} and \eqref{E:psi as a character of M1} per the definition of $\varepsilon$.  
%Thus, we have natural inclusions:
%\begin{equation}
%\label{E:SkIw in Skdagger in Spadic small}
%\rmS_k^\Iw(\psi) \subset \rmS_k^\dagger(\psi) \subset \rmS_{p\textrm{-adic}}^{(\varepsilon), \leq 1/p}\otimes_{\calO\llbracket w\rrbracket, w \mapsto w_k} \calO,
%\end{equation} equivariant for $U_p$-actions.
An immediate consequence of this is that the characteristic power series of the $U_p$-action on $\rmS_{k}^\Iw(\psi)$ (which is in fact a polynomial) divides the characteristic power series $C^{(\varepsilon)}(w_k, t)$.  We defer further discussions of these inclusions and classicality to  \S\,\ref{SS:power basis} and Proposition~\ref{P:theta map}.

\medskip
For a character $\eta: \Delta \to \calO^\times$, we define the space of \emph{abstract classical forms of weight $k$ with full level structure} to be
$$
\rmS_k^\unr(\eta): = \Hom_{\calO[\rmK_p]}\big( \widetilde \rmH, \calO[z]^{\leq k-2} \otimes \eta \circ \det\big),
$$
where a matrix $\Matrix \alpha \beta \gamma \delta \in \rmM_2(\ZZ_p)^{\det \neq 0}$ with determinant $p^r d$ (for $r \in \ZZ$ and $d\in \ZZ_p^\times$) acts from the right on $\calO[z]^{\leq k-2}$ by
\begin{equation}
\label{E:modular action of weight k GL2}
f|_{\Matrix \alpha \beta \gamma \delta}(z) : =  \eta(\bar d) \cdot ( \gamma z + \delta)^{k-2} \cdot f \big(\frac{\alpha z + \beta }{\gamma z + \delta }\big),
\end{equation}
and we view $\eta \circ \det$ as a character of $\rmM_2(\ZZ_p)^{\det \neq 0}$ by setting 
\begin{equation}
\label{E:eta det as a character of M2(Zp)}
\eta\circ \det \big( \Matrix \alpha \beta \gamma \delta \big) = \eta (\bar d).
\end{equation}
As before, we endow the space $\calO[z]^{\leq k-2} \otimes \eta $ with the $p$-adic topology.

This space carries an action of $T_p$-operator:
taking a  coset decomposition $\rmK_p \Matrix p001 \rmK_p = \coprod_{j=0}^p \rmK_p u_j$ (e.g. $u_j = \Matrix 1j0p$ for $j=0, \dots, p-1$ and $u_p = \Matrix p001$), then
$$
T_p(\varphi)(m) = \sum_{j=0}^p \varphi(mu_j^{-1})|_{u_j} \quad \textrm{for all } m \in \widetilde \rmH.$$
The $T_p$-operator does not depend on the choice of the coset decomposition.

\begin{notation}
Since $\widetilde \rmH$ is a projective $\calO\llbracket \rmK_p\rrbracket$-module, these spaces $\rmS_{k}^\unr(\eta)$ and $\rmS_{k}^\Iw(\psi)$ are finite free $\calO$-modules (but possibly zero). We record their ranks:
\[
d_{k}^\unr(\eta): = \rank_\calO \rmS_{k}^\unr(\eta) \quad \textrm{and} \quad d_{k}^\Iw(\psi): = \rank_\calO \rmS_{k}^\Iw(\psi).
\]

The character $\eta$ induces a character $\tilde \eta$ of $\Delta^2$ per Notation~\ref{N:tilde eta}; it is straightforward to see that $\rmS_k^\unr(\eta) \subseteq \rmS_k^\Iw(\tilde \eta)$. (We defer to the forthcoming paper for a detailed study of the corresponding $p$-stabilization process, which is a key input to proving local ghost conjecture.) For now, we simply put 
\[
d_{k}^{\new}(\eta) : = d_{k}^\Iw(\tilde \eta) - 2 d_{k}^\unr(\eta).
\]
We will call this the \emph{rank of abstract $p$-new forms}.
\end{notation}

\begin{remark}
\label{R:abstract vs classical}
If $D$ is a definite quaternion algebra $D$ over $\QQ$ split at $p$, and if $\widetilde \rmH = \varprojlim_n \calO\big[ D^\times \backslash D(\AAA_f)^\times / K^p (1+ p^n \rmM_2(\ZZ_p))\big]$, then $\rmS_{k}^\unr(\mathrm{triv})$ is the same as the space of automorphic forms $S_{k}^D(K^p\rmK_p)$ on $D$ with level $K^p\rmK_p$ and weight $k$ defined in \cite[\S~4]{buzzard2}. Similarly,  $\rmS_{k}^\Iw(\psi)$ is the same as $S_{k}^D(K^p\Iw_p; \psi)$, the space of automorphic forms of level $K^p \Iw_p$ and nebentypus character $\psi$ (cf. \cite[$\mathit{loc. cit.}$]{buzzard2}).  The similar statement holds for the space of abstract overconvergent forms.

One can also define abstract classical forms of higher level and more general nebentypus characters. We will not make essential use of them in this paper; so we leave that to interested readers.
\end{remark}

\begin{remark}
\label{R:homogenized classical forms}
One can consider a homogenized version of the space of abstract classical forms; explicitly, we have natural isomorphisms:
\[
\rmS_{k}^\Iw(\psi)\cong \Hom_{\calO[\Iw_p ]}\big(\widetilde \rmH,\, \Sym^{k-2}\calO^{\oplus 2}  \otimes \psi\big)\  \textrm{and}\ 
\rmS_{k}^\unr(\eta) \cong \Hom_{\calO[ \rmK_p]}(\widetilde \rmH,\, \Sym^{k-2}\calO^{\oplus 2} \otimes \eta \circ \det).
\]
Here we  view $\psi$ as a character of $\Iw_p$ (resp. $\eta \circ \det$ as a character of $\GL_2(\ZZ_p)$) via \eqref{E:psi as a character of M1} (resp. \eqref{E:eta det as a character of M2(Zp)}). The space $\Sym^{k-2} \calO^{\oplus 2}$ is provided with the natural action of $\rmK_p$ from the right, namely,
\begin{equation}
\label{E:right action on Sym}
\tilde h \big|_{\Matrix \alpha \beta \gamma \delta} (X, Y): = \tilde h(\alpha X + \beta Y, \gamma X + \delta Y).
\end{equation}
(This is the same as the transpose of the left action considered in \cite{breuil} and \cite{paskunas}.)

The first isomorphism above follows from the identification
\begin{equation}
\label{E:sym corresponds to polynomials}
\begin{tikzcd}[row sep=0pt]
\Sym^{k-2}\calO^{\oplus 2} \otimes \psi \ar[r, leftrightarrow] & \calO[z]^{\leq k-2} \otimes \psi\\
\sum\limits_{i =0}^{k-2} a_i X^i Y^{k-2-i} \ar[r, leftrightarrow] & \sum\limits_{i=0}^{k-2} a_i z^i.
\end{tikzcd}
\end{equation}
equivariant for the $\Iw_p$-action \eqref{E:right action on Sym} above on the left hand side and the action~\eqref{E:modular action of weight k} on the right hand side. The second isomorphism follows from similar identifications.
\end{remark}

\begin{notation}
\label{N:Serre weights}
For a pair of non-negative integers $(a,b)$, we use $\sigma_{a,b}$ to denote the \emph{right} $\FF$-representation $\Sym^a\FF^{\oplus 2} \otimes \det^b$ of $\GL_2(\FF_p)$; (in particular, $b$ is taken modulo $p-1$.) As argued in Remark~\ref{R:homogenized classical forms}, we may identify the representation $\sigma_{a,b}$ with the
$\FF$-vector space $\FF[z]^{\deg \leq a}$ of polynomials of degree less than or equal to $a$, where the $\GL_2(\FF_p)$-action is given by
$$
h|_{\Matrix {\bar \alpha}{\bar\beta}{\bar\gamma}{\bar\delta}}(z) =(\bar\alpha\bar\delta-\bar\beta\bar\gamma)^b \cdot  (\bar\gamma z+\bar\delta)^{a}\cdot  h\Big( \frac{\bar\alpha z+\bar\beta}{\bar\gamma z+\bar\delta}\Big), \quad \textrm{for }\Matrix {\bar \alpha}{\bar\beta}{\bar\gamma}{\bar\delta} \in \GL_2(\FF_p).
$$

When $a \in \{0, \dots, p-1\}$ and $b \in \{0, \dots, p-2\}$, $\sigma_{a,b}$ is irreducible. These representations exhaust all irreducible (right) $\FF$-representations of $\GL_2(\FF_p)$ (\cite[Proposition~1]{Barthel-Livne}). We call them the \emph{Serre weights}.
We use $\Proj_{a,b}$ to denote the projective envelope of $\sigma_{a,b}$ as a (right) $\FF[\GL_2(\FF_p)]$-module. We refer to Lemma~\ref{L:projective envelope} in the appendix for an explicit description of $\Proj_{a,b}$.
\end{notation}

Now, we switch to stating the local version of Bergdall and Pollack's ghost conjecture. This is largely inspired by their work
\cite{bergdall-pollack2,bergdall-pollack3}, with one difference: we confine ourselves to the reducible, non-split, and generic type below.

\begin{definition}
\label{D:primitive type}
For the rest of this paper, fix a \emph{reducible, nonsplit, and generic} residual representation $\bar\rho: \rmI_{\QQ_p} \to \GL_2(\FF)$ of the inertia subgroup:
\begin{equation}
\label{E:bar rhop}
\bar \rho \simeq \MATRIX{\omega_1^{a+b+1}}{*\neq 0}{0}{\omega_1^b} \qquad \textrm{for } 1 \leq a \leq p-4 \textrm{ and } 0 \leq b \leq p-2,
\end{equation}
where $\omega_1$ is the first fundamental character (recall $p\geq 5$).
We say an $\calO\llbracket K_p\rrbracket$-projective augmented module $\widetilde \rmH$ is of \emph{type $\bar\rho$} with \emph{multiplicity} $m(\widetilde \rmH) \in \NN$ if
\begin{enumerate}
\item (Serre weight)
$\overline \rmH: = \widetilde \rmH / (\varpi, \rmI_{1+p\rmM_2(\ZZ_p)})$ is isomorphic to a direct sum of $m(\widetilde \rmH)$ copies of $\Proj_{a,b}$ as a right $\FF[\GL_2(\FF_p)]$-module, and

%\item (Ordinary) \liang{I don't think this condition is needed. Let's see.}
%the $U_p$-action on the space $\rmS_2^{\Iw}(\omega^b \times \omega^{a+b})$ is given by a matrix invertible over $\calO$ (with respect to some $\calO$-basis), and
\item (Central character \uppercase\expandafter{\romannumeral1})
the action of $\Matrix p00p$ on $\widetilde \rmH$ is the multiplication by an invertible element $\xi \in \calO^\times$.
\item (Central character \uppercase\expandafter{\romannumeral2}) there exists an isomorphism $\widetilde \rmH \cong \widetilde \rmH_0 \widehat \otimes_\calO \calO\llbracket (1+p\ZZ_p)^\times \rrbracket$ of  $\calO[\GL_2(\QQ_p)]$-modules, where $\widetilde \rmH_0$ carries an action of $\GL_2(\QQ_p)$ which is trivial on elements of the form $\Matrix \alpha 00 \alpha$ for $\alpha \in (1+p\ZZ_p)^\times$, and the latter factor $\calO\llbracket (1+p\ZZ_p)^\times \rrbracket$ carries the natural action of $\GL_2(\QQ_p)$ through the map $\GL_2(\QQ_p) \xrightarrow{\det}\QQ_p^\times \xrightarrow{p^r\delta \mapsto \delta/\omega(\bar d)} (1+p\ZZ_p)^\times$.
\end{enumerate}

We say $\widetilde \rmH$ is \emph{primitive} if $m(\widetilde \rmH) = 1$.
\end{definition}

\begin{remark}\fakephantomsection\label{R:after the definition of completed homology piece}
\begin{enumerate}
%\item 
%With some effort, we can show (Lemma~\ref{L:reducible => ordinary}) that conditions (1) and (3) above imply condition (2) (so condition (2) is redundant). However, this seems to be a coincidental result, special to the our one Serre weight situation. \liang{will come back and decide how to write this part.}
%\item 
%We do not know if either central character conditions (3) and (4) above could be automatically deduced from other conditions. They are certainly true in any automorphic settings below.  In particular, condition (4) is used in the proof of Proposition~\ref{P:theta map}\,(1) for theta operators. \liang{check later}
\item
If $D$ is a definite quaternion algebra over $\QQ$ that is split at $p$ and if $\bar r: \Gal_\QQ \to \GL_2(\FF)$ is an absolutely irreducible global residual Galois representation such that $\bar r|_{\rmI_{\QQ_p}} \cong \bar \rho$, then the $\bar r$-localized completed homology
$$
\widetilde \rmH: = \varprojlim_n \calO\big[ D^\times \backslash D(\AAA_f)^\times / K^p (1+ p^n \rmM_2(\ZZ_p))\big]_{\gothm_{\bar r}}
$$
is an $\calO\llbracket K_p\rrbracket$-projective augmented module of type $\bar\rho$; the primitive condition is equivalent to the mod-$p$ multiplicity-one hypothesis: $\rmS_2^D(K^p \Iw_p; \omega^{a+b} \times \omega^b)$ has rank $1$ over $\calO$. Indeed, condition (1) of Definition~\ref{D:primitive type} is determined by the Serre weight conjecture. The other two conditions are natural properties of central characters.  One have similar construction for modular curves, as indicated in the introduction.

\item If $\tilde{\rmH}$ is primitive, we will see in Proposition~\ref{P:reducible => ordinary} that the space $\rmS_2^{\Iw}(\omega^b \times \omega^{a+b})$ has rank $1$ over $\calO$ and the $U_p$-action on it is given by an element in $\calO^\times$. In particular, this space is ordinary. Since we will not use this result in this paper, we leave it to the Appendix (Proposition~\ref{P:reducible => ordinary}). For a general $\tilde{\rmH}$, we still expect the space $\rmS_2^{\Iw}(\omega^b \times \omega^{a+b})$ is ordinary although we cannot give a proof in this paper.

One can see why this should be true as follows: in the automorphic situation above, 
the mod-$p$-multiplicity-one condition automatically puts us in the situation with one Serre weight, which (in many cases) means
$\bar r|_{\rmI_{\QQ_p}} \cong \bar \rho$ for some $\bar\rho$ above; in this case, the ordinary condition is automatic. Thus, it is conceivable that such ordinarity result can be deduced directly on purely on $\calO\llbracket K_p\rrbracket$-projective augmented modules.
\end{enumerate}
\end{remark}

We shall primarily focus on the case of a primitive $\calO\llbracket K_p\rrbracket$-projective augmented modules, but we shall indicate from time to time how our theory adapts to the non-primitive case.

\begin{notation}
\label{N:relevant varepsilon}
For the rest of this paper, we fix a \emph{primitive} $\calO\llbracket K_p\rrbracket$-projective augmented module $\widetilde \rmH$ of type $\bar \rho$, unless otherwise specified. In particular, $\overline \rmH = \widetilde \rmH/ (\varpi, \rmI_{1+p \rmM_2(\ZZ_p)}) \cong \Proj_{a,b}$.
\begin{enumerate}
\item
A character $\varepsilon$ of $\Delta^2$ is called \emph{relevant} to $\bar \rho$ if $\varepsilon(\bar \alpha, \bar \alpha) = \bar \alpha^{a+2b}$ for all $\bar \alpha \in \Delta$.
By comparing the central action of $\Matrix {\omega(\bar \alpha)}00{\omega(\bar \alpha)}$ for $\bar \alpha \in \Delta$ on $\widetilde \rmH$ and on $\Ind_{B^\op(\ZZ_p)}^{\Iw_p}(\chi_\univ^{(\varepsilon)})$, one sees that  $\rmS_{p\textrm{-adic}}^{(\varepsilon)}$ as defined in \eqref{E:Sp-adic epsilon} is nonzero if and only if $\varepsilon$ is relevant to $\bar \rho$.

\item
We write each relevant character $\varepsilon$ of $\Delta^2$ in the form
$$
\varepsilon =\varepsilon_1 \times \varepsilon_2 =  \omega^{-s_\varepsilon+b} \times \omega^{a+s_\varepsilon + b}
$$
for some $s_\varepsilon \in \{0, \dots, p-2\}$.
A very important invariant we shall use later is
\begin{equation}
\label{E:kepsilon}
k_\varepsilon : = 2+\{a+2s_\varepsilon\} \ \in \ \{2,3, \dots, p\}.
\end{equation}
here $\{ a+2s_\varepsilon \}$ is the unique integer in $\{0,\dots, p-2 \}$ satisfying the condition 
$$a+2s_\varepsilon\equiv \{ a+2s_\varepsilon \} \mod p-1 .$$
If we write $\tilde \varepsilon_1$ for the character $\varepsilon_1\times \varepsilon_1$ of $\Delta^2$ (per Notation~\ref{N:tilde eta}), then
\begin{equation}
\label{E:epsilon as omega times (1times k)}
\varepsilon = \tilde \varepsilon_1 \cdot (1\times \omega^{k_\varepsilon-2}).
\end{equation}

\item
By the discussion in \S\,\ref{S:abstract classical forms} and \eqref{E:epsilon as omega times (1times k)}, for integer $k \geq 2$ such that $k \equiv k_\varepsilon \bmod{(p-1)}$, the space of abstract classical forms $\rmS_{k}^\Iw(\tilde \varepsilon_1)$ and hence $\rmS_{k}^\unr( \varepsilon_1)$ are subspaces of  $\rmS_{p\textrm{-}\mathrm{adic}}^{(\varepsilon)} \otimes_{\calO\llbracket w\rrbracket , w \mapsto w_k} \calO$, where we recall that $w_k = \exp((k-2)p)-1$.
\end{enumerate}

\emph{For the rest of this section, we fix a character $\varepsilon$ of $\Delta^2$ relevant to $\bar \rho$.}
\end{notation}

\begin{definition}
\label{D:ghost series}
Following \cite{bergdall-pollack2,bergdall-pollack3}, we define the \emph{ghost series} for $\widetilde \rmH$ over $\calW^{(\varepsilon)}$  to be the formal power series
$$
G^{(\varepsilon)}(w,t)=G^{(\varepsilon)}_{\widetilde\rmH}(w,t) = 1+\sum_{n=1}^\infty
g_n^{(\varepsilon)}(w)t^n\in \calO[w]\llbracket t\rrbracket,
$$
where each coefficient $g_n^{(\varepsilon)}(w)$ is a product
\begin{equation}
\label{E:gi varepsilon}
g_n^{(\varepsilon)}(w) = \prod_{\substack{k \geq 2\\ k \equiv k_\varepsilon \bmod{p-1}}} (w - w_k)^{m_n^{(\varepsilon)}(k)} \in \ZZ_p[w]
\end{equation}
with exponents $m_n^{(\varepsilon)}(k)$ given by the following recipe
\[
m_n^{(\varepsilon)}(k) = \begin{cases}
\min\big\{ n - d_{k}^\unr( \varepsilon_1) , d_{k}^\Iw(\tilde \varepsilon_1)- d_{k}^\unr(\varepsilon_1) - n\big\} & \textrm{ if }d_{k}^\unr(\varepsilon_1) < n < d_{k}^\Iw(\tilde\varepsilon_1) - d_{k}^\unr(\varepsilon_1)
\\
0 & \textrm{ otherwise.}
\end{cases}
\]
In particular, for a fixed $k$, the sequence $(m_n^{(\varepsilon)}(k))_{n \geq 1}$ is given by the following palindromic pattern
\begin{equation}
\label{E:cascading pattern}
\underbrace{0, \dots, 0}_{d_{k}^{\unr}(\varepsilon_1)}, 1, 2, 3, \dots, \tfrac12 d_{k}^{\textrm{new}}(\varepsilon_1) -1, \tfrac12{d_{k}^{\textrm{new}}(\varepsilon_1)}, \tfrac12{d_{k}^{\textrm{new}}(\varepsilon_1)} -1,\dots,  3, 2, 1, 0, 0, \dots,
\end{equation}
where the maximum $\frac 12 d_{k}^{\new}(\varepsilon_1)$ appears at the $\frac 12 d_{k}^{\Iw}(\tilde\varepsilon_1)$th place.
(We shall see later in Corollary~\ref{C:dIw is even} that $d_k^\mathrm{new}(\varepsilon_1) = d_k^\Iw(\tilde \varepsilon_1) - 2d_k^\unr(\varepsilon_1) $ is always an even integer.)
As we shall late prove in Corollary~\ref{C:ghost series depends on bar rho} that, for a fixed $n$, $m_n^{(\varepsilon)}(k)$ is only nonzero for finitely many $k$. So the product \eqref{E:gi varepsilon} defining $g_n^{(\varepsilon)}(w)$ is a finite product.

%We point out that, as will be proved later in Proposition~\ref{P:dimension of Sunr} and \S\,\ref{S:explicit Sunr}, the dimensions $d_k^\Iw(\psi)$ and $d_k^\unr(\eta)$ and so $G$, depends only on $\bar\rho$, $k$, $\psi$, $\eta$ but  not on the $\GL_2(\QQ_p)$-action of $\widetilde \rmH$.

We remark that, the ghost pattern \eqref{E:cascading pattern} implies that, for any integer $k \geq 2$ such that $k \equiv k_\varepsilon \bmod {(p-1)}$, we have, for $n\geq 1$,
\begin{equation}
\label{E:increment of multiplicity}
m_{n+1}^{(\varepsilon)}(k) -m_{n}^{(\varepsilon)}(k) = \begin{cases}
1 & \textrm{ if }  d_{k}^\unr(\varepsilon_1)  \leq n< \frac 12 d_{k}^\Iw(\tilde \varepsilon_1)
\\
-1 & \textrm{ if }\frac 12 d_{k}^\Iw(\tilde \varepsilon_1) \leq n < d_{k}^\Iw(\tilde \varepsilon_1) - d_{k}^\unr(\varepsilon_1)
\\
0 & \textrm{ otherwise,}
\end{cases}
\end{equation}
and for $n \geq 2$,
\begin{equation}
\label{E:second order increment of multiplicity}
m_{n+1}^{(\varepsilon)}(k) -2m_n^{(\varepsilon)}(k) +m_{n-1}^{(\varepsilon)}(k) = \begin{cases}
-2 & \textrm{ if }  n = \frac 12 d_{k}^\Iw(\tilde \varepsilon_1)
\\
1 & \textrm{ if }n =  d_{k}^\unr(\varepsilon_1) \textrm{ or } d_{k}^\unr(\varepsilon_1)+d_{k}^{\new}(\varepsilon_1)
\\
0 & \textrm{ otherwise.}
\end{cases}
\end{equation}
\end{definition}

%For $w_\star \in \gothm_{\CC_p}$, we denote its evaluation at $w=w_\star$ as $G^{(\varepsilon)}_{w_\star}(t): = G^{(\varepsilon)}(w_\star,t) \in \CC_p[[t]]$.

%\begin{notation}
%\label{N:degree of gn}
%We write $d_n^{(\varepsilon)}$ for the degree of $g_n^{(\varepsilon)}(w)$.
%\end{notation}

\begin{example}
\label{Ex:p=7a=3}
We give an example of the ghost series and refer to \S\,\ref{Sec:properties of ghost series} for the computation of the dimensions.
Suppose that $p=7$, $a=2$, $b=0$, and that $\widetilde \rmH$ is primitive of type $\bar\rho = \Matrix{\omega_1^3}{*\neq 0}{0}{1}$; so $\overline \rmH \cong \Proj_{2,0}$. We list below the dimensions $d_k^{\Iw}(\psi)$ for small $k$'s.

\begin{center}
\renewcommand{\arraystretch}{1.2}
\begin{tabular}{|c|c|c|c|c|c|c|c|c|c|c|c|c|c|c|c|}
\hline $\varepsilon$ &
$k$ & 2& 3 & 4 &  5 & 6 & 7 & 8 & 9 & 10 & 11 & 12&13&14 
\\ \hline
$1 \times \omega^2$ &
$d_{k}^\Iw(1 \times \omega^{4-k}) = \lfloor \frac{k+2}6\rfloor + \lfloor \frac{k+4}6\rfloor$ &
1& 1& 2${}^*$& 2& 2& 2& 3 & 3& 4${}^*$ & 4& 4& 4& 5 
\\ \hline
$\omega^5 \times \omega^3$ &
$d_{k}^\Iw(\omega^5 \times \omega^{5-k})= \lfloor \frac{k+1}6\rfloor + \lfloor \frac{k+3}6\rfloor$ & 0&
1& 1& 2& 2${}^*$& 2& 2& 3 & 3& 4 & 4${}^*$& 4& 4
\\ \hline
$\omega^4 \times \omega^4$ &
$d_{k}^\Iw(\omega^4 \times \omega^{-k})= \lfloor \frac{k}6\rfloor + \lfloor \frac{k+2}6\rfloor$ & 0${}^*$& 0&
1& 1& 2& 2& 2${}^*$& 2& 3 & 3& 4 & 4& 4${}^*$
\\ \hline
$\omega^3 \times \omega^5$ &
$d_{k}^\Iw(\omega^3 \times \omega^{1-k}) = \lfloor \frac{k-1}6\rfloor + \lfloor \frac{k+1}6\rfloor$ & 0 & 0 & 0${}^*$&
1& 1& 2& 2& 2& 2${}^*$& 3 & 3& 4 & 4
\\ \hline
$\omega^2 \times 1$ &
$d_{k}^\Iw(\omega^2 \times \omega^{2-k}) =  \lfloor \frac{k+4}6\rfloor + \lfloor \frac{k}6\rfloor$ & 1& 1& 1& 1& 2${}^*$& 2& 3& 3& 3& 3& 4${}^*$& 4& 5
\\ \hline
$\omega \times \omega$ &
$d_{k}^\Iw(\omega \times \omega^{3-k}) = \lfloor \frac{k+3}6\rfloor + \lfloor \frac{k-1}6\rfloor$ & 0${}^*$&  1& 1& 1& 1& 2& 2${}^*$& 3& 3& 3& 3& 4& 4${}^*$
\\ \hline
\end{tabular}
\end{center}

The superscript $*$ indicates where the character $\psi$ is equal to $\tilde \varepsilon_1$, in which case $d_{k}^\unr(\varepsilon_1)$ makes sense.
In the table below, we list the information on dimensions of abstract classical forms with level $\rmK_p$ and $\Iw_p$.
\begin{center}\renewcommand{\arraystretch}{1.2}
\begin{tabular}{|c|c|c|c|c|c|c|c|c|c|c|c|c|c|c|c|}
\hline
$\varepsilon$ & \multicolumn{7}{|c|}{Triples $\big(k, \ d_{k}^\unr(\varepsilon_1),\ d_{k}^{\textrm{new}}(\varepsilon_1)\big)$ on the corresponding weight disk}
\\
\hline
$1 \times \omega^2$ & 
 $(4, 1, 0)$  & $(10, 1, 2) $ & $(16, 1, 4)$ & $(22, 1, 6)$ & $(28, 2, 6)$ & $(34, 2,8)$ & $(40, 2, 10)$ \\
 \hline
 $\omega^5 \times \omega^3$ &
 $(6, 0, 2)$ & $(12, 1, 2)$ & $(18, 1, 4)$ & $(24, 1, 6)$ & $(30, 1, 8)$ & $(36, 2, 8)$ & $(42, 2, 10)$
\\ \hline
 $\omega^4 \times \omega^4$ &
  $(2,0,0)$  & $(8,0, 2)$ & $(14, 0, 4)$ & $(20, 1, 4)$ & $(26, 1, 6)$ & $(32, 1, 8)$ & $(38, 1, 10)$
\\ \hline
 $\omega^3 \times \omega^5$ &
$(4, 0, 0)$  & $(10, 0, 2) $ & $(16, 0, 4)$ & $(22, 0, 6)$ & $(28, 1, 6)$& $(34, 1,8)$ & $(40, 1, 10)$
\\ \hline
 $\omega^2 \times 1$ &
$(6, 0, 2)$ & $(12, 1, 2)$ & $(18, 1, 4)$ & $(24, 1, 6)$ & $(30, 1, 8)$& $(36, 2, 8)$ & $(42, 2, 10)$
\\ \hline
 $\omega \times \omega$ &
$(2,0,0)$  & $(8,0, 2)$ & $(14, 0, 4)$ & $(20, 1, 4)$ & $(26, 1, 6)$ & $(32, 1, 8)$ & $(38, 1, 10)$
\\ \hline
\end{tabular}
\end{center}

Unwinding the definition of ghost series in Definition~\ref{D:ghost series}, we give  the first four terms of the ghost series on the $\varepsilon = (1 \times \omega^2)$-weight disk (corresponding to the first rows in the above two tables).
\begin{align*}
g_1^{(\varepsilon)}(w) &=1,
\\
g_2^{(\varepsilon)}(w) &=(w-w_{10})(w-w_{16})(w-w_{22}),
\\
g_3^{(\varepsilon)}(w) &=(w-w_{16})^2(w-w_{22})^2(w-w_{28})(w-w_{34})(w-w_{40})(w-w_{46}),
\\
g_4^{(\varepsilon)}(w) & = (w-w_{16})(w-w_{22})^3(w-w_{28})^2 \cdots (w-w_{46})^2(w-w_{52}) \cdots (w-w_{70}).
\end{align*}

The following graph gives the asymptotic behavior of the dimensions $d_{k}^\unr = \frac {2k}{p^2-1}+O(1)$ and $d_{k}^\Iw = \frac{2k}{p-1}+O(1)$ when $p=7$. (The vertical axis was stretched approximately $5$ times.) The degree of $g_n(w)$ is proportional to the area of the shaded region, which corresponds to the contribution of the ghost zeros.
\begin{center}
\begin{tikzpicture}[line cap=round,line join=round,>=triangle 45,x=1.0cm,y=5.0cm]
\draw [color=cqcqcq,, xstep=1.0cm,ystep=1.0cm] (-0.5,-0.1) grid (7.,1.);
\draw[->,color=black] (0.,0.) -- (7.,0.);
\foreach \x in {,1.,2.,3.,4.,5.,6.}
\draw[shift={(\x,0)},color=black] (0pt,2pt) -- (0pt,-2pt);
\draw[->,color=black] (0.,0.) -- (0.,1.);
\foreach \y in {,0.2,0.4,0.6,0.8}
\draw[shift={(0,\y)},color=black] (2pt,0pt) -- (-2pt,0pt);
\clip(-0.5,-0.1) rectangle (7.,1.);
\fill[line width=1.2pt,color=uuuuuu,fill=uuuuuu,pattern=north east lines,pattern color=uuuuuu] (1.5,0.25) -- (0.8571428571428571,0.25) -- (1.5,0.4375) -- cycle;
\fill[line width=1.2pt,color=uuuuuu,fill=uuuuuu,pattern=north east lines,pattern color=uuuuuu] (1.5,0.25) -- (6.,0.25) -- (1.5,0.0625) -- cycle;
\draw [line width=1.2pt,dash pattern=on 2pt off 3pt] (6.,0.25)-- (0.,0.25);
\draw [line width=1.2pt,dash pattern=on 2pt off 3pt] (1.5,0.4375)-- (1.5,0.);
\draw [line width=2.pt,color=red,domain=0.0:7.0] plot(\x,{(-0.--0.25*\x)/6.});
\draw [line width=2.pt,color=red,domain=0.0:7.0] plot(\x,{(-0.--1.75*\x)/6.});
\draw [line  width=2.pt,color=red,domain=0.0:7.0] plot(\x,{(-0.--2.*\x)/6.});
\begin{scriptsize}
\draw [fill=blue] (0.,0.25) circle (1.5pt);
\draw[color=blue] (-0.2,0.27) node {$n$};
\draw [fill=blue] (1.5,0.) circle (1.5pt);
\draw[color=blue] (1.5,-0.05) node {$k$};
\draw [fill=black] (1.5,0.0625) circle (2.0pt);
\draw [fill=black] (1.5,0.4375) circle (2.0pt);
\draw[color=red] (5.8,0.15) node {$d_{k}^\unr \approx\frac{2k}{p^2-1}$};
\draw[color=red] (4.15,0.73) node {$d_{k}^\Iw-d_{k}^\unr\approx \frac{2pk}{p^2-1}$};
\draw[color=red] (1.7,0.9) node {$\frac{2k}{p-1} \approx d_{k}^\Iw$};
\end{scriptsize}
\end{tikzpicture}

Graph 1: Functions $d_{k}^\unr$, $d_{k}^\Iw$, and $d_{k}^\Iw-d_{k}^\unr$, and the contribution of ghost zeros.
\end{center}

The following table gives the increments in degrees $ \deg g_n^{(\varepsilon)}$ over different weight disks.

\begin{center}\renewcommand{\arraystretch}{1.2}
\begin{tabular}{|c|c|c|c|c|c|c|c|c|c|c|c|c|c|c|c|c|}
\hline
$\varepsilon$ & \multicolumn{16}{|c|}{$\deg g_n^{(\varepsilon)}- \deg g_{n-1}^{(\varepsilon)}$ for $n \geq 1$}
\\
\hline
$1\times \omega^2$ & 0 & 3 & 5 & 8 & 10 & 13 & 15 & 18 & 21 & 23 & 26 & 28&31&33&36 & ... \\ \hline
$\omega^5\times \omega^3$ &  1 & 3 & 6 & 9 & 11 & 14 & 16 & 19 & 21&24&27&29&32&34&37& ...
\\
 \hline
$\omega^4\times \omega^4$  & 2 & 4 & 7 & 9 & 12 &15 & 17 & 20 & 22&25&27&30&33&35&38& ...
\\ \hline
$\omega^3\times \omega^5$  &3 & 5 & 8 & 10 & 13 & 15 & 18 & 21 &23&26& 28&31&33&36&39& ...
\\ \hline
$\omega^2\times 1$ & 1 & 3 & 6 & 9 & 11 & 14 & 16 & 19 &21&24&27&29&32&34&37&...
\\ \hline
$\omega\times \omega$  & 2 & 4 & 7 & 9 & 12 &15 & 17 & 20 &22&25&27&30& 33&35&38&...
\\ \hline
\end{tabular}
\end{center}
These are expected to be the halo $w$-adic slopes ``near the boundary" of the weight disks; see Corollary~\ref{C:ghost versus halo} and the surrounding remark for more discussions.
\end{example}
\begin{remark}
\label{R:ghost pattern}
In the definition of ghost series, the palindromic pattern in the middle of \eqref{E:cascading pattern} is what Bergdall and Pollack referred to as the ``ghost pattern". They discovered this through a numerical computation. We refer to \cite[\S1]{bergdall-pollack2} for a discussion on the heuristic behind the construction of ghost series.
\end{remark}

\begin{notation}
\label{N:Newton polygon}
For a power series $F(t) = \sum_{n \geq 0} f_n t^n\in
\CC_p\llbracket t\rrbracket$ (with $f_0=1$), we use $\NP(F)$ to denote its \emph{Newton polygon}, that is the convex hull of points $(n, v_p(f_n))$ for all $n$.
The slopes of its segments are exactly the minuses of the $p$-adic valuations of zeros of $F(t)$, counted with multiplicities.
\end{notation}

The following is the local analogue of the ghost conjecture of Bergdall and Pollack. \begin{conjecture}[Local ghost conjecture]
\label{Conj:local ghost conjecture}
Let $\bar \rho = \Matrix{\omega_1^{a+b+1}}{*\neq 0}{0}{\omega_1^b}: \rmI_{\QQ_p} \to \GL_2(\FF)$ be a character with $a \in \{1, \dots, p-4\}$ and $b \in \{0, \dots, p-2\}$.
Let $\widetilde \rmH$ be a primitive $\calO\llbracket K_p\rrbracket$-projective augmented module of type $\bar\rho$, and let $\varepsilon$ be a character of $\Delta^2$ relevant to $\bar \rho$. We define the characteristic power series $C^{(\varepsilon)}(w,t)$ of $U_p$-action and the ghost series $G^{(\varepsilon)}(w,t)$ for $\widetilde \rmH$ as in this section.  Then
\begin{enumerate}
\item
the ghost series $G^{(\varepsilon)}(w,t)$ depends only $\bar \rho$, and $\varepsilon$, and
\item 

for every $w_\star \in \gothm_{\CC_p}$, we have
$\NP(G^{(\varepsilon)}(w_\star, -))=\NP(C^{(\varepsilon)}(w_\star,-))$.
\end{enumerate} 
\end{conjecture}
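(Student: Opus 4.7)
The plan is to establish both parts of the local ghost conjecture. Part~(1) should follow from a direct representation-theoretic computation: since $\widetilde{\rmH}$ is finite free over $\calO\llbracket\Iw_{p,1}\rrbracket$ and reduces to $\Proj_{a,b}$ modulo $(\varpi, \rmI_{1+p\rmM_2(\ZZ_p)})$ by primitivity, the ranks $d_k^\Iw(\tilde\varepsilon_1)$ and $d_k^\unr(\varepsilon_1)$ coincide with the $\FF$-dimensions of the mod-$p$ Hom spaces $\Hom_{\FF[\Iw_p]}(\Proj_{a,b}, \FF[z]^{\leq k-2}\otimes\tilde\varepsilon_1)$ and $\Hom_{\FF[\rmK_p]}(\Proj_{a,b}, \FF[z]^{\leq k-2}\otimes\varepsilon_1\circ\det)$ respectively, which depend only on $(a,b)$ encoded by $\bar\rho$ and on $(\varepsilon, k)$. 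Consequently each exponent $m_n^{(\varepsilon)}(k)$ and thus each $g_n^{(\varepsilon)}(w)$ depends only on $\bar\rho$ and $\varepsilon$.

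For part~(2), I propose a three-step strategy. First, at a classical weight $w_\star = w_k$ with $k \equiv k_\varepsilon \bmod{p-1}$, the characteristic polynomial of $U_p$ on the abstract classical subspace $\rmS_k^\Iw(\tilde\varepsilon_1) \subseteq \rmS_k^\dagger(\tilde\varepsilon_1)$ divides $C^{(\varepsilon)}(w_k,t)$; an Atkin--Lehner/newform decomposition shows its slopes comprise $d_k^\new(\varepsilon_1)$ copies of $(k-2)/2$ together with $2 d_k^\unr(\varepsilon_1)$ old-form slopes paired so that each pair sums to $k-1$, the lower halves being the slopes on $\rmS_k^\unr(\varepsilon_1)$. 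A direct computation from Definition~\ref{D:ghost series} shows these agree with the slopes of $\NP(G^{(\varepsilon)}(w_k,-))$ predicted by the palindromic pattern \eqref{E:cascading pattern} in the classical range. Second, for slope data beyond the classical range, I would use the Pa\v sk\=unas functor (Remark~\ref{R:origins of arithmetic modules}(3)) to realize $\widetilde\rmH$ as a specialization of $\bfP_{\tilde\rho}$, then invoke the known Breuil--M\'ezard conjecture for $\GL_2(\QQ_p)$ to translate slope multiplicities into Hilbert--Samuel multiplicities on Kisin's crystalline deformation ring of $\tilde\rho$, producing the ghost multiplicities intrinsically. Third, to propagate equality from classical weights to all $w_\star \in \gothm_{\CC_p}$, I would combine the refined spectral halo behavior (Corollary~\ref{C:ghost versus halo}) controlling $\NP$ near $\partial\calW^{(\varepsilon)}$, the near-Steinberg characterization of vertices (Theorem~\ref{T:near Steinberg = non-vertex}), and the compatibilities of the ghost series with theta maps, Atkin--Lehner involutions, and $p$-stabilizations (Proposition~\ref{P:ghost compatible with theta AL and p-stabilization}).

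The principal obstacle will be the final propagation step. While the first two steps pin down Newton polygons on the classical locus and halo data controls the boundary $v_p(w_\star) \in (0,1)$, interpolating to arbitrary $w_\star$ requires a uniqueness principle not obviously implied by standard rigidity for Fredholm series. The envisioned resolution is an inductive scheme on the index $n$: assuming $\NP(C^{(\varepsilon)}) = \NP(G^{(\varepsilon)})$ up to slope $n-1$ at all $w_\star$, one uses ghost duality (Proposition~\ref{P:ghost compatible with theta AL and p-stabilization}(4)) together with the near-Steinberg vertex criterion to force agreement at slope $n$, bootstrapping from $p$-stabilization identities relating the $n$-th slope on one weight disk to lower-$n$ data on a congruent weight disk. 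A secondary technical difficulty is the non-neat phenomenon of \S\,\ref{S:explicit Muniv}, whereby $\widetilde\rmH$ splits into nontrivial $\bar\rmT$-eigenspaces that must be tracked separately; and once the primitive case is handled, the extension to general multiplicity $m(\widetilde\rmH) > 1$ should proceed via the Gee--Kisin patching construction indicated in the introduction.
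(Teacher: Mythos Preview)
This statement is a \emph{conjecture}, and the paper proves only part~(1), explicitly deferring part~(2) to a sequel (see the sentence immediately following Conjecture~\ref{Conj:local ghost conjecture}). Your approach to part~(1) is correct and matches the paper: the dimensions $d_k^\Iw(\tilde\varepsilon_1)$ and $d_k^\unr(\varepsilon_1)$ are computed via representation theory of $\GL_2(\FF_p)$ (Propositions~\ref{P:dimension of SIw} and~\ref{P:dimension of Sunr}, Corollary~\ref{C:ghost series depends on bar rho}) and depend only on $(a,b,\varepsilon,k)$, hence so do the exponents $m_n^{(\varepsilon)}(k)$.

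Your proposed strategy for part~(2), however, has genuine gaps. First, Step~1 does not actually establish $\NP(C^{(\varepsilon)}(w_k,-)) = \NP(G^{(\varepsilon)}(w_k,-))$ even at classical weights: knowing that classical $U_p$-slopes satisfy the Atkin--Lehner and newform constraints, and that ghost slopes satisfy the \emph{same} constraints (Proposition~\ref{P:ghost compatible with theta AL and p-stabilization}), does not force them to coincide---many sequences of numbers satisfy those constraints. Second, the propagation step in Step~3 is the heart of the matter and your ``inductive scheme on $n$'' is not a proof: Newton polygons of Fredholm series do not obey a rigidity principle that allows interpolation from a Zariski-dense set of weights, and the ghost duality and near-Steinberg criteria you cite are properties of $G^{(\varepsilon)}$ alone, not comparison statements with $C^{(\varepsilon)}$. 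Third, the appeal to Breuil--M\'ezard in Step~2 is a program, not an argument; the paper itself flags this route only as a hope for future work (Remark~\ref{R:origins of arithmetic modules}(3) and the discussion after Conjecture~\ref{Conj:Breuil-Buzzard-Emerton}). In short, you should treat part~(2) as open in this paper and not attempt a proof here.
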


Part (1) of this conjecture will be proved in Corollary~\ref{C:ghost series depends on bar rho}; in fact the dimensions $d_k^\Iw(\varepsilon_1)$ and $d_k^\unr(\varepsilon_1)$ depend only on $k$, $\bar \rho$, and $\varepsilon$. Because of this, one may rename $G_{\widetilde \rmH}^{(\varepsilon)}(w,t)$ into $G_{\bar \rho}^{(\varepsilon)}(w,t)$, and call it \emph{the ghost series attached to $\bar\rho$ and $\varepsilon$}.

We will address part (2) in a sequel to this paper.

\begin{remark}\fakephantomsection
\label{R:remark after ghost series}
\begin{enumerate}
\item
The ghost series $G^{(\varepsilon)}(w,t)$ may be viewed as a good enough approximation of the characteristic power series $C^{(\varepsilon)}(w,t)$.
At least the $p$-adic valuations of their zeros are the same.

\item 
The definition of the ghost series depends on the choice of the topological generator $\exp(p)$ of $(1+p\ZZ_p)^\times$. In other words, for any $\eta \in 1+p\ZZ_p^\times$, we may replace in \eqref{E:induced representation action extended} the exponent $\log\left((\gamma z+\delta)/\omega(\bar \delta)\right) /p$ by $\log \left((\gamma z+\delta)/\omega(\bar \delta)\right)/\log \eta $, and replace $w_k = \exp((k-2)p)-1$ by $\eta^{k-2}-1$, to get a essentially different ghost series. So ghost series is in this sense not canonical; yet the Newton polygon $\NP(G^{(\varepsilon)}(w_\star,t))$ (for all $w_\star \in \gothm_{\CC_p}$) does not depend on the choice of the topological generator.
So we may work with our ``preferred" topological generator $\exp(p)$ of $(1+p\ZZ_p)^\times$.

\item 
Recall that $\Matrix p00p$ acts by multiplication with a unit $\xi \in\calO^\times$ on $\widetilde \rmH$.
Moreover, by Definition~\ref{D:primitive type}\,(1), the diagonal matrices $\Matrix {\bar\alpha}00{\bar \alpha}$ for $ \bar \alpha \in \Delta$ acts on $\widetilde \rmH$ by multiplication with $\omega(\bar \alpha)^{a+2b}$.
% has a central character $\xi$ of $\QQ_p^\times$ such that $\xi(\alpha ) = \omega(\bar \alpha)^{a+2b}$ for all $ \alpha \in \ZZ_p^\times$. Then (possibly after enlarging $\calO$) there exists a (not unique) character $\zeta: \QQ_p^\times \to \calO^\times$ such that $\xi \zeta^2$ is trivial on $p^\ZZ \cdot (1+p\ZZ_p)^\times$ and such that $\zeta(\alpha) = \omega(\bar \alpha)^{-b}$ for all $ \alpha \in \ZZ_p^\times$.
(By possibly enlarging $\calO$), consider a character $\zeta: \QQ_p^\times \to \calO^\times$ such that $\zeta(p)^2 = \xi^{-1}$ and $\zeta (\alpha) = \omega(\bar \alpha)^{-b}$ for all $ \alpha \in \ZZ_p^\times$. If twist the $\GL_2(\QQ_p)$-action on $\widetilde \rmH$ by the character $\zeta\circ \det$, then the resulting completed homology piece $\widetilde \rmH': = \widetilde \rmH \otimes \zeta \circ \det$ is of type $\Matrix{\omega_1^{a+1}}{*\neq 0}{0}{1}$, and the action of $\Matrix p00p$ is trivial.

It follows easily from the construction of the characteristic power series of the $U_p$-action and the definition of the ghost series that
\[
C_{\widetilde \rmH}^{(\varepsilon)}(w,t) = C_{\widetilde \rmH'}^{(\varepsilon \cdot\tilde \omega^{-b})}(w,\zeta(p)^{-1} t) \quad \textrm{and} \quad G_{\widetilde \rmH}^{(\varepsilon)}(w,t) = G_{\widetilde \rmH'}^{(\varepsilon\cdot\tilde \omega^{-b} )}(w,t).
\]
So Conjecture~\ref{Conj:local ghost conjecture} can be reduced to the case when $b = 0$ and $\xi(p) = 1$.

\end{enumerate}

\end{remark}

\begin{remark}

Our definition of ghost series follows Bergdall and Pollack \cite{bergdall-pollack2, bergdall-pollack3}. In fact, a variant of the ghost series already appeared in a much earlier paper by Buzzard and Calegari \cite{buzzard-calegari} (see also \cite{loeffler}) but only in a special case: $p=2$ and level $N=1$. In our language, one can set
$u = \log (1+w) +2p$ and then it is easy to check that $\calO\llbracket w/ p^{1/(p-1)}\rrbracket  \cong \calO\llbracket u/p^{1/(p-1)}\rrbracket$, where we assume that $\calO$ contains a $(p-1)$th root of $p$. Then one can generalize Buzzard and Calegari's construction to define a variant of ghost series:
$$
A^{(\varepsilon)}(u,t) = 1+ \sum_{n=1}^\infty a_n^{(\varepsilon)}(u)t^n \in \calO\llbracket u/p^{1/(p-1)}\rrbracket\llbracket t\rrbracket 
$$
with coefficients given by
$$
a_n^{(\varepsilon)}(u): = \prod_{k \geq 2, k \equiv k_\varepsilon \bmod {p-1}} (u- pk)^{m_n^{(\varepsilon)}(k)} \in \ZZ_p[u].
$$
(In Buzzard--Calegari case, the numbers $m_n^{(\varepsilon)}(k)$ are explicitly computed, as it is for one example $p=2$ and level $N=1$.)
One can prove that for every $w_\star \in \calO_{\CC_p}$ with $v_p(w_\star) > \frac 1{p-1}$, setting 
\[
u_\star = \log(1+w_\star)+2p
\]
(which has the same $p$-adic valuation as $w_\star$), the two Newton polygons
$\NP(G^{(\varepsilon)}(w_\star, -))$ and $\NP(A^{(\varepsilon)}(u_\star, -))$ coincide. Thus proving Conjecture~\ref{Conj:local ghost conjecture} is equivalent to proving that, for every pair $w_\star$ and $u_\star$ above (with valuation $>\frac 1{p-1}$), 
\[
\NP(C^{(\varepsilon)}(w_\star, -)) = \NP(A^{(\varepsilon)}(u_\star, -)).
\]
\end{remark}

\begin{remark}
\label{R:generalizations of ghost conjecture}
It is natural to ask what one should expect when $\bar\rho$ is nongeneric, reducible split, or of the form $\bar \rho = \omega_2^a \oplus \omega_2^{pa}$, where $\omega_2$ is second fundamental character. There are several places of the theory that needs modifications:
\begin{enumerate}
\item a primitive $\calO\llbracket K_p\rrbracket$-projective augmented module
$\widetilde \rmH$ should be instead isomorphic to the direct sum of the projective envelopes of Serre weights, viewed as an $\calO\llbracket \GL_2(\ZZ_p)\rrbracket$-module (which would then change the dimension combinatorics);

\item some extra condition should be imposed to give initial ``seeds" of the theory corresponding to the Hodge--Tate weight $(0,1)$ crystabelian deformation rings of $\bar\rho$ (in the Fontaine-Laffaille range, the crystalline deformations of a reducible $\bar{\rho}$ are ordinary, while for the irreducible $\rho$'s, we need extra information to record the initial slopes); and
\item the combinatorics may be different and more complicated.
\end{enumerate}
It seems that, with appropriate modifications, the ghost series framework might still partially work. 
We hope to address some of these cases in future works.

We do not know how to generalize the ghost conjecture framework beyond $\GL_2(\QQ_p)$.
\end{remark}

\begin{remark}
\label{R:ghost for direct sums}
If $\widetilde \rmH_1$ and $\widetilde \rmH_2$ are two primitive $\calO\llbracket K_p\rrbracket$-projective augmented modules of type $\bar\rho_1$ and $\bar\rho_2$, respectively, with $\det \bar \rho_1 = \det \bar\rho_2$, then one may use the recipe in Definition~\ref{D:ghost series} to define a ghost series $G_{\widetilde \rmH_1\oplus \widetilde \rmH_2}^{(\varepsilon)}(w,t)$ for $\widetilde \rmH_1 \oplus \widetilde \rmH_2$. While it is clear that $C^{(\varepsilon)}_{\widetilde \rmH_1 \oplus \widetilde \rmH_2}(w,t) = C^{(\varepsilon)}_{\widetilde \rmH_1} (w,t)C_{\widetilde \rmH_2}^{(\varepsilon)}(w,t)$, it is not obvious whether  $\NP\big(G^{(\varepsilon)}_{\widetilde \rmH_1 \oplus \widetilde \rmH_2}(w_\star,-)\big) = \NP\big(G^{(\varepsilon)}_{\widetilde \rmH_1} (w_\star,t)G_{\widetilde \rmH_2}^{(\varepsilon)}(w_\star,-)\big)$ for all $w_\star \in \gothm_{\CC_p}$.

In fact, as a recent preprint of Rufei Ren \cite{ren} suggests, this equality of Newton polygon of ghost series should hold if and only if $\bar\rho_1^{ss} = \bar \rho_2^{ss}$, where $\bar\rho_?^{ss}$ stands for the semisimplification of $\bar\rho_?$. When $\bar \rho_1^{ss} \neq \bar \rho_2^{ss}$, the two Newton polygons can disagree at some $n$th slope of a certain weight $k$, where both $n$ and $k$ are very large integers.
In turn, this suggests the original ghost conjecture for the full space of modular forms may not be correct as stated in \cite[Conjecture~2.5]{bergdall-pollack2}, where the numerical computation is not able to reach very large $n$. 

Now if $\widetilde \rmH'$ is an $\calO\llbracket K_p\rrbracket$-projective augmented module of type $\bar \rho$ with multiplicity $m(\widetilde \rmH')$, then Rufei Ren's result \cite{ren} would imply that $\NP(G_{\widetilde \rmH'}^{(\varepsilon)}(w_\star,-))$ is the same as $\NP(G_{\bar \rho}^{(\varepsilon)}(w_\star,-))$ with $x$-axis stretched $m(\widetilde \rmH')$ times, for every $w_\star \in \gothm_{\CC_p}$. It is natural to conjecture that $\NP(C_{\widetilde \rmH'}^{(\varepsilon)}(w_\star,-))$ agrees with this polygon.
\end{remark}

\section{Properties of abstract classical, overconvergent, and $p$-adic forms}
\label{Sec:more abstract automorphic forms}

In this section, we discuss finer properties of abstract classical, overconvergent,  and $p$-adic forms:
\begin{enumerate}
\item 
a careful choice of the basis $e_1$ and $e_2$ of $\widetilde \rmH$ as a free $\calO\llbracket \Iw_{p,1}\rrbracket$-module (Notation~\ref{N:e1 = e2Pi}),
\item a description of the power basis of abstract classical, overconvergent forms (Proposition~\ref{P:power basis}),
\item classicality of abstract overconvergent forms, and the theta operator (Proposition~\ref{P:theta map}), and
\item 
the Atkin--Lehner involution for abstract classical forms (Proposition~\ref{P:Atkin-Lehner duality}).
\end{enumerate}
Property (1) is well known. Property (2) will be useful later for establishing the dimension formula for abstract classical forms with Iwahori level structure.
Properties (3) and (4) are all well known in the context of modular forms, and more generally automorphic forms; we just reproduce them in our abstract setup.

In view of the discussion of Remark~\ref{R:remark after ghost series}\,(3), we assume the following.

\begin{hypothesis}
\label{H:b=0}
For the rest of this paper, we assume that $b=0$, $\xi =1$, and that $\widetilde \rmH$ is a \emph{primitive} $\calO\llbracket K_p\rrbracket$-projective augmented module of type $\bar \rho =\Matrix{\omega_1^{a+1}}{*\neq 0}{0}{1}$, on which $\Matrix p00p$ acts trivially. In addition, we assume that $1\leq a \leq p-4$.
\end{hypothesis}
We reserve the Greek letter
$\varepsilon$ to denote a character of $\Delta^2$ relevant to $\bar \rho$.

\subsection{$\widetilde \rmH$ as an $\calO\llbracket \Iw_p\rrbracket$-module}
\label{S:basis of widetilde H}
Let $\widetilde \rmH$ be as above. 
As explained in \S\,\ref{S:explicit Muniv}, we may write $\widetilde \rmH$ as an $\calO\llbracket \Iw_p\rrbracket$-module
\begin{equation}
\label{E:tilde H as Iwp module}
\widetilde \rmH \simeq e_1 \calO\otimes_{\chi_1, \calO[\bar \rmT]} \calO\llbracket \Iw_p\rrbracket \oplus e_2 \calO\otimes_{\chi_2, \calO[\bar \rmT]} \calO\llbracket \Iw_p\rrbracket 
\end{equation}
for two characters $\chi_1$ and $\chi_2$ of $\Delta^2$, which we determine now.

Taking \eqref{E:tilde H as Iwp module} modulo $(\varpi,\rmI_{1+p\rmM_2(\ZZ_p)})$ and using condition (1) of Definition~\ref{D:primitive type}, we obtain an isomorphism of (right) $\FF[\bar \rmB]$-modules (for $\bar \rmB := \Matrix{\FF_p^\times}{\FF_p}{0}{\FF^\times_p}$)
$$ \Proj_{a,0} \cong e_1 \FF \otimes_{\chi_1, \FF[\bar \rmT]} \FF[\bar \rmB] \oplus e_2 \FF \otimes_{\chi_2, \FF[\bar \rmT]}\FF[\bar \rmB].$$
Taking the $\bar \rmU: =\Matrix{1}{\FF_p}{0}{1} $-invariants of both sides, the left hand side has $2$-dimensional $\bar \rmU$-invariants with characters $1 \times \omega^{a}$ and $\omega^{a} \times 1$ under the $\bar \rmT$-action by Lemma~\ref{L:U invariants of Proj}; the $\bar \rmU$-invariants of the right hand side are spanned by $e_i \cdot \big( \Matrix 1001 + \Matrix 1101 + \cdots + \Matrix 1{p-1}01\big)$ for $i =1,2$, which have characters $\chi_1$ and $\chi_2$, respectively.
So it follows that (after possibly exchanging the $\chi_i$'s)
$$
\chi_1 = 1 \times \omega^{a}\quad \textrm{and}\quad \chi_2 = \omega^{a} \times1.
$$

Although the characters $\chi_1$ and $\chi_2$ are uniquely determined, the two generators $e_1$ and $e_2$ are far from unique.
We further rigidify the generators $e_1$ and $e_2$ as follows.

\begin{lemma}
\label{L:choice of e1 and e2}
In \eqref{E:tilde H as Iwp module}, we may replace $e_2$ by $e'_2 := e_1 \Matrix 01p0$ (so that $e_1 = e'_2 \Matrix 01p0$).
\end{lemma}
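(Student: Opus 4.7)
\textbf{Proof plan for Lemma~\ref{L:choice of e1 and e2}.}

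The plan is to verify two things: first, that $e'_2 := e_1 \Pi$ carries the right $\bar\rmT$-character $\chi_2 = \omega^a \times 1$; and second, that $e_1$ and $e'_2$ still freely generate $\widetilde \rmH$ as a right $\calO\llbracket\Iw_{p,1}\rrbracket$-module, so that the direct sum decomposition~\eqref{E:tilde H as Iwp module} persists with $e'_2$ in place of $e_2$.

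For the first point, the basic commutation relation $m\Pi \Matrix\alpha 00\delta = m\Matrix\delta 00\alpha \Pi$ (a special case of the relation at the end of \S\,\ref{S:Iwasawa algebra} with $\beta=\gamma=0$) applied to $m = e_1$ gives, for any Teichm\"uller element $\Matrix\alpha 00\delta$ of $\bar\rmT$,
\[
e_1\Pi \cdot \Matrix\alpha 00\delta = e_1\Matrix\delta 00\alpha \Pi = \chi_1\!\Matrix\delta 00\alpha \cdot e_1\Pi = \omega^a(\alpha)\cdot e_1\Pi = \chi_2\!\Matrix\alpha 00\delta \cdot e_1\Pi,
\]
using $\chi_1 = 1\times \omega^a$. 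Hence $e_1\Pi$ transforms by $\chi_2$.

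For the second point, recall that Hypothesis~\ref{H:b=0} (together with condition~(2) of Definition~\ref{D:primitive type}) gives $\Pi^2 = \Matrix p00p$ acts as the identity on $\widetilde\rmH$, and a direct computation as in \S\,\ref{S:Iwasawa algebra} shows that $\Pi^{-1}\Iw_{p,1}\Pi = \Iw_{p,1}$, i.e.\ $\Pi$ \emph{normalizes} $\Iw_{p,1}$ (the conjugation simply swaps the two diagonal coordinates and the two off-diagonal coordinates). Since $\widetilde \rmH$ is a rank-$2$ free right $\calO\llbracket\Iw_{p,1}\rrbracket$-module, by Nakayama's lemma applied to the local ring $\calO\llbracket\Iw_{p,1}\rrbracket$ with maximal ideal $\gothm:=(\varpi,\rmI_{\Iw_{p,1}})$, it suffices to check that the images $\overline e_1$ and $\overline{e_1\Pi}$ in $\widetilde\rmH/\gothm\widetilde\rmH$ are $\FF$-linearly independent. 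The $\bar\rmT$-action descends to this two-dimensional quotient, and the images of $e_1,e_2$ give $\FF$-eigenvectors of characters $\chi_1,\chi_2$. Since $1\leq a\leq p-4$ forces $\chi_1\neq\chi_2$, linear independence reduces to showing $\overline{e_1\Pi}\neq 0$.

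The key step --- and the only not entirely formal one --- is this nonvanishing. The plan is to argue by contradiction: suppose $e_1\Pi \in \gothm\widetilde\rmH$, so we can write
\[
e_1\Pi \;=\; \varpi\, m_0 \;+\; \sum_j m_j\,(g_j-1), \qquad m_0,m_j\in \widetilde \rmH,\ g_j\in \Iw_{p,1}.
\]
Multiply on the right by $\Pi$. Using $\Pi^2=\mathrm{id}$ on $\widetilde\rmH$ and rewriting $(g_j-1)\Pi = \Pi(\Pi^{-1}g_j\Pi - 1)$ with $g'_j := \Pi^{-1}g_j\Pi \in \Iw_{p,1}$ (by normality), we obtain
\[
e_1 \;=\; \varpi\,(m_0\Pi) \;+\; \sum_j (m_j\Pi)\,(g'_j-1)\; \in\; \gothm\widetilde\rmH,
\]
contradicting the fact that $\overline e_1\neq 0$ in $\widetilde\rmH/\gothm\widetilde\rmH$ (which follows from the original decomposition~\eqref{E:tilde H as Iwp module}). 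Hence $\overline{e_1\Pi}\neq 0$, and so $(e_1,e_1\Pi)$ is an $\calO\llbracket\Iw_{p,1}\rrbracket$-basis of $\widetilde\rmH$. Combined with the character computation, this yields the required decomposition
\[
\widetilde \rmH \;\simeq\; e_1\calO\otimes_{\chi_1,\calO[\bar\rmT]}\calO\llbracket\Iw_p\rrbracket \;\oplus\; e'_2\calO\otimes_{\chi_2,\calO[\bar\rmT]}\calO\llbracket\Iw_p\rrbracket,
\]
and the identity $e_1 = e'_2\Pi$ is then immediate from $\Pi^2=\mathrm{id}$.
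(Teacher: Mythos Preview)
Your proof is correct and takes essentially the same approach as the paper: both compute that $e_1\Pi$ carries the character $\chi_2$, then use Nakayama together with the invertibility of the $\Pi$-action on $\widetilde\rmH/(\varpi,\rmI_{\Iw_{p,1}})\widetilde\rmH$ to conclude that $(e_1,e_1\Pi)$ is a basis. The paper phrases the second step as a $2\times 2$ matrix computation (writing $e_i\Pi = e_1 A_i + e_2 B_i$ and showing the off-diagonal reductions are units), while you unwind the same invertibility more explicitly via $\Pi$ normalizing $\Iw_{p,1}$ and $\Pi^2=\mathrm{id}$; these are equivalent arguments.
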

\begin{proof}
Note that the action of $\bar \rmT$ on $e_2$ and $e'_2$ are the same.
Indeed, for the given basis $e_1$ and $e_2$, we can write
\begin{equation}
\label{E:e1Pi and e2Pi}
e_1 \Matrix 01p0 = e_1 A + e_2 B \quad \textrm{and} \quad e_2 \Matrix 01p0  = e_1 C + e_2 D,
\end{equation}
with $A, B, C, D \in \calO\llbracket \Iw_{p, 1}\rrbracket$.
Let $\bar A$, $\bar B$, $\bar C$, and $\bar D$ denote the reduction in $\FF$ of $A$, $B$, $C$, and $D$ modulo the maximal ideal  $(\varpi,\rmI_{\Iw_{p,1}})$ of $\calO\llbracket \Iw_{p,1}\rrbracket$.
Consider the $\bar \rmT$-action on \eqref{E:e1Pi and e2Pi} modulo $(\varpi, \rmI_{\Iw_{p,1}})$ (noting that $e_1$ and $e_2$ have different characters $1\times \omega^a$ and $\omega^a \times 1$, respectively).  We get $\bar A = \bar D=  0$ and $\bar B, \bar C \in \FF^\times$. Thus it follows that $e_1$ and $e_1 \Matrix 01p0$ spans $\widetilde \rmH$ over $\calO\llbracket \Iw_{p,1}\rrbracket $. 
\end{proof}

\begin{notation}
\label{N:e1 = e2Pi}
\emph{For the rest of the paper, we fix 
an isomorphism}
$$
\widetilde \rmH \simeq e_1 \calO\otimes_{1 \times \omega^a, \calO[\bar \rmT]} \calO\llbracket \Iw_p\rrbracket \oplus e_2 \calO\otimes_{\omega^a \times 1, \calO[\bar \rmT]} \calO\llbracket \Iw_p\rrbracket 
,
$$
where the generators satisfy $e_1 = e_2 \Matrix 01p0$ and $e_2 = e_1 \Matrix 01p0$.
\end{notation}
\begin{remark}
Choosing generators with the above property will simplify our proof in this paper; and will be essential for sequel.
\end{remark}

\subsection{Power basis of abstract overconvergent and classical forms}
\label{SS:power basis}
Fix a \emph{relevant} character $\varepsilon = \omega^{-s_\varepsilon}
\times \omega^{a+s_\varepsilon}$ of $\Delta^2$ as in Notation~\ref{N:relevant varepsilon} (in particular, $\varepsilon_1 = \omega^{-s_\varepsilon}$).
We give a natural power basis of the space of the family of abstract overconvergent forms $\rmS^{\dagger, (\varepsilon)}$ defined in \S\,\ref{S:family of overconvergent forms}; then by specializing the parameters, we obtain a power basis of the space of abstract overconvergent forms
$\rmS_k^\dagger\big(\varepsilon \cdot (1\times \omega^{2-k})\big)$ of weight $k$ and its subspace of abstract classical forms $\rmS_k^{\Iw}\big(\varepsilon \cdot (1\times \omega^{2-k})\big)$ as defined in \S\,\ref{S:abstract classical forms}.

Following a discussion similar to \S\,\ref{S:explicit Muniv}, we have
\begin{align*}
&\rmS^{\dagger, (\varepsilon)} 
= \Hom_{\calO[ \Iw_p]}\big(\widetilde \rmH,\, \Lambda^{\leq 1/p}\langle z \rangle \otimes (\omega^{-s_\varepsilon} \times \omega^{a+s_\varepsilon}) \big)\\
\cong \ & e_1^* \cdot \big(\Lambda^{\leq 1/p}\langle z \rangle\otimes (\omega^{-s_\varepsilon}\times \omega^{a+s_\varepsilon}) \big)^{\bar \rmT = 1 \times \omega^{a}}  \oplus e_2^* \cdot \big(\Lambda^{\leq 1/p}\langle z \rangle \otimes (\omega^{-s_\varepsilon}\times \omega^{a+s_\varepsilon}) \big)^{\bar \rmT = \omega^{a} \times 1}.
\end{align*}
The monomial $z^j \in \calO\langle z \rangle$ is an eigenvector for the $\bar \rmT$-action with character $\omega^j \times \omega^{-j}$. So we deduce the following results on basis of $\rmS^{\dagger, (\varepsilon)}$.

\begin{proposition}
\label{P:power basis}
For the relevant character $\varepsilon = \omega^{-s_\varepsilon} \times \omega^{a+s_\varepsilon}$ with $s_\varepsilon \in \{0, \dots, p-2\}$ and for $k$ an integer, the following list is a basis of $\rmS^{\dagger, (\varepsilon)}$ as a Banach $\Lambda^{\leq 1/p}$-module and also a basis of $\rmS_{k}^\dagger\big(\varepsilon\cdot (1\times \omega^{2-k})\big)$ as a Banach $\calO$-module:
\begin{equation}
\label{E:basis of Sdagger}e_1^* z^{s_\varepsilon}, e_1^* z^{p-1+s_\varepsilon},e_1^* z^{2(p-1)+s_\varepsilon}, \dots;\; e_2^* z^{\{a+s_\epsilon\}}, e_2^* z^{p-1+\{a+s_\epsilon\}},e_2^* z^{2(p-1)+\{a+s_\epsilon\}}, \dots.
\end{equation}
When $k\geq 2$, the subsequence consisting of terms whose power in $z$ is less than or equal to $k-2$ forms a basis of $\rmS_k^\Iw\big(\varepsilon\cdot(1\times \omega^{2-k})\big)$ as an $\calO$-module.
\end{proposition}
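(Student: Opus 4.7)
The plan is to identify the $\bar\rmT$-eigenspace decompositions of $\Lambda^{\leq 1/p}\langle z\rangle$ (and its specializations) and combine them with the description \eqref{E:explicit Mepsilon}, adapted to the overconvergent setting via the isomorphism of Notation~\ref{N:e1 = e2Pi}. Once the characters of the monomials $z^j$ are computed, the rest is essentially bookkeeping.

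First, I would write down the key local computation. Using \eqref{E:action on overconvergent forms} for the diagonal Teichm\"uller matrix $\Matrix{\bar\alpha}{0}{0}{\bar\delta}$ (which has determinant $\bar\alpha\bar\delta \in \ZZ_p^\times$ so $r=0$ and $\bar d = \bar\alpha\bar\delta$), and using $\bar\delta = \omega(\bar\delta)$ so that the $(1+w)$-factor equals $1$, one gets
\[
z^j\big|_{\Matrix{\bar\alpha}{0}{0}{\bar\delta}} \;=\; \varepsilon(\bar\alpha,\bar\delta)\cdot(\bar\alpha/\bar\delta)^j\cdot z^j \;=\; \bar\alpha^{\,j-s_\varepsilon}\,\bar\delta^{\,a+s_\varepsilon-j}\,z^j.
\]
Hence $z^j$ is a $\bar\rmT$-eigenvector with character $\omega^{j-s_\varepsilon}\times \omega^{a+s_\varepsilon-j}$. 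Matching with $\chi_1 = 1\times\omega^a$ and $\chi_2 = \omega^a\times 1$ (Notation~\ref{N:e1 = e2Pi}) yields, modulo $p-1$:
\[
j\equiv s_\varepsilon \text{ for the }e_1^*\text{-factor},\qquad j\equiv \{a+s_\varepsilon\} \text{ for the }e_2^*\text{-factor}.
\]

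Next, I would observe that $\Lambda^{\leq 1/p}\langle z\rangle$ is an orthonormal Banach $\Lambda^{\leq 1/p}$-module with basis $\{z^j\}_{j\geq 0}$, and the $\bar\rmT$-action is diagonal in this basis. Since $|\Delta|=p-1$ is prime to $p$, the decomposition of $\Lambda^{\leq 1/p}\langle z\rangle$ into $\bar\rmT$-eigenspaces is integrally split, so each eigenspace is itself an orthonormal Banach $\Lambda^{\leq 1/p}$-module with basis given by the subset of monomials carrying that character. Combining with the $\calO\llbracket\Iw_p\rrbracket$-module decomposition of $\widetilde\rmH$ from Notation~\ref{N:e1 = e2Pi} — which gives, by the same reasoning as in \S\,\ref{S:explicit Muniv},
\[
\rmS^{\dagger,(\varepsilon)} \;\cong\; e_1^*\cdot\bigl(\Lambda^{\leq 1/p}\langle z\rangle\bigr)^{\bar\rmT=1\times\omega^a}\;\oplus\;e_2^*\cdot\bigl(\Lambda^{\leq 1/p}\langle z\rangle\bigr)^{\bar\rmT=\omega^a\times 1}
\]
(the twist by $\varepsilon$ is absorbed into the character computation above) — gives the claimed orthonormal basis \eqref{E:basis of Sdagger} of $\rmS^{\dagger,(\varepsilon)}$.

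For the weight $k$ statement, I would invoke the specialization isomorphism \eqref{E:family overconvergent forms specialization}: since formation of eigenspaces and the tensor product $-\otimes_{\Lambda^{\leq 1/p},w\mapsto w_k}\calO$ both commute with direct sums, the specialized list is an $\calO$-Banach basis of $\rmS_k^\dagger\bigl(\varepsilon\cdot(1\times \omega^{2-k})\bigr)$. For the classical subspace, I would use the identification $\calO[z]^{\leq k-2}\subset \calO\langle z\rangle$ together with the same Mackey-style computation of the $\bar\rmT$-eigenspaces of $\calO[z]^{\leq k-2}$: the $\bar\rmT$-eigenspace decomposition of polynomials is just the restriction of that of analytic functions, so the subset of basis elements in \eqref{E:basis of Sdagger} of degree $\leq k-2$ is an $\calO$-basis of $\rmS_k^\Iw\bigl(\varepsilon\cdot(1\times\omega^{2-k})\bigr)$.

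There is no real obstacle here; the only points deserving care are (i) verifying that the Teichm\"uller-diagonal matrices act with the scalar factor $(1+w)^0=1$ (so the $w$-dependence drops out and the $\bar\rmT$-character computation is clean), and (ii) noting that the $\bar\rmT$-eigenspace decomposition is \emph{integrally} split (because $|\Delta|$ is a unit in $\calO$), which is what preserves the orthonormal property when passing from $\Lambda^{\leq 1/p}\langle z\rangle$ to its eigenspaces and hence to $\rmS^{\dagger,(\varepsilon)}$.
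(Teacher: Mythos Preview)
Your proposal is correct and follows essentially the same approach as the paper: both identify $\rmS^{\dagger,(\varepsilon)}$ with the direct sum of the two $\bar\rmT$-eigenspaces of $\Lambda^{\leq 1/p}\langle z\rangle$ (via the generators $e_1,e_2$ of Notation~\ref{N:e1 = e2Pi}), compute that $z^j$ is a $\bar\rmT$-eigenvector with character $\omega^{j-s_\varepsilon}\times\omega^{a+s_\varepsilon-j}$, and match against $\chi_1,\chi_2$; the specialization to weight $k$ and the passage to $\rmS_k^\Iw$ are then immediate. You are slightly more explicit than the paper in verifying that the Teichm\"uller diagonal acts with trivial $(1+w)$-factor and that the $\bar\rmT$-eigenspace decomposition is integrally split, but these are exactly the points the paper leaves implicit in the sentence ``follows from the discussion above''.
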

\begin{proof}
The list of basis \eqref{E:basis of Sdagger} follows from the discussion above.  The case for $\rmS_k^{\dagger}(\varepsilon\cdot (1\times \omega^{2-k}))$ follows from the family case and the isomorphism \eqref{E:family overconvergent forms specialization}.
\end{proof}
This result will immediately give a dimension formula for $d_k^\Iw(\psi)$, which we defer to Proposition~\ref{P:dimension of SIw}.

\begin{notation}\fakephantomsection
\label{N:power basis}
\begin{enumerate}
\item 
We call \eqref{E:basis of Sdagger} the \emph{power basis} of $\rmS^{\dagger, (\varepsilon)}$ and  $\rmS_{k}^{\dagger}\big(\varepsilon\cdot (1\times \omega^{2-k})\big)$, denoted by $\bfB^{ (\varepsilon)}$ (as it formally does not depend on $k$).
\item 
The \emph{degree} of each basis element $\bfe = e_i^*z^j \in \bfB^{ (\varepsilon)}$ is the exponents on $z$, i.e., $\deg (e_i^*z^j) = j$.
We order the elements in $\bfB^{(\varepsilon)}$ as $\bfe_1^{(\varepsilon)}, \bfe_2^{(\varepsilon)}, \dots$ with increasing degrees. In particular each $\bfe_\ell^{(\varepsilon)}$ is some $e_i^*z^j$ and $i$ depends only on the parity of $\ell$.
Moreover, under our generic assumption $1\leq a \leq p-4$ (indeed $1\leq a \leq p-2$ is already enough), the degrees of elements of $\bfB^{ (\varepsilon)}$ are pairwise distinct.
\item 
Write $ \bfB_{k}^{(\varepsilon)} $ for the subset of elements of $\bfB^{(\varepsilon)}$ with degree less than or equal to $k-2$. It is a basis of $\rmS_{k}^\Iw\big(\varepsilon\cdot (1\times \omega^{2-k})\big)$ by Proposition~\ref{P:power basis}. When fixing the $k$, elements in $\bfB_k^{(\varepsilon)}$ are often called \emph{classical} and elements in $\bfB^{(\varepsilon)} \backslash \bfB_k^{(\varepsilon)}$ are often called \emph{non-classical}.

\item
We write $\rmU^{\dagger,(\varepsilon)} \in \rmM_\infty(\Lambda^{\leq 1/p})$ for the matrix of $\Lambda^{\leq 1/p}$-linear $U_p$-action on $\rmS^{\dagger,(\varepsilon)}$ with respect to the power basis $\bfB^{(\varepsilon)}$; its evaluation at $w = w_k$ is the matrix $\rmU_k^{\dagger, (\varepsilon)}$ of $U_p$-action on $\rmS_k^\dagger\big(\varepsilon\cdot(1\times \omega^{2-k})\big)$ (with respect to $\bfB^{ (\varepsilon)}$).

Since the $U_p$-operator on $\rmS_k^\dagger\big(\varepsilon\cdot(1\times \omega^{2-k})\big)$ leaves the subspace $\rmS_{k}^\Iw\big(\varepsilon\cdot (1\times \omega^{2-k})\big)$ stable, the matrix $\rmU^{\dagger, (\varepsilon)}_k$ is a block-upper-triangular matrix, whose $d_k^\Iw\big(\varepsilon\cdot (1\times \omega^{2-k})\big) \times d_k^\Iw\big(\varepsilon\cdot (1\times \omega^{2-k})\big)$ upper-left block $\rmU^{\Iw, (\varepsilon)}_k$ is the matrix for the $U_p$-action on $\rmS_{k}^\Iw\big(\varepsilon\cdot (1\times \omega^{2-k})\big)$ with respect to $\bfB_k^{ (\varepsilon)}$.

\end{enumerate}
\end{notation}

\begin{remark}
\label{R:comparison of char power series for power basis and small basis}
As we proved in Lemma~\ref{L:char power series agree} that the $U_p$-action on $\rmS^{\dagger, (\varepsilon)}$ is compact, and the inclusion $\rmS_{p\textrm{-adic}}^{(\varepsilon), \leq 1/p} \subseteq \rmS^{\dagger, (\varepsilon)}$ induces an identification of  characteristic power series: 
\begin{equation}
\label{E:char Up of power is equal to char Up of small basis}\Char(\rmU^{\dagger,(\varepsilon)},t) = C^{(\varepsilon)}(w, t) \quad \textrm{and} \quad \Char(\rmU_k^{\dagger,(\varepsilon)}, t) = C^{(\varepsilon)}(w_k, t).
\end{equation}
\end{remark}

The following is an analogue of the classicality result of overconvergent automorphic forms.

\begin{proposition}[Theta map]
\label{P:theta map}
Let $\varepsilon = \omega^{-s_\varepsilon} \times \omega^{a+s_\varepsilon}$ be a character of $\Delta^2$ relevant to $\bar \rho$, and let $k \geq 2$ be an integer. Put $\psi = \varepsilon \cdot (1\times \omega^{2-k})$, $\varepsilon' = \varepsilon \cdot (\omega^{k-1} \times \omega^{1-k})$, and $\psi' = \varepsilon'\cdot (1 \times \omega^k) = \psi\cdot \tilde \omega^{k-1}$.

\begin{enumerate}
\item
There is a short exact sequence
\begin{equation}
\label{E:classicality short exact sequence}
0 \to \rmS^\Iw_{k}(\psi) \longrightarrow  \rmS^{\dagger}_{k}(\psi)\xrightarrow{(\frac{d}{dz})^{k-1} \circ}\rmS_{2-k}^{\dagger}(\psi'),
\end{equation}
which is equivariant for the usual $U_p$-action on the first two terms and the $p^{k-1}U_p$-action on the third term. Here the map $\big( \frac d{dz}\big)^{k-1} \circ$ sends an element $\varphi \in \rmS_{k}^{\dagger}(\psi)$ to the function that takes $m \in \widetilde \rmH$ to the $(k-1)$th derivative of the function $\varphi(m) \in \calO\langle z\rangle$.
\item
We have an equality of characteristic power series
\[
C^{(\varepsilon)}(w_k, t) = C^{(\varepsilon')}(w_{2-k}, p^{k-1}t) \cdot \mathrm{Char}\big(U_p;\, \rmS^\Iw_k(\psi) \big).
\]
\item
All finite $U_p$-slopes of $\rmS^{\dagger}_{k}(\psi)$ strictly less than $k-1$ are the same as the finite $U_p$-slopes of $\rmS_{k}^\Iw( \psi)$ (which are strictly less than $k-1$). The multiplicity of $k-1$ as $U_p$-slopes of $\rmS^{\dagger}_{k}(\psi)$ is the sum of the multiplicity of $k-1$ as $U_p$-slopes of $\rmS_{k}^\Iw(\psi)$ and the multiplicity of $0$ as $U_p$-slopes of $\rmS^{\dagger}_{2-k}(\psi')$.
\end{enumerate}

\end{proposition}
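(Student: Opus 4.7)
The plan is to establish (1) first, then derive (2) and (3) as consequences.

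For part (1), the kernel claim is immediate: $(d/dz)^{k-1}$ annihilates precisely $\calO[z]^{\leq k-2}$, so after composing with a Hom from $\widetilde \rmH$ the kernel is exactly $\rmS_k^\Iw(\psi)$. For well-definedness of the target equivariance, the key input is Bol's identity: for $\gamma = \Matrix \alpha \beta \gamma \delta \in \bfM_1$ with $D = \det \gamma$, a direct induction on $k-1$ via the chain rule gives
\[
(d/dz)^{k-1}\big( (\gamma z + \delta)^{k-2}\, f(\tfrac{\alpha z+\beta}{\gamma z+\delta}) \big) = D^{k-1}\, (\gamma z+\delta)^{-k}\, f^{(k-1)}(\tfrac{\alpha z+\beta}{\gamma z+\delta}),
\]
so the raw weight-$k$ action intertwines with the raw weight-$(2-k)$ action up to a scalar factor of $D^{k-1}$. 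Decomposing $D = \omega(\bar D) \cdot \langle D \rangle$ into Teichm\"uller and pro-$p$ parts, the factor $\omega(\bar D)^{k-1}$ is absorbed into the shift of characters $\psi \to \psi' = \psi \cdot \tilde \omega^{k-1}$ on $\Delta^2$, while the pro-$p$ factor $\langle D \rangle^{k-1}$ is accounted for by the Central character II hypothesis on $\widetilde \rmH$ from Definition~\ref{D:primitive type}(3), which ensures that the determinant-twist on the $\calO\llbracket (1+p\ZZ_p)^\times \rrbracket$-factor of $\widetilde \rmH$ matches the $(1+p\ZZ_p)^\times$-shift in central character coming from passing from weight $k$ to weight $2-k$. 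For $U_p$-equivariance up to $p^{k-1}$, note that each coset representative $v_j^{-1} = \Matrix p 0 {-jp} 1$ has $D = p$, so Bol contributes exactly the factor $p^{k-1}$ as claimed.

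For part (2), the short exact sequence of part (1) becomes right exact after inverting $p$: any analytic function in $E\langle z\rangle$ admits a $(k-1)$-fold antiderivative in $E\langle z\rangle$ (with denominators bounded by $(k-1)!$). The multiplicativity of characteristic power series for compact operators over short exact sequences of $p$-adic Banach spaces (Serre--Coleman Fredholm theory) then yields
\[
\mathrm{Char}\bigl(U_p;\, \rmS_k^\dagger(\psi);\, t\bigr) = \mathrm{Char}\bigl(U_p;\, \rmS_k^\Iw(\psi);\, t\bigr)\cdot \mathrm{Char}\bigl(p^{k-1}U_p;\, \rmS_{2-k}^\dagger(\psi');\, t\bigr)
\]
in $\calO\llbracket t\rrbracket$. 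The factor of $p^{k-1}$ can be pulled out via the substitution $t \mapsto p^{k-1}t$, and identification (via Remark~\ref{R:comparison of char power series for power basis and small basis} and \eqref{E:family overconvergent forms specialization}) of the outer characteristic power series with $C^{(\varepsilon)}(w_k, t)$ and $C^{(\varepsilon')}(w_{2-k}, t)$ respectively yields the claimed identity.

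For part (3), read the Newton polygons of both sides of the identity in part (2). The $U_p$-action on $\rmS_{2-k}^\dagger(\psi')$ is represented by a matrix with entries in $\calO$, so all its $U_p$-slopes are non-negative; the substitution $t \mapsto p^{k-1}t$ translates every slope of $C^{(\varepsilon')}(w_{2-k}, -)$ upwards by $k-1$, placing every slope of $C^{(\varepsilon')}(w_{2-k}, p^{k-1}t)$ at or above $k-1$. Hence the slopes of $C^{(\varepsilon)}(w_k, t)$ strictly less than $k-1$ come entirely from the factor $\mathrm{Char}(U_p;\rmS_k^\Iw(\psi);t)$, and the multiplicity of $k-1$ splits as the sum of the multiplicity of $k-1$ in $\rmS_k^\Iw(\psi)$ and the multiplicity of $0$ in $\rmS_{2-k}^\dagger(\psi')$.

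The main obstacle is checking the equivariance in part (1): the naive application of Bol's identity introduces a factor $D^{k-1}$ whose pro-$p$ component $\langle D \rangle^{k-1}$ is not itself a character of $\Delta^2$, so one must carefully use the Central character II hypothesis to transfer this pro-$p$ discrepancy onto the $\widetilde \rmH$ side, so that the resulting map lands genuinely in $\rmS_{2-k}^\dagger(\psi')$ rather than in an auxiliary twist. Once this accounting is understood, the rest of the proof is routine Fredholm theory combined with reading Newton polygons.
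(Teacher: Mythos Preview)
Your approach to (1) matches the paper's: Bol's identity, then decompose $\det^{k-1}$ into Teichm\"uller and pro-$p$ parts, absorbing the latter via the Central character II hypothesis. Your derivation of (3) from (2) is likewise the same.

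The gap is in (2). Your claim that the sequence becomes right-exact after inverting $p$, with ``denominators bounded by $(k-1)!$'', is false. The $(k-1)$-fold antiderivative of $z^n$ is $z^{n+k-1}/\bigl((n+1)\cdots(n+k-1)\bigr)$; this denominator has $p$-adic valuation unbounded in $n$ (a single factor can be a large power of $p$), so for $f = \sum a_n z^n \in E\langle z\rangle$ with $a_n \to 0$ sufficiently slowly the formal antiderivative lies outside $E\langle z\rangle$. The paper remarks on exactly this point after the proof: $(d/dz)^{k-1}$ is not surjective onto $\rmS_{2-k}^\dagger(\psi')$, even rationally. Consequently your appeal to multiplicativity of Fredholm determinants over a short exact sequence does not apply as stated.

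The paper bypasses surjectivity by working with the explicit power basis $\bfB^{(\varepsilon)}$ of Proposition~\ref{P:power basis}: each non-classical basis element $e_i^* z^j$ (with $j \geq k-1$) maps under $(d/dz)^{k-1}$ to a \emph{nonzero} scalar multiple $j(j-1)\cdots(j-k+2)\cdot e_i^* z^{j-k+1}$ of a power basis element of $\rmS_{2-k}^\dagger(\psi')$, and this correspondence is a bijection of basis sets. Hence the matrix $\rmU_k^{\dagger,(\varepsilon)}$ of $U_p$ on $\rmS_k^\dagger(\psi)$ is block upper-triangular, with upper-left block the classical $U_p$-matrix $\rmU_k^{\Iw,(\varepsilon)}$ and lower-right block $p^{k-1} D^{-1}\rmU_{2-k}^{\dagger,(\varepsilon')}D$ for a diagonal matrix $D$. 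The factorization of characteristic power series then follows directly from this block-triangular form, with no exactness hypothesis needed.
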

\begin{proof} The results of the proposition are well-known. But we do not know any proof in our abstract setup. We include one for completeness.
\emph{Only in this proof, for $a$ an integer, we write $|^{(a)}_{\Matrix \alpha \beta \gamma \delta}$ for the action of $\Matrix \alpha \beta \gamma \delta \in \bfM_1$ on $h(z) \in \calO\langle z\rangle$ given by }
$$h\big|^{(a)}_{\Matrix \alpha \beta \gamma \delta}(z) = (\gamma z+\delta)^a h\big(\frac{\alpha z+\beta}{\gamma z+\delta} \big).$$

For (1), 
it follows from the discussion in \cite[\S7]{buzzard} that for $h(z) \in \calO\langle z\rangle$,
we have the following equality for $\Matrix\alpha \beta \gamma \delta \in \bfM_1$,
\begin{align*}
&\frac {d^{k-1}}{dz^{k-1}} \Big( h\big|^{(k-2)}_{\Matrix\alpha\beta\gamma\delta}(z)\Big)=\frac {d^{k-1}}{dz^{k-1}} \Big((\gamma z +\delta)^{k-2} h\big( \frac{\alpha z+\beta}{\gamma z+\delta}\big) \Big)
\\
=\;& (\alpha \delta - \beta \gamma)^{k-1}(\gamma z+\delta )^{-k} \big(\frac{d^{k-1}h}{dz^{k-1}}\big)\big( \frac{\alpha z+\beta}{\gamma z+\delta}\big)
= (\alpha \delta - \beta \gamma)^{k-1} \big(\frac{d^{k-1}h}{dz^{k-1}}\big)\big|^{(-k)}_{\Matrix \alpha \beta \gamma \delta}.
\end{align*}
In other words, $\big(\frac d{dz}\big)^{k-1}$ induces a natural morphism (equivariant for the $\bfM_1$-action)
\begin{equation}
\label{E:derivatives commute with actions}
\big(\frac d{dz}\big)^{k-1}: \Big( \calO\langle z \rangle,\, \big |^{(k-2)} \Big) \longrightarrow\Big( \calO\langle z \rangle,\, \big |^{(-k)} \otimes \det^{k-1} \Big).
\end{equation}
Applying $\Hom_{\calO[\Iw_p]}(\widetilde \rmH, - \otimes \psi)$ to \eqref{E:derivatives commute with actions} gives a natural morphism
\begin{equation}
\label{E:derivatives commute with action 2}
\big(\frac d{dz}\big)^{k-1}\circ : \rmS_{k}^{\dagger, (\varepsilon)}  \longrightarrow\Hom_{\calO[\Iw_p]}\Big(\widetilde \rmH,\big( \calO\langle z \rangle,\, \big |^{(-k)} \otimes \det^{k-1} \otimes \psi \big) \Big)  .
\end{equation}
Write the character $\det$ (on $\GL_2(\QQ_p)$) as $\det_1 \times \det_2$ so that if $\det \Matrix \alpha \beta \gamma \delta = p^r d$ for $r \in \ZZ$ and $d\in \ZZ_p^\times$, then $\det_1\Matrix \alpha \beta \gamma \delta =p^r \omega(\bar d)$ and $\det_2 \Matrix \alpha\beta\gamma\delta= d / \omega(\bar d)$.
Then we may rewrite \eqref{E:derivatives commute with action 2} as
\begin{equation}
\label{E:theta map local}
\big(\frac d{dz}\big)^{k-1}\circ : \rmS_{k}^{\dagger, (\varepsilon)}  \longrightarrow\Hom_{\calO[\Iw_p]}\Big(\widetilde \rmH \otimes \det_2^{1-k},\big( \calO\langle z \rangle,\, \big |^{(-k)} \otimes \det_1^{k-1} \otimes \psi \big) \Big).
\end{equation}

By assumption $(3)$ in Definition~\ref{D:primitive type}, the $\calO\llbracket K_p\rrbracket$-projective augmented module $\widetilde \rmH$ is continuously and $\calO[\GL_2(\QQ_p)]$-equivariantly isomorphic to $\widetilde \rmH \otimes \det_2^{1-k}$. Using this isomorphism, we deduce that the right hand side of \eqref{E:theta map local} is isomorphic to $\rmS_{2-k}^{\dagger}(\psi')$ with $\psi' = \psi \cdot \tilde \omega^{k-1}$, and the corresponding weight disk character
$$\varepsilon' = (1\times \omega^{-k}) \cdot \psi' =  (1 \times \omega^{-k})\cdot \varepsilon \cdot (1\times \omega^{2-k}) \cdot (\omega^{k-1} \times \omega^{k-1}) = \varepsilon \cdot (\omega^{k-1} \times \omega^{1-k}).
$$
The $U_p$-action on the right hand side of \eqref{E:theta map local} is $p^{k-1}$ times the usual $U_p$-action on $\rmS_{2-k}^{\dagger}(\psi')$ (where the extra factor $p^{k-1}$ comes from the image of $p$ under $\det_1^{k-1}$). 
This proves (1) of the Proposition.

\medskip
(2) 
Consider the standard power basis \eqref{E:basis of Sdagger} of $\rmS_{k}^{\dagger}(\psi)$. The kernel of the map $(\frac d{dz})^{k-1}\circ$, namely $\rmS_{k}^\Iw(\psi)$, is exactly spanned by those power basis elements $\bfe \in \bfB^{(\varepsilon)}$ such that $\deg (\bfe) \leq k-2$, i.e $\bfB^{(\varepsilon)}_k$. Moreover, the image of a basis element $\bfe = e_i^* z^j \in \bfB^{(\varepsilon)}-\bfB^{(\varepsilon)}_k$ under $(\frac d{dz})^{k-1}\circ$ is exactly $j (j-1)\cdots (j-k+2) e_i^*z^{j-k+1}$, namely a multiple of the power basis element in $\bfB^{(\varepsilon')}$ for $\rmS_{2-k}^{\dagger}(\psi')$ (if nonzero).
Therefore, it follows that
\begin{equation}
\label{E:Ukdagger is block upper triangular}
\rmU_k^{\dagger,(\varepsilon)} = \begin{pmatrix}
\rmU_k^{\Iw,(\varepsilon)}& * \\
0 & p^{k-1}D^{-1} \rmU_{2-k}^{\dagger,(\varepsilon')} D
\end{pmatrix},
\end{equation}
where $D$ is the diagonal matrix whose diagonal entries are indexed by $\bfe = e_i^*z^j \in \bfB^{(\varepsilon)}$ with $i=1,2$ and $j \geq k-1$, and are given by $j(j-1) \cdots (j-k+2)$. From this and the equality \eqref{E:char Up of power is equal to char Up of small basis}, we see immediately that
\begin{align*}
C^{(\varepsilon)}(w_k,t) &\stackrel{\eqref{E:char Up of power is equal to char Up of small basis}}= \Char (\rmU_k^{\dagger,(\varepsilon)},t) = \Char ( \rmU_k^{\Iw,(\varepsilon)
},t) \cdot \Char(\rmU_{2-k}^{\dagger,(\varepsilon')},p^{k-1}t)
\\ & \stackrel{\eqref{E:char Up of power is equal to char Up of small basis}}=\Char\big(U_p;\,\rmS^\Iw_k(\psi) \big)\cdot  C^{(\varepsilon')}(w_{2-k}, p^{k-1}t) .
\end{align*}

(3) By (2), the multiset of the finite $U_p$-slopes of $\rmS_{k}^{\dagger}(\psi)$ is the disjoint union of the multiset of $U_p$-slopes of $\rmS_{k}^{\Iw}(\psi)$ and the multiset of $k-1+\alpha$ for each $U_p$-slope $\alpha$ of $\rmS_{2-k}^{\dagger}(\psi')$, counted with multiplicity. (3) is a corollary of this and the fact that the $U_p$-slope on $\rmS_{2-k}^{\dagger}(\psi')$ are non-negative (as $C^{(\varepsilon')}(w_{2-k}, t)$ has integral coefficients).
\end{proof}

\begin{remark} The short exact sequence \eqref{E:classicality short exact sequence} is not exact on the right, because ``integrating" a power series in $\calO\langle z\rangle $ no longer belong to $\calO\langle z\rangle$ (inverting $p$ will not fix this problem either due to existence of arbitrarily large $p$-powers in the denominators). To get the surjectivity, one needs to work with functions on open disks (as opposed to closed disks), or to take certain limit. Since we will not use this later, we leave this as an exercise for the readers.

\end{remark}

The following is an analogue of the classical Atkin--Lehner theory. 

\begin{proposition}[Atkin--Lehner involution] \label{P:Atkin-Lehner duality}
Let $\psi = \psi_1 \times \psi_2$ be a character of $\Delta^2$ (where we allow $\psi_1 = \psi_2$). Put $\psi'' = \psi_2 \times \psi_1$ and $\varepsilon''= \varepsilon \cdot \psi'' \cdot \psi^{-1} = \omega^{-s_{\varepsilon''}} \times \omega^{a+s_{\varepsilon''}}$ (with $s_{\varepsilon''} = \{k-2-a-s_\varepsilon\}$).

\begin{enumerate}
\item 
Then we have a well-defined natural morphism
\begin{equation}
\label{E:Atkin-Lehner map}
\begin{tikzcd}[row sep =0pt]
\AL_{(k, \psi)}: \rmS_{k}^{\Iw}( \psi)  \ar[r] &  \rmS_{k}^{\Iw}( \psi'')
\\
\varphi \ar[r, mapsto] & (\ \AL_{(k, \psi)}(\varphi): x \mapsto \varphi \big(x \Matrix 0{p^{-1}}10 \big)\big|_{\Matrix 01p0} \ ).
\end{tikzcd}
\end{equation}
Here the last $|_{\Matrix 01p0}$ is the usual action on $\calO[z]^{\leq k-2}$ and the \emph{trivial} action on the factor $\psi''$. 

\item 
We have an explicit formula, for $i=1,2$ and any $j$,
$$
\AL_{(k,\psi)} (e_i^* z^j) = p^{k-2-j} \cdot e_{3-i}^* z^{k-2-j}.
$$
Or equivalently we have
\begin{equation}
\label{E:AL swaps power basis elements}\AL_{(k,\psi)}(\bfe_\ell^{(\varepsilon)}) = p^{k-2-\deg \bfe_\ell^{(\varepsilon)}}\bfe^{(\varepsilon'')}_{d_k^\Iw(\psi'')+1-\ell},
\end{equation}
where we added superscripts to the power basis element to indicate the corresponding character.
In particular, we have
\begin{equation}
\label{E:AL circ AL}
\AL_{(k, \psi'')} \circ \AL_{(k, \psi)} = p^{k-2}.
\end{equation}
So $\AL_{(k, \psi)}$ induces an isomorphism $\AL_{(k,\psi)}: \rmS_{k}^\Iw(\psi)\otimes E \xrightarrow{\cong}\rmS_{k}^\Iw(\psi'') \otimes E$.  

\item
When $\psi_1 \neq \psi_2$ (or equivalently $k \not\equiv k_\varepsilon \bmod{(p-1)}$), we have an equality
\begin{equation}
\label{E:Atkin-Lehner involution}
U_p \circ \AL_{(k, \psi)} \circ U_p = p^{k-1} \cdot \AL_{(k, \psi)}
\end{equation}
as maps from $\rmS_{k}^{\Iw}( \psi)$ to $\rmS_{k}^{\Iw}( \psi'')$.
Consequently, when $\psi_1 \neq \psi_2$,  we can pair the slopes for the $U_p$-action on $\rmS_{k}^{\Iw}( \psi)$ and the slopes for the $U_p$-action on $\rmS_{k}^{\Iw}( \psi'')$ so that each pair adds up to $k-1$. In particular all slopes belong to $[0, k-1]$.
\end{enumerate}
\end{proposition}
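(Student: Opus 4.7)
Parts (1) and (2) are essentially direct verifications resting on two structural facts: that $\pi := \Matrix{0}{p^{-1}}{1}{0}$ normalizes $\Iw_p$, and the rigidification $e_\ell\pi = e_{3-\ell}$ from Notation~\ref{N:e1 = e2Pi}. For (1), I would compute, for $g = \Matrix{\alpha}{\beta}{p\gamma}{\delta} \in \Iw_p$, the conjugate $g' := \pi^{-1}g\pi = \Matrix{\delta}{\gamma}{p\beta}{\alpha} \in \Iw_p$, and observe the key identity $g'\pi^{-1} = \pi^{-1}g$ in $\GL_2(\QQ_p)$. Both $\AL_{(k,\psi)}(\varphi)(xg)$ and $\AL_{(k,\psi)}(\varphi)(x)|_g$ (on the target $\calO[z]^{\leq k-2} \otimes \psi''$) then unfold to $\varphi(x\pi)|_{\pi^{-1}g}^{\mathrm{poly}}$, with the $\psi$-twist $\psi_1(\bar\delta)\psi_2(\bar\alpha)$ arising from $|_{g'}$ matching the $\psi''$-twist $\psi''(\bar\alpha,\bar\delta) = \psi_2(\bar\alpha)\psi_1(\bar\delta)$. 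For (2), since $\AL_{(k,\psi)}(e_i^*z^j)(e_\ell) = (e_i^*z^j)(e_{3-\ell})|_{\pi^{-1}}^{\mathrm{poly}}$, this vanishes unless $\ell = 3-i$, and otherwise evaluates to $(pz)^{k-2}\cdot(pz)^{-j} = p^{k-2-j}z^{k-2-j}$; the ordering statement \eqref{E:AL swaps power basis elements} then follows from the observation that $j\mapsto k-2-j$ induces a degree-reversing bijection between the degree multisets of $\bfB_k^{(\varepsilon)}$ and $\bfB_k^{(\varepsilon'')}$ (using $s_{\varepsilon''} = \{k-2-a-s_\varepsilon\}$ and $\{a+s_{\varepsilon''}\} = \{k-2-s_\varepsilon\}$), and \eqref{E:AL circ AL} is immediate from iteration.

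For (3), the central structural observation is $\pi^{-1}\Matrix{p^{-1}}{0}{0}{1}\pi = \Matrix{1}{0}{0}{p^{-1}}$, so conjugation by $\pi$ interchanges the cocharacter $\tau := \Matrix{p^{-1}}{0}{0}{1}$ defining $U_p$ with the ``opposite'' cocharacter $\tau' := \Matrix{1}{0}{0}{p^{-1}}$. Let $U_p^*$ denote the Hecke operator on $\rmS_k^\Iw(\psi)$ associated to $\Iw_p\tau'\Iw_p$, implemented via the coset representatives $v'_j := \pi v_j \pi^{-1} = \Matrix{1}{jp^{-1}}{0}{p^{-1}}$, or equivalently (after inverting $p$) as $\AL_{(k,\psi)}^{-1} \circ U_p \circ \AL_{(k,\psi)}$. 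A calculation mirroring (1), exploiting that $\pi$ normalizes $\Iw_p$ together with $\pi v_j = v'_j \pi$, produces the intertwining $U_p \circ \AL_{(k,\psi)} = \AL_{(k,\psi)} \circ U_p^*$ as maps $\rmS_k^\Iw(\psi) \to \rmS_k^\Iw(\psi'')$. Therefore \eqref{E:Atkin-Lehner involution} reduces to the Iwahori--Hecke identity $U_p^* \circ U_p = p^{k-1}\cdot\id$ on $\rmS_k^\Iw(\psi)$ under the hypothesis $\psi_1 \neq \psi_2$.

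The hard part will be establishing this last identity. The natural approach is to compute the product of double cosets $\Iw_p\tau'\Iw_p \cdot \Iw_p\tau\Iw_p$ directly: writing each $v_i v'_j$ in the form $\Matrix{p^{-1}}{0}{0}{p^{-1}}\cdot\gamma_{ij}$ with $\gamma_{ij}\in\GL_2(\ZZ_p)$, Hypothesis~\ref{H:b=0} ensures the central element acts trivially on $\widetilde\rmH$, so the double sum reorganizes according to the Bruhat class of $\gamma_{ij}$ relative to $\Iw_p$. The terms in the ``big cell'' vanish when evaluated against the $\psi$-isotypic part of the target, precisely because $\psi_1\neq\psi_2$ makes the corresponding character sum over $\Delta$ zero by orthogonality, while the ``small cell'' terms collapse to $p^{k-1}\cdot\id$ after accounting for the weight-$k$ polynomial action of the central matrix. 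An alternative, possibly cleaner route is to encode the claim as the matrix identity $\rmU^{\Iw,(\varepsilon'')}_k \cdot A \cdot \rmU^{\Iw,(\varepsilon)}_k = p^{k-1}A$, with $A$ given explicitly by (2), and verify it directly using the matrix formula for $U_p$ on the power basis developed in \S\,\ref{S:Up in terms of matrices}.

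Once \eqref{E:Atkin-Lehner involution} is in hand, the slope pairing follows at once: $\AL_{(k,\psi)}$ is an isomorphism after inverting $p$ (by (2)), so it intertwines $U_p$ on $\rmS_k^\Iw(\psi)\otimes E$ with $p^{k-1}U_p^{-1}$ on $\rmS_k^\Iw(\psi'')\otimes E$; consequently each $U_p$-eigenvalue $\lambda$ on one side corresponds to an eigenvalue $p^{k-1}/\lambda$ on the other, pairing the slopes as $\alpha \leftrightarrow k-1-\alpha$. Non-negativity of all slopes (from integrality of $\Char(U_p;\,\rmS_k^\Iw(\psi);t)$) then confines them to $[0,k-1]$.
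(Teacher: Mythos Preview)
Your proposal is correct and very close to the paper's proof. Parts (1) and (2) are handled identically. For (3), the paper does not introduce the opposite Hecke operator $U_p^*$ but instead composes once more with $\AL_{(k,\psi'')}$ and proves the equivalent identity $U_p\circ\AL_{(k,\psi)}\circ U_p\circ\AL_{(k,\psi'')}=p^{2k-3}$ directly; after using the central-character hypothesis to strip off $\Matrix{p}{0}{0}{p}^{-2}$, this becomes the double sum $p^{2k-4}\sum_{i,j}\varphi(\bullet\Matrix{1}{i}{j}{1+ij}^{-1})|_{\Matrix{1}{i}{j}{1+ij}}$, which is then split according to whether $1+ij\equiv 0\bmod p$ (your ``big cell'') or not, and both pieces are killed or simplified by the orthogonality relation $\sum_{\bar j\in\Delta}\psi_1(\bar j)\psi_2(\bar j)^{-1}=0$. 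Your packaging via $U_p^*$ is slightly more conceptual and makes the intertwining transparent, but the actual computation---the Bruhat case split and the use of $\psi_1\neq\psi_2$---is the same in both arguments.
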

\begin{proof}
This is well known, but not documented in the literature for our abstract setup. We include a proof for completeness. First of all, $\psi = \varepsilon \cdot (1\times \omega^{2-k}) = \omega^{-s_\varepsilon} \times \omega^{a+s_\varepsilon+2-k}$ and
$$
\varepsilon'' = \varepsilon \cdot \psi''\cdot \psi^{-1} = \omega^{-s_\varepsilon + (a+s_\varepsilon +2-k) +s_\varepsilon} \times \omega^{a+s_\varepsilon+s_\varepsilon - (a+s_\varepsilon +2-k)} = \omega^{-s_{\varepsilon''}} \times \omega^{a+s_{\varepsilon''}},
$$
where $s_{\varepsilon''} = \{k-2-a-s_\varepsilon\}$.

(1) We need to show that if $\varphi \in \rmS_{k}^{\Iw}( \psi)$, $\AL_{(k,\psi)}(\varphi)$ as a map from $\widetilde \rmH$ to $\calO[z]^{\leq k-2} \otimes \psi''$ is equivariant for the $\Iw_{p}$-action. Indeed,  for $\Matrix \alpha \beta \gamma \delta \in \Iw_p$ and $x \in \widetilde \rmH$,
\begin{align*}
\AL_{(k,\psi)}(\varphi)&\big(x\Matrix \alpha \beta \gamma \delta\big) = \varphi\big(x \Matrix \alpha \beta \gamma \delta\Matrix 0{p^{-1}}10 \big)\big|_{\Matrix 01p0} = \varphi \big( x\Matrix 0{p^{-1}}10\Matrix \delta{p^{-1}\gamma} {p\beta} \alpha\big)\big|_{\Matrix 01p0}
\\
& = \psi(\bar \delta, \bar \alpha) \cdot \varphi(x  \Matrix 0{p^{-1}}10\big)\big|_{\Matrix \delta{p^{-1}\gamma} {p\beta} \alpha \Matrix 01p0} = \psi''( \bar \alpha, \bar \delta) \cdot \varphi(x  \Matrix 0{p^{-1}}10\big)\big|_{ \Matrix 01p0 \Matrix\alpha \beta \gamma \delta}.
\end{align*}
Here the third equality uses the fact that $\varphi$ is equivariant for the right action of $\Matrix \delta{p^{-1}\gamma} {p\beta} \alpha \in \Iw_p$ on $\widetilde \rmH$ and the product right action of $\Matrix \delta{p^{-1}\gamma} {p\beta} \alpha \in \Iw_p$ on $\calO[z]^{\leq k-2} \otimes \psi$.

(2) Recall from Notation~\ref{N:e1 = e2Pi} that the generators of $\widetilde \rmH$ are chosen so that $e_1 \Matrix 01p0 = e_2$ and $e_2 \Matrix 01p0 =e_1$ (as $\Matrix p00p$ acts trivially on $\widetilde \rmH$ by Hypothesis~\ref{H:b=0}). So if $\varphi = e_i^*z^j\in \rmS_{k}^\Iw(\psi)$, then
$$\AL_{(k,\psi)}(\varphi)(e_{i'}) = \varphi \big(e_{i'} \Matrix 0{p^{-1}}10\big) \big|_{\Matrix 01p0} = \varphi(e_{3-i'})\big|_{\Matrix 01p0}=
\begin{cases}
z^j\big|_{\Matrix 01p0} = (pz)^{k-2-j} & \textrm{if } i=3-i'
\\
0 & \textrm{if } i = i'.
\end{cases} 
$$

(3)
Let us check that $\psi_1 = \psi_2$ if and only if $k \equiv k_\varepsilon \bmod {(p-1)}$. Indeed, $\psi = \varepsilon \cdot (1\times \omega^{2-k}) = \omega^{-s_\varepsilon} \times \omega^{a+s_\varepsilon +2-k}$. So $\psi_1 = \psi_2 $ if and only if $-s_\varepsilon \equiv a+s_\varepsilon+2 -k \bmod{(p-1)}$, which is further equivalent to $k \equiv a+2s_\varepsilon +2\equiv k_\varepsilon \bmod{(p-1)}$.

Now we assume $\psi_1 \neq \psi_2$ and check the equality \eqref{E:Atkin-Lehner involution} and then the last sentence is clearly a corollary of this.
Thanks to \eqref{E:AL circ AL}, we prove $ U_p \circ \AL_{(k, \psi)} \circ U_p\circ \AL_{(k, \psi'')} = p^{2k-3}$ instead.
We compute using the formula \eqref{E:Up action}, for $\varphi \in \rmS_{k}^\Iw(\psi)$, 
\begin{eqnarray}
\label{E:U AL U AL computation}
&&U_p \circ \AL_{k, \psi} \circ U_p \circ  \AL_{k, \psi''} (\varphi)
\\
\nonumber
&=& \sum_{i,j  =0}^{p-1} \varphi \big(\bullet{\Matrix 01p0}^{-1} {\Matrix p0{ip} 1}^{-1} {\Matrix 01p0}^{-1} {\Matrix p0{jp}1}^{-1} \big) \big|_{{ \Matrix p0{jp} 1} \Matrix 01p0 {\Matrix p0{ip}1}\Matrix 01p0}
\\ \nonumber
&= & \sum_{i,j  =0}^{p-1} \varphi \big(\bullet {\Matrix p00p}^{-2} {\Matrix 1ij{1+ij}}^{-1} \big) \big|_{{ \Matrix 1ij{1+ij}{\Matrix p00p}}^2 }
\\ \nonumber
&= & p^{2k-4}\sum_{i,j  =0}^{p-1} \varphi \big(\bullet {\Matrix 1ij{1+ij}}^{-1} \big) \big|_{\Matrix 1ij{1+ij}}
\end{eqnarray}

We separate the discussion depending on whether $1+ij$ is divisible by $p$.
When $ij \equiv -1 \bmod p$, 
\begin{align*}
\sum_{ j = 1}^{p-1} \varphi  \big(\bullet {\Matrix 1{i_j}j0}^{-1} \big) \big|_{\Matrix 1{i_j} j0} &= \sum_{ j = 1}^{p-1} \varphi  \big(\bullet{\Matrix0{-1}10}^{-1} {\Matrix {-i_j}10j}^{-1} \big) \big|_{\Matrix {-i_j}10j \Matrix 0{-1}10} 
\\
&= \sum_{ j = 1}^{p-1} \psi_1(\bar j)\psi_2(\bar j)^{-1} \varphi  \big(\bullet{\Matrix0{-1}10}^{-1}\big) \big|_{\Matrix 0{-1}10} 
\end{align*}
where $i_j$ denote the unique integer in $\{1, \dots, p-1\}$ such that $j i_j \equiv -1 \bmod p$. Since $\psi_1 \neq \psi_2$, the sum of $\psi_1(\bar j) \psi_2(\bar j)^{-1}$ over all $\bar j  \in \Delta$ is zero. So the sum above is zero.

When $ij \not\equiv -1 \bmod p$, we may write
$$\MATRIX 1ij{1+ij}  = \MATRIX {1/(1+ij)}i0{1+ij} \MATRIX 10{j/(1+ij)} 1.$$
So we deduce
\begin{eqnarray}
\label{E:sum when ij neq -1}
&&\sum_{ij \not\equiv -1 \bmod p} \varphi \big(\bullet {\Matrix 1ij{1+ij}}^{-1} \big) \big|_{\Matrix 1ij{1+ij}}
\\
\nonumber
&=& \sum_{ij \not\equiv -1 \bmod p} \varphi \big(\bullet {{\Matrix 10{j/(1+ij)} 1}^{-1}\Matrix{1/(1+ij)}i0{1+ij}}^{-1} \big) \big|_{\Matrix {1/(1+ij)}i0{1+ij} \Matrix 10{j/(1+ij)} 1}
\\
\nonumber
& = &
\sum_{ij \not\equiv -1 \bmod p} \psi_1(\overline{1+ij})\psi_2(\overline{1+ij})^{-1} \varphi \big(\bullet {\Matrix 10{j/(1+ij)} 1}^{-1} \big) \big|_{ \Matrix 10{j/(1+ij)} 1}
\end{eqnarray}
Since $\psi_1 \neq \psi_2$, the sum of $\psi_1(\bar \alpha) \psi_2(\bar\alpha)^{-1}$ over all $\bar \alpha  \in \Delta$ is zero. So when $j \neq 0$, if we consider the above sum with $j/(1+ij)$ fixed yet letting $1+ij$ to run over all nonzero residual classes mod $p$,
we would get zero. 
Indeed, there is a bijection
$$
\begin{tikzcd}[row sep = 0pt]
\{\bar i, \bar j \in \FF_p \;|\; \bar j \neq 0 \textrm{ and }1+\bar i \bar j \neq 0 \} \ar[r, leftrightarrow] & \{( \bar \alpha, \bar \beta) \in (\FF_p^\times)^2\}
\\
(\bar i,\, \bar j) \ar[r, mapsto] &  (1+ \bar i \bar j,\, \bar j / (1+\bar i \bar j))
\\
(\bar\alpha \bar \beta,\, (\bar \alpha-1) / \bar \alpha \bar \beta ) & \ar[l, mapsto] (\bar \alpha,\, \bar \beta).
\end{tikzcd}
$$
So the sum in \eqref{E:sum when ij neq -1} is equal to the sum over the pairs $(i,0)$, i.e.
$$\sum_{ij \not\equiv -1 \bmod p} \varphi \big(\bullet {\Matrix 1ij{1+ij}}^{-1} \big) \big|_{\Matrix 1ij{1+ij}} = \sum_{i =0}^{p-1} \varphi \big(\bullet {\Matrix 1i0{1}}^{-1} \big) \big|_{\Matrix 1i0{1}} = p \varphi.
$$
Combining this with the discussion when $ij \equiv -1 \bmod p$ and \eqref{E:U AL U AL computation}, we deduce that
$$U_p \circ \AL_{k, \psi} \circ U_p \circ  \AL_{k, \psi''} (\varphi) = p^{2k-4} \cdot p \varphi = p^{2k-3} \varphi.$$
This concludes the proof of the Proposition.
\end{proof}
\begin{remark}\fakephantomsection\label{R:remark after AL for abstract forms}
\begin{enumerate}
\item 
We point out that $\rmS_{k}^\Iw(\psi)$ and $\rmS_{k}^\Iw(\psi'')$ often correspond to points on \emph{different} weight disks. More precisely, if $\varepsilon = \omega^{-s_\varepsilon} \times \omega^{a+s_\varepsilon}$ is a character of $\Delta^2$ relevant to $\bar\rho$.
Then applying above to $\psi = \omega^{-s_\varepsilon} \times \omega^{a+s_\varepsilon-k+2}$ we get an Atkin--Lehner involution, for $\varepsilon'' = \omega^{-s''_\varepsilon} \times \omega^{a+s''_\varepsilon}$ with $s''_\varepsilon: = \{k-  2-a-s_\varepsilon\}$,
$$\AL_{(k, \psi)}:  \rmS_{k}^\Iw\big(\varepsilon\cdot (1\times \omega^{2-k})\big) \to \rmS_{k}^\Iw\big(\varepsilon'' \cdot (1\times \omega^{2-k})\big).
$$
In particular, for different $k$'s, the Atkin--Lehner involutions are between different pairs of disks. (This phenomenon was known before, see e.g. \cite[Step III of \S3.23]{liu-wan-xiao}.)
\item
One can also deduce an Atkin--Lehner involution for abstract classical forms of higher level at $p$ (with primitive nebentypus characters). We do not need this generalization in this paper, so we leave it as an exercise for interested readers.
\item
For the formulas in Propositions~\ref{P:theta map} and \ref{P:Atkin-Lehner duality},  we essentially used the fact that the action of $\Matrix p00p$ on $\widetilde \rmH$ is given by multiplication with $\xi = 1$.  If $\xi \neq 1$ in general, the formulas above should be modified to
$$ \AL_{(k, \psi'')} \circ \AL_{(k, \psi)} = p^{k-2} \xi^{-1}, \quad \textrm{and}\quad U_p \circ \AL_{(k, \psi)} \circ U_p = p^{k-1} \xi^{-1} \cdot \AL_{(k, \psi)}.$$

\item 
The case when $\psi_1 \neq \psi_2$ is the usual Atkin--Lehner involution. However, what is particularly useful to us is the case when $\psi_1 = \psi_2$. While the equality \eqref{E:Atkin-Lehner involution} no longer holds, the operator $\AL_{(k, \psi)}$ is closely related to the $p$-stabilization and $T_p$- and $U_p$-operators. We refer to the sequel to this paper for a detailed discussion.
\end{enumerate}
\end{remark}

%\begin{notation}
%\label{N:omit varepsilon}
%For the rest of this paper, let $\widetilde \rmH$ be as before.  We shall fix a character  %We may suppress the information of $\varepsilon$ from the notation when no confusion arises. 
%We will only consider weights $k \geq 2$ such that $k \equiv k_\varepsilon \bmod{p-1}$ (per Notation~\ref{N:relevant varepsilon}\,(1)), so the corresponding spaces of abstract classical forms are $\rmS_{k}^\Iw(\tilde \varepsilon_1)$ and $\rmS_{k}^\unr(\varepsilon_1)$ for $\varepsilon_1 = \omega^{-s_\varepsilon}$.

%\emph{To simplify the notation, we shall frequently omit $\varepsilon$ from the notation during the discussions or proofs, e.g. writing $\rmS^\dagger$ for $\rmS^{\dagger, (\varepsilon)}$ or  $d_k^\Iw$ for $d_k^\Iw(\tilde \varepsilon_1)$. However, we try to include back these $\varepsilon$'s when stating propositions and lemmas.}
%\end{notation}

\section{Basic properties of ghost series}
\label{Sec:properties of ghost series}

In this section, we discuss some basic properties of the ghost series:
\begin{itemize}
\item We give the dimension formulas for the space of abstract classical forms (Propositions~\ref{P:dimension of SIw} and \ref{P:dimension of Sunr}).
\item 
From the dimension formulas, we deduce a formula in Proposition~\ref{P:increment of degrees in ghost series} describing the increment in degrees of the coefficients of the ghost series, comparing this to the halo estimate (\cite[Theorem 3.16]{liu-wan-xiao}) for the modified Mahler basis. In particular, we see that the halo estimate for the modified Mahler basis is ``surprisingly sharp".
\item 
Next, we discuss the compatibility of the ghost series with theta maps,  the Atkin--Lehner involutions, and the $p$-stabilizations (Proposition~\ref{P:ghost compatible with theta AL and p-stabilization}\,(1)--(3)).  
 \item
At the same time, we prove the most important property of ghost series, namely the \emph{ghost duality}, near a weight point where the space of abstract $p$-new forms  is nonzero. We refer to Proposition~\ref{P:ghost compatible with theta AL and p-stabilization}\,(4) for the statement.

\item Finally, we show that the ``old form slopes" of the ghost series at a classical weight $k$ is $\leq \lfloor \frac{k-1}{p+1}\rfloor$ (Proposition~\ref{P:gouvea k-1/p+1 conjecture}). This is the essential condition for proving folklore conjecture of Gouv\^ea on maximal old form slopes from local ghost conjecture.
\end{itemize}
Some of the proofs are rather combinatorially involved and somewhat tedious; we recommend passing the technical proofs for the first reading.

\medskip
Recall that we have fixed $\bar \rho = \Matrix{\omega_1^{a+1}}{*\neq 0}{}{1}$ with $1\leq a \leq p-4$, and  $\widetilde \rmH$ a \emph{primitive} $\calO\llbracket K_p\rrbracket$-projective augmented module of type $\bar \rho$ on which $\Matrix p00p$ acts trivially.
Fix a character $\varepsilon = \omega^{-s_\varepsilon} \times \omega^{a+s_\varepsilon}$ of $\Delta^2$ relevant to $\bar \rho$ with $s_\varepsilon \in \{0, \dots, p-2\}$ (as in Notation~\ref{N:relevant varepsilon}\,(2)); in particular, $\varepsilon_1 = e^{-s_\varepsilon}$.

We first deduce the dimension formula for $\rmS_k^\Iw(\psi)$ as a corollary of Proposition~\ref{P:power basis}.

\begin{proposition}
\label{P:dimension of SIw}We have the following dimension formulas
\begin{equation}
\label{E:dimension of SIw}
d_{k}^\Iw\big(\varepsilon\cdot (1\times \omega^{2-k})\big) = \Big\lfloor \frac{k-2-s_\varepsilon}{p-1}\Big\rfloor + \Big\lfloor \frac{k-2-\{a+s_\varepsilon\}}{p-1}\Big\rfloor +2.
\end{equation}
We may write this differently as
\begin{itemize}
\item When $a+s_\varepsilon<p-1$,
$$
d_{k}^\Iw\big(\varepsilon\cdot (1\times \omega^{2-k})\big) = 2\Big\lfloor \frac{k-2-s_\varepsilon}{p-1}\Big\rfloor +1+ \begin{cases}
1 & \textrm{ if } \{k-2-s_\varepsilon\} \geq a\\ 0 & \textrm{ otherwise.}
\end{cases}
$$
\item When $a+s_\varepsilon\geq p-1$,
$$
d_{k}^\Iw\big(\varepsilon\cdot (1\times \omega^{2-k})\big) = 2\Big\lfloor \frac{k-2-\{a+s_\varepsilon\}}{p-1}\Big\rfloor +1+ \begin{cases}
1 & \textrm{ if } \{k-2-\{a+s_\varepsilon\}\} \geq p-1-a\\ 0 & \textrm{ otherwise.}
\end{cases}
$$
\end{itemize}
As a special case of either case, we have
\[
d_{k+p-1}^\Iw\big(\varepsilon\cdot (1\times \omega^{2-k})\big) - d_{k}^\Iw\big(\varepsilon\cdot (1\times \omega^{2-k})\big) = 2.
\]
Asymptotically, we see that $$d_k^\Iw \big(\varepsilon\cdot(1 \times \omega^{2-k})\big) = \frac{2k}{p-1} + O(1).$$
\end{proposition}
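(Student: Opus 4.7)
\medskip

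\noindent\textbf{Proof plan.}
The formula will be read off directly from the power basis of Proposition~\ref{P:power basis}. By the last sentence of that proposition, $\bfB_k^{(\varepsilon)}$ is a basis of $\rmS_k^\Iw\big(\varepsilon\cdot(1\times\omega^{2-k})\big)$ over $\calO$, and it consists of those elements of
$$
\bfB^{(\varepsilon)}=\{\,e_1^*z^{s_\varepsilon+j(p-1)}:j\geq 0\,\}\ \cup\ \{\,e_2^*z^{\{a+s_\varepsilon\}+j(p-1)}:j\geq 0\,\}
$$
whose degree is $\leq k-2$. Hence
$$
d_{k}^\Iw\big(\varepsilon\cdot(1\times\omega^{2-k})\big)=\#\{j\geq 0:s_\varepsilon+j(p-1)\leq k-2\}+\#\{j\geq 0:\{a+s_\varepsilon\}+j(p-1)\leq k-2\}.
$$
Each of these counts equals $\lfloor(k-2-t)/(p-1)\rfloor+1$ for $t=s_\varepsilon$ and $t=\{a+s_\varepsilon\}$ respectively (with the convention that $\lfloor -r/(p-1)\rfloor=-1$ for $1\leq r\leq p-2$, so the count is genuinely $0$ when $t>k-2$). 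Adding gives exactly formula \eqref{E:dimension of SIw}.

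To recast this in the two cases it suffices to compare the two floor terms by writing $k-2-s_\varepsilon=q(p-1)+r$ with $0\leq r\leq p-2$. When $a+s_\varepsilon<p-1$ we have $\{a+s_\varepsilon\}=a+s_\varepsilon$, so the second floor is $\lfloor(q(p-1)+r-a)/(p-1)\rfloor$, which equals $q$ or $q-1$ according as $r\geq a$ or $r<a$; this is precisely the asserted expression $2\lfloor(k-2-s_\varepsilon)/(p-1)\rfloor+1+[\{k-2-s_\varepsilon\}\geq a]$. The case $a+s_\varepsilon\geq p-1$ is entirely parallel: one writes $k-2-\{a+s_\varepsilon\}=q'(p-1)+r'$, uses $s_\varepsilon=\{a+s_\varepsilon\}-a+(p-1)$, and distinguishes $r'\geq p-1-a$ from $r'<p-1-a$.

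Finally, replacing $k$ by $k+p-1$ leaves the character $\varepsilon\cdot(1\times\omega^{2-k})$ unchanged (since $\omega$ has order $p-1$) and increases each of the two floors in \eqref{E:dimension of SIw} by exactly $1$, giving the increment of $2$. The asymptotic $d_k^\Iw=\tfrac{2k}{p-1}+O(1)$ is immediate from $\lfloor x\rfloor=x+O(1)$. There is no real obstacle here: once Proposition~\ref{P:power basis} is in place, the entire proof is a routine counting-and-case-split argument, the only mild subtlety being the need to handle the boundary behaviour of the floor function at small $k$ so that the formula continues to give a nonnegative answer.
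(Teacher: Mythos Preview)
Your proof is correct and follows exactly the same approach as the paper: both deduce \eqref{E:dimension of SIw} from Proposition~\ref{P:power basis} by counting basis elements of degree $\leq k-2$, then split into the two cases according to whether $\{a+s_\varepsilon\}$ equals $a+s_\varepsilon$ or $a+s_\varepsilon-(p-1)$, with the final two assertions being immediate consequences of the formula. You have simply supplied more detail than the paper's terse ``by a simple counting.''
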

\begin{proof}
The dimension formula \eqref{E:dimension of SIw} follows from Proposition~\ref{P:power basis} by a simple counting. The alternative writing also follows because
\begin{itemize}
\item when $a+s_\varepsilon<p-1$, $\{a+s_\varepsilon\} = a+s_\varepsilon > s_\varepsilon$; and
\item when $a+s_\varepsilon \geq p-1$, $\{a+s_\varepsilon\} = a+s_\varepsilon-(p-1)< s_\varepsilon$.
\end{itemize}
The last two statements are clear from the dimension formulas.
\end{proof}

\begin{notation}
\label{N:kbullet}
For $\varepsilon$ relevant, we set
$$
\delta_\varepsilon = \Big\lfloor \frac{s_\varepsilon + \{a+s_\varepsilon\}}{p-1}\Big \rfloor = \begin{cases}
0 & \textrm{ if }s_\varepsilon + \{a+s_\varepsilon\} < p-1,\\
1 & \textrm{ if }s_\varepsilon + \{a+s_\varepsilon\} \geq p-1.
\end{cases}
$$
We shall often consider the case when $k \equiv k_\varepsilon =2+ \{a+2s_\varepsilon \} \bmod{(p-1)}$ (in which case $\varepsilon\cdot (1\times \omega^{2-k}) = \tilde \varepsilon_1$). In this case, we set $$k = k_\varepsilon + k_\bullet(p-1) \quad \textrm{for }k_\bullet \in \ZZ.$$
\end{notation}

\begin{remark}
\label{R:delta}
An equality frequently used later is
\[
(p-1)\delta_{\varepsilon}+\{a+2s_{\varepsilon}\}=s_{\varepsilon}+\{a+s_{\varepsilon}\}.
\]
\end{remark}

\begin{corollary}
\label{C:dIw is even}
When $k = k_\varepsilon + k_\bullet (p-1)$, we have
$$
d_{k}^\Iw(\tilde \varepsilon_1) = 2k_\bullet +2 -2 \delta_\varepsilon.
$$

In particular,
the dimensions $d_{k}^\Iw(\tilde \varepsilon_1)$ and $d_{k}^{\mathrm{new}}(\varepsilon_1)$ are even integers, and asymptotically, $$d_k^\Iw(\tilde \varepsilon_1) = \frac{2k}{p-1}+O(1).$$ 
\end{corollary}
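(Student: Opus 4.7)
The plan is a direct substitution into the dimension formula of Proposition~\ref{P:dimension of SIw}. Writing $k = k_\varepsilon + k_\bullet(p-1) = 2 + \{a+2s_\varepsilon\} + k_\bullet(p-1)$ and invoking the single identity $(p-1)\delta_\varepsilon + \{a+2s_\varepsilon\} = s_\varepsilon + \{a+s_\varepsilon\}$ from Remark~\ref{R:delta}, I would rewrite
\[
k - 2 - s_\varepsilon \;=\; \{a+s_\varepsilon\} + (k_\bullet - \delta_\varepsilon)(p-1), \qquad k - 2 - \{a+s_\varepsilon\} \;=\; s_\varepsilon + (k_\bullet - \delta_\varepsilon)(p-1).
\]
Since $s_\varepsilon, \{a+s_\varepsilon\} \in \{0, 1, \dots, p-2\}$, both floors appearing in \eqref{E:dimension of SIw} collapse to $k_\bullet - \delta_\varepsilon$, and summing with the constant $2$ yields $d_k^\Iw(\tilde\varepsilon_1) = 2(k_\bullet - \delta_\varepsilon) + 2 = 2k_\bullet + 2 - 2\delta_\varepsilon$.

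The three remaining assertions are immediate consequences. Evenness of $d_k^\Iw(\tilde\varepsilon_1)$ is visible from the formula; evenness of $d_k^{\mathrm{new}}(\varepsilon_1) = d_k^\Iw(\tilde\varepsilon_1) - 2\, d_k^\unr(\varepsilon_1)$ then follows from its defining relation. For the asymptotic, since $k_\bullet = (k - k_\varepsilon)/(p-1)$ and both $k_\varepsilon$ and $\delta_\varepsilon$ depend only on $(a, s_\varepsilon)$, we obtain $d_k^\Iw(\tilde\varepsilon_1) = \frac{2k}{p-1} + O(1)$ as $k \to \infty$ with $k \equiv k_\varepsilon \bmod (p-1)$.

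There is no serious obstacle; the only mildly subtle point is that the case split of Proposition~\ref{P:dimension of SIw} according to whether $a + s_\varepsilon < p-1$ or $a + s_\varepsilon \geq p-1$ needs to be absorbed uniformly. That is exactly what the invariant $\delta_\varepsilon \in \{0,1\}$ in Remark~\ref{R:delta} is designed to do, so both cases merge into the single closed formula above without additional bookkeeping.
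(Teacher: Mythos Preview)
Your proof is correct and follows essentially the same approach as the paper: both substitute $k = k_\varepsilon + k_\bullet(p-1)$ into the dimension formula \eqref{E:dimension of SIw} and evaluate the two floors. The only cosmetic difference is that the paper splits into the two cases $\delta_\varepsilon = 0$ and $\delta_\varepsilon = 1$ and checks each separately, whereas you invoke the identity of Remark~\ref{R:delta} to handle both cases uniformly in a single line; this is a slightly cleaner packaging of the same computation.
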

\begin{proof}
When $k \equiv k_\varepsilon \bmod{(p-1)}$, we have
$$\{k-2\} = \{k_\varepsilon-2\} = \{a+2s_\varepsilon\} \equiv s_\varepsilon + \{a+s_\varepsilon\} \bmod{(p-1)}.$$
So by \eqref{E:dimension of SIw}, 
\begin{itemize}
\item when $s_\varepsilon+\{a+s_\varepsilon\}<p-1$, we have $\{k-2\} = s_\varepsilon+\{a+s_\varepsilon\} \geq \max\big( s_\varepsilon, \{a+s_\varepsilon\}\big)$, in which case $d_{k}^\Iw(\tilde \varepsilon_1) = 2 \cdot \big\lfloor \frac{k-2}{p-1}\big\rfloor +2 =2k_\bullet +2$,
\item when $s_\varepsilon+\{a+s_\varepsilon\} \geq p-1$, we have $\{k-2\} = s_\varepsilon+\{a+s_\varepsilon\} - p-1< \min\big( s_\varepsilon, \{a+s_\varepsilon\} \big)$, in which case $d_{k}^\Iw(\tilde \varepsilon_1) = 2 \cdot \big\lfloor \frac{k-2}{p-1}\big\rfloor =2 k_\bullet $.
\end{itemize}
In either case, $d_{k}^\Iw(\tilde \varepsilon_1)$ is an even integer; so $d_{k}^{\textrm{new}}(\varepsilon_1) = d_{k}^\Iw(\tilde \varepsilon_1)- 2 d_{k}^\unr(\tilde \varepsilon_1)$ is also an even integer.
The asymptotic of $d_{k}^\Iw(\tilde \varepsilon_1)$ is clear.
\end{proof}

\begin{notation}
For a (right) $\FF$-representation $\sigma$ of $\GL_2(\FF_p)$, we write $\JH(\sigma)$ for the multiset of its Jordan--H\"older factors.  For a Serre weight $\sigma_{a',b'}$ (see Notation~\ref{N:Serre weights}), we write $\Mult_{\sigma_{a',b'}}(\sigma)$ for the multiplicity of $\sigma_{a',b'}$ in $\JH(\sigma)$.
\end{notation}

\subsection{Explicit computation of $\rmS_{k}^\unr$}
\label{S:explicit Sunr}
Let $\widetilde \rmH$ and $\varepsilon$ be as above.
Since $\overline \rmH$ is a (right) projective $\FF[\GL_2(\FF_p)]$-module, we have the following sequence of equalities, for $k \equiv k_\varepsilon \bmod {(p-1)}$
\begin{align}
\label{E:dimension formula dkunr}
d_k^{\unr}(\varepsilon_1) &= \rank_\calO \rmS_{k}^{\unr}(\varepsilon_1) = \rank \Hom_{\calO[ \rmK_p]}\big( \widetilde \rmH,\, \Sym^{k-2}\calO^{\oplus 2} \otimes \varepsilon_1 \circ \det\big)
\\
\nonumber
& = \dim_\FF \Hom_{\FF[\GL_2(\FF_p)]} \big( \overline \rmH, \sigma_{k-2, -s_\varepsilon} \big) = \dim_\FF \Hom_{\FF[\GL_2(\FF_p)]} \big( \Proj_{a,s_\varepsilon}, \sigma_{k-2,0} \big)
\\
\nonumber
& =  \Mult_{\sigma_{a,s_\varepsilon}}(\sigma_{k-2,0}). 
\end{align}
The last equality used the exactness of $\Hom_{\FF[\GL_2(\FF_p)]}(\Proj_{a,s_\varepsilon},-)$.
Write
$
k= k_\varepsilon + k_\bullet (p-1)$ as in Notation~\ref{N:kbullet}.
Recall the definition of $\delta_\varepsilon$ from Corollary~\ref{C:dIw is even}.
Using the notation above, we introduce two integers $t_1=t_1^{(\varepsilon)}$ and $t_2  =t_2^{(\varepsilon)}$ as follows:
\begin{itemize}
\item when $a+s_\varepsilon<p-1$, $t_1 = s_\varepsilon+\delta_\varepsilon$ and $t_2 = a+s_\varepsilon + \delta_\varepsilon+2$;
\item when $a+s_\varepsilon \geq p-1$, $t_1 =  \{a+s_\varepsilon\} + \delta_\varepsilon+1$ and $t_2 = s_\varepsilon +\delta_\varepsilon+ 1$.
\end{itemize}

\begin{proposition}
\label{P:dimension of Sunr}
We have
\begin{eqnarray}
\label{E:dkunr}
d_{k}^\unr(\varepsilon_1) &=&
\Big\lfloor \frac{k_\bullet - t_1}{p+1}\Big\rfloor + \Big\lfloor \frac{k_\bullet - t_2}{p+1}\Big\rfloor + 2
\\ \nonumber &=& 2\Big\lfloor \frac{k_\bullet - t_1}{p+1}\Big\rfloor + 1 +\begin{cases}
1 & \textrm{ when }k_\bullet - (p+1) \lfloor \frac{k_\bullet-t_1}{p+1}\rfloor  \geq t_2
\\
0 & \textrm{ otherwise.}
\end{cases}
\end{eqnarray}
In particular,  $d_{k}^\unr(\varepsilon_1)$ is non-decreasing in the class $k \equiv k_\varepsilon \bmod{(p-1)}$, and we have
$$d_{k+p^2-1}^\unr(\varepsilon_1) - d_{k}^\unr(\varepsilon_1) = 2.$$
Asymptotically, $d_{k}^\unr(\varepsilon_1) = \frac{2k}{p^2-1}+O(1)$ and  $d_k^\new(\varepsilon_1) = \frac{2k}{p+1}+O(1)$.
\end{proposition}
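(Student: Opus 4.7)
The plan is to reduce to a combinatorial count of Jordan--Hölder factors of $\Sym^{k-2}\FF^{\oplus 2}$, and then to match this count against the two-floor-function formula \eqref{E:dkunr} by a case analysis in $a + s_\varepsilon$.

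The starting point is \eqref{E:dimension formula dkunr}, which already does the main conceptual reduction: projectivity of $\overline\rmH \cong \Proj_{a,0}$ as an $\FF[\GL_2(\FF_p)]$-module identifies
\[
d_k^{\unr}(\varepsilon_1) = \Mult_{\sigma_{a, s_\varepsilon}}(\sigma_{k-2, 0}).
\]
Thus the question becomes: how many times does the Serre weight $\sigma_{a, s_\varepsilon}$ occur in a Jordan--Hölder series of the $\GL_2(\FF_p)$-representation $\Sym^{k-2}\FF^{\oplus 2}$, when $k = k_\varepsilon + k_\bullet(p-1)$?

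Next I would invoke the explicit description of $\JH(\sigma_{k-2,0})$ recorded in the appendix. The structural feature one needs is the recursion (valid for $p\ge 5$ and in the generic range $1\le a\le p-4$) expressing the JH factors of $\Sym^{m+(p^2-1)}\FF^{\oplus 2}$ in terms of those of $\Sym^m\FF^{\oplus 2}$: increasing $m$ by $p^2 - 1$ (equivalently, $k_\bullet$ by $p+1$) produces exactly two additional copies of every non-Steinberg Serre weight appearing in $\sigma_{m,0}$. This is the source of the $p+1$ in the denominators of \eqref{E:dkunr}.

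Granted this, the proof becomes a matter of locating the two arithmetic progressions of $k_\bullet$ at which new copies of $\sigma_{a, s_\varepsilon}$ first appear. I would split into the two cases dictated by the definition of $(t_1, t_2)$:
\begin{itemize}
\item[(i)] When $a + s_\varepsilon < p-1$, write $k-2 = \{a+2s_\varepsilon\} + k_\bullet(p-1)$ and trace through the standard filtration of $\Sym^{k-2}$ using the base-$p$ digits of $k-1$. The two smallest values of $k_\bullet$ producing $\sigma_{a,s_\varepsilon}$ as a JH factor, within one period of length $p+1$, work out to $t_1 = s_\varepsilon + \delta_\varepsilon$ (the ``Steinberg-adjacent'' copy) and $t_2 = a + s_\varepsilon + \delta_\varepsilon + 2$ (the ``cyclotomic-twist'' copy), where $\delta_\varepsilon$ is the carry term of Remark~\ref{R:delta}.
\item[(ii)] When $a + s_\varepsilon \ge p-1$, the two copies swap roles: $t_1 = \{a+s_\varepsilon\} + \delta_\varepsilon + 1$, $t_2 = s_\varepsilon + \delta_\varepsilon + 1$, as follows from the dual filtration obtained by twisting by a power of $\det$.
\end{itemize}
Counting how many translates $t_1, t_1+(p+1), t_1 + 2(p+1), \dots$ and $t_2, t_2+(p+1), \dots$ lie in $[0, k_\bullet]$ gives precisely the two floor functions in \eqref{E:dkunr}, and the alternative one-floor-plus-indicator formula is an elementary rewriting depending on the fractional part of $(k_\bullet - t_1)/(p+1)$.

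The main obstacle is the careful bookkeeping in identifying $t_1, t_2$ in each case and verifying that the generic hypothesis $1\le a\le p-4$ prevents any ``collision'' between the two progressions (so that they really do contribute independently, with no overcount at the base of the recursion). Once \eqref{E:dkunr} is established, the monotonicity in $k$ along the class $k\equiv k_\varepsilon\pmod{p-1}$ and the period-$(p^2-1)$ relation $d^{\unr}_{k+p^2-1}-d^{\unr}_k = 2$ are immediate from the floor-function formula, and the asymptotic $d_k^{\unr}(\varepsilon_1) = \tfrac{2k}{p^2-1} + O(1)$ follows by taking leading terms; combining with Corollary~\ref{C:dIw is even} then yields $d_k^{\new}(\varepsilon_1) = \tfrac{2k}{p+1} + O(1)$.
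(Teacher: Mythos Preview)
Your outline matches the paper's approach: reduce to $\Mult_{\sigma_{a,s_\varepsilon}}(\sigma_{k-2,0})$ via \eqref{E:dimension formula dkunr}, establish the period-$(p^2-1)$ recursion, then handle the base case $0\le k_\bullet < p+1$ to pin down $t_1,t_2$. Two points of imprecision are worth flagging.

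First, the appendix does not record an ``explicit description of $\JH(\sigma_{k-2,0})$'' nor anything about base-$p$ digits of $k-1$; what it provides is the $(p+1)$-step exact sequence (Lemma~\ref{L:exact sequence of (p+1)-steps jump}) $0\to\sigma_{k-2-(p+1),1}\to\sigma_{k-2,0}\to\Ind_{\bar\rmB}^{\bar\rmG}\eta^{k-2}\to 0$ together with the two-term decomposition of the induced representation (Lemma~\ref{L:exact sequence of induced representation}). The paper iterates this $p-1$ times to obtain $[\sigma_{k-2,0}]-[\sigma_{k-2-(p^2-1),0}]$ as a sum of $2(p-1)$ Serre weights in which $\sigma_{a,s_\varepsilon}$ appears exactly twice. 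Your ``standard filtration'' language suggests a different (Glover-type) computation that is not what is set up here.

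Second, and more substantively, you assert the values of $t_1,t_2$ with labels (``Steinberg-adjacent'', ``cyclotomic-twist'') but do not indicate how they are derived. The paper's base-case analysis writes $k-2=s(p+1)+r$ and tracks three sources of $\sigma_{a,s_\varepsilon}$: the remainder $[\sigma_{r,s}]$ (contributing iff $s=s_\varepsilon$), the first summand of row $s_\varepsilon+1$ (iff $s\ge s_\varepsilon+1$), and the second summand of row $\{a+s_\varepsilon\}+1$ (iff $s\ge\{a+s_\varepsilon\}+1$). The first two combine into a single threshold $s\ge s_\varepsilon$, and converting these two thresholds on $s$ into thresholds on $k_\bullet$ (via $\{a+2s_\varepsilon\}+k_\bullet(p-1)=s(p+1)+r$) is where $t_1,t_2$ actually emerge. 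This combination of the remainder term with one of the running contributions is the step your sketch elides.
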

\begin{proof}It is enough to compute the Jordan--H\"older factors of $\sigma_{k-2,0}$.
By Lemmas~\ref{L:exact sequence of (p+1)-steps jump} and \ref{L:exact sequence of induced representation}\,(1) (and the congruence $k_\varepsilon-2 \equiv a+2s_\varepsilon \bmod{(p-1)}$), we have the following equality in the Grothendieck group  $\mathrm{Groth}(\FF[\GL_2(\FF_p)])$ of (right) $\FF[\GL_2(\FF_p)]$-modules:
$$
[\sigma_{k-2,0}]-[\sigma_{k-2-(p+1), 1}]=[\sigma_{\{a+2s_\varepsilon\},0}]+[\sigma_{p-1-\{a+2s_\varepsilon\}, \{a+2s_\varepsilon\}}].
$$
If we replace $k-2$ by $k-2-(p+1)$ in the above equality and then twist by the character $\det$, we deduce 
$$
[\sigma_{k-2-(p+1), 1}]-[\sigma_{k-2-2(p+1), 2}]=[\sigma_{\{a+2s_\varepsilon-2\},1}]+[\sigma_{p-1-\{a+2s_\varepsilon-2\}, \{a+2s_\varepsilon\}-1}].
$$
		
Now assume that $k-2\geq (p+1)(p-1)$, then we can repeat the above		computation $p-1$ times to get $p-1$ equalities:
\begin{align}
\label{E:inductive formulas for symmetric squares}
[\sigma_{k-2,0}]-[\sigma_{k-2-(p+1), 1}]&=[\sigma_{\{a+2s_\varepsilon\},0}]+[\sigma_{p-1-\{a+2s_\varepsilon\}, \{a+2s_\varepsilon\}}]
\\
\nonumber
[\sigma_{k-2-(p+1), 1}]-[\sigma_{k-2-2(p+1), 2}] &=[\sigma_{\{a+2s_\varepsilon-2\},1}]+[\sigma_{p-1-\{a+2s_\varepsilon-2\}, \{a+2s_\varepsilon\}-1}].
\\
\nonumber
\cdots & \cdots\cdots\\
\nonumber
[\sigma_{k-2-(p-2)(p+1), p-2}]-[\sigma_{k-2-(p-1)(p+1), 0}]&=[\sigma_{\{a+2s_\varepsilon+2\},p-2}]+[\sigma_{p-1-\{a+2s_\varepsilon+2\}, \{a+2s_\varepsilon\}-(p-2)}].
\end{align}
The sum of the left hand sides of \eqref{E:inductive formulas for symmetric squares} is equal to $[\sigma_{k-2,0}] - [\sigma_{k-2-(p-1)(p+1), 0}]$.
On the right sides of \eqref{E:inductive formulas for symmetric squares}, the Serre weight $\sigma_{a, s_\varepsilon}$ appears exactly twice: one time, it appears in the first summand of the $(s_\varepsilon+1)$th equality, and at the other time, it appears in the second summand of the $(\{a+s_\varepsilon\}+1)$th equality. So combining this with \eqref{E:dimension formula dkunr}, we deduce that $$d_k^{\unr}(\varepsilon_1)=d_{k-(p-1)(p+1)}^{\unr}(\varepsilon_1)+2.$$
It then remains to prove \eqref{E:dkunr} when $k-2< p^2-1$ (or equivalently $k_\bullet < p+1$).

In this case, we write $k-2=s(p+1)+r$ with $s\in \{0, \dots, p-2\}$ and $r \in \{0, \dots, p\}$. We can then repeat the argument above but only using the first $s$ rows of \eqref{E:inductive formulas for symmetric squares} to deduce that $[\sigma_{k-2,0}] - [\sigma_{r,s}]$ is equal to the sum of the right hand sides of first $s$ rows of \eqref{E:inductive formulas for symmetric squares}.
\begin{enumerate}
\item 
The ``remainder" term $[\sigma_{r,s}]$ contains $\sigma_{a,s_\varepsilon}$ as a Jordan--H\"older factor  if and only if $s=s_\varepsilon$ and $a  \equiv r \bmod{(p-1)}$ (note that the latter includes the case that $a=1$ and $r=p$).  We immediately remark that when $s=s_\varepsilon$, since $\{a+2s_\varepsilon\} + k_\bullet(p-1) = s_\varepsilon (p+1) +r$, the condition $a \equiv r \bmod{(p-1)}$ is automatic.

\item 
If $s\geq s_\varepsilon +1$, the Serre weight $\sigma_{a, s_\varepsilon}$ will appear in the first summand of the right hand side of (the $(s_\varepsilon + 1)$th row) of \eqref{E:inductive formulas for symmetric squares}. This will contribute a $\sigma_{a, s_\varepsilon}$ in the Jordan--H\"older factors of $[\sigma_{k-2,0}]$.

\item 
If $s \geq \{a+s_{\varepsilon}\} + 1$, 
 the Serre weight $\sigma_{a, s_\varepsilon}$ will appear in the second summand of the right hand side of (the $(\{a+s_{\varepsilon}\} + 1)$th row) of \eqref{E:inductive formulas for symmetric squares}. This will contribute a $\sigma_{a, s_{\varepsilon}}$ in the Jordan--H\"older factors of $[\sigma_{k-2,0}]$.
\end{enumerate}
Combining (1) and (2), precisely when $s \geq s_\varepsilon$, (1) and (2) will contribute one $\sigma_{a, s_\varepsilon}$ to the Jordan--H\"older factor of $[\sigma_{k-2,0}]$. Note that $\{a+2s_\varepsilon \}+k_\bullet(p-1)\equiv s_\varepsilon(p+1)+a\equiv a+2s_\varepsilon \bmod (p-1)$. Since $\lfloor \frac{\{a+2s_\varepsilon\} + k_\bullet(p-1)}{p+1} \rfloor=\lfloor \frac{k-2}{p+1}\rfloor=s$ and $\lfloor\frac{s_\varepsilon(p+1)+a}{p+1}\rfloor=s_\varepsilon>\lfloor\frac{s_\varepsilon(p+1)+a-(p-1)}{p+1}\rfloor$, it is straightforward to see that $s \geq s_\varepsilon$ if and only if the following equivalent conditions hold:
\begin{equation}
\label{E:t1}
\{a+2s_\varepsilon\} + k_\bullet(p-1) \geq s_\varepsilon(p+1)  + a \quad  \Longleftrightarrow \quad k_\bullet \geq s_\varepsilon + \frac{a+2s_\varepsilon - \{a+2s_\varepsilon\}}{p-1}.
\end{equation}
On the other hand, the condition $s \geq \{a+s_\varepsilon\}+1$ of (3) is equivalent to
$$
\{a+2s_\varepsilon\} + k_\bullet(p-1) \geq (p+1) \big( \{a+s_\varepsilon\}+1\big),
$$
which is further equivalent to
\begin{equation}
\label{E:t2}
k_\bullet \geq \Big\lceil \frac{(p+1)\{a+s_\varepsilon\}-\{a+2s_\varepsilon\}+p+1}{p-1}\Big\rceil = \{a+s_\varepsilon\} + 2 + \frac{2\{a+s_\varepsilon\} - \{a+2s_\varepsilon\} - a}{p-1}.
\end{equation}

Now, when $a+s_\varepsilon<p-1$, \eqref{E:t1} is equivalent to $k_\bullet \geq s_\varepsilon+\delta_\varepsilon = t_1$ and \eqref{E:t2} is equivalent to $k_\bullet \geq a+s_\varepsilon + 2+\delta_\varepsilon = t_2$. So \eqref{E:dkunr} follows.
When $a+s_\varepsilon \geq p-1$,
\eqref{E:t1} is equivalent to $k_\bullet \geq s_\varepsilon+\delta_\varepsilon + 1 = t_2$ and \eqref{E:t2} is equivalent to $k_\bullet \geq \{a+s_\varepsilon\} + 1+\delta_\varepsilon = t_1$. So \eqref{E:dkunr} follows.
\end{proof}

\begin{corollary}\fakephantomsection
\label{C:ghost series depends on bar rho}
For $\varepsilon$ a relevant character for $\bar\rho$ and $\widetilde \rmH$ a primitive $\calO\llbracket K_p\rrbracket$-projective augmented module, the dimensions $d_k^\Iw(\tilde \varepsilon_1)$ and $d_k^\unr(\varepsilon_1)$ for every $k \in \NN_{\geq 2}$ depend only on $\bar \rho$, $k$, and $\varepsilon$, but not on (the $\GL_2(\QQ_p)$-action on) $\widetilde \rmH$.

Moreover, for a fixed $n$, the ghost multiplicity $m_n^{(\varepsilon)}(k)$ in Definition~\ref{D:ghost series} is zero when $d_{k}^\unr(\varepsilon_1) \geq n$ (which is certainly the case when $k/(p^2-1) > n/2+2$). So the coefficients of the ghost series $g_n^{(\varepsilon)}(w)$ are polynomials in $w$.
\end{corollary}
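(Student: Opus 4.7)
The plan is to extract the needed statements directly from the explicit dimension formulas in Propositions~\ref{P:dimension of SIw} and \ref{P:dimension of Sunr}, then read off the consequence for the ghost coefficients from the combinatorial recipe in Definition~\ref{D:ghost series}.

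First, I will verify that $d_k^\Iw(\tilde\varepsilon_1)$ depends only on $\bar\rho$, $\varepsilon$, and $k$. This is immediate from the formula \eqref{E:dimension of SIw}: the right-hand side involves only $a$, $s_\varepsilon$, and $k$, and $a$ is determined by $\bar\rho$ while $s_\varepsilon$ is determined by $\varepsilon$ via Notation~\ref{N:relevant varepsilon}. No data from $\widetilde\rmH$ beyond its type enters. Similarly, the proof of Proposition~\ref{P:dimension of Sunr} reduces $d_k^\unr(\varepsilon_1)$ to the Jordan--H\"older multiplicity $\Mult_{\sigma_{a,s_\varepsilon}}(\sigma_{k-2,0})$ via the identity \eqref{E:dimension formula dkunr}, which uses only that $\overline\rmH \cong \Proj_{a,0}$ as a (right) $\FF[\GL_2(\FF_p)]$-module. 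Since this isomorphism is part of the definition of primitive type $\bar\rho$ (Definition~\ref{D:primitive type}), the dimension depends only on $\bar\rho$, $\varepsilon$, and $k$.

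Next, for the vanishing of $m_n^{(\varepsilon)}(k)$, I will simply invoke the defining recipe: the exponent $m_n^{(\varepsilon)}(k)$ is zero unless $d_k^\unr(\varepsilon_1) < n < d_k^\Iw(\tilde\varepsilon_1) - d_k^\unr(\varepsilon_1)$. Thus $d_k^\unr(\varepsilon_1) \geq n$ forces $m_n^{(\varepsilon)}(k) = 0$. For the explicit bound in terms of $k/(p^2-1)$, I will use the asymptotic $d_k^\unr(\varepsilon_1) = \frac{2k}{p^2-1} + O(1)$ from Proposition~\ref{P:dimension of Sunr}, making the error term explicit. Concretely, from \eqref{E:dkunr} one can check that $d_k^\unr(\varepsilon_1) \geq 2\lfloor (k_\bullet - t_1)/(p+1)\rfloor + 1$ with $t_1 \leq p$, and together with $k_\bullet = (k - k_\varepsilon)/(p-1)$ this gives $d_k^\unr(\varepsilon_1) \geq n$ whenever $k/(p^2-1) > n/2 + 2$; I will write out this elementary inequality to confirm the stated threshold.

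Finally, the polynomiality of $g_n^{(\varepsilon)}(w)$ follows immediately: the product in \eqref{E:gi varepsilon} only ranges over $k \equiv k_\varepsilon \pmod{p-1}$, and we have just shown that among such $k$ only finitely many (those with $k/(p^2-1) \leq n/2+2$) contribute a nonzero factor. Therefore $g_n^{(\varepsilon)}(w) \in \ZZ_p[w]$ is a polynomial, and the entire ghost series depends on $\widetilde\rmH$ only through $\bar\rho$ (and $\varepsilon$). No step here is a serious obstacle; the only mild point is making the asymptotic bound $k/(p^2-1) > n/2 + 2$ precise directly from \eqref{E:dkunr}, which is a routine estimate on the floor functions.
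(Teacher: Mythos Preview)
Your proposal is correct and follows essentially the same approach as the paper: the paper's proof is the single sentence ``This is clear from Propositions~\ref{P:dimension of SIw} and \ref{P:dimension of Sunr},'' and you simply unpack that sentence by reading off the explicit formulas and the defining recipe for $m_n^{(\varepsilon)}(k)$. Your more detailed verification of the threshold $k/(p^2-1) > n/2 + 2$ via \eqref{E:dkunr} is a routine floor-function estimate that the paper leaves implicit, so there is no substantive difference.
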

\begin{proof}
This is clear from Propositions~\ref{P:dimension of SIw} and \ref{P:dimension of Sunr}.
\end{proof}

\begin{remark}
\label{R:nonprimitive ghost}
\begin{enumerate}

\item 
If $\widetilde \rmH'$ is an $\calO\llbracket K_p\rrbracket$-projective augmented module of type $\bar\rho$ with multiplicity $m(\widetilde \rmH')$, we note that as $\calO\llbracket \GL_2(\ZZ_p)\rrbracket$-modules, $\widetilde \rmH' \simeq \widetilde \rmH^{\oplus m(\widetilde \rmH')}$. Yet formulas \eqref{E:dimension of SIw} and \eqref{E:dimension formula dkunr} only makes use of the $\calO\llbracket \GL_2(\ZZ_p)\rrbracket$-module structure of $\widetilde \rmH$. It then follows that, for $k \in \NN_{\geq 2}$,  a character $\psi$ of $\Delta^2$, and a character $\varepsilon_1$ of $\Delta$,
$$
\rank_\calO \rmS^\Iw_{\widetilde \rmH', k}(\psi) = m(\widetilde \rmH') d_k^\Iw(\psi) \quad \textrm{and}\quad \rank_\calO \rmS_{\widetilde \rmH',k}^\ur(\varepsilon_1) = m(\widetilde \rmH') d_k^\unr(\varepsilon_1).
$$
So these ranks also only depend on $\bar \rho$, $\psi$, $\varepsilon_1$, $k$, and (linearly on) $m(\widetilde \rmH')$.
\item
The fact that $d_{k}^\unr(\varepsilon_1)$ is increasing in $k$ relies on the assumption that the parameter $a$ is generic. The analogous statement for non-generic $a$ is false (as in the original examples considered by Bergdall--Pollack \cite{bergdall-pollack2}).
\end{enumerate}
\end{remark}

Now, we move to discuss the slopes when $w_\star \in \gothm_{\CC_p}$ has valuation $v_p(w_\star) \in (0,1)$, the so-called \emph{spectral halo range}. Note that for every $k \in \ZZ$, $v_p(w_k) = v_p(\exp(p(k-2))-1) \geq 1$; so $v_p(w_\star- w_k) = v_p(w_\star)$ in this case, and thus, for the $n$th term of the ghost series
$$
v_p(g_n^{(\varepsilon)}(w_\star)) =  v_p(w_\star) \cdot \deg g_n^{(\varepsilon)}(w).
$$
In particular, this valuation is linear in $v_p(w_\star)$. 
This is a direct analogue of Coleman--Mazur--Buzzard--Kilford's spectral halo conjecture, as partially proved in \cite{liu-wan-xiao}. The key to that proof is to provide a lower bound on the Newton polygon of the  characteristic power series, called the \emph{Hodge polygon} in that paper.
Formally translating the construction from that paper, we expect that when $v_p(w_\star) \in (0,1)$, the Newton polygon of the ghost series should lie above the polygon, which we call \emph{ghost Hodge polygon}, whose $n$th slope is given by $v_p(w_\star) \cdot \lambda_n^{(\varepsilon)}$ with  $$\lambda_n^{(\varepsilon)}(w)= \deg \bfe_n^{(\varepsilon)} - \big\lfloor \tfrac{\deg \bfe_n^{(\varepsilon)}}p\big\rfloor.$$
It turns out that $\NP(C(w_\star, -))$ is ``very close" to this ghost Hodge polygon, as proved in Proposition~\ref{P:increment of degrees in ghost series} below.

\begin{notation}
\label{N:tnvarepsilon}
For $n \in \ZZ$ and $\varepsilon$ a relevant character, we set
$\beta_{[n]}^{(\varepsilon)} = \begin{cases}
t_1^{(\varepsilon)} & \textrm{if }n \textrm{ is even}
\\
t_2^{(\varepsilon)}-
\tfrac{p+1}2 & \textrm{if }n \textrm{ is odd}
\end{cases}$.
Since this depends only on the parity of $n$, we often write $\beta_\even^{(\varepsilon)}$ and $\beta_\odd^{(\varepsilon)}$ for the values.
\end{notation}

\begin{proposition}
\label{P:increment of degrees in ghost series}
Fix $\varepsilon$ a relevant character.  If $a+s_\varepsilon<p-1$, then we have
\begin{equation}
\label{E:increment of degree a+s<p-1}
\deg g_{n+1}^{(\varepsilon)}(w) - \deg g_{n}^{(\varepsilon)}(w) - \lambda_{n+1}^{(\varepsilon)} =\begin{cases}
1 & \textrm{ if } n - 2s_\varepsilon \equiv 1, 3, \dots, 2a+1 \bmod {2p},
\\
-1 & \textrm{ if } n- 2s_{\varepsilon} \equiv 2, 4, \dots, 2a+2 \bmod{2p},
\\
0 & \textrm{ otherwise.}
\end{cases}
\end{equation}
If $a+s_\varepsilon\geq p-1$, then we have
\begin{equation}
\label{E:increment of degree a+s>=p-1}
\deg g_{n+1}^{(\varepsilon)}(w) - \deg g_{n}^{(\varepsilon)}(w) - \lambda_{n+1}^{(\varepsilon)} =\begin{cases}
1 & \textrm{ if } n - 2s_\varepsilon \equiv 2, 4, \dots, 2a+2 \bmod {2p},
\\
-1 & \textrm{ if } n- 2s_{\varepsilon} \equiv 3, 5, \dots, 2a+3 \bmod{2p},
\\
0 & \textrm{ otherwise.}
\end{cases}
\end{equation}
In either case, we have
\begin{equation}
\label{E:degree approx halo bound}
\deg g_n^{(\varepsilon)}(w) - (\lambda_1^{(\varepsilon)}+\cdots + \lambda_n^{(\varepsilon)}) = \begin{cases}
0 & \textrm{ if }\deg \bfe_{n+1}-\deg\bfe_n = a,\\
0 \textrm{ or }1 & \textrm{ if }\deg \bfe_{n+1}-\deg\bfe_n = p-1-a.
\end{cases}
\end{equation}
Moreover, the differences $\deg g_{n+1}^{(\varepsilon)}(w) - \deg g_n^{(\varepsilon)}(w)$ are strictly increasing in $n$.
\end{proposition}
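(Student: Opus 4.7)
The plan is to compute both sides of \eqref{E:increment of degree a+s<p-1} and \eqref{E:increment of degree a+s>=p-1} explicitly and compare. First, by \eqref{E:increment of multiplicity}, writing $k = k_\varepsilon + k_\bullet(p-1)$, one has
\[
\deg g_{n+1}^{(\varepsilon)}(w) - \deg g_n^{(\varepsilon)}(w) \;=\; \#A_n \;-\; \#B_n,
\]
where $A_n = \{k_\bullet \geq 0 : d_k^\unr(\varepsilon_1) \leq n < \tfrac{1}{2} d_k^\Iw(\tilde\varepsilon_1)\}$ and $B_n = \{k_\bullet \geq 0 : \tfrac{1}{2} d_k^\Iw(\tilde\varepsilon_1) \leq n < d_k^\Iw(\tilde\varepsilon_1) - d_k^\unr(\varepsilon_1)\}$. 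Corollary~\ref{C:dIw is even} gives $\tfrac{1}{2} d_k^\Iw(\tilde\varepsilon_1) = k_\bullet + 1 - \delta_\varepsilon$, so the conditions on $d_k^\Iw$ cut out the ranges $k_\bullet \geq n + \delta_\varepsilon$ (for $A_n$) and $k_\bullet \leq n + \delta_\varepsilon - 1$ (for $B_n$). Substituting the explicit formula for $d_k^\unr(\varepsilon_1)$ from Proposition~\ref{P:dimension of Sunr}, each of $A_n, B_n$ becomes an interval of integers $k_\bullet$ whose endpoints are explicit floor-function expressions in $n$, and so $\#A_n - \#B_n$ is computed in closed form.

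Second, the power basis from Proposition~\ref{P:power basis} yields, in the case $a+s_\varepsilon < p-1$, $\deg \bfe_{2j-1}^{(\varepsilon)} = s_\varepsilon + (j-1)(p-1)$ and $\deg \bfe_{2j}^{(\varepsilon)} = a + s_\varepsilon + (j-1)(p-1)$; the case $a+s_\varepsilon \geq p-1$ is analogous after interchanging these two sequences. From this, $\lambda_{n+1}^{(\varepsilon)} = \deg \bfe_{n+1}^{(\varepsilon)} - \lfloor \deg \bfe_{n+1}^{(\varepsilon)}/p \rfloor$ is computed in closed form, and the discrepancy $\#A_n - \#B_n - \lambda_{n+1}^{(\varepsilon)}$ turns out to be a periodic function of $n - 2s_\varepsilon \bmod 2p$. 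A direct verification carried out in each of the $2p$ residue classes then gives the pattern stated in \eqref{E:increment of degree a+s<p-1} (and, by a parallel argument, \eqref{E:increment of degree a+s>=p-1}). The identity $(p-1)\delta_\varepsilon + \{a+2s_\varepsilon\} = s_\varepsilon + \{a+s_\varepsilon\}$ of Remark~\ref{R:delta} is used to align the various parameters $t_1, t_2, \delta_\varepsilon$ between the two cases.

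The bound \eqref{E:degree approx halo bound} then follows by summation. Setting $\epsilon_n := \deg g_{n+1}^{(\varepsilon)} - \deg g_n^{(\varepsilon)} - \lambda_{n+1}^{(\varepsilon)}$, telescoping gives $\deg g_n^{(\varepsilon)} - \sum_{i=1}^n \lambda_i^{(\varepsilon)} = \sum_{i=0}^{n-1} \epsilon_i$. From the explicit periodic pattern of $\epsilon_n$ established in the previous step, the partial sums take values in $\{0, 1\}$, and moreover take value $0$ precisely at the transitions where $\deg \bfe_{n+1}^{(\varepsilon)} - \deg \bfe_n^{(\varepsilon)} = a$, which is exactly \eqref{E:degree approx halo bound}. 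Finally, for the strict monotonicity, write $D_n := \deg g_{n+1}^{(\varepsilon)} - \deg g_n^{(\varepsilon)} = \lambda_{n+1}^{(\varepsilon)} + \epsilon_n$ and expand $D_{n+1} - D_n = (\lambda_{n+2}^{(\varepsilon)} - \lambda_{n+1}^{(\varepsilon)}) + (\epsilon_{n+1} - \epsilon_n)$. The genericity condition $1 \leq a \leq p - 4$ ensures that both $a \geq 1$ and $p - 1 - a \geq 3$; a case check across the $2p$ residue classes shows that in the one case where $\lambda_{n+2}^{(\varepsilon)} - \lambda_{n+1}^{(\varepsilon)} = 0$ (which can occur only when $a = 1$ and the transition crosses a multiple of $p$), the compensating term $\epsilon_{n+1} - \epsilon_n$ is $\geq 2$, while in all other cases $\lambda_{n+2}^{(\varepsilon)} - \lambda_{n+1}^{(\varepsilon)} \geq 1$ strictly dominates the periodic fluctuation, giving $D_{n+1} > D_n$ in every case.

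The main obstacle is the bookkeeping: one has to handle simultaneously the $2p$ residue classes of $n - 2s_\varepsilon \bmod 2p$, the two principal subcases $a + s_\varepsilon < p - 1$ versus $\geq p - 1$, and the interaction of the auxiliary parameters $\delta_\varepsilon, t_1, t_2$ appearing in Proposition~\ref{P:dimension of Sunr}. Each individual verification is elementary, but navigating all the floor-function case-splits in parallel is the chief technical burden.
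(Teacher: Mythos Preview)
Your overall strategy coincides with the paper's: both compute $\deg g_{n+1}^{(\varepsilon)} - \deg g_n^{(\varepsilon)}$ by counting the $k_\bullet$ with $d_k^\unr \leq n < \tfrac12 d_k^\Iw$ minus those with $\tfrac12 d_k^\Iw \leq n < d_k^\Iw - d_k^\unr$, feed in the dimension formulas of Corollary~\ref{C:dIw is even} and Proposition~\ref{P:dimension of Sunr}, and compare the result to $\lambda_{n+1}^{(\varepsilon)}$ via the power basis. The paper organises this by first packaging the three cut-off values as $k_{\min\bullet}(n),\,k_{\midd\bullet}(n),\,k_{\max\bullet}(n)$ (its Lemma--Notation~\ref{L:extremal ks}) and then tabulating $k_{\max\bullet}(n)-k_{\midd\bullet}(n)$ against $\deg\bfe_{n+1}$ and $k_{\midd\bullet}(n)-k_{\min\bullet}(n)+1$ against $\lfloor\deg\bfe_{n+1}/p\rfloor$; your writeup does the same computation without introducing that notation. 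For \eqref{E:degree approx halo bound} both telescope.

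The one place where the arguments genuinely diverge is the strict monotonicity. The paper splits $D_n = \deg g_{n+1}^{(\varepsilon)} - \deg g_n^{(\varepsilon)}$ as $(k_{\max\bullet}(n)-k_{\midd\bullet}(n)) - (k_{\midd\bullet}(n)-k_{\min\bullet}(n)+1)$ and observes directly from the tables that the first piece jumps by $a+1$ or $p-2-a$ (both $\geq 2$ under $1\leq a\leq p-4$) while the second changes by at most $1$, giving $D_{n+1}-D_n\geq 1$ in one line. Your split $D_n = \lambda_{n+1}^{(\varepsilon)} + \epsilon_n$ is less well-adapted: the claim ``$\lambda_{n+2}^{(\varepsilon)}-\lambda_{n+1}^{(\varepsilon)}\geq 1$ strictly dominates the periodic fluctuation'' is not literally true as stated, since $\epsilon_{n+1}-\epsilon_n$ can equal $-2$ (whenever $n-2s_\varepsilon$ is odd in $\{1,\dots,2a+1\}$), and in that situation you need $\lambda_{n+2}^{(\varepsilon)}-\lambda_{n+1}^{(\varepsilon)}\geq 3$, not $\geq 1$. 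This does in fact hold---at those residues $n$ is odd, so the relevant gap is $p-1-a$, and a direct check of $\deg\bfe_{n+1}\bmod p$ shows no multiple of $p$ is crossed---but your outline does not isolate this. The $2p$-residue case check you invoke would uncover it, but the summary you give misidentifies which case is the delicate one.
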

\begin{proof}
In this proof, we shall suppress the $\varepsilon$ from the notation when no confusion arises.
Since $\deg g_n(w)$ is the sum of ghost multiplicities $m_n(k)$ for all $k \equiv k_\varepsilon \bmod{(p-1)}$, the formula \eqref{E:increment of multiplicity} for the increments of the ghost multiplicities implies that
\begin{equation}
\label{E:gn+1 - gn}
\deg g_{n+1} (w) - \deg g_n(w) = \hspace{-10pt}
\sum_{\substack{k \equiv k_\varepsilon \bmod{(p-1)}
\\
d_{k}^\unr \leq n < \frac 12 d_{k}^\Iw}}
\hspace{-10pt}1\qquad -  \hspace{-10pt} \sum_{\substack{k \equiv k_\varepsilon \bmod{(p-1)}
\\
\frac 12 d_{k}^\Iw\leq n < d_{k}^{\Iw} - d_{k}^\unr }}
\hspace{-15pt}1.
\end{equation}
Note that $d_{k}^\unr$, $d_{k}^\Iw$, and $ d_{k}^{\Iw} - d_{k}^\unr$ are increasing functions in $k$ within the congruence class (for the last one, as $k$ is increased by $p-1$, $d_{k}^\Iw$ is increased by $2$ and $d_{k}^\unr$ is increased by at most $1$).

The following picture illustrates the contribution to the degree increment $\deg g_{n+1}(w) -\deg g_n(w)$. The contribution of the blue interval from $k_{\min}$ to $k_\midd$ is negative and the contribution of the red interval from $k_\midd$ to $k_{\max}$ is positive. More precisely, we define these three integers as
\begin{align}\label{E:first definition of extremal ks}
k_{\min}&=k_{\min}(n):=\min\{k|~k\equiv k_\varepsilon \bmod (p-1) \text{~and~} d_k^\Iw-d_k^\ur>n \}\\
k_{\midd}&=k_{\midd}(n):=\min\{k|~k\equiv k_\varepsilon \bmod (p-1) \text{~and~} \frac 12 d_k^\Iw\leq n \}\nonumber\\
k_{\max}&=k_{\max}(n):=\min\{k|~k\equiv k_\varepsilon \bmod (p-1) \text{~and~} d_k^\ur\leq n \}\nonumber
\end{align}

\begin{center}
\begin{tikzpicture}[line cap=round,line join=round,>=triangle 45,x=1cm,y=5cm]
\draw [color=cqcqcq,, xstep=1cm,ystep=1cm] (-0.5,-0.1) grid (7,0.6);
\draw[->,color=black] (0,0) -- (7,0);
\foreach \x in {1,2,3,4,5,6}
\draw[shift={(\x,0)},color=black] (0pt,2pt) -- (0pt,-2pt);
\draw[->,color=black] (0,0) -- (0,0.6);
\foreach \y in {0.2,0.4,0.6}
\draw[shift={(0,\y)},color=black] (2pt,0pt) -- (-2pt,0pt);
\clip(-0.5,-0.1) rectangle (7,0.6);
\draw [line width=1pt,dash pattern=on 2pt off 3pt] (6,0.25)-- (0,0.25);
\draw [line width=1pt,dash pattern=on 2pt off 3pt] (1.5,0.4375)-- (1.5,0);
\draw [line width=1pt,domain=0:7] plot(\x,{(-0--0.25*\x)/6});
\draw [line width=1pt,domain=0:7] plot(\x,{(-0--1.75*\x)/6});
\draw [line width=1pt,domain=0:7] plot(\x,{(-0--2*\x)/6});
\draw [line width=2pt,color=red] (6,0.25)-- (1.5,0.25);
\draw [line width=2pt,color=blue] (0.87,0.25)-- (1.5,0.25);
\draw [line width=1pt,dash pattern=on 2pt off 3pt] (6,0.25)-- (6,0);
\draw [line width=1pt,dash pattern=on 2pt off 3pt] (0.86,0.25)-- (0.86,0);
\begin{scriptsize}
\draw [fill=black] (0,0.25) circle (1pt);
\draw[color=black] (-0.2,0.26) node {$n$};
\draw [fill=black] (1.5,0) circle (1pt);
\draw[color=black] (1.5,-0.05) node {$k_{\midd}$};
\draw[color=black] (6,-0.05) node {$k_{\max}$};
\draw[color=black] (0.8,-0.05) node {$k_{\min}$};
\draw [fill=black] (1.5,0.0625) circle (1.5pt);
\draw [fill=black] (1.5,0.4375) circle (1.5pt);
\draw[color=black] (4.7,0.13) node {$d_{k}^\unr$};
\draw[color=black] (2.86,0.55) node {$d_{k}^\Iw-d_{k}^\unr$};
\draw[color=black] (1.13,0.55) node {$d_{k}^\Iw$};
\draw[color=blue] (1.2,0.21) node {$-1$};
\draw[color=red] (3.7,0.29) node {$+1$};
\end{scriptsize}
\end{tikzpicture}

Graph 2: contribution to increments of degrees.
\end{center}

It remains to express $k_{\midd \bullet}$, $k_{{\max}\bullet}$, and $k_{{\min}\bullet}$ in terms of $n$. Since this result will be used later, we state (a more refined version of) it separately here.
\begin{lemmanotation}
\label{L:extremal ks}
We only consider those integers $k \equiv k_\varepsilon \bmod{(p-1)}$ and write $k=k_\varepsilon+(p-1)k_{\bullet}$. Fix $n\in \ZZ_{ \geq 0}$.
Recall the definition of $\delta_\varepsilon$ in Notation~\ref{N:kbullet}, $t_1^{(\varepsilon)}$ and $t_2^{(\varepsilon)}$ from Proposition~\ref{P:dimension of Sunr}, and $\beta_{[n]}^{(\varepsilon)}$ from Notation~\ref{N:tnvarepsilon}

\begin{enumerate}
\item There is a unique integer $k_\bullet$ such that  $n=\frac 12 d_{k}^\Iw(\tilde \varepsilon_1)$; it is
$$k_\bullet = k_{\midd\bullet}^{(\varepsilon)}(n): = n+\delta_\varepsilon-1.$$

\item If $k_\bullet \in \ZZ_{\geq 0}$ satisfies $n = d_{k}^\unr(\varepsilon_1)$, then
\begin{align*}
\textrm{when } n \textrm{ is even,}&\quad (p+1)\tfrac {n-2}2 + t_2^{(\varepsilon)} \leq  k_\bullet  < (p+1)\tfrac {n}2 + t_1^{(\varepsilon)},
\\
\textrm{when } n \textrm{ is odd,}&\quad (p+1)\tfrac {n-1}2 + t_1^{(\varepsilon)} \leq k_\bullet < (p+1)\tfrac {n-1}2 + t_2^{(\varepsilon)}.
\end{align*}
We denote the largest such $k_\bullet$ by  $$k_{{\max}\bullet}^{(\varepsilon)}(n)=\frac{p+1}2 n + \beta_{[n]}^{(\varepsilon)}-1.$$

\item 
There is at most one $k_\bullet \in \ZZ_{\geq 0}$ such that  $n = d_{k}^\Iw(\tilde \varepsilon_1)-d_{k}^\unr(\varepsilon_1)$.
Such $k_\bullet$ exists if and only if the interval
\begin{align}
\label{E:condition on kmin}
\big(\,(p+1)( \tfrac {n-2}2 + \delta_\varepsilon)  -t_1^{(\varepsilon)} , \, (p+1)( \tfrac n2 + \delta_\varepsilon)  -t_2^{(\varepsilon)}\, \big]& \textrm{ if }n \textrm{ is even, and}
\\
\nonumber \big(\,(p+1)( \tfrac {n-1}2 + \delta_\varepsilon)  -t_2^{(\varepsilon)} , \, (p+1)( \tfrac {n-1}2 + \delta_\varepsilon)  -t_1^{(\varepsilon)}\, \big] &\textrm{ if }n \textrm{ is odd}
\end{align}
contains a multiple of $p$, which would be $pk_\bullet$. We write $$\tilde k_{{\min}\bullet}^{(\varepsilon)}(n)= \frac{p+1}2(n-1+2\delta_\varepsilon) - \beta_{[n-1]}^{(\varepsilon)}+1$$ for the \emph{right end} of the above interval plus $1$; and set $k_{{\min}\bullet}^{(\varepsilon)}(n) = \lceil \tilde k_{{\min}\bullet}^{(\varepsilon)} (n)/ p\rceil$.

\emph{\underline{Warning:} despite the name, $k_{{\min}\bullet}^{(\varepsilon)}(n)$ is never  a $k_\bullet$ such that $n = d_{k}^\Iw(\tilde \varepsilon_1) - d_{k}^\unr(\varepsilon_1)$.}
\end{enumerate}
Finally, for %$k_{?\bullet}^{(\varepsilon)}(n)$ with 
$? = \midd$, $\max$, and $\min$, we put $k_?^{(\varepsilon)}(n) = k_\varepsilon + (p-1)k_{?\bullet}^{(\varepsilon)}(n)$. We set
$$\tilde k_{{\min}}^{(\varepsilon)}(n)  = pk_\varepsilon + (p-1)\tilde k_{{\min}\bullet}^{(\varepsilon)}(n).
$$
\end{lemmanotation}
\begin{remark}
\label{R:meaning of kmidminmax}
From the definition of $k_{\mathrm{mid}\bullet}^{(\varepsilon)}(n)$, $k_{\mathrm{min}\bullet}^{(\varepsilon)}(n)$, and $k^{(\varepsilon)}_{\mathrm{max} \bullet}(n)$, we see that
\begin{itemize}
\item $\frac 12d_k^\Iw(\tilde \varepsilon_1) = n \Leftrightarrow k_\bullet = k_{\mathrm{mid}\bullet}^{(\varepsilon)}(n)$.
\item $d_k^\ur(\varepsilon_1) \leq  n \Leftrightarrow k_\bullet \leq k_{\mathrm{max}\bullet}^{(\varepsilon)}(n)$ and $d_k^\ur(\varepsilon_1) \geq   n \Leftrightarrow k_\bullet > k_{\mathrm{max}\bullet}^{(\varepsilon)}(n-1)$.
\item 
$d_k^\Iw(\tilde \varepsilon_1) - d_k^\ur(\varepsilon_1) \leq  n \Leftrightarrow k_\bullet < k_{\mathrm{min}\bullet}^{(\varepsilon)}(n)$ and $d_k^\Iw(\tilde \varepsilon_1) -d_k^\ur(\varepsilon_1) \geq   n \Leftrightarrow k_\bullet \geq k_{\mathrm{min}\bullet}^{(\varepsilon)}(n-1)$.
\end{itemize}
In particular, we see that the numbers defined in Lemma-Notation~\ref{L:extremal ks} coincide with those defined in (\ref{E:first definition of extremal ks}).
\end{remark}
\begin{proof}
(1) This follows from the dimension formula in Corollary~\ref{C:dIw is even}: $d_{k}^{\Iw}(\tilde \varepsilon_1) = 2k_\bullet + 2-2\delta_\varepsilon$.

(2) This follows from inverting the dimension formula in Proposition~\ref{P:dimension of Sunr}. (There is no problem with the statement when $n=0$, as we asked for $k_\bullet \geq 0$.)

(3) By Corollary~\ref{C:dIw is even}, $d_{k}^{\Iw}(\tilde \varepsilon_1) = 2k_\bullet + 2-2\delta_\varepsilon$. So the equality $n = d_{k}^\Iw(\tilde \varepsilon_1)-d_{k}^\unr(\varepsilon_1)$ is equivalent to
$$
d_{k}^\unr(\varepsilon_1)\ =  (2k_\bullet+2-2\delta_\varepsilon)-n.
$$
When $n$ is even, this means that
$$(p+1)\tfrac { (2k_\bullet+2-2\delta_\varepsilon)-n-2}2 + t_2 \leq k_\bullet  < (p+1)\tfrac {(2k_\bullet+2-2\delta_\varepsilon)-n}2 + t_1,
$$
or equivalently
$$ (p+1)( \tfrac {n-2}2 + \delta_\varepsilon) -t_1 < pk_\bullet  \leq (p+1)( \tfrac n2 + \delta_\varepsilon)  -t_2.
$$
The case when $n$ is odd follows from the same argument.
\end{proof}

Now, we continue the proof of Proposition~\ref{P:increment of degrees in ghost series}.  By \eqref{E:gn+1 - gn}, we write
$$
\deg g_{n+1}(w) - \deg g_n(w) = \big(k_{{\max}\bullet}(n) - k_{\midd \bullet}(n)\big) - \big(k_{\midd \bullet}(n) -  k_{{\min}\bullet}(n)+1 \big).$$
We shall compare $k_{{\max}\bullet}(n)-k_{\midd \bullet}(n)$ with $\deg \bfe_{n+1}$ and   $k_{\midd \bullet}(n)-k_{{\min}\bullet}(n)$ with $\lfloor \frac{\deg\bfe_{n+1}}{p} \rfloor$.

By Lemma-Notation~\ref{L:extremal ks}\,(1)(2),  $k_{{\max}\bullet}(n)-k_{\midd \bullet}(n)=\frac{p-1}{2}\cdot n+\beta_{[n]}-\delta_{\varepsilon}$, we list the values of $k_{{\max}\bullet}(n)-k_{\midd \bullet}(n)$ and $\deg \bfe_{n+1}$ in the following table:
\begin{center}
\begin{tabular}{|c|c|c|c|}
\hline
 & &$k_{{\max} \bullet}(n)-k_{\midd \bullet}(n)$ & $\deg \bfe_{n+1}$ 
\\
\hline
\multirow{2}{*}{$a+s_\varepsilon < p-1$} & $n=2n_0$ & $(p-1)n_0+s_\varepsilon$& $s_\varepsilon+(p-1)n_0$
\\
\cline{2-4}
& $n=2n_0+1$ &  $(p-1)n_0+a+s_\varepsilon+1$ & $a+s_\varepsilon+(p-1)n_0$
\\
\hline 
\multirow{2}{*}{
$a+s_\varepsilon \geq p-1$} & $n=2n_0$ &$(p-1)n_0+\{a+s_\varepsilon\}+1$ & $\{a+s_\varepsilon\}+(p-1)n_0$ 
\\
\cline{2-4} & $n=2n_0+1$ & $(p-1)n_0+s_\varepsilon$ & $s_\varepsilon+(p-1)n_0$ 
\\
\hline
\end{tabular}
\end{center}
In other words, by the explicit description of power basis \eqref{E:basis of Sdagger},
\begin{equation}
\label{E:kmax-kmid}
k_{\max {\bullet}}(n) - k_{\midd\bullet}(n)  - \deg \bfe_{n+1} = \begin{cases}
0& \textrm{ if }\bfe_{n+1} = e_1^*z^{s_\varepsilon+ (p-1)\lfloor n/2\rfloor},
\\
1 & \textrm{ if }\bfe_{n+1} = e_2^*z^{\{a+s_\varepsilon\}+ (p-1)\lfloor n/2\rfloor}.
\end{cases}
\end{equation}

By Lemma-Notation~\ref{L:extremal ks}\,(1)(3), we have 
$$
k_{{\midd}\bullet}(n)-k_{{\min} \bullet}(n)+1=n-\Big\lceil \frac{\frac{p+1}{2}(n-1)+\delta_\varepsilon-\beta_{[n-1]}+1}{p}\Big\rceil.$$
We list the values of $k_{{\midd}\bullet}(n)-k_{{\min}\bullet}(n)+1$ and $\lfloor\frac {\deg \bfe_{n+1}}p\rfloor$ in the following table:

\begin{center}
\begin{tabular}{|c|c|c|c|}
\hline
 & &$k_{\midd \bullet}(n)-k_{{\min} \bullet}(n)+1$ & $\lfloor\frac{\deg \bfe_{n+1}}p\rfloor$ 
\\
\hline
\multirow{2}{*}{$a+s_\varepsilon < p-1$} & $n=2n_0$ &  $n_0-\lceil \frac{n_0-a-s_\varepsilon-1}{p}\rceil$ & $n_0-\lceil \frac{n_0-s_\varepsilon}p\rceil$
\\
\cline{2-4}
& $n=2n_0+1$ & $n_0+1-\lceil\frac{n_0-s_\varepsilon+1}{p}\rceil$ &$n_0-\lceil \frac{n_0-a-s_\varepsilon}p\rceil$
\\
\hline 
\multirow{2}{*}{
$a+s_\varepsilon \geq p-1$} & $n=2n_0$ & $n_0-\lceil \frac{n_0-s_\varepsilon}{p} \rceil$  &  $n_0-\lceil \frac{n_0-\{a+s_\varepsilon\}}p\rceil$
\\
\cline{2-4} & $n=2n_0+1$ & $ n_0+1-\lceil \frac{n_0-\{a+s_\varepsilon\}}{p} \rceil$& $n_0-\lceil \frac{n_0-s_\varepsilon}p\rceil$
\\
\hline
\end{tabular}
\end{center}
Here we used the obvious formula $\lfloor \frac{(p-1)a+b}p\rfloor = a - \lceil \frac{a-b}p\rceil$. 
The equalities \eqref{E:increment of degree a+s<p-1} and \eqref{E:increment of degree a+s>=p-1} follow from this by a case-by-case argument. We only spell out the case when $a+s_\varepsilon<p-1$ (that is \eqref{E:increment of degree a+s<p-1}) and the other case is similar.

When $a+s_\varepsilon<p-1$, 
$$
k_{\midd\bullet}(n) - k_{{\min}\bullet}(n)+1 - \Big\lfloor \frac{\deg \bfe_{n+1}}p\Big \rfloor=
\begin{cases}
0 & \textrm{ if } n = 2n_0 \textrm{ and } n_0  -s_\varepsilon \equiv a+2, \dots, p \bmod p\\
& \textrm{ or } n = 2n_0+1 \textrm{ and } n_0 -s_\varepsilon \equiv 0, \dots, a \bmod p\\
1 & \textrm{ otherwise}
\end{cases}
$$ 
Combining this with \eqref{E:kmax-kmid}, we deduce that
$$
\deg g_{n+1}(w) - \deg g_n(w) - \lambda_{n+1} = \begin{cases}
1 & \textrm{ if } n - 2s_\varepsilon \equiv 1, 3, \dots, 2a+1 \bmod {2p},
\\
-1 & \textrm{ if } n- 2s_{\varepsilon} \equiv 2, 4, \dots, 2a+2 \bmod{2p}
\\
0 & \textrm{ otherwise.}
\end{cases}
$$

The statement \eqref{E:degree approx halo bound} follows from \eqref{E:increment of degree a+s<p-1} and \eqref{E:increment of degree a+s>=p-1} immediately, by observing that the difference being $1$ only appears in the case when $\deg \bfe_{n+1}-\deg \bfe_n = p-1-a$.

To see the last statement, we note that, as shown in the table above 
$$\big( k_{{\max}\bullet}(n+1) - k_{\midd\bullet}(n+1)\big) - \big(  k_{{\max}\bullet}(n) - k_{\midd\bullet}(n)\big)$$ is always equal to either $a+1$ or $p-2-a$, which is $\geq 2$ under the genericity condition $1\leq a \leq p-4$. On the other hand, the next table shows that
$$
\Big|\big(k_{{\midd}\bullet}(n+1) - k_{{\min}\bullet}(n+1)\big) - \big(  k_{{\midd}\bullet}(n) - k_{{\min}\bullet}(n)\big)\Big| \leq 1.
$$
Combining these two, we deduce that the increments of degrees $\deg g_{n+1}(w)-\deg g_n(w)$ is always strictly increasing.
\end{proof}

\begin{corollary}[Ghost conjecture versus spectral halo conjecture]
\label{C:ghost versus halo}
Let $\varepsilon$ be a relevant character, then for $w_\star \in \gothm_{\CC_p}$ with $v_p(w_\star) \in (0,1)$, the $n$th slope of $\NP(G(w_\star, -))$ is
$$
v_p(w_\star) \cdot (\deg g_{n+1}^{(\varepsilon)}-\deg g_n^{(\varepsilon)})
$$
Moreover, these slopes are strictly increasing with respect to $n$.
\end{corollary}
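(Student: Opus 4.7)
The plan is to read off both claims directly from the computation of the differences $\deg g_{n+1}^{(\varepsilon)} - \deg g_n^{(\varepsilon)}$ already performed in Proposition~\ref{P:increment of degrees in ghost series}. The key point is that the zeros $w_k = \exp((k-2)p)-1$ satisfy $v_p(w_k) \geq 1$ for every $k \equiv k_\varepsilon \bmod{(p-1)}$, whereas by hypothesis $v_p(w_\star) \in (0,1)$. Consequently, for every such $k$ we have the simple identity
\[
v_p(w_\star - w_k) = v_p(w_\star),
\]
since $v_p(w_\star) < v_p(w_k)$.

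Applying this to the factorization $g_n^{(\varepsilon)}(w) = \prod_{k \equiv k_\varepsilon \bmod{(p-1)}} (w-w_k)^{m_n^{(\varepsilon)}(k)}$ from Definition~\ref{D:ghost series} (a finite product by Corollary~\ref{C:ghost series depends on bar rho}), I immediately get
\[
v_p\bigl(g_n^{(\varepsilon)}(w_\star)\bigr) \;=\; v_p(w_\star)\cdot \sum_{k} m_n^{(\varepsilon)}(k) \;=\; v_p(w_\star)\cdot \deg g_n^{(\varepsilon)}.
\]
Thus the point $\bigl(n,\, v_p(g_n^{(\varepsilon)}(w_\star))\bigr)$ on the graph defining $\NP(G(w_\star,-))$ has height proportional, with ratio $v_p(w_\star)$, to $\deg g_n^{(\varepsilon)}$.

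Next I would invoke the last sentence of Proposition~\ref{P:increment of degrees in ghost series}, which asserts that the increments $\deg g_{n+1}^{(\varepsilon)} - \deg g_n^{(\varepsilon)}$ are strictly increasing in $n$ (this is where the genericity hypothesis $1 \leq a \leq p-4$ enters). Multiplying by the positive number $v_p(w_\star)$ preserves the strict monotonicity, so the consecutive slopes
\[
v_p(w_\star)\cdot\bigl(\deg g_{n+1}^{(\varepsilon)} - \deg g_n^{(\varepsilon)}\bigr), \qquad n = 0, 1, 2, \dots,
\]
of the piecewise-linear graph connecting the points $\bigl(n,\, v_p(g_n^{(\varepsilon)}(w_\star))\bigr)$ are strictly increasing. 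By the very definition of the Newton polygon as the lower convex hull of these points, strict monotonicity of consecutive slopes forces each vertex candidate $\bigl(n,\, v_p(g_n^{(\varepsilon)}(w_\star))\bigr)$ to actually be a vertex of $\NP(G(w_\star,-))$. Consequently, the $n$th slope of $\NP(G(w_\star,-))$ is exactly $v_p(w_\star)\cdot(\deg g_{n+1}^{(\varepsilon)} - \deg g_n^{(\varepsilon)})$, proving both assertions simultaneously.

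There is essentially no obstacle here: the entire content of the corollary has been packaged into Proposition~\ref{P:increment of degrees in ghost series}, and the corollary is simply the translation of that combinatorial statement into the language of Newton polygons, exploiting the fact that we are on the ``halo'' region $v_p(w_\star) \in (0,1)$ where the ghost zeros all sit at larger valuation and hence contribute uniformly to $v_p(g_n^{(\varepsilon)}(w_\star))$.
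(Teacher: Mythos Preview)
Your proof is correct and follows the same approach as the paper: the paper establishes the identity $v_p(g_n^{(\varepsilon)}(w_\star)) = v_p(w_\star)\cdot \deg g_n^{(\varepsilon)}$ in the discussion preceding Notation~\ref{N:tnvarepsilon} (using $v_p(w_k)\geq 1$ exactly as you do), and then the corollary is an immediate consequence of the strict monotonicity of degree increments proved in Proposition~\ref{P:increment of degrees in ghost series}.
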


\begin{example}
\label{E:halo bound vs g}
In the setup of Example~\ref{Ex:p=7a=3}, the following table indicates the difference between $\deg g_{n+1}^{(\varepsilon)}- \deg g_{n}^{(\varepsilon)}$ and the halo Hodge slope $\lambda_{n+1} : = \deg \bfe_{n+1}^{(\varepsilon)} - \big\lfloor \deg \bfe_{n+1}^{(\varepsilon)} / p\big\rfloor$, on two of the disks.

\begin{center}
\begin{tabular}{|c|c|c|c|c|c|c|c|c|c|c|c|c|c|c|c|c|c|}
\hline
& \multicolumn{17}{|c|}{$\varepsilon = 1\times \omega^2$ disk}
\\
\hline
$\deg g_{n+1}^{(\varepsilon)}- \deg g_{n}^{(\varepsilon)}$  & 0 & 3 & 5 & 8 & 10 & 13 & 15 & 18 & 21 & 23 & 26 & 28&31&33&36 & 39& ... \\ \hline
$\deg \bfe_{n+1}^{(\varepsilon)}$ &  0 & 2 & 6 & 8 & 12 & 14 & 18 & 20 & 24&26&30&32&36&38&42&44& ...
\\
 \hline
$\lambda_{n+1}^{(\varepsilon)}$  & 0 & 2 & 6 & 7 & 11 & 12 & 16 & 18 & 21 & 23 & 26 & 28&31&33&36 & 38& ...
\\ \hline
& \multicolumn{17}{|c|}{$\varepsilon= \omega^2 \times 1$ disk}
\\
\hline
$\deg g_{n+1}^{(\varepsilon)}- \deg g_{n}^{(\varepsilon)}$  &1 & 3 & 6 & 9 & 11 & 14 & 16 & 19 &21&24&27&29&32&34&37&39& ...
\\ \hline
$\deg \bfe^{(\varepsilon)}_{n+1}$ & 0 & 4 & 6 & 10 & 12 & 16 & 18 & 22 &24&28&30&34&36&40& 42 &46& ...
\\ \hline
$\lambda_{n+1}^{(\varepsilon)}$  & 0 & 4 & 6 & 9 & 11 & 14 & 16 & 19 &21&24&26&30&31&35&36&40& ...
\\ \hline
\end{tabular}
\end{center}
\end{example}

\begin{remark}
\label{R:modified halo bound sharp}
As indicated by this Corollary, the halo bound is surprising sharp ``after we distributed it over to each weight disk". It seems to be possible to give a much more robust proof of the halo conjecture similar to \cite{liu-wan-xiao}. We will return to this topic in a future work.
\end{remark}

The next part of this section is devoted to checking that the slopes of the ghost series are compatible with the theta maps, the Atkin--Lehner involutions, and $p$-stabilizations (which will be elaborated in the forthcoming paper; but we do not need them logically here). In other words, 
\begin{itemize}
\item 
the existence of the theta maps predicts that the slopes of abstract overconvergent but non-classical forms of weight $k_0$ is $k_0-1$ plus the slopes of abstract overconvergent forms of weight $2-k_0$,
\item 
the Atkin--Lehner involution predicts that the slopes of abstract classical forms of weight $k_0$ with ``opposite characters" can be paired so that the sum is equal to $k_0-1$, and
\item
the $p$-stabilization process predicts that the slopes of abstract $p$-old forms of weight $k_0$ can be paired so that the sum is equal to $k_0-1$.
\end{itemize}
Below, we show that the ghost series satisfies the analogous properties.
In addition, we will prove a very important yet mysterious property of the ghost series, which we call the \emph{ghost duality}. Roughly speaking, for a fixed weight $k_0 \equiv k_\varepsilon \bmod{(p-1)}$, not only the slopes of the abstract $p$-new forms  are equal to $\frac{k_0-2}2$, but certain slopes computed using appropriate derivatives in $w$ of the ghost series have slopes $\frac{k_0-2}2$. This was first observed by Bergdall and Pollack in their computational data, and was communicated to us. As in many other places, we thank them for sharing their ideas on this project.

\begin{notation}
\label{N:gnhatk}
For $k \equiv k_\varepsilon \bmod {(p-1)}$, we write 
$$
g^{(\varepsilon)}_{n,\hat k}(w): = g_n^{(\varepsilon)}(w) \big/ (w-w_k)^{m_n^{(\varepsilon)}(k)}.
$$
In particular, $g^{(\varepsilon)}_{n,\hat k}(w_k)$ is the leading coefficient of the Taylor expansion of $g_n^{(\varepsilon)}(w)$ at $w=w_k$.

More generally, if $\bfk = \{k_1, \dots, k_r\}$ is a set of ghost zeros with each $k_i \equiv k_\varepsilon \bmod (p-1)$, we put
$$
g^{(\varepsilon)}_{n,\hat {\bfk}}(w): = g_n^{(\varepsilon)}(w) \big/ \prod_{i=1}^r(w-w_{k_i})^{m_n^{(\varepsilon)}(k_i)}.
$$
\end{notation}

We point out a formula that we will use frequently in this section and later:
\begin{equation}
\label{E:vp of k1-k2}
v_p(w_{k_1} - w_{k_2}) = v_p\big(\exp(p(k_2-2)) \cdot (\exp(p(k_1-k_2))-1)\big) =  1+ v_p(k_1-k_2).
\end{equation}
\begin{proposition}
\label{P:ghost compatible with theta AL and p-stabilization}
Fix $k_0 \geq 2$ and a character $\varepsilon= \omega^{-s_\varepsilon} \times \omega^{a+s_\varepsilon}$ of $\Delta^2$ relevant to $\bar \rho$ as before. Write $d: = d_{k_0}^\Iw(\varepsilon\cdot (1\times \omega^{2-k_0}))$ in this theorem.
\begin{enumerate}
\item (Compatibility with theta maps)
Put $\varepsilon' := \varepsilon \cdot (\omega^{k_0-1} \times \omega^{1-k_0})$ with $s_{\varepsilon'} = \{s_\varepsilon +1-k_0\}$. For every $\ell\geq 1$, the $(d+\ell)$th slope of $\NP(G^{(\varepsilon)}(w_{k_0}, -))$ is $k_0-1$ plus the  $\ell$th slope of $\NP(G^{(\varepsilon')}(w_{k_0}, -))$.
In particular, the $(d+\ell)$th slope of $\NP(G^{(\varepsilon)}(w_{k_0}, -))$ is at least ${k_0}-1$.

\item (Compatibility with Atkin--Lehner involutions)
Assume that ${k_0}\not \equiv k_\varepsilon \bmod{(p-1)}$.
Put $\varepsilon'' = \omega^{-s_{\varepsilon''}} \times \omega^{a+s_{\varepsilon''}}$ with $s_{\varepsilon''}: = \{{k_0}-  2-a-s_\varepsilon\}$.  Then for every $\ell \in \{1, \dots, d\}$, the sum of the $\ell$th slope of $\NP(G^{(\varepsilon)}(w_{k_0}, -))$ and the $(d-\ell+1)$th slope of $\NP(G^{(\varepsilon'')}(w_{k_0}, -))$ is exactly ${k_0}-1$.
In particular, the $\ell$th slope of $\NP(G^{(\varepsilon)}(w_{k_0}, -))$ is at most $k_0-1$.

\item (Compatibility with $p$-stabilizations)
Assume that ${k_0}\equiv k_\varepsilon \bmod {(p-1)}$. Then for every $\ell \in \{1, \dots, d_{k_0}^\unr(\varepsilon_1)\}$, the sum of the $\ell$th slope of $\NP(G^{(\varepsilon)}(w_{k_0}, -))$ and the $(d-\ell+1)$th slope of $\NP(G^{(\varepsilon)}(w_{k_0}, -))$ is exactly ${k_0}-1$.
In particular, the $\ell$th slope of $\NP(G^{(\varepsilon)}(w_{k_0}, -))$ is at most ${k_0}-1$.
\item (Ghost duality)
Assume that ${k_0} \equiv k_\varepsilon \bmod{(p-1)}$. Write $d_{k_0}^\unr$, $d_{k_0}^\Iw$, and $d_{k_0}^{\new}$ for $d_{k_0}^\unr(\varepsilon_1)$, $d_{k_0}^\Iw(\tilde \varepsilon_1)$, and $d_{k_0}^{\new}(\varepsilon_1)$, respectively.
Then for each $\ell = 0, \dots,\frac 12 d_{k_0}^{\new}-1$,
\begin{equation}
\label{E:ghost duality}
v_p\big(g_{d_{k_0}^\Iw - d_{k_0}^\unr -\ell, \hat k_0}^{(\varepsilon)} (w_{k_0}) \big) - v_p\big(g_{ d_{k_0}^\unr +\ell, \hat k_0}^{(\varepsilon)} (w_{k_0}) \big) = ({k_0}-2) \cdot (\tfrac 12d_{k_0}^{\new} - \ell).
\end{equation}

In particular, the $(d_{k_0}^\unr+1)$th to the $(d_{k_0}^\Iw-d_{k_0}^\unr)$th slopes of $\NP(G^{(\varepsilon)}(w_{k_0},-))$ are all equal to $\frac{{k_0}-2}2$. 
\end{enumerate}
\end{proposition}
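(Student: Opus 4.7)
The proof plan is entirely combinatorial, manipulating the ghost multiplicities $m_n^{(\varepsilon)}(k)$ via the dimension formulas in Propositions~\ref{P:dimension of SIw} and~\ref{P:dimension of Sunr}, together with the identity \eqref{E:vp of k1-k2}: for $k, k_0 \equiv k_\varepsilon \pmod{p-1}$ with $k \neq k_0$, one has $v_p(w_k - w_{k_0}) = 1 + v_p(k - k_0)$. This translates each relevant valuation into a weighted count
\[
v_p\bigl(g^{(\varepsilon)}_{n, \hat{k}_0}(w_{k_0})\bigr) = \sum_{\substack{k \equiv k_\varepsilon \pmod{p-1}\\ k \neq k_0}} m_n^{(\varepsilon)}(k)\bigl(1 + v_p(k - k_0)\bigr),
\]
so every slope assertion reduces to an identity among such sums (with the NP slopes being read off as successive differences once convexity is checked).

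For parts (1)--(3), the matching of dimensions $d_k^\Iw$ and $d_k^\unr$ between the characters $\varepsilon$ versus $\varepsilon'$ or $\varepsilon''$ is read off directly from the explicit formulas in Propositions~\ref{P:dimension of SIw} and~\ref{P:dimension of Sunr}. For the theta compatibility (1), the shift $\varepsilon \mapsto \varepsilon'$ corresponds to a shift of the power-basis labeling by $k_0 - 1$; one then checks that the ``post-classical'' portion of the ghost pattern for $\varepsilon$ at $w_{k_0}$ coincides with the full ghost pattern for $\varepsilon'$ at $w_{k_0}$, uniformly shifted by $k_0 - 1$ in valuation. For the Atkin--Lehner compatibility (2), the substitution $\varepsilon \mapsto \varepsilon''$ reverses the order of the power-basis degrees modulo $k_0 - 2$ (cf.\ Proposition~\ref{P:Atkin-Lehner duality}(2)), producing a palindromic ghost-multiplicity relation whose weighted sum is $k_0 - 1$. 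For the $p$-stabilization compatibility (3), the pairing among the first and last $d_{k_0}^\unr$ slopes is obtained by applying (2) to $\psi$ itself (which equals $\psi''$ in this case); only the $p$-old portion is affected, while the middle $p$-new slopes are handled by (4).

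The heart of the proof is part (4), the ghost duality. Setting $n_1 = d_{k_0}^\unr + \ell$ and $n_2 = d_{k_0}^\Iw - d_{k_0}^\unr - \ell = d_{k_0}^\Iw - n_1$, the asserted identity becomes
\[
\sum_{\substack{k \equiv k_\varepsilon \pmod{p-1}\\ k \neq k_0}} \bigl[m_{n_2}^{(\varepsilon)}(k) - m_{n_1}^{(\varepsilon)}(k)\bigr]\bigl(1 + v_p(k - k_0)\bigr) = (k_0 - 2)\bigl(\tfrac{1}{2} d_{k_0}^{\new} - \ell\bigr).
\]
Parametrizing $k = k_0 + (p-1)j$ for $j \neq 0$, the palindrome at $k$ is centered at $\tfrac{1}{2} d_k^\Iw = \tfrac{1}{2} d_{k_0}^\Iw + j - \delta_{\varepsilon, k}$, shifted from the fixed midpoint $\tfrac{n_1+n_2}{2} = \tfrac{1}{2} d_{k_0}^\Iw$ by approximately $j$. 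For $|j|$ large, both $n_1$ and $n_2$ lie on the same monotone side of this palindrome, so $m_{n_2}^{(\varepsilon)}(k) - m_{n_1}^{(\varepsilon)}(k) = \pm(d_{k_0}^{\new} - 2\ell)$, with opposite signs for $j > 0$ versus $j < 0$. The contributions from small $|j|$ are boundary corrections reflecting where $n_1$ and $n_2$ transition across $d_k^\unr$ and $\tfrac{1}{2} d_k^\Iw$; these corrections should telescope across the congruence class.

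The main obstacle will be executing the bookkeeping in (4): controlling the interaction between the $k$-dependent palindrome center $\tfrac{1}{2} d_k^\Iw$ and the fixed reflection $n \leftrightarrow d_{k_0}^\Iw - n$, and verifying that the weights $1 + v_p(j)$ (which cluster near multiples of $p$, $p^2$, etc.) combine with the signed multiplicity differences to sum to exactly $(k_0 - 2)(\tfrac{1}{2} d_{k_0}^{\new} - \ell)$. Granting the identity, the ``in particular'' assertion is immediate: since $m^{(\varepsilon)}_{d_{k_0}^\unr}(k_0) = m^{(\varepsilon)}_{d_{k_0}^\Iw - d_{k_0}^\unr}(k_0) = 0$, at both endpoints the hatted and unhatted evaluations agree, so the straight segment of $\NP(G^{(\varepsilon)}(w_{k_0}, -))$ over these $d_{k_0}^{\new}$ horizontal units has slope exactly $\tfrac{k_0 - 2}{2}$.
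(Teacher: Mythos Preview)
Your overall strategy---reducing everything to identities among the weighted sums $\sum_k m_n^{(\varepsilon)}(k)(1+v_p(k-k_0))$---matches the paper's, but your proposal has two genuine gaps.

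First, an increment identity does not by itself yield a slope identity: you must also know that the relevant points lie on the Newton polygon. For parts (1) and (2) you need $(d, v_p(g_d^{(\varepsilon)}(w_{k_0})))$ to be a vertex; for the ``in particular'' of (4) you need $(d_{k_0}^\unr, v_p(g_{d_{k_0}^\unr}(w_{k_0})))$ and $(d_{k_0}^\Iw-d_{k_0}^\unr, v_p(g_{d_{k_0}^\Iw-d_{k_0}^\unr}(w_{k_0})))$ to be vertices. Your remark that the intermediate $g_n(w_{k_0})$ vanish only shows the polygon is a straight line \emph{between} those two indices, not that the segment is actually part of $\NP(G^{(\varepsilon)}(w_{k_0},-))$. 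The paper closes this gap by invoking Proposition~\ref{P:gouvea k-1/p+1 conjecture} to bound the first $d_{k_0}^\unr$ slopes strictly below $\tfrac{k_0-2}{2}$, and then sandwiching with parts (1) and (2). You make no mention of this ingredient, and without it the argument for (3) and the last sentence of (4) is incomplete. Relatedly, you cannot deduce (3) by ``applying (2) to $\psi$ itself'': part (2) explicitly assumes $k_0\not\equiv k_\varepsilon$, so a separate argument is needed (the paper handles both cases by proving a single increment identity \eqref{E:ghost compatible with AL} with a case split on the right-hand side).

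Second, your treatment of (4) is a heuristic, not a proof. Saying the boundary corrections ``should telescope'' and the $p$-adic weights ``combine to sum exactly'' identifies the difficulty without resolving it. The paper does not attack $\sum_k[m_{n_2}(k)-m_{n_1}(k)](1+v_p(k-k_0))$ head-on; instead it proves the increment identity \eqref{E:ghost compatible with AL} via the $k_{\min}$/$k_{\midd}$/$k_{\max}$ machinery of Lemma-Notation~\ref{L:extremal ks}, reducing the claim to three exact numerical identities among these endpoints (items (1)--(3) in \S\ref{S:proof of ghost compatible with AL}), each of which unwinds to a case-by-case check (Lemma~\ref{L:case-by-case check AL}). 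The ghost duality \eqref{E:ghost duality} is then obtained by summing \eqref{E:ghost compatible with AL} over a range of $\ell'$. Your direct approach may be workable, but as written it does not carry the bookkeeping through.
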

For an alternative explanation of name ``ghost duality", we refer to Notation~\ref{N:Delta kell} later.
%\begin{remark}
%\label{R:ghost duality explained}
%We can reformulate Theorem~\ref{T:ghost compatible with theta AL and p-stabilization}(4) as following to justify the name ``ghost duality". For $\ell = 0, \dots, d_{k_0}^{p\new}$, the polynomial $g^{(\varepsilon)}_{d_{k_0}^\unr+\ell}(w)$ has zero at $w=w_{k_0}$ of order $\ell^\star: = m_{d_{k_0}^\unr+\ell} (k_0) =\min\{\ell, d_{k_0}^{p\new}-\ell\} $.
%Consider certain normalized valuations
%$$\Delta'_{{k_0}, \ell}: = \tfrac{{k_0}-2}2 \ell- v_p \big(g_{d_{k_0}^\unr+\ell}^{(\varepsilon)*}(w_{k_0}) \big) .$$
%Then \eqref{E:ghost duality} is equivalent to the following equality for all $\ell = 0, \dots, d_{k_0}^{p\new}$
%\begin{equation}
%\label{E:ghost duality alternative}
%\Delta'_{k_0,d_{k_0}^{p\new}-\ell} = \Delta'_{{k_0}, \ell}.
%\end{equation}
%\end{remark}
\begin{proof}[Proof of Proposition~\ref{P:ghost compatible with theta AL and p-stabilization}]
We first list the concrete statements on $g_n^{(\varepsilon)}(w)$ that we shall prove in order to deduce the Theorem:
\begin{itemize}
\item[(a)] In the setup of (1),
we will prove in \S\,\ref{S:proof of ghost compatible with theta} that
\begin{equation}
\label{E:ghost compatible with theta}
v_p(g^{(\varepsilon)}_{d+\ell+1}(w_{k_0}))-v_p(g^{(\varepsilon)}_{d+\ell}(w_{k_0})) = v_p(g^{(\varepsilon')}_{\ell+1}(w_{2-{k_0}}))-v_p(g^{(\varepsilon')}_{\ell}(w_{2-{k_0}})) +{k_0}-1.
\end{equation}
\item[(b)]
Set $\varepsilon'' = \omega^{-s_{\varepsilon''}} \times \omega^{a+s_{\varepsilon''}}$ with $s_{\varepsilon''}: = \{{k_0}-  2-a-s_\varepsilon\}$ (allowing $k_0\equiv k_\varepsilon \bmod{(p-1)}$ in which case $\varepsilon''= \varepsilon$).
Then for every integer $\ell =1, \dots, d$, we will prove in \S\,\ref{S:proof of ghost compatible with AL} that
\begin{align}
\label{E:ghost compatible with AL}
&\big(v_p(g^{(\varepsilon)}_{d+1-\ell, \hat k_0}(w_{k_0})) - v_p(g^{(\varepsilon)}_{d-\ell, \hat k_0}(w_{k_0})) \big) + \big(v_p(g^{(\varepsilon'')}_{\ell, \hat k_0}(w_{k_0})) - v_p(g^{(\varepsilon'')}_{\ell-1, \hat k_0}(w_{k_0})) \big) 
\\
\nonumber
=\ & \begin{cases}
{k_0}-2 & \textrm{if }k_0\equiv k_\varepsilon \bmod{(p-1)} \textrm{ and }d_{k_0}^\unr +1\leq  \ell \leq d_{k_0}^\Iw-d_{k_0}^\unr
\\
{k_0}-1 & \textrm{otherwise.}
\end{cases}
\end{align}
\end{itemize}
Replacing $\ell$ by $\ell'$ in \eqref{E:ghost compatible with AL}, the equality \eqref{E:ghost duality} follows from summing the resulting equality for $\ell'$'s being $d_{k_0}^{\unr} + \ell+1, d_{k_0}^{\unr} +\ell+2, \dots, \frac12 d$. To deduce $(1)$ and $(2)$ from \eqref{E:ghost compatible with AL} and \eqref{E:ghost compatible with theta} rigorously, we still need to show that the point $(d, v_p(g_d^{(\varepsilon)}(w_{k_0})))$ actually lies on the Newton polygon $\NP(G^{(\varepsilon)}(w_{k_0}, -))$, which we prove now.

Replacing  $\ell$ by $\ell'$ in \eqref{E:ghost compatible with theta} and summing the resulting equality for $\ell' =0, 1, \dots, \ell-1$ implies that for all $\ell \geq 1$,
$$
v_p(g^{(\varepsilon)}_{d+\ell}(w_{k_0}))-v_p(g^{(\varepsilon)}_{d}(w_{k_0})) = v_p(g^{(\varepsilon')}_{\ell}(w_{2-{k_0}}))  +\ell({k_0}-1) \geq (k_0-1)\ell.
$$
Similarly, taking appropriate sums of \eqref{E:ghost compatible with AL} implies that, for $\ell = 1, \dots, d$ if ${k_0} \not\equiv k_\varepsilon \bmod{(p-1)}$ and for $\ell = 1, \dots, d_{k_0}^\unr$ if ${k_0} \equiv k_\varepsilon \bmod{(p-1)}$,
$$
\big(v_p(g^{(\varepsilon)}_d(w_{k_0})) - v_p(g^{(\varepsilon)}_{d-\ell}(w_{k_0})) \big) + v_p(g_\ell^{(\varepsilon'')}(w_{k_0})) = (k_0-1) \ell.
$$
In particular, the first summand is less than or equal to $(k_0-1)\ell$.
These two estimates together show that Newton polygon $\NP(G^{(\varepsilon)}(w_{k_0}, -))$ passes through the point $(d, v_p(g_d^{(\varepsilon)}(w_{k_0})))$, and the slopes are less than or equal to ${k_0}-1$ before this point, and are greater than or equal to ${k_0}-1$ after this point.

Finally, to address $(3)$ and the last statement of (4), we need to show that the first $d_{k}^\unr$th slopes of $\NP(G^{(\varepsilon)}(w_{k_0}, -))$ are strictly less than $\frac{k_0-2}2$. In fact, we will prove these slopes are less than or equal to $\frac{k_0-3}{p+1}$ in Proposition~\ref{P:gouvea k-1/p+1 conjecture} below. Moreover, in the proof of Proposition~\ref{P:gouvea k-1/p+1 conjecture}, we obtain the inequality
$$
v_p(g^{(\varepsilon)}_{d_{k_0}^\unr}(w_{k_0})) - v_p(g^{(\varepsilon)}_{d_{k_0}^\unr-i}(w_{k_0})) \leq i \cdot \frac{k_0-3}{p+1}.
$$
for $i=1,\dots,d_{k_0}^\unr$. It follows that $(d_{k_0}^\ur,v_p(g_{d_{k_0}^\ur}^{(\varepsilon)}(w_{k_0})))$ is a vertex of $\NP(G^{(\varepsilon)}(w_{k_0}),-)$. Combining with (\ref{E:ghost compatible with AL}), we see that $(d-d_{k_0}^\ur,v_p(g_{d-d_{k_0}^\ur}^{(\varepsilon)}(w_{k_0})))$ is also a vertex of $\NP(G^{(\varepsilon)}(w_{k_0}),-)$.

 This then concludes the proof of the Theorem assuming \eqref{E:ghost compatible with theta}, \eqref{E:ghost compatible with AL} and Proposition~\ref{P:gouvea k-1/p+1 conjecture}, which will be proved below after a  couple of  lemmas first.
\end{proof}

\begin{lemma}
\label{L:increment of ghost valuation}
Fix $k_0 \in \ZZ$ and a relevant character $\varepsilon$. 
We keep the convention that $k = 2+\{k_\varepsilon-2\} + (p-1)k_\bullet$. Recall $k_\midd^{(\varepsilon)}(n)$, $k_{\max}^{(\varepsilon)}(n)$, and  $k_{\min}^{(\varepsilon)}(n)$ from Lemma-Notation~\ref{L:extremal ks}.
For $n \geq 0$, we have
\begin{equation}
\label{E:increment of ghost series valuation 1}
v_p(g_{n+1, \hat k_0}^{(\varepsilon)}(w_{k_0})) - v_p(g_{n, \hat k_0}^{(\varepsilon)}(w_{k_0})) =  \hspace{-25pt}
\sum_{\substack{
k \neq k_0
\\
k^{(\varepsilon)}_{\midd\bullet}(n) < k_\bullet \leq k^{(\varepsilon)}_{{\max}\bullet}(n)}}
\hspace{-25pt}\big( v_p(k-k_0)+1 \big)\ -  \hspace{-25pt} \sum_{\substack{
k \neq k_0\\
k^{(\varepsilon)}_{{\min}\bullet}(n) \leq k_\bullet \leq k^{(\varepsilon)}_{{\midd}\bullet}(n)}}
\hspace{-25pt}\big( v_p(k-k_0)+1 \big).
\end{equation}
For $n \geq 1$, we have 
\begin{align}
\label{E:increment of ghost series valuation 2}
& v_p(g_{n+1, \hat k_0}^{(\varepsilon)}(w_{k_0})) -2v_p(g_{n, \hat k_0}^{(\varepsilon)}(w_{k_0}))  + v_p(g_{n-1, \hat k_0}^{(\varepsilon)}(w_{k_0})) 
\\
\nonumber
=\hspace{-30pt}
\sum_{\substack{
k \neq k_0\\
k^{(\varepsilon)}_{{\max}\bullet}(n-1) < k_\bullet \leq k^{(\varepsilon)}_{{\max}\bullet}(n)}} 
\hspace{-30pt}\big(& v_p(k-k_0)+1 \big)
+\hspace{-30pt}
\sum_{\substack{
k \neq k_0\\
k^{(\varepsilon)}_{{\min}\bullet}(n-1) \leq k_\bullet < k^{(\varepsilon)}_{{\min}\bullet}(n)}}
\hspace{-30pt}\big( v_p(k-k_0)+1 \big)
- 
2 \big( v_p(k_\midd^{(\varepsilon)}(n) - k_0) + 1\big),
\end{align}
where the last term is undefined when $k_{\midd}^{(\varepsilon)}(n)= k_0$, in which case we interpret it as zero.
\end{lemma}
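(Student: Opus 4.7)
\emph{Proof proposal.} The plan is to reduce both equalities to bookkeeping over the ghost multiplicities $m_n^{(\varepsilon)}(k)$, using the explicit formulas \eqref{E:increment of multiplicity} and \eqref{E:second order increment of multiplicity} together with the translation between the conditions on $k$ provided by Lemma-Notation~\ref{L:extremal ks} and Remark~\ref{R:meaning of kmidminmax}.

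First I would unwind the definitions. By Definition~\ref{D:ghost series} and Notation~\ref{N:gnhatk},
\[
g^{(\varepsilon)}_{n,\hat k_0}(w_{k_0}) = \prod_{\substack{k \equiv k_\varepsilon \bmod{(p-1)} \\ k \neq k_0}} (w_{k_0}-w_k)^{m_n^{(\varepsilon)}(k)},
\]
so invoking the elementary identity \eqref{E:vp of k1-k2} gives the master formula
\[
v_p\bigl(g_{n,\hat k_0}^{(\varepsilon)}(w_{k_0})\bigr) = \sum_{\substack{k \equiv k_\varepsilon \bmod{(p-1)} \\ k \neq k_0}} m_n^{(\varepsilon)}(k) \cdot \bigl( v_p(k-k_0) + 1 \bigr).
\]
Both equalities to be proved are obtained by subtracting shifted versions of this identity, so everything reduces to controlling the supports of $m_{n+1}^{(\varepsilon)}(k)-m_n^{(\varepsilon)}(k)$ and of $m_{n+1}^{(\varepsilon)}(k) - 2m_n^{(\varepsilon)}(k) + m_{n-1}^{(\varepsilon)}(k)$.

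For \eqref{E:increment of ghost series valuation 1}, I would invoke \eqref{E:increment of multiplicity}: the first difference equals $+1$ when $d_k^\unr(\varepsilon_1) \leq n < \tfrac12 d_k^\Iw(\tilde\varepsilon_1)$, equals $-1$ when $\tfrac12 d_k^\Iw(\tilde\varepsilon_1) \leq n < d_k^\Iw(\tilde\varepsilon_1)-d_k^\unr(\varepsilon_1)$, and vanishes otherwise. Using Remark~\ref{R:meaning of kmidminmax}, these two conditions on $k_\bullet$ become $k^{(\varepsilon)}_{\midd\bullet}(n) < k_\bullet \leq k^{(\varepsilon)}_{{\max}\bullet}(n)$ (the contribution with sign $+$) and $k^{(\varepsilon)}_{{\min}\bullet}(n) \leq k_\bullet \leq k^{(\varepsilon)}_{\midd\bullet}(n)$ (the contribution with sign $-$). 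Plugging these into the master formula yields precisely \eqref{E:increment of ghost series valuation 1}.

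For \eqref{E:increment of ghost series valuation 2}, I would apply the same principle but with \eqref{E:second order increment of multiplicity}: the second difference is $-2$ when $n = \tfrac12 d_k^\Iw(\tilde\varepsilon_1)$, that is, when $k_\bullet = k_{\midd\bullet}^{(\varepsilon)}(n)$; it is $+1$ on the set $\{k_\bullet : d_k^\unr(\varepsilon_1) = n\} = \{k_{{\max}\bullet}^{(\varepsilon)}(n-1) < k_\bullet \leq k_{{\max}\bullet}^{(\varepsilon)}(n)\}$; and another $+1$ on $\{k_\bullet : d_k^\Iw(\tilde\varepsilon_1)-d_k^\unr(\varepsilon_1) = n\} = \{k_{{\min}\bullet}^{(\varepsilon)}(n-1) \leq k_\bullet < k_{{\min}\bullet}^{(\varepsilon)}(n)\}$ (using again Remark~\ref{R:meaning of kmidminmax}). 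Substituting into the master formula and remembering to omit $k_0$ itself gives \eqref{E:increment of ghost series valuation 2}, with the convention for the last term explaining the case $k_\midd^{(\varepsilon)}(n) = k_0$.

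The only real subtlety --- and the step I would double-check carefully --- is the translation of the endpoint conditions ``$<$'' versus ``$\leq$'' through the definitions of $k_{\midd\bullet}^{(\varepsilon)}(n)$, $k_{{\max}\bullet}^{(\varepsilon)}(n)$, and $k_{{\min}\bullet}^{(\varepsilon)}(n)$, because $k_{{\min}\bullet}^{(\varepsilon)}(n)$ is defined as a ceiling and (per the warning in Lemma-Notation~\ref{L:extremal ks}\,(3)) is never itself a $k_\bullet$ realizing the relevant dimension equality. No deep new input is needed; the argument is a careful transcription of the ghost pattern into the $k_\bullet$-coordinate.
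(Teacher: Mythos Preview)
Your proposal is correct and follows essentially the same approach as the paper: expand $v_p(g_{n,\hat k_0}^{(\varepsilon)}(w_{k_0}))$ as a sum of $m_n^{(\varepsilon)}(k)\cdot(v_p(k-k_0)+1)$ via \eqref{E:vp of k1-k2}, take first and second differences, apply \eqref{E:increment of multiplicity} and \eqref{E:second order increment of multiplicity} respectively, and translate the resulting dimension conditions into inequalities on $k_\bullet$ via Lemma-Notation~\ref{L:extremal ks} / Remark~\ref{R:meaning of kmidminmax}. The paper's proof is terser but identical in substance; your explicit endpoint checks are exactly what the paper means by ``Combining this with Lemma-Notation~\ref{L:extremal ks}.''
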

\begin{proof}
We first prove \eqref{E:increment of ghost series valuation 1}.
By definition, we have
\begin{align*}
v_p(g_{n+1, \hat k_0}^{(\varepsilon)}(w_{k_0})) - v_p(g_{n, \hat k_0}^{(\varepsilon)}(w_{k_0})) = 
\sum_{\substack{k \neq k_0\\k \equiv k_\varepsilon \bmod{(p-1)}}} v_p(w_k-w_{k_0}) \cdot (m_{n+1}^{(\varepsilon)}(k) - m_{n}^{(\varepsilon)}(k))
\\=  \hspace{-20pt}
\sum_{\substack{
k \neq k_0
\\k \equiv k_\varepsilon \bmod{(p-1)}
\\
d_{k}^\unr(\varepsilon_1) \leq n < \frac 12 d_{k}^\Iw(\tilde \varepsilon_1)}}
\hspace{-20pt}\big( v_p(k-k_0)+1 \big)\ -  \hspace{-30pt} \sum_{\substack{
k \neq k_0
\\k \equiv k_\varepsilon \bmod{(p-1)}
\\
\frac 12 d_{k}^\Iw(\tilde \varepsilon_1)  \leq n < d_{k}^{\Iw}(\tilde \varepsilon_1) - d_{k}^\unr(\varepsilon_1)  }}
\hspace{-30pt}\big( v_p(k-k_0)+1 \big).
\end{align*}
The second equality uses the expression for \eqref{E:increment of multiplicity} to compute $m_{n+1}^{(\varepsilon)}(k) - m_{n}^{(\varepsilon)}(k)$ and the equality $v_p(w_k-w_{k_0}) =v_p(k-k_0)+1$ from \eqref{E:vp of k1-k2}.
Combining this with Lemma-Notation~\ref{L:extremal ks}, we deduce \eqref{E:increment of ghost series valuation 1}.

Using a similar argument and citing \eqref{E:second order increment of multiplicity} in place of \eqref{E:increment of multiplicity}, we deduce \eqref{E:increment of ghost series valuation 2}.
\end{proof}

\begin{remark}
\label{R:graph of kmin and kmax}For \eqref{E:increment of ghost series valuation 1}, the contribution of various $k$ is similar to the case of degrees, as shown in Graph 2 in the proof of Proposition~\ref{P:increment of degrees in ghost series}, namely the blue interval from $k_{\min}$ to $k_\midd$ contributes negatively and the red interval from $k_\midd$ to $k_{\max}$ contributes positively. For \eqref{E:increment of ghost series valuation 2}, the contribution is shown in the picture below, namely, the contributions of the red points at $k_{\min}$ and $k_{\max}$ are positive and the contribution of the blue point at $k_\midd$ is negative with a multiplier of $2$.

\begin{center}
\begin{tikzpicture}[line cap=round,line join=round,>=triangle 45,x=1cm,y=5cm]
\draw [color=cqcqcq,, xstep=1cm,ystep=1cm] (-0.5,-0.1) grid (7,.6);
\draw[->,color=black] (0,0) -- (7,0);
\foreach \x in {,1,2,3,4,5,6}
\draw[shift={(\x,0)},color=black] (0pt,2pt) -- (0pt,-2pt);
\draw[->,color=black] (0,0) -- (0,.6);
\foreach \y in {0.2,0.4,0.6}
\draw[shift={(0,\y)},color=black] (2pt,0pt) -- (-2pt,0pt);
\clip(-0.5,-0.1) rectangle (7,.6);
\draw [line width=1pt,dash pattern=on 2pt off 3pt] (6,0.25)-- (0,0.25);
\draw [line width=1pt,dash pattern=on 2pt off 3pt] (1.5,0.4375)-- (1.5,0);
\draw [line width=1pt,domain=0:7] plot(\x,{(-0--0.25*\x)/6});
\draw [line width=1pt,domain=0:7] plot(\x,{(-0--1.75*\x)/6});
\draw [line width=1pt,domain=0:7] plot(\x,{(-0--2*\x)/6});
\draw [line width=1pt,dash pattern=on 2pt off 3pt] (6,0.25)-- (6,0);
\draw [line width=1pt,dash pattern=on 2pt off 3pt] (0.86,0.25)-- (0.86,0);
\begin{scriptsize}
\draw [fill=red] (6,0.25) circle (3pt);
\draw[color=red] (6.,0.3) node {$+1$};
\draw [fill=black] (0,0.25) circle (1pt);
\draw[color=black] (-0.2,0.26) node {$n$};
\draw [fill=red] (0.8571428571428571,0.25) circle (3pt);
\draw[color=red] (0.6,0.3) node {$+1$};
\draw [fill=black] (1.5,0) circle (1pt);
\draw[color=black] (1.5,-0.05) node {$k_{\midd}$};
\draw[color=black] (6,-0.05) node {$k_{\max}$};
\draw[color=black] (0.8,-0.05) node {$k_{\min}$};
\draw [fill=black] (1.5,0.0625) circle (1.5pt);
\draw [fill=black] (1.5,0.4375) circle (1.5pt);
\draw [fill=blue] (1.5,0.25) circle (3pt);
\draw[color=blue] (1.75,0.3) node {$-2$};
\draw[color=black] (4.7,0.14) node {$d_{k}^\unr$};
\draw[color=black] (2.6,0.5) node {$d_{k}^\Iw-d_{k}^\unr$};
\draw[color=black] (1.3,0.53) node {$d_{k}^\Iw$};
\end{scriptsize}
\end{tikzpicture}

Graph 3: contribution to the second difference of $p$-adic valuations of ghost series
\end{center}
\end{remark}

\begin{lemma}
\label{L:d-2k} We have the following equalities
\begin{eqnarray}
\label{E:d-2k varepsilon''}
(p-1)(d + \delta_\varepsilon + \delta_{\varepsilon''}-2) &=& 2(k_0 -2)  - \{k_\varepsilon -2\} - \{k_{\varepsilon''} - 2\}.
\\
\label{E:d-2k varepsilon'}
(p-1)(d + \delta_\varepsilon - \delta_{\varepsilon'}) &=& 2(k_0-1) - \{k_\varepsilon - 2\} + \{k_{\varepsilon'}-2\}.
\end{eqnarray}
Here we recall
$$
\delta_? = \Big\lfloor \frac{s_? + \{a+s_?\}}{p-1}\Big \rfloor = \begin{cases}
0 & \textrm{ if }s_? + \{a+s_?\} < p-1,\\
1 & \textrm{ if }s_? + \{a+s_?\} \geq p-1.
\end{cases}
$$
for $?=\varepsilon$, $\varepsilon'$ or $\varepsilon''$.
\end{lemma}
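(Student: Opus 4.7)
The plan is a direct computation starting from the dimension formula \eqref{E:dimension of SIw}. Applying the identity $(p-1)\lfloor x/(p-1)\rfloor = x - \{x\}$ (where $\{x\}$ denotes the residue mod $p-1$) to both floors in the formula for $d$, I get
\[
(p-1)(d-2) = (k_0-2-s_\varepsilon) - \{k_0-2-s_\varepsilon\} + (k_0-2-\{a+s_\varepsilon\}) - \{k_0-2-\{a+s_\varepsilon\}\}.
\]
This simplifies using $\{a+s_\varepsilon\} \equiv a+s_\varepsilon \bmod (p-1)$ to
\[
(p-1)(d-2) = 2(k_0-2) - s_\varepsilon - \{a+s_\varepsilon\} - \{k_0-2-s_\varepsilon\} - \{k_0-2-a-s_\varepsilon\}. \tag{$\ast$}
\]
Everything else is just identifying the last three quantities in terms of the auxiliary characters.

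For \eqref{E:d-2k varepsilon''}, I use that $s_{\varepsilon''} = \{k_0-2-a-s_\varepsilon\}$ by definition, and that $a + s_{\varepsilon''} \equiv k_0 - 2 - s_\varepsilon \bmod (p-1)$, giving $\{k_0-2-s_\varepsilon\} = \{a+s_{\varepsilon''}\}$. Hence the last two terms of $(\ast)$ sum to $s_{\varepsilon''} + \{a+s_{\varepsilon''}\}$. Applying Remark~\ref{R:delta} to both $\varepsilon$ and $\varepsilon''$ converts
\[
s_\varepsilon + \{a+s_\varepsilon\} = (p-1)\delta_\varepsilon + \{k_\varepsilon-2\}, \qquad s_{\varepsilon''} + \{a+s_{\varepsilon''}\} = (p-1)\delta_{\varepsilon''} + \{k_{\varepsilon''}-2\},
\]
and substituting into $(\ast)$ and rearranging yields exactly \eqref{E:d-2k varepsilon''}.

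For \eqref{E:d-2k varepsilon'}, I instead compare with $\varepsilon' = \omega^{-s_{\varepsilon'}}\times \omega^{a+s_{\varepsilon'}}$ where $s_{\varepsilon'} = \{s_\varepsilon+1-k_0\}$. The key arithmetic observation is the pair of identities
\[
\{k_0-2-s_\varepsilon\} = (p-2) - s_{\varepsilon'}, \qquad \{k_0-2-a-s_\varepsilon\} = (p-2) - \{a+s_{\varepsilon'}\},
\]
which follow from the congruence $(k_0-2-s_\varepsilon) + (s_\varepsilon+1-k_0) \equiv -1 \bmod (p-1)$ (and similarly for the second, using $a+s_{\varepsilon'} \equiv a+s_\varepsilon - k_0+1 \bmod (p-1)$). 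Substituting these into $(\ast)$ and then applying Remark~\ref{R:delta} to both $\varepsilon$ and $\varepsilon'$ produces
\[
(p-1)(d-2) = 2(k_0-2) - 2(p-2) - (p-1)\delta_\varepsilon - \{k_\varepsilon-2\} + (p-1)\delta_{\varepsilon'} + \{k_{\varepsilon'}-2\}.
\]
Moving $2(p-2) = 2(p-1)-2$ to the left bundles $d-2$ with $2$ on the left to give $(p-1)d$, and collecting the $\delta$-terms finishes off \eqref{E:d-2k varepsilon'}.

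The only real obstacle is the bookkeeping of residues mod $p-1$; the care point is verifying the edge cases in the two residue identities above (when $\{k_0-2-s_\varepsilon\}$ or $\{k_0-2-a-s_\varepsilon\}$ equals $0$ or $p-2$), but since $0 \leq s_{\varepsilon'}, \{a+s_{\varepsilon'}\} \leq p-2$ by definition, the identities $s_{\varepsilon'} = p-2-\{k_0-2-s_\varepsilon\}$ and $\{a+s_{\varepsilon'}\} = p-2 - \{k_0-2-a-s_\varepsilon\}$ hold uniformly. No finer input is needed.
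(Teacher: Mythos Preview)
Your proof is correct and follows essentially the same route as the paper's: both start from the dimension formula \eqref{E:dimension of SIw}, rewrite the floors via $(p-1)\lfloor x/(p-1)\rfloor = x-\{x\}$, identify the residues $\{k_0-2-s_\varepsilon\}$ and $\{k_0-2-a-s_\varepsilon\}$ with $\{a+s_{\varepsilon''}\}$, $s_{\varepsilon''}$ (for \eqref{E:d-2k varepsilon''}) or with $(p-2)-s_{\varepsilon'}$, $(p-2)-\{a+s_{\varepsilon'}\}$ (for \eqref{E:d-2k varepsilon'}, the paper phrases this as $\{c\}+\{-1-c\}=p-2$), and then invoke Remark~\ref{R:delta}.
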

\begin{proof}
By the dimension formula of $d = d_{k_0}^{\Iw}(\varepsilon \cdot(1\times \omega^{2-k_0}))$, we have
\begin{align*}
d\, &=2+ \frac{k_0-2-s_\varepsilon-\{k_0-2-s_\varepsilon\}}{p-1} + \frac{k_0-2-\{a+s_\varepsilon\}-\{k_0-2-a-s_\varepsilon\}}{p-1}
\\
& = 2 + \frac{2(k_0-2) - s_\varepsilon - \{a+s_\varepsilon\} -s_{\varepsilon''} -  \{a+s_{\varepsilon''}\}}{p-1}
\\
& = 2-\delta_\varepsilon -\delta_{\varepsilon''}+ \frac{2(k_0-2) - \{k_\varepsilon -2\} - \{k_{\varepsilon''} - 2\}}{p-1},
\end{align*}
where the last equality makes use of the equality (see Remark \ref{R:delta})
\begin{equation}
\label{E:k-delta}
\{k_\varepsilon - 2\} = s_\varepsilon + \{a+s_\varepsilon\} - (p-1)\delta_\varepsilon
\end{equation} 
and the similar equality with $\varepsilon''$ in place of $\varepsilon$. This proves \eqref{E:d-2k varepsilon''}.
We may alternatively use the equality $\{c\} + \{-1-c\} = p-2$ for any integer $c$ to rewrite the first row above as
\begin{align*}
d\, &= \frac{k_0-1-s_\varepsilon+\{s_\varepsilon+1-k_0\}}{p-1} + \frac{k_0-1-\{a+s_\varepsilon\}+\{a+s_\varepsilon+1-k_0\}}{p-1}
\\
& = - \delta_{\varepsilon} + \delta_{\varepsilon'} + \frac{2(k_0-1) -\{k_\varepsilon -2\} +\{k_{\varepsilon'} - 2\}}{p-1}.
\end{align*}
This proves \eqref{E:d-2k varepsilon'}.
\end{proof}

Now we are ready to prove the two key equalities \eqref{E:ghost compatible with theta} and \eqref{E:ghost compatible with AL}.

\subsection{Proof of \eqref{E:ghost compatible with AL}}
\label{S:proof of ghost compatible with AL}
%We write $k_0 = 2+\{{k_0}-2\}+ (p-1)k_{0\bullet}$ for $k_{0\bullet} \in \ZZ$. 
For $\ell = 1, \dots, d$, applying formula \eqref{E:increment of ghost series valuation 1} to the case of $n=d-\ell$ with $\varepsilon$, and the case of $n=\ell-1$ with $\varepsilon''$, we deduce 
\begin{align}
\label{E:ghost compatible with AL first step}
&\big(v_p(g_{d-\ell+1, \hat k_0}^{(\varepsilon)}(w_{k_0})) - v_p(g_{d-\ell, \hat k_0}^{(\varepsilon)}(w_{k_0}))\big) + \big(v_p(g_{\ell, \hat k_0}^{(\varepsilon'')}(w_{k_0})) - v_p(g_{\ell-1, \hat k_0}^{(\varepsilon'')}(w_{k_0}))\big)
\\
\nonumber
=&
\hspace{-25pt}
\sum_{\substack{k\neq k_0 \\
k^{(\varepsilon)}_{\midd\bullet}(d-\ell) < k_\bullet \leq k^{(\varepsilon)}_{{\max}\bullet}(d-\ell)}}
\hspace{-25pt}\big( v_p(k-k_0)+1 \big)\quad -  \hspace{-25pt} \sum_{\substack{k\neq k_0 \\
k^{(\varepsilon)}_{{\min}\bullet}(d-\ell) \leq  k_\bullet \leq k^{(\varepsilon)}_{{\midd}\bullet}(d-\ell)}}
\hspace{-25pt}\big( v_p(k-k_0)+1 \big)\\ \nonumber
&+
\hspace{-25pt}
\sum_{\substack{k\neq k_0 \\
k^{(\varepsilon'')}_{\midd\bullet}(\ell-1) < k_\bullet \leq k^{(\varepsilon'')}_{{\max}\bullet}(\ell-1)}}
\hspace{-25pt}\big( v_p(k-k_0)+1 \big)\quad -  \hspace{-25pt} \sum_{\substack{k\neq k_0 \\
k^{(\varepsilon'')}_{{\min}\bullet}(\ell-1) \leq k_\bullet \leq k^{(\varepsilon'')}_{{\midd}\bullet}(\ell-1)}}
\hspace{-25pt}\big( v_p(k-k_0)+1 \big).
\end{align}
For the first and the third terms, we separate out the $1$'s in the sum; and for the second and the fourth terms, we include the $1$'s into the valuation as a multiple of $p$. This way, (after rearranging the terms and possibly replacing $v_p(c)$ by $v_p(-c)$,) the sum above becomes the following sum.
\begin{align}
\label{E:degree term AL}
&\big( k^{(\varepsilon)}_{{\max}\bullet}(d-\ell) - k^{(\varepsilon)}_{{\midd}\bullet}(d-\ell) \big)+ \big( k^{(\varepsilon'')}_{{\max}\bullet}(\ell-1) - k^{(\varepsilon'')}_{{\midd}\bullet}(\ell-1)\big) - \upsilon
\\
\label{E:factorial term + AL}
+\ & \hspace{-15pt}\sum_{\substack{k \neq k_0\\
k^{(\varepsilon)}_{\midd\bullet}(d-\ell) < k_\bullet \leq k^{(\varepsilon)}_{{\max}\bullet}(d-\ell)}}
\hspace{-15pt}v_p(k-k_0)\quad + \hspace{-15pt} \sum_{\substack{k \neq k_0\\
k^{(\varepsilon'')}_{{\midd}\bullet}(\ell-1) < k_\bullet \leq k^{(\varepsilon'')}_{{\max}\bullet}(\ell-1)}}
\hspace{-15pt}v_p(k_0-k)\quad 
\\
\label{E:factorial term - AL}
-\ & \hspace{-15pt} \sum_{\substack{k \neq k_0\\
k^{(\varepsilon)}_{{\min}\bullet}(d-\ell) \leq k_\bullet \leq k^{(\varepsilon)}_{{\midd}\bullet}(d-\ell)}}
\hspace{-15pt}v_p(pk-pk_0)\quad - \hspace{-15pt}\sum_{\substack{k \neq k_0\\
k^{(\varepsilon'')}_{{\min}\bullet}(\ell-1) \leq  k_\bullet \leq k^{(\varepsilon'')}_{{\midd}\bullet}(\ell-1)}}
\hspace{-15pt}v_p(pk_0-pk),
\end{align}
where term $\upsilon$ is zero unless $k_0 \equiv k_\varepsilon \bmod{(p-1)}$ and $d_{k_0}^\unr+1\leq  \ell\leq d_{k_0}^\Iw - d_{k_0}^\unr$,  in which case $\upsilon = 1$ because the extra $1$ in the sum of \eqref{E:ghost compatible with AL first step} is one less due to the removal of the factor when $k = k_0$.

Then to prove \eqref{E:ghost compatible with AL}, it suffices to show that \eqref{E:degree term AL} is equal to $k_0-1-\upsilon$, and the sum \eqref{E:factorial term + AL} cancels with the sum \eqref{E:factorial term - AL}.

We prove the former statement. Indeed, by Lemma-Notation~\ref{L:extremal ks}, we need to show that
$$
\big(\tfrac{p+1}2 (d-\ell) + \beta_{[d-\ell]}^{(\varepsilon)}-1\big) - (d-\ell + \delta_\varepsilon -1)  + \big(\tfrac{p+1}2 (\ell-1) + \beta_{[\ell-1]}^{(\varepsilon'')}-1\big) - (\ell-1 + \delta_{\varepsilon''} -1)=k_0-1.
$$
This is equivalent to proving
$$
k_0-1-\tfrac{p-1}2 (d-1) +\delta_\varepsilon + \delta_{\varepsilon''} = \beta_{[d-\ell]}^{(\varepsilon)} + \beta_{[\ell-1]}^{(\varepsilon'')}.
$$
By Lemma~\ref{L:d-2k}, this is further equivalent to proving
$$
\beta_{[d-\ell]}^{(\varepsilon)} + \beta_{[\ell-1]}^{(\varepsilon'')} \ =\tfrac12\big(\{k_\varepsilon - 2\} + \{k_{\varepsilon''}-2\} \big)  + \tfrac{p+1}2 (\delta_\varepsilon + \delta_{\varepsilon''}) -\tfrac{p-3}2.
$$
This boils down to a case-by-case check, which we do in Lemma~\ref{L:case-by-case check AL} later.

\medskip
Now we show that the sum \eqref{E:factorial term + AL} cancels with the sum \eqref{E:factorial term - AL}.
For this, we need to establish three identities.
\begin{enumerate}
\item $k_\midd^{(\varepsilon)} (d-\ell)+p-1 -k_0 =k_0- k_{\midd}^{(\varepsilon'')}(\ell-1)$,
\item 
$k_{\max}^{(\varepsilon)} (d-\ell) -k_0 = pk_0-\tilde  k_{\min}^{(\varepsilon'')}(\ell-1)$, and
\item 
$ \tilde k_{\min}^{(\varepsilon)} (d-\ell) -pk_0=k_0-   k_{\max}^{(\varepsilon'')}(\ell-1) $.
\end{enumerate}
Assume these identities for a moment.
Then the first sum in \eqref{E:factorial term + AL} is exactly the sum of the valuations of the numbers in the sequence (with step $p-1$) from $ k^{(\varepsilon)}_{\max}(d-\ell) - k_0$ (downwards) to $k^{(\varepsilon)}_{\midd}(d-\ell)+(p-1) - k_0 \stackrel{(1)}= k_0 - k_{{\midd}}^{(\varepsilon'')}(\ell-1)$, and then is continued with the second sum of \eqref{E:factorial term + AL} which is the sum of valuations of the numbers (with step size $p-1$) from $k_0 -(k_{{\midd}}^{(\varepsilon'')}(\ell-1)+p-1)$ to $k_0 - k_{\max}^{(\varepsilon'')}(\ell-1)$. In this sum we remove the term if the number happens to be zero.  In comparison, in the expression \eqref{E:factorial term - AL}, the second sum is the sum of valuations of only $p$-multiples in the sequence (with step size $p-1$) from $pk_0-\tilde  k_{\min}^{(\varepsilon'')}(\ell-1)$ to $pk_0 - pk^{(\varepsilon'')}_{\midd \bullet}(\ell-1) 
\stackrel{(1)}=  pk^{(\varepsilon)}_{\midd \bullet}(d-\ell) - pk_0 + p(p-1)$; whereas the first sum is the sum of the $p$-multiples in the continuing sequence (with step size $p-1$) from $
pk^{(\varepsilon)}_{\midd \bullet}(d-\ell) - pk_0$ to $\tilde k^{(\varepsilon'')}_{\min}(d-\ell) - pk_0$. Again we skip the valuation of zero in the sequence (if it appears).
So both the sums \eqref{E:factorial term + AL} and \eqref{E:factorial term - AL} are exactly the sum of valuations of (the $p$-multiples in) the same sequence from $k^{(\varepsilon)}_{\max}(d-\ell) - k_0\stackrel{(2)}= pk_0-\tilde  k_{\min}^{(\varepsilon'')}(\ell-1)$ to $ k_0 - k^{(\varepsilon'')}_{\max}(\ell-1) \stackrel{(3)}=\tilde k^{(\varepsilon'')}_{\min}(d-\ell) - pk_0$. Clearly they cancel each other.

\medskip
It remains to prove the equalities (1), (2), and (3).
By Lemma-Notation~\ref{L:extremal ks}, Equality (1) is equivalent to
\begin{align*}
2+\{k_\varepsilon -&2\} + (p-1)(d-\ell +\delta_\varepsilon) -k_0 = k_0 - \big( 2+\{k_{\varepsilon ''}-2\} + (p-1)(\ell-1 + \delta_{\varepsilon''} -1)\big).
\\
&
\textrm{or equivalently, }(p-1)(d+\delta_\varepsilon + \delta_{\varepsilon''}-2) = 2k_0 -4 -\{k_\varepsilon-2\}+\{k_{\varepsilon''}-2\}.
\end{align*}
But this is exactly Lemma~\ref{L:d-2k}.

We prove (2). By Lemma-Notation~\ref{L:extremal ks}, this is equivalent to
\begin{align*}
2 +\{k_{\varepsilon} -2\} + &(p-1) \big(\tfrac{p+1}2 (d-\ell) +\beta_{[d-\ell]}^{(\varepsilon)}-1\big) - k_0
\\
& = pk_0 - 
2p -p\{k_{\varepsilon''} -2\} - (p-1) \big(\tfrac{p+1}2 (\ell-2+2\delta_{\varepsilon''}) -\beta_{[\ell]}^{(\varepsilon'')}+1\big) .
\end{align*}
Rearranging, this is equivalent to
$$
(p+1) \big( \tfrac{p-1}2 (d-2+2\delta_{\varepsilon''})-k_0 + 2\big) + \{k_\varepsilon - 2\} +p\{k_{\varepsilon''}-2\} = (p-1)\big( \beta_{[\ell]}^{(\varepsilon'')}- \beta_{[d-\ell]}^{(\varepsilon)}\big).
$$
Feeding in the formula \eqref{E:d-2k varepsilon''}, this is further equivalent to
$$
\tfrac{p-1}2\{k_{\varepsilon''} - 2\}-\tfrac{p-1}2\{k_\varepsilon-2\} + \tfrac{p^2-1}2(\delta_{\varepsilon''} - \delta_\varepsilon) =  (p-1)\big( \beta_{[\ell]}^{(\varepsilon'')}- \beta_{[d+\ell]}^{(\varepsilon)}\big).
$$
We may cancel the factor $p-1$ on both sides, and then after that, we need to do a case-by-case study which we defer to Lemma~\ref{L:case-by-case check AL} later.

Finally, we note that (3) is the same as (2) when $\varepsilon$ and $\varepsilon''$ are swapped and $\ell$ is replaced by $d-\ell + 1$.
%we prove (3). By Lemma-Notation~\ref{L:extremal ks}, this is equivalent to
%\begin{align*}
%2p +p\{k_{\varepsilon} -2\} + &(p-1) \big(\tfrac{p+1}2 (d-\ell- 1 +2\delta_\varepsilon) -\beta_{[d-\ell-1]}^{(\varepsilon)}+1\big) - pk_0
%\\
%& = k_0 - 
%2 -\{k_{\varepsilon''} -2\} - (p-1) \big(\tfrac{p+1}2 (\ell-1) +\beta_{[\ell-1]}^{(\varepsilon'')}-1\big) .
%\end{align*}
%Rearranging, this is equivalent to
%$$
%(p+1) \big( \tfrac{p-1}2 (d-2+2\delta_\varepsilon)-k_0 + 2\big) + p\{k_\varepsilon - 2\} +\{k_{\varepsilon''}-2\} = (p-1)\big( \beta_{[d-\ell-1]}^{(\varepsilon)}-\beta_{[\ell-1]}^{(\varepsilon'')}\big).
%$$
%Feeding in the formula \eqref{E:d-2k varepsilon''}, we have
%$$
%\tfrac{p-1}2\{k_\varepsilon-2\}-\tfrac{p-1}2\{k_{\varepsilon''} - 2\}+ \tfrac{p^2-1}2(\delta_\varepsilon-\delta_{\varepsilon''} ) =  (p-1)\big( \beta_{[\ell-1]}^{(\varepsilon'')}- \beta_{[d+\ell-1]}^{(\varepsilon)}\big).
%$$
%We may cancel the factor $p-1$ and reduce this again to Lemma~\ref{L:case-by-case check AL} later.  
This then completes the proof of \eqref{E:ghost compatible with AL} assuming the following Lemma.
 \hfill $\Box$

\begin{lemma}
\label{L:case-by-case check AL}
We have the following two identities for every $\ell$
\begin{eqnarray*}
\beta_{[d+\ell]}^{(\varepsilon)}-\beta_{[\ell]}^{(\varepsilon'')} &=&\tfrac12\big(\{k_\varepsilon-2\}-\{k_{\varepsilon''}-2\}\big) + \tfrac{p+1}2( \delta_\varepsilon-\delta_{\varepsilon''} ), \quad\textrm{and}
\\
\beta_{[d-\ell]}^{(\varepsilon)} + \beta_{[\ell-1]}^{(\varepsilon'')}&=& \tfrac12\big(\{k_\varepsilon - 2\} + \{k_{\varepsilon''}-2\} \big)  + \tfrac{p+1}2 (\delta_\varepsilon + \delta_{\varepsilon''}) - \tfrac{p-3}2.
\end{eqnarray*}
\end{lemma}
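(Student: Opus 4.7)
The plan is to reduce both identities to a single combinatorial dichotomy comparing the parity of $d$ with the ``cases'' of $\varepsilon$ and $\varepsilon''$, where we say $\varepsilon$ is in Case A when $a+s_\varepsilon<p-1$ and in Case B when $a+s_\varepsilon\geq p-1$.

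First, by combining the definitions of $t_1^{(\varepsilon)}, t_2^{(\varepsilon)}$ from Proposition~\ref{P:dimension of Sunr} with Remark~\ref{R:delta}, we obtain the uniform identity
\[
t_1^{(\varepsilon)}+t_2^{(\varepsilon)} = \{k_\varepsilon-2\}+(p+1)\delta_\varepsilon+2
\]
valid in both cases. This rewrites the right hand side of the first identity as $\tfrac12\bigl((t_1^{(\varepsilon)}+t_2^{(\varepsilon)})-(t_1^{(\varepsilon'')}+t_2^{(\varepsilon'')})\bigr)$ and that of the second as $\tfrac12\bigl((t_1^{(\varepsilon)}+t_2^{(\varepsilon)})+(t_1^{(\varepsilon'')}+t_2^{(\varepsilon'')})\bigr)-\tfrac{p+1}{2}$. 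We also observe that $t_2-t_1 = a+2$ in Case A and $t_2-t_1 = p-1-a$ in Case B, and that $(a+2)+(p-1-a)=p+1$.

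Next, for each identity we substitute the piecewise definition of $\beta_{[\cdot]}^{(\varepsilon)}$ from Notation~\ref{N:tnvarepsilon} in the four subcases determined by the parities of $d$ and $\ell$. In all eight resulting equalities, the desired identity simplifies, after clearing denominators, to one of two algebraic conditions: $t_2^{(\varepsilon)}-t_1^{(\varepsilon)} = t_2^{(\varepsilon'')}-t_1^{(\varepsilon'')}$ when $d$ is even, or $(t_2^{(\varepsilon)}-t_1^{(\varepsilon)}) + (t_2^{(\varepsilon'')}-t_1^{(\varepsilon'')}) = p+1$ when $d$ is odd. In view of the two values of $t_2 - t_1$ from Step~1, the lemma reduces to the following claim: $d$ is even if and only if $\varepsilon$ and $\varepsilon''$ lie in the same case (both A or both B), and $d$ is odd if and only if they lie in opposite cases.

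Finally, we verify this parity dichotomy by writing $k_0-2 = q(p-1)+r$ with $0\leq r\leq p-2$. Proposition~\ref{P:dimension of SIw} yields $d-2 = 2q$ minus the number of elements of $\{s_\varepsilon, \{a+s_\varepsilon\}\}$ strictly exceeding $r$; hence $d$ is odd precisely when $r$ lies strictly between $s_\varepsilon$ and $\{a+s_\varepsilon\}$. Meanwhile $s_{\varepsilon''}$ is the representative of $r-a-s_\varepsilon$ modulo $p-1$ in $\{0,\dots,p-2\}$, giving a trichotomy: (i) $r\geq a+s_\varepsilon$, so $s_{\varepsilon''} = r-a-s_\varepsilon$; (ii) $\max(0,a+s_\varepsilon-(p-1))\leq r<a+s_\varepsilon$, so $s_{\varepsilon''} = r+p-1-a-s_\varepsilon$; (iii) $r<a+s_\varepsilon-(p-1)$, which forces $\varepsilon$ to be in Case B, and gives $s_{\varepsilon''} = r+2(p-1)-a-s_\varepsilon$. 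In each branch one reads off $a+s_{\varepsilon''}$ directly and compares with $p-1$, and then a short split depending on whether $\varepsilon$ itself is in Case~A or Case~B--together with the location of $r$ relative to $s_\varepsilon$ and $\{a+s_\varepsilon\}$--confirms the dichotomy. The main obstacle will be the bookkeeping of these interval splits (particularly disentangling in Case~B the subranges $r<a+s_\varepsilon-(p-1)$, $a+s_\varepsilon-(p-1)\leq r<s_\varepsilon$, and $r\geq s_\varepsilon$); once these ranges are laid out, the verification is purely mechanical.
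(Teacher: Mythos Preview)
Your proof is correct. The reduction via $t_1^{(\varepsilon)}+t_2^{(\varepsilon)}=\{k_\varepsilon-2\}+(p+1)\delta_\varepsilon+2$ and $t_2^{(\varepsilon)}-t_1^{(\varepsilon)}\in\{a+2,\,p-1-a\}$ is valid, and your parity dichotomy (that $d$ is even iff $\varepsilon$ and $\varepsilon''$ lie in the same case) is exactly the content of the paper's two tables. The final verification of the dichotomy via the trichotomy on $r=\{k_0-2\}$ checks out in every branch.

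The paper takes a more direct route: it rewrites the right hand sides using $\{k_\varepsilon-2\}+(p-1)\delta_\varepsilon=s_\varepsilon+\{a+s_\varepsilon\}$, then simply tabulates all of $\beta_{\mathrm{even}}^{(\varepsilon)},\beta_{\mathrm{odd}}^{(\varepsilon)},\beta_{\mathrm{even}}^{(\varepsilon'')},\beta_{\mathrm{odd}}^{(\varepsilon'')}$ in each of the four cases (Case~A/B for $\varepsilon$ crossed with $d$ odd/even), and checks both identities by inspection. Your approach is more conceptual: you isolate the single invariant $t_2-t_1$ and reduce both identities, for all parities of $\ell$, to the parity dichotomy, making clear \emph{why} the identities hold rather than merely that they do. The price is that you still have to establish the dichotomy via the same range analysis on $r$ relative to $s_\varepsilon$ and $\{a+s_\varepsilon\}$ that the paper encodes in its tables; so the underlying casework is equivalent, just organized differently.
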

\begin{proof}
In view of the equality $\{k_\varepsilon -2 \} + (p-1)\delta_\varepsilon = s_\varepsilon +\{a+s_\varepsilon\}$ and the similar equality for $\varepsilon''$, we need to show that
\begin{eqnarray}
\label{E:beta + beta and beta - beta}
\beta_{[d+\ell]}^{(\varepsilon)}-\beta_{[\ell]}^{(\varepsilon'')} & =&\tfrac12\big(s_\varepsilon + \{a+s_\varepsilon\}\big) -\tfrac 12\big(s_{\varepsilon''} + \{a+s_{\varepsilon''}\}\big) +  \delta_\varepsilon-\delta_{\varepsilon''}, \quad\textrm{and}
\\
\label{E:beta + beta and beta - beta-continued}
\beta_{[d-\ell]}^{(\varepsilon)} + \beta_{[\ell-1]}^{(\varepsilon'')} &=&\tfrac12\big(s_\varepsilon+ \{a+s_\varepsilon \} \big) + \tfrac12\big(s_{\varepsilon''} + \{a+s_{\varepsilon''}\}\big) + \delta_\varepsilon + \delta_{\varepsilon''} -\tfrac{p-3}2.
\end{eqnarray}
Recall that $s_{\varepsilon''} = \{k_0-2-a-s_\varepsilon \}$.
When $a+s_\varepsilon < p-1$, we list our calculation  in the following table. 
\begin{center}
\begin{tabular}{|c|c|c|}
\hline
Condition & $s_\varepsilon\leq  \{k_0-2\} < a+s_\varepsilon$ &$\{k_0-2\}< s_\varepsilon $ or $\{k_0-2\} \geq a+s_\varepsilon$
\\
\hline 
$d$ &  odd & even
\\ 
\hline
$a+ s_{\varepsilon''}$ &  $\geq p-1$ & $< p-1$
\\
\hline 
$\beta^{(\varepsilon)}_\even$ & $s_\varepsilon + \delta_\varepsilon$& $s_\varepsilon + \delta_\varepsilon$
\\
\hline 
$\beta^{(\varepsilon)}_\odd$ & $a+s_\varepsilon + \delta_\varepsilon - \frac{p-3}2$ & $a+s_\varepsilon + \delta_\varepsilon - \frac{p-3}2$ 
\\
\hline 
$\beta^{(\varepsilon'')}_\even$ & $\{a+s_{\varepsilon''}\}  + \delta_{\varepsilon''}+1$& $s_{\varepsilon''} + \delta_{\varepsilon''}$
\\
\hline 
$\beta^{(\varepsilon'')}_\odd$ & $s_{\varepsilon''} + \delta_{\varepsilon''} -\frac{p-1}2$&$a+s_{\varepsilon''} + \delta_{\varepsilon''} - \frac{p-3}2$
\\
\hline  $\beta_{[d+\ell]}^{(\varepsilon)} - \beta_{[\ell]}^{(\varepsilon'')}$ & $s_\varepsilon -s_{\varepsilon''} +  \delta_\varepsilon - \delta_{\varepsilon''} +\frac{p-1}2$ & $s_\varepsilon - s_{\varepsilon''} + \delta_{\varepsilon } - \delta_{\varepsilon''}$
\\
\hline
$\beta_{[d-\ell]}^{(\varepsilon)} + \beta_{[\ell-1]}^{(\varepsilon'')}$  & $a+  s_\varepsilon + s_{\varepsilon''} + \delta_\varepsilon + \delta_{\varepsilon ''} +2-p$ & $a+  s_\varepsilon + s_{\varepsilon''} + \delta_\varepsilon + \delta_{\varepsilon ''} -\frac{p-3}2$
\\
\hline
\end{tabular}
\end{center}
Note that the expressions in the last two rows are valid regardless of the parity of $\ell$. We explain how to get the results in the above table and use them to prove (\ref{E:beta + beta and beta - beta}) and (\ref{E:beta + beta and beta - beta-continued}). We divide the discussion into two cases. When $s_\varepsilon\leq  \{k_0-2\} < a+s_\varepsilon$, it follows from Proposition~\ref{P:dimension of SIw} that $d: = d_{k_0}^\Iw(\varepsilon\cdot (1\times \omega^{2-k_0}))$ is odd. We also have $a+s_{\varepsilon''}\geq p-1$ in this case. We use Notation~\ref{N:tnvarepsilon} to compute the last six numbers in the table. The penultimate line computes the left hand side of (\ref{E:beta + beta and beta - beta}), while the right hand side of (\ref{E:beta + beta and beta - beta}) equals to $\frac 12(s_\varepsilon+a+s_\varepsilon)-\frac 12(s_{\varepsilon''}+a+s_{\varepsilon''}-(p-1))+\delta_\varepsilon-\delta_{\varepsilon''}$, and (\ref{E:beta + beta and beta - beta}) is proved in this case. We use the last line in the table to prove (\ref{E:beta + beta and beta - beta-continued}) by a similar computation. The other case when $\{k_0-2\}< s_\varepsilon $ or $\{k_0-2\} \geq a+s_\varepsilon$ can be proved in the same way. 

Similarly, when $a + s_\varepsilon \geq p-1$, we have the following table.

\begin{center}
\begin{tabular}{|c|c|c|}
\hline
Condition & $\{a+s_\varepsilon\}\leq  \{k_0-2\} < s_\varepsilon$ &$\{k_0-2\}< \{a+s_\varepsilon\} $ or $\{k_0-2\} \geq s_\varepsilon$
\\
\hline 
$d$ &  odd & even
\\ 
\hline
$a+ s_{\varepsilon''}$  & $< p-1$&  $\geq p-1$
\\
\hline 
$\beta^{(\varepsilon)}_\even$ & $\{a+s_\varepsilon\} + \delta_\varepsilon+1$& $\{a+s_\varepsilon\} + \delta_\varepsilon+1$
\\
\hline 
$\beta^{(\varepsilon)}_\odd$ & $s_\varepsilon + \delta_\varepsilon - \frac{p-1}2$ & $s_\varepsilon + \delta_\varepsilon - \frac{p-1}2$
\\
\hline 
$\beta^{(\varepsilon'')}_\even$ & $s_{\varepsilon''} + \delta_{\varepsilon''}$ & $\{a+s_{\varepsilon''}\}  + \delta_{\varepsilon''}+1$
\\
\hline 
$\beta^{(\varepsilon'')}_\odd$&$a+s_{\varepsilon''} + \delta_{\varepsilon''} - \frac{p-3}2$ & $s_{\varepsilon''} + \delta_{\varepsilon''} -\frac{p-1}2$
\\
\hline  $\beta_{[d+\ell]}^{(\varepsilon)} - \beta_{[\ell]}^{(\varepsilon'')}$ & $s_\varepsilon -s_{\varepsilon''} +  \delta_\varepsilon - \delta_{\varepsilon''} -\frac{p-1}2$ & $s_\varepsilon - s_{\varepsilon''} + \delta_{\varepsilon } - \delta_{\varepsilon''}$
\\
\hline
$\beta_{[d-\ell]}^{(\varepsilon)} + \beta_{[\ell-1]}^{(\varepsilon'')}$  & $a+  s_\varepsilon + s_{\varepsilon''} + \delta_\varepsilon + \delta_{\varepsilon ''} +2-p$ & $a+  s_\varepsilon + s_{\varepsilon''} + \delta_\varepsilon + \delta_{\varepsilon ''} -\frac{3p-5}2$
\\
\hline
\end{tabular}
\end{center}
As above, the expressions in the last two rows are valid regardless of the parity of $\ell$. Again the two equalities (\ref{E:beta + beta and beta - beta}) and (\ref{E:beta + beta and beta - beta-continued}) can be proved via a case-by-case computation.
\end{proof}

\subsection{Proof of \eqref{E:ghost compatible with theta}}
\label{S:proof of ghost compatible with theta}
The proof is similar to \S\,\ref{S:proof of ghost compatible with AL}, so we only sketch the proof and focus on the differences of the two proofs.
Applying formula \eqref{E:increment of ghost series valuation 1} to the case of $n=d+\ell$ with $\varepsilon$, and the case of $n=\ell$ with $\varepsilon'$ and rearranging terms in a way similar to \S\,\ref{S:proof of ghost compatible with AL}, we deduce that
%\begin{align*}
%&\big(v_p(g_{d+\ell+1}^{(\varepsilon)}(w_{k_0})) - v_p(g_{d+\ell}^{(\varepsilon)}(w_{k_0}))\big) - \big(v_p(g_{\ell+1}^{(\varepsilon')}(w_{2-k_0})) - v_p(g_{\ell}^{(\varepsilon')}(w_{2-k_0}))\big)
%\\=&\hspace{-25pt}\sum_{k^{(\varepsilon)}_{\midd\bullet}(d+\ell) < k_\bullet \leq k^{(\varepsilon)}_{{\max}\bullet}(d+\ell)}
%\hspace{-25pt}\big( v_p(k-k_0)+1 \big)\quad -  \hspace{-25pt} \sum_{k^{(\varepsilon)}_{{\min}\bullet}(d+\ell) \leq k_\bullet \leq k^{(\varepsilon)}_{{\midd}\bullet}(d+\ell)}\hspace{-25pt}\big( v_p(k-k_0)+1 \big)\\
%&-\hspace{-25pt}\sum_{k^{(\varepsilon')}_{\midd\bullet}(\ell) < k_\bullet \leq k^{(\varepsilon')}_{{\max}\bullet}(\ell)}\hspace{-25pt}\big( v_p(k-(2-k_0))+1 \big)\quad +  \hspace{-25pt} \sum_{k^{(\varepsilon')}_{{\min}\bullet}(\ell) \leq k_\bullet \leq k^{(\varepsilon')}_{{\midd}\bullet}(\ell)}\hspace{-25pt}\big( v_p(k-(2-k_0))+1 \big).
%\end{align*}
%For the first and the third terms, we separate out the $1$ in the sum; and for the second and the fourth terms, we include the $1$ into the valuation as a multiple of $p$. This way, (after rearranging the terms,) the sum above becomes the following sum.
\begin{align}
\nonumber
\big( v_p(g^{(\varepsilon)}_{d+\ell+1}& (w_{k_0}))-v_p(g^{(\varepsilon)}_{d+\ell}(w_{k_0})) \big) - \big( v_p(g^{(\varepsilon')}_{\ell+1}(w_{2-{k_0}}))-v_p(g^{(\varepsilon')}_{\ell}(w_{2-{k_0}})) \big)
\\
\label{E:degree term theta}
=\ &\big( k^{(\varepsilon)}_{{\max}\bullet}(d+\ell) - k^{(\varepsilon)}_{{\midd}\bullet}(d+\ell) \big)- \big( k^{(\varepsilon')}_{{\max}\bullet}(\ell) - k^{(\varepsilon')}_{{\midd}\bullet}(\ell)\big)
\\
\label{E:factorial term + theta}
&+  \hspace{-15pt}\sum_{
k^{(\varepsilon)}_{\midd\bullet}(d+\ell) < k_\bullet \leq k^{(\varepsilon)}_{{\max}\bullet}(d+\ell)}
\hspace{-15pt}v_p(k-k_0)\quad + \hspace{-15pt} \sum_{
k^{(\varepsilon')}_{{\min}\bullet}(\ell) \leq k_\bullet \leq k^{(\varepsilon')}_{{\midd}\bullet}(\ell)}
\hspace{-15pt}v_p(pk-p(2-k_0))\quad 
\\
\label{E:factorial term - theta}
&-  \hspace{-15pt} \sum_{
k^{(\varepsilon')}_{{\midd}\bullet}(\ell) < k_\bullet \leq k^{(\varepsilon')}_{{\max}\bullet}(\ell)}
\hspace{-15pt}v_p(k-(2-k_0))\quad - \hspace{-15pt}\sum_{
k^{(\varepsilon)}_{{\min}\bullet}(d+\ell) \leq k_\bullet \leq k^{(\varepsilon)}_{{\midd}\bullet}(d+\ell)}
\hspace{-15pt}v_p(pk-pk_0).
\end{align}
Then to prove \eqref{E:ghost compatible with theta}, it suffices to show that \eqref{E:degree term theta} is equal to $k_0-1$, and the sum \eqref{E:factorial term + theta} cancels with the sum \eqref{E:factorial term - theta}.

We prove the former statement. Indeed, by Lemma-Notation~\ref{L:extremal ks}, we need to show that
$$
\big(\tfrac{p+1}2 (d+\ell) + \beta_{[d+\ell]}^{(\varepsilon)}-1\big) - (d+\ell + \delta_\varepsilon -1)  = (k_0-1) + \big(\tfrac{p+1}2 \ell + \beta_{[\ell]}^{(\varepsilon')}-1\big) - (\ell + \delta_{\varepsilon'} -1).
$$
This is equivalent to proving
\begin{equation}
k_0-1-\tfrac{p-1}2 d +\delta_\varepsilon - \delta_{\varepsilon'} = \beta_{[d+\ell]}^{(\varepsilon)} - \beta_{[\ell]}^{(\varepsilon')}.
\end{equation}
By \eqref{E:d-2k varepsilon'}, we are left to show that
$$
\beta_{[d+\ell]}^{(\varepsilon)} - \beta_{[\ell]}^{(\varepsilon')} = \tfrac{p+1}2( \delta_\varepsilon -\delta_{\varepsilon'}) +\tfrac 12\big( \{k _\varepsilon -2\} - \{k_{\varepsilon '}-2\}\big).
$$
This boils down to a case-by-case check, which we refer to Lemma~\ref{L:case-by-case check theta} later.

Now we show that the sum \eqref{E:factorial term + theta} cancels with the sum \eqref{E:factorial term - theta}.
For this, we need the following lemma:
\begin{lemma}\label{L:three elementary identities}
	We have the following three identities:
\begin{enumerate}
\item $k_\midd^{(\varepsilon)} (d+\ell) -k_0 = k_{\midd}^{(\varepsilon')}(\ell) - (2-k_0)$,
\item 
$k_{\max}^{(\varepsilon)} (d+\ell) -k_0 = \tilde  k_{\min}^{(\varepsilon')}(\ell) -p (2-k_0)-(p-1)$, and
\item 
$\tilde k_{\min}^{(\varepsilon)} (d+\ell) -pk_0 - (p-1) =   k_{\max}^{(\varepsilon')}(\ell) - (2-k_0)$.
\end{enumerate}
\end{lemma}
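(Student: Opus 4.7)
The plan is to mimic the strategy used in \S\ref{S:proof of ghost compatible with AL} for Lemma~\ref{L:case-by-case check AL}: I would substitute the explicit formulas for $k_\midd$, $k_{\max}$, and $\tilde k_{\min}$ from Lemma-Notation~\ref{L:extremal ks}, use the dimension identity \eqref{E:d-2k varepsilon'} to eliminate $d$, and reduce each identity to a purely combinatorial statement about $\beta_{[\cdot]}^{(\varepsilon)}$ and $\beta_{[\cdot]}^{(\varepsilon')}$.

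Identity~(1) is the quickest. Substituting $k_\midd^{(\varepsilon)}(n) = 2 + \{k_\varepsilon - 2\} + (p-1)(n+\delta_\varepsilon - 1)$ on both sides, the identity becomes
\[
(p-1)(d + \delta_\varepsilon - \delta_{\varepsilon'}) = 2(k_0 - 1) - \{k_\varepsilon - 2\} + \{k_{\varepsilon'} - 2\},
\]
which is precisely \eqref{E:d-2k varepsilon'}. For identity~(2), I would substitute $k_{\max}^{(\varepsilon)}(d+\ell) = 2 + \{k_\varepsilon - 2\} + (p-1)\bigl[\tfrac{p+1}{2}(d+\ell) + \beta_{[d+\ell]}^{(\varepsilon)} - 1\bigr]$ together with the analogous expression for $\tilde k_{\min}^{(\varepsilon')}(\ell)$, then use \eqref{E:d-2k varepsilon'} to replace $\tfrac{p^2-1}{2}d$. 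After the dust settles and I divide by $p-1$, identity~(2) becomes
\[
\beta_{[d+\ell]}^{(\varepsilon)} + \beta_{[\ell-1]}^{(\varepsilon')} = \tfrac12 \bigl(\{k_\varepsilon - 2\} + \{k_{\varepsilon'}-2\}\bigr) + \tfrac{p+1}{2}(\delta_\varepsilon + \delta_{\varepsilon'}) - \tfrac{p-1}{2}.
\]
Running the same manipulation with $\tilde k_{\min}^{(\varepsilon)}(d+\ell)$ and $k_{\max}^{(\varepsilon')}(\ell)$ in place reduces identity~(3) to
\[
\beta_{[d+\ell-1]}^{(\varepsilon)} + \beta_{[\ell]}^{(\varepsilon')} = \tfrac12\bigl(\{k_\varepsilon - 2\} + \{k_{\varepsilon'}-2\}\bigr) + \tfrac{p+1}{2}(\delta_\varepsilon + \delta_{\varepsilon'}) - \tfrac{p-3}{2}.
\]
The right-hand constants differ by exactly $1$, matching the fact that the parity pairs $\bigl([d+\ell],[\ell-1]\bigr)$ and $\bigl([d+\ell-1],[\ell]\bigr)$ are complementary.

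Both of the reduced identities depend only on the parities of $d+\ell$ and $\ell$ (together with the fixed quantities $\delta_\varepsilon, \delta_{\varepsilon'}, \{k_\varepsilon-2\}, \{k_{\varepsilon'}-2\}$), so they can be verified by a tabular case-by-case analysis, completely analogous to the tables at the end of Lemma~\ref{L:case-by-case check AL}. The key relation I would exploit is $s_{\varepsilon'} = \{s_\varepsilon + 1 - k_0\}$, together with the elementary arithmetic identity $\{c\} + \{-1-c\} = p-2$, which pins down $\{k_{\varepsilon'} - 2\}$ and $\delta_{\varepsilon'}$ in terms of $\{k_\varepsilon - 2\}$, $\delta_\varepsilon$, and the residue class of $k_0$ modulo $p-1$. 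The natural subdivision is first by whether $a + s_\varepsilon < p-1$ or $\geq p-1$, then by the position of $\{k_0 - 2\}$ relative to $s_\varepsilon$ and $\{a+s_\varepsilon\}$ (which determines the parity of $d$ through Proposition~\ref{P:dimension of SIw}), and finally by whether $a + s_{\varepsilon'} < p-1$ or $\geq p-1$. In each cell of the resulting table, reading off $\beta_\even^{(\varepsilon)},\beta_\odd^{(\varepsilon)},\beta_\even^{(\varepsilon')},\beta_\odd^{(\varepsilon')}$ from Notation~\ref{N:tnvarepsilon} reduces both identities to direct arithmetic.

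The main obstacle, as with Lemma~\ref{L:case-by-case check AL}, is just the tedious bookkeeping of the case analysis --- especially keeping track of how the various floors shift when replacing $\varepsilon$ by $\varepsilon'$. No new idea beyond what is already available should be needed.
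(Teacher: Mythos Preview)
Your approach is essentially identical to the paper's: substitute the formulas from Lemma-Notation~\ref{L:extremal ks}, use \eqref{E:d-2k varepsilon'} to eliminate $d$, and reduce (2) and (3) to the combinatorial identities of Lemma~\ref{L:case-by-case check theta}, which are then verified by the same case-by-case tabulation as in Lemma~\ref{L:case-by-case check AL}.

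There is one arithmetic slip worth flagging. Your reduced form of identity~(2) has constant $-\tfrac{p-1}{2}$, but the correct constant is $-\tfrac{p-3}{2}$, the same as for (3). Indeed, since $\beta_{[\cdot]}$ depends only on parity, the second identity of Lemma~\ref{L:case-by-case check theta} applied with $\ell$ replaced by $\ell+1$ gives exactly $\beta_{[d+\ell]}^{(\varepsilon)} + \beta_{[\ell-1]}^{(\varepsilon')}$ on the left with the same right-hand side; so (2) and (3) reduce to a single identity, not two identities differing by $1$. Your heuristic that ``the parity pairs are complementary, so the constants should differ by $1$'' is therefore mistaken. This does not affect the overall strategy---it just means the case-by-case verification handles both (2) and (3) simultaneously.
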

Assume these identities for a moment.
The first sum of \eqref{E:factorial term + theta} is the sum of the  valuations of the sequence (with step size $p-1$) from $ k^{(\varepsilon)}_{\midd}(d+\ell) - k_0+(p-1)$ to $k^{(\varepsilon)}_{\max}(d+\ell) - k_0 \stackrel{(2)}= \tilde k_{{\min}\bullet}^{(\varepsilon')}(\ell) -p(2-k_0) - (p-1)$, and the second sum of \eqref{E:factorial term + theta} then continues with the valuations (of only the $p$-multiples) in the sequence (with step size $p-1$) from $\tilde k_{{\min}\bullet}^{(\varepsilon')}(\ell) -p(2-k_0)$ to $pk_{\midd\bullet}^{(\varepsilon')}(\ell) - p(2-k_0)$.
In a similar way, the first sum of \eqref{E:factorial term - theta} is the sum of the  valuations of the sequence (with step size $p-1$) from $ k^{(\varepsilon')}_{\midd}(\ell) - (2-k_0)+(p-1)$ to $k^{(\varepsilon')}_{\max}(\ell) - (2-k_0) \stackrel{(3)}= \tilde k_{{\min}\bullet}^{(\varepsilon)}(d+\ell) -pk_0-(p-1)$, and then the second sum of \eqref{E:factorial term - theta} continues with the sum of valuations of (only $p$-multiples in) the sequence (of step size $p-1$) from $\tilde k_{{\min}\bullet}^{(\varepsilon)}(d+\ell) -pk_0$ to $pk_{\midd\bullet}^{(\varepsilon)}(d+\ell) - pk_0$.
So these the sums \eqref{E:factorial term + theta} and \eqref{E:factorial term - theta} are exactly the sum of valuations of the same sequence from $k_{\midd}^{(\varepsilon)}(d+\ell) - k_0+p-1 \stackrel{(1)}= k_{\midd}^{(\varepsilon')}(\ell) - (2-k_0)+p-1$ to $pk_{\midd}^{(\varepsilon)}(d+\ell) - pk_0 \stackrel{(1)}= pk_{\midd}^{(\varepsilon')}(\ell) -p (2-k_0)$. Clearly they cancel each other.

\medskip
It remains to prove the equalities (1), (2), and (3) in Lemma~\ref{L:three elementary identities}.
By Lemma-Notation~\ref{L:extremal ks}, Equality (1) is equivalent to
\begin{align*}
2+\{k_\varepsilon -&2\} + (p-1)(d+\ell +\delta_\varepsilon -1) -k_0 = 2+\{k_{\varepsilon '}-2\} + (p-1)(\ell + \delta_{\varepsilon'} -1) - (2-k_0).
\\
&
\textrm{or equivalently, }(p-1)(d+\delta_\varepsilon - \delta_{\varepsilon'}) = 2k_0 -2 -\{k_\varepsilon-2\}+\{k_{\varepsilon'}-2\}.
\end{align*}
But this is exactly Lemma~\ref{L:d-2k}.

We prove (2). By Lemma-Notation~\ref{L:extremal ks}, this is equivalent to
\begin{align*}
2 +\{k_{\varepsilon} -2\} + &(p-1) \big(\tfrac{p+1}2 (d+\ell) +\beta_{[d+\ell]}^{(\varepsilon)}-1\big) - k_0\\ & = 
2p +p\{k_{\varepsilon'} -2\} + (p-1) \big(\tfrac{p+1}2 ( \ell-1+2\delta_{\varepsilon'})- \beta_{[\ell-1]}^{(\varepsilon')}\big)   -p(2-k_0).
\end{align*}
Rearranging and using Lemma~\ref{L:d-2k}, this is equivalent to
$$  (p-1)(\beta_{[d+\ell]}^{(\varepsilon)}+ \beta_{[\ell-1]}^{(\varepsilon')} )=
\tfrac{p-1}2\{k_\varepsilon-2\} +\tfrac{p-1}2\{k_{\varepsilon'} - 2\} + \tfrac{p^2-1}2(  \delta_\varepsilon+\delta_{\varepsilon'}) -(p-1)\tfrac{p-3}2.
$$
Dividing this by $p-1$, this follows from Lemma~\ref{L:case-by-case check theta} below.

We prove (3). By Lemma-Notation~\ref{L:extremal ks}, this is equivalent to
\begin{align*}
2p +p\{k_{\varepsilon} -2\} + &(p-1) \big(\tfrac{p+1}2 (d+\ell-1+2\delta_{\varepsilon}) -\beta_{[d+\ell-1]}^{(\varepsilon)}\big) - pk_0
\\ & = 
2 +\{k_{\varepsilon'} -2\} + (p-1) \big(\tfrac{p+1}2 \ell +\beta_{[\ell]}^{(\varepsilon')}-1\big)   -(2-k_0).
\end{align*}
Rearranging and using Lemma~\ref{L:d-2k}, this is further equivalent to
$$(p-1)(\beta_{[d+\ell-1]}^{(\varepsilon)}+ \beta_{[\ell]}^{(\varepsilon')})=
\tfrac{p-1}2\{k_\varepsilon-2\} +\tfrac{p-1}2\{k_{\varepsilon'} - 2\} + \tfrac{p^2-1}2(\delta_{\varepsilon'} + \delta_\varepsilon) -(p-1)\tfrac{p-3}2.
$$
Dividing by $p-1$, this will follow from Lemma~\ref{L:case-by-case check theta} below.

To sum up, we have proved \eqref{E:ghost compatible with AL} provided the following Lemma. \hfill $\Box$

\begin{lemma}
\label{L:case-by-case check theta}
We have the following two identities for every $\ell$
\begin{eqnarray*}
\beta_{[d+\ell]}^{(\varepsilon)} - \beta_{[\ell]}^{(\varepsilon')} &=&  \tfrac 12\big(\{k _\varepsilon -2\} - \{k_{\varepsilon '}-2\}\big)+\tfrac{p+1}2( \delta_\varepsilon-\delta_{\varepsilon'})  \quad\textrm{and}
\\
\beta_{[d+\ell-1]}^{(\varepsilon)}+\beta_{[\ell]}^{(\varepsilon')}
&=&
\tfrac12\big(\{k_\varepsilon-2\} +\{k_{\varepsilon'}  - 2\}\big) + \tfrac{p+1}2(\delta_{\varepsilon'} + \delta_\varepsilon)-\tfrac{p-3}2.
\end{eqnarray*}
\end{lemma}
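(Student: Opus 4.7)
The plan is to mirror the case-by-case bookkeeping strategy used in the proof of Lemma~\ref{L:case-by-case check AL}. First I would translate the $\varepsilon'$-parameters in terms of the $\varepsilon$-parameters. Since $\varepsilon' = \varepsilon \cdot (\omega^{k_0-1} \times \omega^{1-k_0})$, we have $s_{\varepsilon'} = \{s_\varepsilon + 1 - k_0\}$; applying the identity $\{c\} + \{-1-c\} = p-2$ used already in the proof of Lemma~\ref{L:d-2k} converts this into
\[
s_{\varepsilon'} = p-2 - \{k_0 - 2 - s_\varepsilon\}, \qquad \{a+s_{\varepsilon'}\} = p-2 - \{k_0 - 2 - a - s_\varepsilon\}.
\]
From these expressions one directly reads off $\delta_{\varepsilon'}$ (by comparing $s_{\varepsilon'} + \{a+s_{\varepsilon'}\}$ to $p-1$) and the sign of $a+s_{\varepsilon'} - (p-1)$, which is what selects between the two formulas for $t_1^{(\varepsilon')}, t_2^{(\varepsilon')}$ appearing before Proposition~\ref{P:dimension of Sunr}.

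Next I would fix the parity of $d$ via Proposition~\ref{P:dimension of SIw}: $d$ is odd precisely when $\{k_0-2\}$ lies in the inner gap determined by $\min(s_\varepsilon, \{a+s_\varepsilon\})$ and $\max(s_\varepsilon,\{a+s_\varepsilon\})$. Equivalently, by \eqref{E:d-2k varepsilon'}, the parity of $d+\delta_\varepsilon - \delta_{\varepsilon'}$ is controlled by $\tfrac12(\{k_{\varepsilon'}-2\} - \{k_\varepsilon-2\})$ modulo $\tfrac{p-1}{2}$. With the parity of $d$ in hand, the three quantities $\beta_{[d+\ell]}^{(\varepsilon)}$, $\beta_{[d+\ell-1]}^{(\varepsilon)}$, $\beta_{[\ell]}^{(\varepsilon')}$ are expressed via Notation~\ref{N:tnvarepsilon}, after splitting on the parity of $\ell$. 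Organising the four resulting sub-cases (indexed by the signs of $a+s_\varepsilon - (p-1)$ and $a+s_{\varepsilon'} - (p-1)$) into a table modeled on the one in the proof of Lemma~\ref{L:case-by-case check AL}, each of the two claimed identities reduces after substitution to an elementary arithmetic equality among the integers $s_\varepsilon, \{a+s_\varepsilon\}, s_{\varepsilon'}, \{a+s_{\varepsilon'}\}$ and $\delta_\varepsilon, \delta_{\varepsilon'}$, which in turn follows from the relation of Remark~\ref{R:delta} combined with the explicit expressions derived in the first step.

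The main obstacle will be the bookkeeping, not any conceptual difficulty. There are essentially four genuinely distinct sub-cases; the symmetry $\{c\} + \{-1-c\} = p-2$ pairs them up so only two of them need be checked from scratch, but within each sub-case the half-integer correction $-\tfrac{p+1}{2}$ coming from $\beta_\odd$ must interact correctly with the $\tfrac{p+1}{2}(\delta_\varepsilon \pm \delta_{\varepsilon'})$ and $\tfrac12(\{k_\varepsilon-2\} \pm \{k_{\varepsilon'}-2\})$ terms on the right-hand side so that the net residue is exactly $0$ or $-\tfrac{p-3}{2}$ as required. I expect the verification to be routine once the tables of the $t_i^{(?)}$'s and the relevant parities are drawn out in full, but delicate enough to be worth doing by explicit tabulation rather than by any attempt at a uniform argument.
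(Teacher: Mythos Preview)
Your proposal is correct and follows essentially the same case-by-case strategy as the paper. The paper notes one shortcut you might exploit: because $s_{\varepsilon'} = p-2 - \{a+s_{\varepsilon''}\}$ and $\{a+s_{\varepsilon'}\} = p-2 - s_{\varepsilon''}$ (from your first-step identities), the resulting tables for $\beta^{(\varepsilon')}_{\mathrm{even}}$, $\beta^{(\varepsilon')}_{\mathrm{odd}}$ turn out to be literally identical to those appearing in the proof of Lemma~\ref{L:case-by-case check AL} with $\varepsilon'$ substituted for $\varepsilon''$, so no new tabulation is actually needed.
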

\begin{proof}
This follows from a case-by-case study as in Lemma~\ref{L:case-by-case check AL}. In fact, we get exactly the \emph{same} tables as in Lemma~\ref{L:case-by-case check AL} with $\varepsilon'$ in places of $\varepsilon''$ everywhere. We leave the details to the readers.
\end{proof}

Lastly, we address the last piece needed for Proposition~\ref{P:ghost compatible with theta AL and p-stabilization}. This is also an essential ingredient to prove that the local ghost conjecture implies Gouv\^ea's $\lfloor\frac{k-1}{p+1}\rfloor$-conjecture (see the discussion following Conjecture~\ref{Conj:Gouvea}). The readers may choose to skip  the proof for the first time reading.

\begin{notation}
For a positive integer $m$, let $\Dig(m)$ denote the sum of all digits in the $p$-based expression of $m$. Then the sums of valuations of consecutive integers in $(m_1, m_2]$ with $m_2 > m_1>0$ is
\begin{equation}
\label{E:sum of consecutive valuations}
\sum_{m_1< i \leq m_2} v_p(i) = \frac{(m_2-\Dig(m_2) )-(m_1 -\Dig(m_1))} {p-1}.
\end{equation}

\end{notation}

\begin{proposition}
\label{P:gouvea k-1/p+1 conjecture}
Fix a relevant character $\varepsilon$ of $\Delta^2$, and let $k_0 \equiv k_\varepsilon \bmod{(p-1)}$ be a weight. Then the first $d_{k_0}^\unr(\varepsilon_1)$ slopes of $\NP(G^{(\varepsilon)}(w_{k_0}, -))$ are all less than or equal to $\big\lfloor\frac{k_0-1-\min\{a+1,p-2-a\}}{p+1}\big\rfloor$. More precisely, they are all less than or equal to
\begin{equation}
\label{E:gouvea maximal slope}
\frac{p-1}2(d_{k_0}^\unr(\varepsilon_1)-1)-\delta_\varepsilon + \beta_{[d_{k_0}^\unr(\varepsilon_1)-1]}.
\end{equation}
\end{proposition}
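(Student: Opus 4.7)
\emph{Plan of proof.} The goal is to establish the sharper bound \eqref{E:gouvea maximal slope}; the looser bound involving $\lfloor\frac{k_0-1-\min\{a+1,p-2-a\}}{p+1}\rfloor$ then follows by elementary estimates from the dimension formula in Proposition~\ref{P:dimension of Sunr}. Writing $d^\unr := d_{k_0}^\unr(\varepsilon_1)$ and letting $S$ denote the right-hand side of \eqref{E:gouvea maximal slope}, the crux is to prove
\[
v_p\bigl(g^{(\varepsilon)}_{d^\unr}(w_{k_0})\bigr) - v_p\bigl(g^{(\varepsilon)}_{d^\unr - i}(w_{k_0})\bigr) \;\leq\; i \cdot S \qquad (\star)
\]
for every $i = 1, \ldots, d^\unr$. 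Granting $(\star)$, the Proposition follows by a purely convex-geometric argument: by Definition~\ref{D:ghost series}, $g^{(\varepsilon)}_n(w_{k_0}) = 0$ for every $d^\unr < n < d_{k_0}^\Iw(\tilde\varepsilon_1) - d^\unr$, so $\NP(G^{(\varepsilon)}(w_{k_0}, -))$ is the lower convex hull of the remaining finite-valuation points. The $d^\unr$-th NP slope $\mu$ is that of the segment of NP containing position $d^\unr$, with some left endpoint $(m^-, v_p(g^{(\varepsilon)}_{m^-}(w_{k_0})))$ at $m^- < d^\unr$; in either case (whether $(d^\unr, v_p(g^{(\varepsilon)}_{d^\unr}(w_{k_0})))$ is a vertex of NP or strictly above it) one has $\mu \leq \bigl(v_p(g^{(\varepsilon)}_{d^\unr}(w_{k_0})) - v_p(g^{(\varepsilon)}_{m^-}(w_{k_0}))\bigr)/(d^\unr - m^-) \leq S$ by $(\star)$ applied with $i = d^\unr - m^-$, and monotonicity of NP slopes propagates the bound to all first $d^\unr$ slopes.

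To prove $(\star)$, I invoke Lemma~\ref{L:increment of ghost valuation}. For each $n \leq d^\unr$ the ghost multiplicity $m_n^{(\varepsilon)}(k_0) = 0$, so $g^{(\varepsilon)}_{n, \hat{k_0}} = g^{(\varepsilon)}_n$, and by Remark~\ref{R:meaning of kmidminmax} the weight $k_0$ satisfies $k_{0\bullet} > k^{(\varepsilon)}_{\max\bullet}(n - 1)$, placing it outside both summation ranges of formula \eqref{E:increment of ghost series valuation 1}. Substituting $k = k_\varepsilon + (p-1)k_\bullet$ and $j := k_{0\bullet} - k_\bullet$ converts the positive and negative ranges into consecutive positive-integer intervals $[A_n, B_n - 1]$ and $[B_n, C_n]$ respectively, where $A_n = k_{0\bullet} - k^{(\varepsilon)}_{\max\bullet}(n-1)$, $B_n = k_{0\bullet} - k^{(\varepsilon)}_{\midd\bullet}(n-1)$, $C_n = k_{0\bullet} - k^{(\varepsilon)}_{\min\bullet}(n-1)$. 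Evaluating the sums $\sum v_p(j)$ via Legendre's formula \eqref{E:sum of consecutive valuations} yields the closed expression
\[
\Delta_n := v_p\bigl(g^{(\varepsilon)}_n(w_{k_0})\bigr) - v_p\bigl(g^{(\varepsilon)}_{n-1}(w_{k_0})\bigr) = \frac{p\bigl(|P_n| - |N_n|\bigr) + \Dig(A_n - 1) + \Dig(C_n) - 2\Dig(B_n - 1)}{p - 1},
\]
where $|P_n| = \tfrac{p-1}{2}(n - 1) + \beta_{[n-1]}^{(\varepsilon)} - \delta_\varepsilon$ and $|N_n| = C_n - B_n + 1$.

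Under the genericity $1 \leq a \leq p - 4$, a direct inspection of the formulas in Lemma-Notation~\ref{L:extremal ks} shows $|P_n|$ is strictly increasing in $n$, so $|P_n| \leq |P_{d^\unr}| = S$ for all $n \leq d^\unr$; moreover $p|N_n|$ and $|P_n|$ differ by an elementary parity-dependent correction coming from the ceiling in the definition of $k^{(\varepsilon)}_{\min\bullet}$, so the ``main term'' $p(|P_n| - |N_n|)/(p-1)$ averages out to approximately $|P_n|$. Summing $\Delta_n$ for $n = d^\unr - i + 1, \ldots, d^\unr$ therefore reduces $(\star)$ to controlling the telescoped digit-sum residual
\[
\sum_{n = d^\unr - i + 1}^{d^\unr}\bigl(\Dig(A_n - 1) + \Dig(C_n) - 2\Dig(B_n - 1)\bigr).
\]
Since $B_n$ shifts by exactly $1$ at each step, the sum $\sum \Dig(B_n - 1)$ collapses via \eqref{E:sum of consecutive valuations}; the sum $\sum \Dig(A_n - 1)$ collapses the same way after grouping by parity, as $A_n$ shifts by $(p+1)/2$ or $(p-1)/2$ alternately.

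The main obstacle is the irregular advance of $C_n$: because $k^{(\varepsilon)}_{\min\bullet}(n - 1)$ involves a ceiling of a function of slope $(p+1)/(2p)$, $C_n$ decreases by $0$ or $1$ at each step in an aperiodic fashion governed by the $p$-adic expansion of $k_{0\bullet}$, and each step with a decrement of $1$ can trigger a potentially long carry in $\Dig(C_n)$. Managing these jumps requires a case analysis on the parity of $n$, on the sign of $a + s_\varepsilon - (p-1)$ (which flips the roles of $t_1^{(\varepsilon)}$ and $t_2^{(\varepsilon)}$), and on the $p$-adic digits of $k_{0\bullet}$; the worst case is when $k_{0\bullet}$ lies within $O(1)$ of a high power of $p$, where a single digit carry can shift $\Dig(C_n)$ by a large amount. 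The key compatibility needed to balance these carries is the elementary identity $A_n + p\,C_n - (p+1) B_n = O(1)$ (independent of $n$), which forces every large jump in $\Dig(C_n)$ to be matched by a corresponding jump in $\Dig(A_n - 1)$. Carrying out this bookkeeping establishes $(\star)$ and hence the Proposition.
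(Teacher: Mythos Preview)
Your overall strategy matches the paper's: reduce to the inequality $(\star)$ via convexity, then expand each increment $v_p(g_n(w_{k_0})) - v_p(g_{n-1}(w_{k_0}))$ using Lemma~\ref{L:increment of ghost valuation} and Legendre's formula~\eqref{E:sum of consecutive valuations}, and control the resulting digit-sum expression. The reduction to $(\star)$ and the initial setup are fine.

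There is a genuine gap in how you handle the ``irregular advance of $C_n$''. You work directly with $k^{(\varepsilon)}_{\min\bullet}(n-1)$, which involves a ceiling, and propose to track the resulting aperiodic jumps by a case analysis matched against your approximate identity $A_n + pC_n - (p+1)B_n = O(1)$. The paper avoids this entirely via the \emph{digital expansion game}: the elementary identity
\[
p\,k_{\min\bullet}(n) + \Dig\bigl(k_{0\bullet} - k_{\min\bullet}(n)\bigr) \;=\; \tilde k_{\min\bullet}(n) + \Dig\bigl(pk_{0\bullet} - \tilde k_{\min\bullet}(n)\bigr)
\]
replaces $k_{\min\bullet}$ by the ceiling-free $\tilde k_{\min\bullet}$. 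With the paper's choice of variables $A_n = k_{0\bullet} - k_{\max\bullet}(n) - 1$, $B_n = pk_{0\bullet} - \tilde k_{\min\bullet}(n)$, $C_n = k_{0\bullet} - k_{\midd\bullet}(n) - 1$, one obtains the \emph{exact} relation $B_n = (p+1)C_n - A_{n+1} - 1$, not merely an $O(1)$ approximation.

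The second piece you are missing is the mechanism that actually bounds the digit sums. The paper isolates this as a standalone inequality (their Lemma~\ref{Diglemma}):
\[
\Dig\bigl((p+1)N\bigr) \;\leq\; \Dig(N) + \Dig(N+m) + (p-2)m,
\]
applied with $N = C, C+1, \ldots$ and $m \approx i/2$. This converts the $\Dig(B_n)$ terms into $\Dig(C_n)$ terms and yields a clean bound on $\sum(\Dig A_n + \Dig B_n - 2\Dig C_n)$ that cancels against the quadratic main term. Your proposal to ``match large jumps in $\Dig(C_n)$ with corresponding jumps in $\Dig(A_n-1)$'' is the right intuition, but as stated it is not an argument: without the exact $(p+1)$-relation and an inequality of Lemma~\ref{Diglemma} type, the carry bookkeeping does not close. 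The paper also needs a separate small argument for the boundary case $i=2$, where the generic estimate falls one unit short; your sketch does not anticipate this.
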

\begin{proof}
In this proof, we shall suppress $\varepsilon$ and $\varepsilon_1$ from the notation for simplicity.
We first check that $\eqref{E:gouvea maximal slope}$ is less than or equal to $\frac{k_0-1-\min\{a+1,p-2-a\}}{p+1}$.

Write $k_0 = 2+\{a+2s\} + (p-1)k_{0\bullet}$ as before. Using Proposition \ref{P:dimension of Sunr} and Remark \ref{R:delta}, we may separate the argument in the following two cases.

\begin{itemize}
\item If $k_{0\bullet} - (p+1) \lfloor \frac{k_{0\bullet}-t_1}{p+1}\rfloor  \geq t_2$, then 
\begin{align*}
\eqref{E:gouvea maximal slope}
\nonumber 
=\ & (p-1)\Big\lfloor \frac{k_{0\bullet}-t_1}{p+1}\Big\rfloor -\delta + t_2-1   \leq (p-1)\frac{(k_{0\bullet}-t_2)}{p+1} -\delta +t_2 -1 \\
 =\ & \frac{k_0-1}{p+1} + \frac{2t_2-\{ a+2 s\} -1}{p+1}-\delta-1 \\ 
\nonumber
=\ & \begin{cases}
	\frac{k_0-1}{p+1}- \frac{p-2-a}{p+1} & \textrm{if } a+s_\varepsilon < p-1
	\\
	\frac{k_0-1}{p+1}- \frac{a+1}{p+1} & \textrm{if } a+s_\varepsilon \geq p-1.
\end{cases}
\end{align*}
\item If $k_{0\bullet} - (p+1) \lfloor \frac{k_{0\bullet}-t_1}{p+1}\rfloor  < t_2$, then 
\begin{align*}
\eqref{E:gouvea maximal slope} 
\nonumber
=\ & (p-1)\Big\lfloor \frac{k_{0\bullet}-t_1}{p+1}\Big\rfloor -\delta + t_1   \leq (p-1)\frac{(k_{0\bullet}-t_1)}{p+1} - \delta + t_1 \\ 
=\ &\frac{k_0-1}{p+1} + \frac{2t_1-\{ a+2 s\} -1}{p+1}-\delta \\ 
\nonumber
=\ & \begin{cases}
	\frac{k_0-1}{p+1}- \frac{a+1}{p+1}  & \textrm{if } a+s_\varepsilon < p-1
	\\
	\frac{k_0-1}{p+1}- \frac{p-2-a}{p+1}& \textrm{if } a+s_\varepsilon \geq p-1.
\end{cases}
\end{align*}
\end{itemize}
The claim follows immediately. %Note that $\min\{a+1, p-2-1\} \geq 2$, so we in fact proved that \eqref{E:gouvea maximal slope} is less than or equal to $\frac{k_0-3}{p+1}$ (the stronger estimate is because we assumed $1 \leq a \leq p-4$).
 
Now, we prove that the first $d_{k_0}^\unr$ slopes of $\NP(G(w_{k_0}, -))$ are all less than or equal to \eqref{E:gouvea maximal slope}. It suffices to show that for $i = 1, \dots, d_{k_0}^\unr$,
$$
v_p(g_{d_{k_0}^\unr}(w_{k_0})) - v_p(g_{d_{k_0}^\unr-i}(w_{k_0})) \leq i \cdot \eqref{E:gouvea maximal slope}.
$$

Recall from \eqref{E:increment of ghost series valuation 1} that for
$n = d_{k_0}^\unr -1,\dots,1, 0$ (so $k_{0\bullet} > k_{{\max}\bullet}(n)$ by Remark \ref{R:meaning of kmidminmax}),
\begin{eqnarray}
\label{E:vp difference k-1/p+1 conjecture}
\nonumber
 &v_p(g_{n+1}(w_{k_0})) - v_p(g_n(w_{k_0})) = \hspace{-15pt}\displaystyle\sum_{
k_{\midd\bullet}(n) < k_\bullet \leq k_{{\max}\bullet}(n)}
\hspace{-20pt}\big( v_p(k_{0\bullet} - k_\bullet)+1 \big)\ \,-  \hspace{-25pt} \displaystyle\sum_{
k_{{\min}\bullet}(n) \leq k_\bullet \leq k_{{\midd}\bullet}(n)}
\hspace{-20pt}\big( v_p(k_{0\bullet}-k_\bullet)+1 \big)
\\
&\qquad\qquad \quad
\stackrel{\eqref{E:sum of consecutive valuations}}= \dfrac{p(k_{{\max}\bullet}(n) - k_{\midd\bullet}(n)) - \Dig (k_{0\bullet}-k_{\midd\bullet}(n)-1)
+ \Dig(k_{0\bullet} - k_{{\max}\bullet}(n)-1)
}{p-1}
\\
\nonumber
& \qquad\qquad\qquad\quad -\ \dfrac{p(k_{{\midd}\bullet}(n) - k_{{\min}\bullet}(n)+1) - \Dig(k_{0\bullet} - k_{{\min}\bullet}(n)) + \Dig ( k_{0\bullet}- k_{\midd\bullet}(n)-1)
}{p-1}.
\end{eqnarray}
Using basic properties of $p$-based numbers and Lemma-Notation \ref{L:extremal ks}, one can deduce that
\begin{equation*}
pk_{{\min}\bullet}(n)+ \Dig(k_{0\bullet} - k_{{\min}\bullet}(n)) = \tilde k_{{\min}\bullet}(n)+ \Dig(pk_{0\bullet} - \tilde k_{{\min}\bullet}(n)).
\end{equation*}
From this, we see that \eqref{E:vp difference k-1/p+1 conjecture} is equal to
\begin{align*}
&\frac{\tilde k_{{\min}\bullet}(n) +pk_{{\max}\bullet}(n)- 2pk_{{\midd}\bullet}(n)-p}{p-1} + \frac{\Dig(k_{0\bullet}- k_{{\max}\bullet}(n)-1) + \Dig(pk_{0\bullet}  - \tilde k_{{\min}\bullet}(n))}{p-1}
\\
& \quad - \frac{2\Dig(k_{0\bullet}- k_{{\midd}\bullet}(n)-1)}{p-1}.
\end{align*}
Plugging in the formulae of Lemma-Notation~\ref{L:extremal ks} and setting 
\[
A_n = k_{0\bullet}- k_{{\max}\bullet}(n)-1, \quad B_n = pk_{0\bullet}  - \tilde k_{{\min}\bullet}(n), \quad C_n = k_{0\bullet}- k_{{\midd}\bullet}(n)-1, 
\]
we deduce that \eqref{E:vp difference k-1/p+1 conjecture} is equal to
$$
\frac{(p-1)n-1}2 - \delta_\varepsilon +\frac{p\beta_{[n]} - \beta_{[n-1]}}{p-1}
+ \frac{\Dig A_n+\Dig B_n-2\Dig C_n}{p-1}.$$

Therefore, we have
\begin{eqnarray}
\label{E:error}
&v_p(g_{d_{k_0}^\unr}(w_{k_0})) - v_p(g_{d_{k_0}^\unr-i}(w_{k_0})) -  i \cdot \big( \frac{p-1}2(d_{k_0}^\unr-1)-\delta_\varepsilon + \beta_{[d_{k_0}^\unr -1]}\big) \\
\nonumber &
= -\frac{i(i-1)}{4}(p-1)-\frac{i}{2}+ 
\begin{cases}
\frac{i}{2} (\beta_{[d_{k_0}^\unr]}-\beta_{[d_{k_0}^\unr -1]}) & \textrm{if $i$ is even}\\
\frac{i-1}{2} (\beta_{[d_{k_0}^\unr ]}-\beta_{[d_{k_0}^\unr -1]})+ \frac{\beta_{[d_{k_0}^\unr -1]}-\beta_{[d_{k_0}^\unr]}}{p-1} & \textrm{if $i$ is odd}
\end{cases}\\
\nonumber
& + \displaystyle \sum_{1 \leq j \leq i} \left( \frac{\Dig \big(A_{d_{k_0}^\unr - j}\big) +\Dig\big(B_{d_{k_0}^\unr - j}\big) - 2 \Dig\big(C_{d_{k_0}^\unr - j}\big)}{p-1}\right).
\end{eqnarray}
Note that $\beta_{[d_{k_0}^\unr]}-\beta_{[d_{k_0}^\unr-1]} \in \{ \pm( t_2-t_1-\frac{p+1}{2})\} = \{\pm \frac{p-3-2a}{2}\}$; so it is at most $\frac{p-5}{2}$. Since $p\geq5$, it is straightforward to see that the second line of \eqref{E:error} is less than or equal to 
\begin{eqnarray}
\label{E:second-line}
\begin{cases}
-\frac{2}{p-1} & \textrm{if $i=1$}\\
-i^2+\tfrac12 i & \textrm{if $i>1$}.
\end{cases}
\end{eqnarray}
Moreover, since \eqref{E:error} is an integer, it reduces to show 
\begin{equation}
\label{E:main-est}
\displaystyle \sum_{1 \leq j \leq i} \left( \frac{\Dig \big(A_{d_{k_0}^\unr - j}\big) +\Dig\big(B_{d_{k_0}^\unr - j}\big) - 2 \Dig\big(C_{d_{k_0}^\unr - j}\big)}{p-1}\right) + \eqref{E:second-line}<1. 
\end{equation}

From Lemma-Notation~\ref{L:extremal ks}, we have the relation
\[
B_n = (p+1) C_n - A_{n+1}-1, \qquad n\geq0.
\]
For $j=1,2,...,d^\unr_{k_0}$, we may compute $A_{d_{k_0}^\unr - j},B_{d_{k_0}^\unr - j},C_{d_{k_0}^\unr - j}$ in terms of $A_{d_{k_0}^\unr - 1},A_{d_{k_0}^\unr - 2}$, and $C= C_{d_{k_0}^\unr - 1}:$
\begin{equation*}
\label{E:A_n and C_n}
A_{d_{k_0}^\unr - j} =
\begin{cases}
A_{d_{k_0}^\unr -1}+ \frac{p+1}{2} (j-1) & \textrm{ if } j \textrm{ is odd}
\\
A_{d_{k_0}^\unr -2} +\frac{p+1}{2}(j-2) & \textrm{ if } j \textrm{ is even}
\end{cases} 
\textrm {    and  }C_{d_{k_0}^\unr - j} = C+ (j-1).
\end{equation*}
Hence
\begin{equation*}
 B_{d_{k_0}^\unr - j} =
\begin{cases}
(p+1)(C+\frac{j-1}{2})+p- A_{d_{k_0}^\unr -2} &  \textrm{ if } j \textrm{ is odd}
\\
(p+1)(C+\frac{j-2}{2})+p- A_{d_{k_0}^\unr -1} & \textrm{ if } j \textrm{ is even}.
\end{cases} 
\end{equation*}
Note that $k_{\max\bullet}(n)$ is an increasing function in $n$ by Remark \ref{R:meaning of kmidminmax}.  Thus $A_{d_{k_0}^\unr -1}\leq A_{d_{k_0}^\unr -2}=\tfrac 12(p+1)<p$. Then using the inequalities $\Dig(A+B)\leq\Dig(A) + \Dig(B)  $ and $\Dig((p+1)D)\leq 2\Dig(D) \leq 2D$, we conclude that
\begin{equation*}
\nonumber
\Dig\big(A_{d_{k_0}^\unr - j}\big) \leq
\begin{cases}
A_{d_{k_0}^\unr -1}+ (j-1) &  \textrm{ if } j \textrm{ is odd}
\\
A_{d_{k_0}^\unr -2}+ (j-2) & \textrm{ if } j \textrm{ is even},
\end{cases} 
\end{equation*}
and
\begin{equation*}
\label{Eq1}
\Dig\big(B_{d_{k_0}^\unr - j}\big) \leq
\begin{cases}
\Dig\big((p+1)(C+\frac{j-1}{2})\big)+p- A_{d_{k_0}^\unr -2}&  \textrm{ if } j \textrm{ is odd}
\\
\Dig\big((p+1)(C+\frac{j-2}{2})\big)+p- A_{d_{k_0}^\unr -1}& \textrm{ if } j  \textrm{ is even}.
\end{cases} 
\end{equation*}  
Summing up these inequalities for $j=1,\dots, i$, we get

\begin{align}
\label{E:diga+digb}
\displaystyle \sum_{1 \leq j \leq i}{\big( \Dig A_{d_{k_0}^\unr - j} +\Dig B_{d_{k_0}^\unr-j}}\big) \leq &\ pi + 2\lfloor\tfrac{i-1}{2}\rfloor\lfloor\tfrac{i+1}{2}\rfloor+ 2\displaystyle \sum_{0 \leq j \leq \lfloor\frac{i-1}{2}\rfloor}{\Dig \big((p+1)(C+j)\big)}\\
\nonumber
&- \delta(i)\big(2\lfloor\tfrac{i-1}{2}\rfloor+\Dig \big((p+1)(C+\lfloor\tfrac{i-1}{2}\rfloor)\big)\big),
\end{align}
where $\delta(i)\in\{0,1\}$ is determined by $\delta(i)\equiv i\mod 2$. 

We need the following lemma.

\begin{lemma}
	\label{Diglemma}
	For $p\geq3$ and $m,N \in \mathbb{N}$,  we have
	\[
	\Dig\big((p+1)N \big) \leq \Dig(N)+ \Dig(N+m)+(p-2)m 
	\]
\end{lemma}
\begin{proof}
Denote by $a$ and $b$ the number of times we carry over digits to the next one when computing the sums $N+pN$ and $N+m$ respectively. It follows that
\[
\Dig\big((p+1)N \big)=\Dig(N)+\Dig(pN)-(p-1)a=2\Dig(N)-(p-1)a
\]
and
\[
\Dig(N+m)=\Dig(N)+\Dig(m)-(p-1)b.
\]
Let $d_m$ be the number of digits of $m$, namely $\lfloor\frac{\ln m}{\ln p}\rfloor+1$, if in the $p$-based expression of $m$ the leading term is not $p-1$; otherwise, we define $d_m$ to be one plus the number of digits of $m$. (Here and later, $\ln(-)$ denotes the natural real logarithmic.) It is not difficult to check that if $b-d_m \geq 1$, then the there must be at least $b-d_m$ consecutive $(p-1)$'s in $N$, and moreover they will force carrying over digits at least $b-d_m$ times. So $a \geq b-d_m$. 
Now, combining with the inequalities above, we deduce that
$$
\Dig((p+1)N) - \Dig(N) -\Dig(N+m) \leq (p-1)(b-a)-\Dig(m) \leq  (p-1)d_m -\Dig(m).
$$
It is not difficult to verify that $(p-1)d_m \leq \Dig(m) + (p-2)m$. The lemma is proved.
\end{proof}

Applying Lemma \ref{Diglemma} to $N=C,C+1,\dots,C +\lfloor\frac{i-1}{2}\rfloor$ and $m=\lfloor\frac{i}{2}\rfloor$ or $\lfloor\frac{i+1}{2}\rfloor$, and adding them up, we deduce
that 
\begin{align*}
&2 \sum_{0 \leq j \leq \lfloor\frac{i-1}{2}\rfloor}{\Dig \big((p+1)(C+j)\big)}-\delta(i)\Dig \big((p+1)(C+\lfloor\tfrac{i-1}{2}\rfloor)\big) \\
\leq\ &
2\displaystyle \sum_{0 \leq j \leq i-1}{\Dig\big((C+j)\big)}+(p-2)\lfloor\tfrac{i+1}{2}\rfloor\lfloor\tfrac{i}{2}\rfloor.
\end{align*}
Note that $\lfloor\frac{i+1}{2}\rfloor-\delta(i)=\lfloor\frac{i}{2}\rfloor$. Combining with \eqref{E:diga+digb}, we deduce that
\begin{align*}
\displaystyle \sum_{1 \leq j \leq i} 
\frac{\Dig A_{d_{k_0}^\unr - j} +\Dig B_{d_{k_0}^\unr - j} - 2 \Dig C_{d_{k_0}^\unr - j}}{p-1}&\leq \frac{pi +2\lfloor\frac{i-1}{2}\rfloor\lfloor\frac{i}{2}\rfloor+ 2(p-2)\lfloor\frac{i+1}{2}\rfloor\lfloor\frac{i}{2}\rfloor}{p-1} \\
\nonumber
&=2\lfloor\tfrac{i+1}{2}\rfloor\lfloor\tfrac{i}{2}\rfloor+i+\tfrac{\delta(i)}{p-1}.
\end{align*}

Put
\[
S(i):=2\lfloor\tfrac{i+1}{2}\rfloor\lfloor\tfrac{i}{2}\rfloor+i+\tfrac{\delta(i)}{p-1}+ \eqref{E:second-line}.
\]
A short computation shows that
$S(1)=1-\frac{1}{p-1}, S(2)=1$, and for $i\geq3$,
\[
S(i)\leq \frac{i^2}{2}+i+\eqref{E:second-line}=\frac{-i^2+3i}{2}\leq0.
\]
By \eqref{E:main-est}, we conclude the desired result except for $i=2$. Moreover, if \eqref{E:main-est} fails for $i=2$, we must have 
\[
\Dig\big((p+1)C+(p-A_{d_{k_0}^\unr -2})\big)=\Dig\big((p+1)C\big)+p- A_{d_{k_0}^\unr -2}
\]
and
\[
\Dig((p+1)C)=\Dig(C)+\Dig(C+1)+(p-2).
\]
From the second equality, we deduce that $\Dig(C+1)<\Dig(C)$, yielding $p|(C+1)$. However, this contradicts the first equality since $p-A_{d_{k_0}^\unr -2}=\tfrac12(p-1)>1$. 
\end{proof}

\section{Vertices of the Newton polygon of ghost series}
\label{Sec:finer properties of ghost series}

In this section, we investigate further properties of ghost series.
We keep Hypothesis~\ref{H:b=0} on $\widetilde \rmH$, i.e. $\widetilde \rmH$ is a primitive $\calO\llbracket K_p\rrbracket$-projective augmented module of type $\bar\rho = \Matrix {\omega_1^{a+1}}{*\neq 0}{0}{1}$ with $1 \leq a \leq p-4$, on which $\Matrix p00p$ acts trivially.

\emph{We fix  $\varepsilon = \omega^{-s_\varepsilon} \otimes \omega^{a+s_\varepsilon}$ a relevant character of $\Delta^2$ in this section. We will suppress $\varepsilon$ from all superscripts when no confusion arises. Let $G(w,t) = G^{(\varepsilon)}(w,t) = \sum_{n\geq 0} g_n(w) t^n \in \ZZ_p[w]\llbracket t\rrbracket$ denote the corresponding ghost series.}

\medskip
We prove the following two results in this section:
\begin{enumerate}
\item For $n \in \ZZ_{\geq 0}$ and $w_\star \in \gothm_{\CC_p}$, the point $(n, v_p(g_n(w_\star)))$ is a vertex on the Newton Polygon $\NP(G(w_\star, -))$ if and only if $(n, w_\star)$ is not near-Steinberg (Theorem~\ref{T:near Steinberg = non-vertex}). Here, near-Steinberg roughly means that $w_\star$ is ``close" to a weight point $w_k$ such that ``$n$ lies between $d_k^\unr$ and $d_k^\Iw-d_k^\unr$".
The precise definition of this concept in Definition~\ref{D:near steinberg} is an essential tool to parcel the delicate combinatorics of the ghost series.

\item For $k \in \ZZ$, the slopes of $\NP(G(w_k,-))$ are all integers if $a$ is even and belong to $\frac 12\ZZ$ if $a$ is odd (Corollary~\ref{C:integrality of slopes of G}). This sheds some lights on the folklore Breuil--Buzzard--Emerton Conjecture (Conjecture~\ref{Conj:Breuil-Buzzard-Emerton}) on slopes of Kisin deformation spaces.
\end{enumerate}

Before diving into the more detailed discussion, let us first explain one of the serious subtleties we face in this section.  While most numerical information of ghost series can be easily described asymptotically through linear or quadratic terms, the precise values of these numerical data are somewhat subtle and often involve the $p$-based expansion of the weight $k$ (or rather $k_\bullet$). This means that for ``majority" of the case where the $p$-based expansion of $k_\bullet$ is not ``special", the behavior of the Newton polygon of the ghost series is ``reasonable". In contract, near these ``special" weights $k$ (e.g. $k_\bullet$ is an integer like $p^{p^p}-p$), we need to be extra careful.  See Example~\ref{Ex:pathological Steinberg range} for a discussion in an example of such pathological case.

\begin{notation}
\label{N:Delta kell}
For integer $k \geq 2$ and $k \equiv k_\varepsilon \bmod{(p-1)}$, we set
\begin{equation}
\label{E:definition of Delta'}
\Delta'_{k, \ell} : = v_p \big(g_{\frac 12 d_{k}^\Iw+\ell, \hat k}(w_k) \big) -\tfrac{k-2}2 \ell, \quad \textrm{for }\ell = -\tfrac 12 d_{k}^\new, -\tfrac 12 d_{k}^\new+1, \dots, \tfrac 12d_{k}^{\new}.
\end{equation}
Then the ghost duality statement in Proposition~\ref{P:ghost compatible with theta AL and p-stabilization}\,(4) is equivalent to the following
equality
\begin{equation}
\label{E:ghost duality alternative}
\Delta'_{k,\ell} = \Delta'_{{k}, -\ell} \quad\textrm{ for all }\ell = -\tfrac 12 d_{k}^\new, \dots, \tfrac 12d_{k}^{\new}.
\end{equation}

If we connect the points $(\ell, \Delta'_{k,\ell})$ with line segments on the plane to get the graph $\underline \Delta'_k$ of a function, the function is in general not convex (although for ``majority" $k$ it is; see Lemma~\ref{L:failure of convexity of Delta'}). Consider the convex hull $\underline \Delta_k = \underline \Delta^{(\varepsilon)}_k$ of such graph and let $(\ell, \Delta_{k, \ell}) = (\ell, \Delta^{(\varepsilon)}_{k, \ell})$ denote the corresponding points on this convex hull. In particular, the ghost duality \eqref{E:ghost duality alternative} implies that $\Delta_{k,\ell} = \Delta_{k, -\ell}$.

\end{notation}

%\begin{remark}
%\label{R:boundary of Delta is okay}
%We shall prove later in Theorem~\ref{T:near Steinberg = non-vertex} that if we extend the definition of $\Delta'_{k, \ell}$ to allow arbitrary $\ell \in \ZZ$, then the convex hull $\underline \Delta_k$ of all the points $(\ell, \Delta'_{k, \ell})_{\ell \in \ZZ}$ is still $\underline \Delta_k$ over $\ell \in [-\frac 12d_k^\new, \frac 12 d_k^\new]$. \liang{Not sure what this remark is for; will come back for it.}
%\end{remark}

We start with several technical lemmas giving estimates of $\Delta'_{k, \ell}$ and $\Delta_{k,\ell}$.  The proofs of these lemmas have complicated notations though not difficult themselves. The readers may skip the proofs for the first reading (but do read Notation~\ref{N:linear shift down}). The motivation to study this polygon $\underline \Delta_k$ is explained by Definition~\ref{D:near steinberg} and the following remarks and examples.
\begin{lemma}
\label{L:estimate Delta kell}
For $k =k_\varepsilon + (p-1)k_{\bullet}$ and $\ell = 1,\dots, \frac 12 d_{k}^\new$, we have
$$
\Delta'_{k, \ell} - \Delta'_{k, \ell-1}\;\geq\; \frac {\min\{a+2, p-1-a\}}2+ \frac {p-1}2 (\ell-1) \;\geq\;  \frac 32 + \frac {p-1}2(\ell-1).% \leq \frac{p-1}2(\ell+2)+\big \lceil \log_p(\tfrac{k-2}2)\big \rceil + \frac2{p-1} \big \lceil \log_p \ell \big\rceil.
$$
\end{lemma}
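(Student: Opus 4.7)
The plan is to derive the one-step increment $\Delta'_{k,\ell} - \Delta'_{k,\ell-1}$ in closed form via the first-order increment formula of Lemma~\ref{L:increment of ghost valuation}, and then estimate the resulting arithmetic expression. Setting $n := \tfrac{1}{2}d_{k}^{\Iw} + \ell - 1$, Lemma-Notation~\ref{L:extremal ks}(1) gives $k_{\mathrm{mid}\bullet}^{(\varepsilon)}(n) = k_\bullet + \ell - 1$; hence substituting $u = k'_\bullet - k_\bullet$ in \eqref{E:increment of ghost series valuation 1} and splitting the negative sum by the sign of $u$ (excluding $u = 0$) yields
\[
\Delta'_{k,\ell} - \Delta'_{k,\ell-1} \;=\; \sum_{u=\ell}^{A}(v_p(u)+1) \;-\; \sum_{u=1}^{B}(v_p(u)+1) \;-\; \sum_{u=1}^{\ell - 1}(v_p(u)+1) \;-\; \tfrac{k-2}{2},
\]
with $A := k_{\max\bullet}^{(\varepsilon)}(n) - k_\bullet$ and $B := k_\bullet - k_{\min\bullet}^{(\varepsilon)}(n)$.

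Next I will invoke the identity $\sum_{u=1}^{M}(v_p(u)+1) = \tfrac{pM - \Dig(M)}{p-1}$ together with the explicit expressions for $k_{\max\bullet}^{(\varepsilon)}(n)$ and $k_{\min\bullet}^{(\varepsilon)}(n) = \lceil \tilde k_{\min\bullet}^{(\varepsilon)}(n)/p\rceil$ from Lemma-Notation~\ref{L:extremal ks}(2)(3). Writing $p\,k_{\min\bullet}^{(\varepsilon)}(n) = \tilde k_{\min\bullet}^{(\varepsilon)}(n) + \epsilon$ with $\epsilon \in \{0,\dots, p-1\}$, a direct expansion gives
\[
p(A - B) \;=\; \tfrac{(p-1)^{2}}{2}(k_\bullet + \ell) + p\beta_{[n]}^{(\varepsilon)} - \beta_{[n-1]}^{(\varepsilon)} - \tfrac{p^{2}-1}{2}\delta_\varepsilon - \tfrac{3p-1}{2} + 1 + \epsilon.
\]
Plugging this back and using $(p-1)k_\bullet = k - 2 - \{k_\varepsilon - 2\}$ together with Remark~\ref{R:delta}, the leading linear-in-$\ell$ contribution of $\Delta'_{k,\ell} - \Delta'_{k,\ell-1}$ simplifies to $\tfrac{k-2}{2} + \tfrac{p-1}{2}(\ell - 1)$, which precisely absorbs the subtracted $\tfrac{k-2}{2}$ and leaves an order-one remainder depending only on $\beta_{[n]}^{(\varepsilon)}, \beta_{[n-1]}^{(\varepsilon)}, \delta_\varepsilon, \epsilon$, and the digit sums $\Dig(A), \Dig(B), \Dig(\ell-1)$.

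To verify that this remainder is at least $\tfrac{\min\{a+2,\,p-1-a\}}{2}$, I will run a case analysis on the parity of $n = k_\bullet + \ell - \delta_\varepsilon$ and on the dichotomy $a+s_\varepsilon < p-1$ versus $a+s_\varepsilon \geq p-1$, plugging in the tabulated values of $\beta_{\mathrm{even}}^{(\varepsilon)}, \beta_{\mathrm{odd}}^{(\varepsilon)}$ from Notation~\ref{N:tnvarepsilon}. In each of the four cases the $\beta$-piece collapses to one of $\tfrac{a+2}{2}$ or $\tfrac{p-1-a}{2}$ (up to the non-negative contribution of $\epsilon$). The second inequality in the lemma then follows at once from $\min\{a+2,\,p-1-a\} \geq 3$, which is automatic under $1 \leq a \leq p-4$.

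The main obstacle is the residual digit-sum correction $\tfrac{\Dig(B) - \Dig(A) + 2\Dig(\ell-1)}{p-1}$, which is delicate precisely when $\ell - 1$ is divisible by a large power of $p$ so that $\Dig(\ell-1)$ is small compared to $\ell-1$. A potentially cleaner alternative is to induct on $\ell$: for the base case $\ell = 1$ one has $k_{\mathrm{mid}}(n) = k$, so the middle subtraction in \eqref{E:increment of ghost series valuation 2} vanishes and the bound can be established by adapting the $\Dig$-style estimate from the proof of Proposition~\ref{P:gouvea k-1/p+1 conjecture} (in particular, Lemma~\ref{Diglemma}); for the inductive step one uses \eqref{E:increment of ghost series valuation 2} to show $(\Delta'_{k,\ell}-\Delta'_{k,\ell-1})-(\Delta'_{k,\ell-1}-\Delta'_{k,\ell-2}) \geq \tfrac{p-1}{2}$ by playing the $|S_+|+|S_-|$-contributions at the two boundary slabs $(k_{\max\bullet}(n-1), k_{\max\bullet}(n)]$ and $[k_{\min\bullet}(n-1), k_{\min\bullet}(n))$ off against the middle correction $-2(v_p(\ell-1)+1)$, using that the digit-sum asymmetries force a compensating high-$v_p$ contribution whenever $v_p(\ell-1)$ is abnormally large.
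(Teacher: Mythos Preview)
Your approach is essentially the paper's: apply \eqref{E:increment of ghost series valuation 1} at the appropriate $n$, convert the resulting sums to digit-sum expressions via \eqref{E:sum of consecutive valuations}, and extract the linear-in-$\ell$ main term plus a constant of size $\tfrac{\theta}{2}$ with $\theta \in \{a+2,\,p-1-a\}$. (The paper uses ghost duality \eqref{E:ghost duality alternative} to work at $n = \tfrac12 d_k^\Iw - \ell$ rather than $n = \tfrac12 d_k^\Iw + \ell - 1$, but this is cosmetic.) You have correctly identified the structure of the answer and the four-case tabulation of $\beta^{(\varepsilon)}_{[n]}$.

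The gap is that you stop exactly where the proof becomes easy. The ``main obstacle'' you flag --- the residual digit-sum correction --- is dispatched in the paper by a single subadditivity observation, and your proposed inductive detour via the second-order formula \eqref{E:increment of ghost series valuation 2} is unnecessary. After the digital-expansion identity \eqref{E:digital expansion game} replaces $k_{\min\bullet}$ by $\tilde k_{\min\bullet}$, the correction term takes the shape
\[
\frac{\theta + \Dig\!\big(\eta - \tfrac{p+1}{2}(\ell-1)\big) + 2\,\Dig(\ell-1) - \Dig\!\big(\eta + \theta + \tfrac{p+1}{2}(\ell-1)\big)}{p-1},
\]
and the numerator is nonnegative because $\theta \leq p-2 < p$ forces $\theta + 2\,\Dig(\ell-1) = \Dig(\ell-1) + \Dig\!\big(p(\ell-1)+\theta\big)$, whence the three positive terms are digit-sums of three integers summing to $\eta+\theta+\tfrac{p+1}{2}(\ell-1)$; subadditivity of $\Dig$ finishes it (this is \eqref{E:inequality in difference of delta}). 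No induction, no invocation of Lemma~\ref{Diglemma}, and no worry about $v_p(\ell-1)$ being large --- the bound is uniform.
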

\begin{proof}
%We use the following formula similar to \eqref{E:increment of ghost series valuation 1}
%$$
%v_p(g_{n+1}(w_\star)) - v_p(g_{n}(w_\star)) =  \hspace{-25pt}\sum_{k^{(\varepsilon)}_{\midd\bullet}(n) < k_\bullet \leq k^{(\varepsilon)}_{{\max}\bullet}(n)}\hspace{-25pt} v_p(w_\star - w_k)\ -  \hspace{-25pt} \sum_{k^{(\varepsilon)}_{{\min}\bullet}(n) \leq k_\bullet \leq k^{(\varepsilon)}_{{\midd}\bullet}(n)}\hspace{-25pt} v_p(w_\star-w_k).
%$$
By ghost duality \eqref{E:ghost duality alternative}, it is equivalent to consider $\Delta'_{k,-\ell} -\Delta'_{k, -\ell+1} $.
We use the formula \eqref{E:increment of ghost series valuation 1} for $n = \frac 12 d_{k}^\Iw - \ell = k_{\bullet}+1-\delta_\varepsilon -\ell$, (and write $k_{*\bullet}$ for $k_{* \bullet}(n)$ with $* = \min, \max, \midd$)
\begin{small}
\begin{align}
\nonumber
&\Delta'_{k,-\ell} - \Delta'_{k,-\ell+1}  = 
\frac{k-2}2-  \hspace{-20pt}
\sum_{\substack{k'_\bullet \neq k_\bullet \\
k_{\midd\bullet} < k'_\bullet  \leq k_{{\max}\bullet}}}
\hspace{-20pt}\big( v_p(k'_\bullet - k_{\bullet})+1 \big)\ +  \hspace{-20pt} \sum_{
k_{{\min}\bullet} \leq k'_\bullet \leq k _{{\midd}\bullet}}
\hspace{-20pt}\big( v_p(k'_\bullet - k_{\bullet})+1 \big).
\\
\nonumber
\stackrel{\eqref{E:sum of consecutive valuations}}=\ & \frac{k-2}2 - \big( k_{{\max}\bullet}-k_{\midd\bullet }-1\big) + \big( k_{\midd \bullet} -k_{{\min}\bullet} +1 \big)-\frac{k_\bullet-k_{{\midd}\bullet}-1- \Dig(k_\bullet-k_{{\midd}\bullet}-1)}{p-1}
\\
\nonumber
&- \frac{k_{{\max}\bullet}-k_\bullet - \Dig(k_{{\max}\bullet}-k_\bullet)}{p-1}+\frac{k_{{\midd}\bullet} - k_{{\min}\bullet} +1-\Dig(k_\bullet - k_{{\min}\bullet})+ \Dig(k_\bullet-k_{{\midd}\bullet}-1)}{p-1}
\\
\label{E:Delta'-Delta'}
%\stackrel{\eqref{E:sum of consecutive valuations}}
=\ & \frac{k-2}2+ \frac {p(2k_{\midd \bullet}- k_{{\max}\bullet}- k_{{\min}\bullet}+2)}{p-1}+ \frac{\Dig(k_{{\max}\bullet} - k_{\bullet})- \Dig (k_{\bullet} - k_{{\min}\bullet})+2\Dig (\ell-1)}{p-1}.
\end{align}\end{small}
Here $\Dig(m)$ is the sum of all digits in the $p$-based expression of $m$, and we made use of formula \eqref{E:sum of consecutive valuations} for sums of valuations of continuous integers, and also noted that $k_\bullet - k_{\midd\bullet} = \ell$.
One can play the digital expansion game to deduce that
\begin{equation}
\label{E:digital expansion game}
pk_{{\min}\bullet}+ \Dig(k_{\bullet} - k_{{\min}\bullet}) = \tilde k_{{\min}\bullet}+ \Dig(pk_{\bullet} - \tilde k_{{\min}\bullet}).
\end{equation}
So \eqref{E:Delta'-Delta'} is equal to \begin{equation}
\label{E:Delta'-Delta'2}
\frac{k-2}2+ \frac {2pk_{\midd \bullet}-p k_{{\max}\bullet}- \tilde k_{{\min}\bullet}+2p}{p-1}+\frac{\Dig(k_{{\max}\bullet} - k_{\bullet})- \Dig (pk_{\bullet} - \tilde  k_{{\min}\bullet})+2\Dig (\ell-1)}{p-1}.
\end{equation} 

By Lemma-Notation~\ref{L:extremal ks}, 
\begin{align*}
&k_{\midd \bullet} =k_{\bullet} -\ell, \quad k_{{\max}\bullet} =\tfrac {p+1}2 (k_{\bullet}+1-\delta_\varepsilon -\ell) + \beta_{[n]}-1,
\\
& \textrm{and} \quad \tilde k_{{\min}\bullet}  =  \tfrac{p+1}2 (k_{\bullet}-\ell+\delta_\varepsilon) - \beta_{[n-1]}+1.
\end{align*}
Then the sum of the first two terms in \eqref{E:Delta'-Delta'2} is equal to
\begin{small}
\begin{align*}
&\frac{k-2}2 +
\frac{2 p\big( k_\bullet -\ell\big)  -p \big(\tfrac{p+1}2(k_\bullet+1 - \delta_\varepsilon -\ell) + \beta_{[n]}-1\big) 
- \big(\tfrac{p+1}2(k_\bullet - \ell+ \delta_\varepsilon) - \beta_{[n-1]}+1 \big)+2p}{p-1}
\\
&= \frac{p-1}2 (\ell-1)+ \frac{k_\varepsilon - 2}2+1+\frac{p+1}2 \delta_\varepsilon -\beta_{[n]}+ \frac{\beta_{[n-1]}-\beta_{[n]}+\frac{p+1}2}{p-1}.
\end{align*}\end{small}We can list the values of $\frac{k_\varepsilon - 2}2+1+\frac{p+1}2 \delta_\varepsilon -\beta_{[n]}$ and $\beta_{[n-1]}-\beta_{[n]}+\frac{p+1}2$ in the following table.
\begin{center}
\begin{tabular}{|c|c|c|c|}
\hline
& &$\frac{k_\varepsilon - 2}2+1+\frac{p+1}2 \delta_\varepsilon -\beta_{[n]}$ & $\beta_{[n-1]}-\beta_{[n]}+\frac{p+1}2$
\\
\hline
\multirow{2}{*}{$a+s_\varepsilon < p-1$} & $n$ even & $\frac 12(a+2)$ & $a+2$
\\
\cline{2-4}
& $n$ odd & $\frac 12(p-1-a)$ & $p-1-a$
\\
\hline 
\multirow{2}{*}{
$a+s_\varepsilon \geq p-1$} & $n$ even & $\frac 12(p-1-a)$ & $p-1-a$
\\
\cline{2-4} & $n$ odd & $\frac 12(a+2)$ & $a+2$
\\
\hline
\end{tabular}
\end{center}
We set $\theta: = \beta_{[n-1]}-\beta_{[n]}+\frac{p+1}2$, which is equal to either $a+2$ or $p-1-a$; in particular, $3\leq \theta \leq p-2$. Then the sum of the first two terms in \eqref{E:Delta'-Delta'2} is equal to
$$
\frac{p-1}2(\ell-1) + \frac 12\cdot  \theta + \frac{\theta}{p-1}.
$$

On the other hand, 
for the digital sum in the last term of \eqref{E:Delta'-Delta'2}, if we set
\begin{equation}
\label{E:eta}
\eta: = \frac{p-1}2k_\bullet -\frac{p+1}2\delta_\varepsilon + \beta_{[n]}-1,
\end{equation}
then we can write in a uniform fashion that
\begin{equation}
\label{E:kmax - k}
k_{{\max}\bullet}-k_\bullet = \eta - \frac{p+1}2 (\ell-1) , \quad pk_\bullet -\tilde k_{{\min}\bullet} =\eta+\theta +\frac{p+1}2(\ell-1).
\end{equation}

To sum up, we deduce from \eqref{E:Delta'-Delta'2} and the discussion above that $\Delta'_{k, \ell} - \Delta'_{k, \ell-1}$ is equal to
\begin{equation}
\label{E:Delta'-Delta'3}
\frac {(p-1) (\ell-1) + \theta}2+ \frac{\theta + \Dig\big(\eta -\frac{p+1}2(\ell-1)\big) + 2\Dig(\ell-1) - \Dig\big(\eta +\theta+\frac{p+1}2(\ell-1)\big)}{p-1}.
\end{equation}
But it is clear that 
\begin{align}
\nonumber
&\theta + \Dig\big(\eta -\tfrac{p+1}2(\ell-1)\big) + 2\Dig(\ell-1)
\\
\nonumber
=\ &\Dig \big(\eta -\tfrac{p+1}2(\ell-1)\big) + \Dig(\ell-1) + \Dig (p\ell-p+\theta)
\\
\label{E:inequality in difference of delta} \geq\ & \Dig\big(\eta +\theta+\tfrac{p+1}2(\ell-1)\big).
\end{align}
It follows that
\[
\Delta'_{k, \ell} - \Delta'_{k, \ell-1} \geq \tfrac 12\big((p-1) (\ell-1) + \theta \big) \geq \tfrac{p-1}2(\ell-1) + \tfrac 32. \qedhere
\]
\end{proof}

The following corollary in fact slightly strengthens the previous lemma. Despite the technical nature, it will be used in multiple important proofs later (for Proposition~\ref{P:shifting points wstar to wk} and for results in the forthcoming paper).
\begin{corollary}
\label{C:finer inequality for Delta}
Let $k = k_\varepsilon + (p-1)k_\bullet$ and $\ell =1, \dots, \frac 12 d_k^\new$ be as above. Let $k' = k_\varepsilon + (p-1) k'_\bullet$ 
be any weight such that 
\begin{equation}
\label{E:dk' boundaries belongs to NS range} 
d_{k'}^\ur, \textrm{ or }d_{k'}^\Iw - d_{k'}^\ur\textrm{  belongs to }\big(\tfrac 12d_k^\Iw-\ell, \tfrac 12 d_k^\Iw+\ell\big)
,
\end{equation}
or $\frac 12d_{k'}^\Iw \in \big[\tfrac 12d_k^\Iw-\ell, \tfrac 12 d_k^\Iw+\ell\big]$,
then we have
\begin{equation}
\label{E:Delta kl difference strong}
\Delta'_{k, \ell} - \Delta'_{k,\ell-1} -v_p(w_k - w_{k'}) \geq \frac 12 + \frac {p-1}2(\ell-1) -\Big \lfloor \frac{\ln((p+1) \ell)}{\ln p}\Big\rfloor  \geq \frac 12 (2\ell-1),
\end{equation}
except when $\ell = 1$, the last inequality fails but $\Delta'_{k, 1} - \Delta'_{k,0} - v_p(w_{k} - w_{k'}) \geq \frac 12$ still holds.

When $p \geq 7$ and $\ell \geq 2$, we have
$\Delta'_{k, \ell} - \Delta'_{k,\ell-1}-v_p(w_{k}-w_{k'}) \geq \frac 12(2\ell+1)$.
\end{corollary}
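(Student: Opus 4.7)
The plan is to strengthen the computation performed in the proof of Lemma~\ref{L:estimate Delta kell}. We derived there the explicit formula
\[
\Delta'_{k,\ell} - \Delta'_{k,\ell-1} \ =\  \frac{(p-1)(\ell-1) + \theta}{2} \ +\  \frac{1}{p-1}\Big[\theta + \Dig(X) + 2\Dig(\ell-1) - \Dig(Y)\Big],
\]
where $X = \eta - \tfrac{p+1}{2}(\ell-1)$, $Y = \eta + \theta + \tfrac{p+1}{2}(\ell-1)$, $\eta$ is as in \eqref{E:eta}, and $\theta \in \{a+2,\,p-1-a\} \subseteq [3, p-2]$. The bracketed digit-sum term is non-negative, being precisely the deficit in the subadditivity of $\Dig$ applied to the equality $Y = X + p(\ell-1) + (\ell-1) + \theta$; each carry in that base-$p$ addition contributes $p-1$ to it. Using $v_p(w_k - w_{k'}) = 1 + v_p(k_\bullet - k'_\bullet)$, the target inequality \eqref{E:Delta kl difference strong} is equivalent to
\[
v_p(k_\bullet - k'_\bullet) \ \leq\ \frac{\theta - 3}{2} + \frac{\theta + \Dig(X) + 2\Dig(\ell-1) - \Dig(Y)}{p-1} + \Big\lfloor \frac{\ln((p+1)\ell)}{\ln p}\Big\rfloor.
\]

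I will handle the three conditions on $k'$ separately. The condition $\tfrac12 d^\Iw_{k'} \in [\tfrac12 d^\Iw_k - \ell,\, \tfrac12 d^\Iw_k + \ell]$ translates via Corollary~\ref{C:dIw is even} into $|k_\bullet - k'_\bullet| \leq \ell$, whence $v_p(k_\bullet - k'_\bullet) \leq \lfloor\log_p\ell\rfloor \leq \lfloor\log_p((p+1)\ell)\rfloor$; combined with $\theta \geq 3$ and non-negativity of the digit-sum bracket, this yields the claim immediately. The remaining conditions on $d_{k'}^\ur$ and on $d_{k'}^\Iw - d_{k'}^\ur$ are interchanged by the palindromic structure $m_n(k') = m_{d_{k'}^\Iw - n}(k')$ of ghost multiplicities, so it suffices to treat the condition on $d_{k'}^\ur$. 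In that case Lemma-Notation~\ref{L:extremal ks}\,(2) constrains $k'_\bullet$ to lie in an interval of length at most $(p+1)\ell + O(1)$ near $k_{{\max}\bullet}(\tfrac12 d_k^\Iw \pm \ell)$. Although $|k_\bullet - k'_\bullet|$ itself is of order $\tfrac{p-1}{2}k_\bullet$, the plan is to show that any excess in $v_p(k_\bullet - k'_\bullet)$ over $\lfloor \log_p((p+1)\ell)\rfloor$ is compensated by additional carries in the base-$p$ addition $X + p(\ell-1) + (\ell-1) + \theta = Y$, using the rigid structure $\eta = \tfrac{p-1}{2}k_\bullet - \tfrac{p+1}{2}\delta_\varepsilon + \beta_{[n]} - 1$ which forces the relevant base-$p$ digits of $X$ and $Y$ to be determined by $k_\bullet$.

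The main obstacle will be making this carry-counting rigorous: one must identify the precise digit positions of $\eta$ at which the constraint $v_p(k_\bullet - k'_\bullet) = v$ forces carries in the above base-$p$ addition, track the interaction with the perturbations $\pm \tfrac{p+1}{2}(\ell-1)$, and carry out a case analysis on the parity of $n$ (affecting $\beta_{[n]}$) and on the sign of $a + s_\varepsilon - (p-1)$ (affecting $\theta$). Once the primary inequality is established, the second inequality $\tfrac12 + \tfrac{p-1}{2}(\ell-1) - \lfloor\log_p((p+1)\ell)\rfloor \geq \tfrac12(2\ell - 1)$ reduces to the elementary estimate $\tfrac{p-3}{2}(\ell - 1) \geq \lfloor \log_p((p+1)\ell)\rfloor$, valid for $p \geq 5$ and $\ell \geq 2$ since $\tfrac{p-3}{2} \geq 1$ while $\lfloor\log_p((p+1)\ell)\rfloor$ grows only logarithmically in $\ell$. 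For $\ell = 1$ the elementary estimate fails, but the carry-counting argument yields directly $\Delta'_{k,1} - \Delta'_{k,0} - v_p(w_k - w_{k'}) \geq \tfrac12$, starting from $\Delta'_{k,1} - \Delta'_{k,0} \geq \tfrac{\theta}{2} \geq \tfrac32$ and using the compensating carries to absorb any nonzero $v_p(k_\bullet - k'_\bullet)$. Finally, the improved bound $\tfrac12(2\ell + 1)$ for $p \geq 7$ and $\ell \geq 2$ is an immediate strengthening of the elementary estimate, since then $\tfrac{p-3}{2} \geq 2$ provides an extra $\tfrac12$ of slack.
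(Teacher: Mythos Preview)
Your overall setup is correct and mirrors the paper: you isolate the formula
\[
\Delta'_{k,\ell} - \Delta'_{k,\ell-1} = \tfrac{(p-1)(\ell-1)+\theta}{2} + \tfrac{1}{p-1}\big[\theta + \Dig(X) + 2\Dig(\ell-1) - \Dig(Y)\big]
\]
and correctly handle the easy case $\tfrac12 d_{k'}^\Iw \in [\tfrac12 d_k^\Iw - \ell,\tfrac12 d_k^\Iw + \ell]$ and the tail inequalities. However, there are two genuine gaps in the remaining argument.

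First, your reduction from the condition ``$d_{k'}^\Iw - d_{k'}^\ur$ is in the range'' to ``$d_{k'}^\ur$ is in the range'' via the palindromic pattern $m_n(k') = m_{d_{k'}^\Iw - n}(k')$ does not work: that symmetry is internal to a fixed $k'$ and says nothing about the location of $k'_\bullet$ relative to $k_\bullet$. The paper's reduction is different and essential: using the identity $k_{\max\bullet}(d_k^\Iw - \ell') - k_\bullet = pk_\bullet - \tilde k_{\min\bullet}(\ell'-1)$ from \S\ref{S:proof of ghost compatible with AL}\,(2), for each $k'_\bullet$ in the $k_{\min}$-range there is a $k''_\bullet$ in the $k_{\max}$-range with $k''_\bullet - k_\bullet = p(k_\bullet - k'_\bullet)$, so $v_p(k''_\bullet - k_\bullet) > v_p(k'_\bullet - k_\bullet)$ and it suffices to bound the latter.

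Second, and more seriously, you miss the key structural observation that makes the digit argument work: once reduced to the $d_{k'}^\ur$ case, one has precisely
\[
k'_\bullet - k_\bullet \;\in\; \big(X,\; Y\big] \;=\; \big(\eta - \tfrac{p+1}{2}(\ell-1),\ \eta + \theta + \tfrac{p+1}{2}(\ell-1)\big],
\]
i.e.\ $k'_\bullet - k_\bullet$ lies in the very interval whose endpoints are the arguments of $\Dig$ in your formula. This is what allows the paper to invoke the clean combinatorial fact (Lemma~\ref{L:elementary Digits lemma}, proved via $v_p\binom{A+B}{A}$)
\[
\tfrac{1}{p-1}\big(\Dig(A)+\Dig(B)-\Dig(A+B)\big) + \big\lfloor \log_p B\big\rfloor \;\geq\; \max_{A<x\leq A+B} v_p(x),
\]
with $A = X$ and $B = Y - X = (p+1)(\ell-1)+\theta$, giving the bound directly with no case analysis. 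Your proposed alternative --- tracking digits of $\eta$ through its expression in $k_\bullet$ and doing a case split on parity and on $a+s_\varepsilon$ --- does not have access to this structure, since $v_p(k_\bullet - k'_\bullet)$ constrains the digits of the \emph{difference}, not of $k_\bullet$ or $\eta$ individually; it is unclear how that route would close.
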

\begin{proof}
The second inequality in \eqref{E:Delta kl difference strong} is obvious (when $\ell >1$); we will prove the first inequality. (Similarly, the last statement also follows from the first inequality.)

We first treat the easy case: $\frac 12d_{k'}^\Iw \in \big[\frac 12d_k^\Iw-\ell, \ \frac 12d_k^\Iw+\ell\big]$.
Since $\frac 12 d_{k}^\Iw - \frac 12 d_{k'}^\Iw = k_\bullet - k'_\bullet$, we deduce that $|k'_\bullet - k_\bullet|\leq \ell$. In particular, $v_p(w_{k_\bullet}-w_{k'_\bullet}) \leq 1+ \big\lfloor \ln \ell / \ln p\big\rfloor$. So the first inequality of \eqref{E:Delta kl difference strong} follows from Lemma~\ref{L:estimate Delta kell} immediately.

We now assume \eqref{E:dk' boundaries belongs to NS range} holds.  First note that, by Remark~\ref{R:meaning of kmidminmax}, the condition \eqref{E:dk' boundaries belongs to NS range} is equivalent to
\begin{equation}
\label{E:k' so that dk'unr in Steinberg}
k'_\bullet \in \big[k_{{\min}\bullet}(\tfrac 12 d_{k}^\Iw - \ell), k_{{\min}\bullet}(\tfrac 12 d_{k}^\Iw + \ell-1) \big) \cup \big(k_{{\max}\bullet}(\tfrac 12 d_{k}^\Iw - \ell), k_{{\max}\bullet}(\tfrac 12 d_{k}^\Iw + \ell-1) \big],
\end{equation}
Using the equality (2) in \S\,\ref{S:proof of ghost compatible with AL}:
\begin{equation}
\label{E:near Steinberg duality}
k_{{\max}\bullet}(d_{k}^\Iw - \ell') - k_\bullet= pk_\bullet - \tilde k_{{\min}\bullet}(\ell' -1 ), \quad \textrm{for }\ell'  = \tfrac 12 d_{k}^\Iw- \ell+1 \textrm{ or } \tfrac 12 d_{k}^\Iw +\ell,
\end{equation}
we see that for every $k'_\bullet \in \big[k_{{\min}\bullet}(\tfrac 12 d_{k}^\Iw - \ell), k_{{\min}\bullet}(\tfrac 12 d_{k}^\Iw + \ell-1) \big)$, there exists $k''_\bullet \in \big(k_{{\max}\bullet}(\tfrac 12 d_{k}^\Iw - \ell), k_{{\max}\bullet}(\tfrac 12 d_{k}^\Iw + \ell-1) \big]$ such that
$
k''_\bullet -k_\bullet = p(k_\bullet  -k'_\bullet).
$
So to prove the lemma, one may assume that 
\begin{equation}
\label{E:condition on k'}
k'_\bullet \in \big(k_{{\max}\bullet}(\tfrac 12 d_{k}^\Iw - \ell), k_{{\max}\bullet}(\tfrac 12 d_{k}^\Iw + \ell-1) \big].
\end{equation}

Now we import the notation from the proof of Lemma~\ref{L:estimate Delta kell}: $n: = \frac 12d_k^\Iw -\ell$, $\theta := \beta_{[n-1]}-\beta_{[n]}+\frac{p+1}2$, and $\eta$ as defined in \eqref{E:eta}. Then condition \eqref{E:condition on k'} is equivalent to
$$
k'_\bullet \in \big(k_{\mathrm{max}\bullet}(n), \ k_{\mathrm{max}\bullet}(n+ 2\ell-1)\big].
$$
So by \eqref{E:kmax - k}, we deduce that
\begin{eqnarray*}
k'_\bullet - k_\bullet & > & \eta-\tfrac{p+1}2(\ell-1), \quad \textrm{ and }
\\
k'_\bullet - k_\bullet & \leq & \eta-\tfrac{p+1}2(\ell-1) + \big( k_{\mathrm{max}\bullet}(n+2\ell-1) - k_{\mathrm{max}\bullet}(n)\big)
\\
 &=& \eta -\tfrac{p+1}2(\ell-1) + (p+1)(\ell-1) + \theta = \eta+\theta + \tfrac{p+1}2(\ell-1),
\end{eqnarray*}
where the first equality uses the formula for $k_{\mathrm{max}\bullet}(n)$ in Lemma-Notation~\ref{L:extremal ks}(2).

Now we argue as in Lemma~\ref{L:estimate Delta kell} till \eqref{E:Delta'-Delta'3} to get
\begin{small}
\begin{equation}
\label{E:Delta kl - Deltakl-1 prime}
\Delta'_{k, \ell}- \Delta'_{k, \ell-1} = 
\frac {(p-1) (\ell-1) + \theta}2+ \frac{\theta + \Dig\big(\eta -\frac{p+1}2(\ell-1)\big) + 2\Dig(\ell-1) - \Dig\big(\eta +\theta+\frac{p+1}2(\ell-1)\big)}{p-1}.
\end{equation}\end{small}Then we simply note that
\begin{align*}
&\tfrac{1}{p-1}\Big(\theta + \Dig\big(\eta -\tfrac{p+1}2(\ell-1)\big) + 2\Dig(\ell-1)\Big)
\\\geq \ &\tfrac{1}{p-1}\Big(\Dig \big(\eta -\tfrac{p+1}2(\ell-1)\big) + \Dig((p+1)(\ell-1)+\theta)\Big)
\\ \geq\ & \tfrac{1}{p-1}\Dig\big(\eta +\theta+\tfrac{p+1}2(\ell-1)\big) + v_p(k'_\bullet - k_\bullet) - \lfloor \ln((p+1)(\ell-1)+\theta)/\ln p\rfloor.
\end{align*}

Here the last inequality uses the following elementary fact (proved in Lemma~\ref{L:elementary Digits lemma} below): for two positive integers $A$ and $B$, we have
$$
\frac{\Dig(A) + \Dig(B) -\Dig(A+B)}{p-1} +\Big\lfloor \frac{\ln B}{\ln p}\Big\rfloor \geq \max_{A<x\leq A+B}\big\{v_p(x)\big\}.
$$
From this, we deduce that
\begin{align*}
\Delta'_{k, \ell} - \Delta'_{k,\ell-1} \geq\tfrac {(p-1) (\ell-1) + \theta}2 +v_p(k'_\bullet - k_\bullet)  - \big\lfloor \ln((p+1)(\ell-1)+\theta) / \ln p\big\rfloor.
\end{align*}
The corollary follows when $\ell >1$ because $\theta \in [3,p-2]$.

In the special case of $\ell=1$, the above inequality becomes
\begin{align*}
\Delta'_{k,1} - \Delta'_{k,0}\geq \frac \theta 2+v_p(k'_\bullet - k_\bullet)
\end{align*}
as $\theta<p$. So we deduce that
\[
\Delta'_{k,1}-\Delta'_{k,0}-v_p(w_{k} - w_{k'})\geq \frac \theta 2 - 1 \geq \frac 12. \qedhere
\]

\end{proof}

\begin{lemma}
\label{L:elementary Digits lemma}For two positive integers $A$ and $B$, we have 
	\[
	\frac{\Dig(A)+\Dig(B)-\Dig(A+B)}{p-1}+\Big\lfloor \frac{\ln B}{\ln p}\Big\rfloor\geq \max_{A<x\leq A+B}\{ v_p(x) \}.
	\]
\end{lemma}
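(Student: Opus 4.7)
The plan is to reinterpret both sides of the inequality in terms of base-$p$ arithmetic and then exhibit enough carries in the addition $A+B$ to account for the maximum $p$-adic valuation in the interval. The starting point is the classical identity (Kummer's theorem or direct manipulation of the formula $v_p(n!) = (n-\Dig(n))/(p-1)$) which gives
\[
\frac{\Dig(A)+\Dig(B)-\Dig(A+B)}{p-1} = c(A,B),
\]
where $c(A,B)$ denotes the number of carries occurring when one adds $A$ and $B$ in base $p$. Equivalently, $c(A,B)$ equals the number of indices $i \geq 0$ such that $(A \bmod p^{i+1}) + (B \bmod p^{i+1}) \geq p^{i+1}$. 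I will also write $d = \lfloor \ln B / \ln p \rfloor + 1$ so that $p^{d-1} \leq B < p^d$, and set $r = \max_{A<x\leq A+B} v_p(x)$. The inequality to be proved becomes $c(A,B) + (d-1) \geq r$.

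The case $r \leq d-1$ is immediate since $c(A,B) \geq 0$, so we may assume $r \geq d$. Pick $x_0 \in (A, A+B]$ with $v_p(x_0) = r$, and write $x_0 = p^r m$ with $\gcd(m,p)=1$. I will show that for each index $i$ with $d-1 \leq i \leq r-1$ there is a carry at position $i$ in the base-$p$ addition of $A$ and $B$; this produces $r - (d-1)$ distinct carries, giving $c(A,B) \geq r - d + 1$ as required.

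To verify the carry at position $i$, note that $p^{i+1} \mid x_0$ (as $i+1 \leq r$), so we may write $x_0 = q p^{i+1}$ for some positive integer $q$. Since $A < x_0 \leq A+B$, writing $A = q_A p^{i+1} + r_A$ with $0 \leq r_A < p^{i+1}$ forces $q_A \leq q-1$, and hence
\[
B \;\geq\; x_0 - A \;=\; (q - q_A) p^{i+1} - r_A \;\geq\; p^{i+1} - r_A.
\]
Because $i \geq d-1$ gives $B < p^d \leq p^{i+1}$, we have $B \bmod p^{i+1} = B$, so the previous inequality rearranges to $(A \bmod p^{i+1}) + (B \bmod p^{i+1}) \geq p^{i+1}$, which is exactly the condition for a carry at position $i$. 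Summing over $i = d-1, d, \dots, r-1$ yields the claim. I do not anticipate a serious obstacle here: the only mildly delicate step is ensuring the correct indexing so that the range $d-1 \leq i \leq r-1$ both lies within the "large position" regime where $B \bmod p^{i+1} = B$ and covers precisely $r-d+1$ indices.
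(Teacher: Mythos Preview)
Your proof is correct. Both you and the paper start from the same classical identity $\frac{\Dig(A)+\Dig(B)-\Dig(A+B)}{p-1}=v_p\binom{A+B}{A}$, but you then take a genuinely different route. The paper rewrites $v_p\binom{A+B}{A}$ as $v_p\big(\prod_{j=1}^B(A+j)/B!\big)$, observes that when $v_p(A+i)=r>n:=\lfloor\ln B/\ln p\rfloor$ one has $v_p(A+j)=v_p(j-i)$ for all other $j$, and then bounds $v_p((i-1)!)+v_p((B-i)!)-v_p(B!)\geq -n$ via the elementary floor inequality $\lfloor\frac{i-1}{p^\ell}\rfloor+\lfloor\frac{B-i}{p^\ell}\rfloor-\lfloor\frac{B}{p^\ell}\rfloor\geq-1$ summed over $\ell\leq n$. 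You instead invoke Kummer's theorem to interpret $v_p\binom{A+B}{A}$ as the number of base-$p$ carries $c(A,B)$, and then directly exhibit a carry at each position $i\in\{d-1,\dots,r-1\}$ from the single observation that $B\geq x_0-A\geq p^{i+1}-(A\bmod p^{i+1})$ whenever $p^{i+1}\mid x_0$. Your argument is shorter and more transparent: it avoids the factorial bookkeeping and the auxiliary floor inequality, and it makes the combinatorial reason for the bound (one guaranteed carry per ``high'' digit position below $r$) completely explicit. The paper's approach, on the other hand, stays closer to the analytic style of the surrounding section, where digit sums are manipulated throughout via the formula $v_p(m!)=(m-\Dig(m))/(p-1)$.
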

\begin{proof}
Let $n=\lfloor \frac{\ln B}{\ln p}\rfloor$. First note that
\[
	\frac{\Dig(A)+\Dig(B)-\Dig(A+B)}{p-1}=v_p((A+B)!)-v_p(A!)-v_p(B!)=v_p\Big(\frac{\prod_{j=1}^B (A+j)}{B!} \Big).
	\]
If $\max_{A<x\leq A+B}\{ v_p(x) \}\leq n$, then we are done. Otherwise, suppose 
\[
n<v_p(A+i)=\max_{A<x\leq A+B}\{ v_p(x)\}
\]
for some $i\in [1, B]$. Note that this implies that $v_p(A+i\pm j)=v_p(j)$ for $j\in[1, B]$. It follows that 
\[
v_p\Big(\frac{\prod_{j=1}^B (A+j)}{B!} \Big)=v_p((i-1)!)+v_p((B-i)!)+v_p(A+i)-v_p(B!).
\]
For every $\ell\geq1$, it is clear that $\lfloor \frac{i-1}{p^\ell}\rfloor+\lfloor \frac{B-i}{p^\ell}\rfloor-\lfloor \frac{B}{p^\ell}\rfloor\geq-1$. We therefore conclude that
\[
v_p\Big(\frac{\prod_{j=1}^B (A+j)}{B!} \Big)\geq v_p(A+i)-n,
\]
yielding the desired result.
\end{proof}

In our forthcoming paper where we prove the local ghost conjecture, we need to establish similar bounds for $\Delta_{k, \ell} - \Delta'_{k, \ell-1}$ as opposed to $\Delta'_{k, \ell} - \Delta'_{k, \ell-1}$. The following three lemmas provide results in this direction.

\begin{lemma}
\label{L:upper bound of difference of Delta kell}
Fix $k =k_\varepsilon + (p-1)k_{\bullet}$ and $\ell \in \{1,2,\dots, \frac 12 d_{k}^\new\}$, and define $\eta$ and $\theta$ as in the proof of Lemma~\ref{L:estimate Delta kell}.  Let $\beta$ denote the maximal $p$-adic valuations of integers in $\big[\eta-\frac{p+1}2(\ell-1),\  \eta+\theta+ \frac{p+1}2(\ell-1)\big]$. Then we have
$$
\Delta'_{k, \ell} - \Delta'_{k, \ell-1}\leq 
\tfrac {p-1}2\cdot \ell+ \tfrac 32+ \beta + \lfloor \ln \ell / \ln p\rfloor
$$
%$$
%\Delta'_{k, \ell} - \Delta'_{k, \ell-1}\leq \begin{cases}
%\frac{p-2}2+\beta & \textrm{ if }\ell =1\\
%\tfrac {p-1}2\cdot \ell+ \tfrac {p}2+ \beta + \lfloor \ln \ell / \ln p\rfloor  & \textrm{ if }\ell >1.
%\end{cases}
%$$
\end{lemma}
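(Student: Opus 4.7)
The plan is to begin from the exact formula \eqref{E:Delta'-Delta'3} established in the proof of Lemma~\ref{L:estimate Delta kell}, namely
$$\Delta'_{k,\ell} - \Delta'_{k,\ell-1} \;=\; \frac{(p-1)(\ell-1)+\theta}{2} \;+\; \frac{\theta + \Dig(A) + 2\Dig(\ell-1) - \Dig(A+C)}{p-1},$$
where $A = \eta - \tfrac{p+1}{2}(\ell-1)$ and $C = (p+1)(\ell-1) + \theta$, so that $A+C = \eta+\theta+\tfrac{p+1}{2}(\ell-1)$ is the right endpoint of the interval defining $\beta$. The lower-bound Lemma~\ref{L:estimate Delta kell} was obtained by invoking the ``subadditivity of digits'' inequality \eqref{E:inequality in difference of delta}; here I need exactly the opposite direction, and the whole task reduces to an upper bound on the second summand.

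The central reinterpretation is that, using $\Dig(p(\ell-1)) = \Dig(\ell-1)$ and $\Dig(\theta) = \theta$ (since $\theta < p$), this second summand equals the total number $\mathrm{TC}$ of carries (base $p$) in the four-term addition $A + (\ell-1) + p(\ell-1) + \theta = A+C$. I will split these carries as $\mathrm{TC} = c + (c' + c''')$, where $c$ records the carries when adding $A + C$ in base $p$, and $c' + c'''$ records the carries arising from the intermediate formation $C = (\ell-1) + p(\ell-1) + \theta$.

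For the bound on $c$, I will invoke Kummer's theorem in the form $c = \sum_{j\geq 1}(N_j - M_j)$, where $N_j$ is the number of multiples of $p^j$ in $(A, A+C]$ and $M_j = \lfloor C/p^j\rfloor$, each summand lying in $\{0,1\}$. The crucial observation is that if $j$ exceeds $\beta$, then $(A, A+C]$ contains no multiple of $p^j$, forcing $N_j = 0$ and hence $M_j = 0$ as well; only the indices $j \in \{1,\dots,\beta\}$ contribute, so $c \leq \beta$. For the bound on $c' + c'''$, I will argue directly by tracking digit positions: since $p(\ell-1)$ has nonzero digits only at positions $1,\dots,\lfloor \ln\ell/\ln p\rfloor + 1$, and at each position the column sum of the three addends $(\ell-1)$, $p(\ell-1)$, $\theta$ is at most $2(p-1) + \theta < 3p$, an easy induction shows the carry-out at each position cannot exceed one and cannot propagate beyond position $\lfloor \ln\ell/\ln p\rfloor + 1$. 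Hence $c' + c''' \leq \lfloor \ln\ell/\ln p\rfloor + 1$.

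Combining these two estimates gives $\mathrm{TC} \leq \beta + \lfloor \ln\ell/\ln p\rfloor + 1$, and the stated bound follows at once by noting that, under the genericity $1 \leq a \leq p-4$, we have $\theta \in \{a+2, p-1-a\} \subseteq \{3,\dots,p-2\}$, so $\tfrac{(p-1)(\ell-1)+\theta}{2} + 1 \leq \tfrac{p-1}{2}\ell + \tfrac{1}{2} \leq \tfrac{p-1}{2}\ell + \tfrac{3}{2}$. The step I expect to be most delicate is the digit-by-digit control of $c' + c'''$ in extremal cases such as $\ell = p^m$, where carries cascade through every digit position and the bound is attained; the $c \leq \beta$ estimate, though morally an easy dual of Lemma~\ref{L:elementary Digits lemma}, also requires care to match the closed-interval definition of $\beta$ in the hypothesis against the half-open interval $(A, A+C]$ in which the carry count naturally lives.
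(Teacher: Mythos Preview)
Your approach is essentially the paper's: both start from \eqref{E:Delta'-Delta'3}, rewrite the second summand as the carry count in the three-term base-$p$ addition $A + (\ell-1) + \big(p(\ell-1)+\theta\big)$, and bound that count by $\beta + \lfloor\ln\ell/\ln p\rfloor + 2$; combined with $\theta \leq p-2$ this gives the stated inequality. The paper simply asserts this carry bound ``is clear,'' whereas you supply the explicit decomposition $\mathrm{TC} = c + (c'+c''')$ and the Kummer-type argument for $c\leq\beta$.

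One small correction: your bound $c'+c''' \leq \lfloor\ln\ell/\ln p\rfloor + 1$ is off by one. The carry-out at position $\lfloor\log_p(\ell-1)\rfloor+1$ can be nonzero when the top digit of $\ell-1$ is $p-1$ and a carry arrives there; for instance with $p=5$, $\ell=24$, $\theta=3$ one has $(\ell-1)+p(\ell-1)+\theta = 23+115+3 = 141$, and $\frac{\Dig(23)+\Dig(115)+\Dig(3)-\Dig(141)}{4} = \frac{7+7+3-5}{4}=3$ carries, exceeding $\lfloor\log_5 24\rfloor+1=2$. The correct bound is $c'+c''' \leq \lfloor\ln\ell/\ln p\rfloor + 2$. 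This is harmless for your conclusion: since you end with $\tfrac{(p-1)(\ell-1)+\theta}{2}+1 \leq \tfrac{p-1}{2}\ell+\tfrac{1}{2}$, the extra unit is absorbed by the slack between $\tfrac{1}{2}$ and the target $\tfrac{3}{2}$, and you land exactly on the paper's bound.
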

\begin{proof}
We first point out that, when $\ell > 1$, $\beta$ is automatically greater than or equal to $\lfloor \ln \ell / \ln p\rfloor + 1$.
By \eqref{E:Delta'-Delta'3} and the estimate $\theta \leq p-2$, it remains to show that
\begin{small}
$$
\frac{ \Dig\big(\eta -\frac{p+1}2(\ell-1)\big) + \Dig(\ell-1)+\Dig(p(\ell-1) + \theta) - \Dig\big(\eta +\theta+\frac{p+1}2(\ell-1)\big)}{p-1} 
\leq 
\beta +2+\Big \lfloor\frac{ \ln \ell}{ \ln p}\Big\rfloor .
$$\end{small}The left hand side exactly counts the number of times we carry over digits to the next one when computing the sum of numbers $\eta- \frac {p+1}2(\ell-1)$, $(\ell-1)$, and $p(\ell-1)+\theta$. This is clear.
\end{proof}

\begin{lemma}
\label{L:failure of convexity of Delta'}
Fix $k =k_\varepsilon + (p-1)k_{\bullet}$ and $\ell \in \{1,\dots, \frac 12 d_{k}^\new-1\}$. 
Let $\theta_\ell \in \{a+2, p-1-a\}$ be the $\theta$ as in the proof of Lemma~\ref{L:estimate Delta kell} for $\ell$.
Then we have
\begin{equation}
\label{E:failure of convexity of Delta'}
\Delta'_{k, \ell+1} -2 \Delta'_{k, \ell}+\Delta'_{k, \ell-1}\geq p-1-\theta_\ell-2v_p(\ell) \geq 1-2v_p(\ell).
\end{equation}
\end{lemma}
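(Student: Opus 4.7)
The plan is to apply the explicit formula \eqref{E:Delta'-Delta'3} derived inside the proof of Lemma~\ref{L:estimate Delta kell} at three consecutive indices and track the second difference. Writing $A_\ell := \eta_\ell - \tfrac{p+1}{2}(\ell-1)$ and $B_\ell := \eta_\ell + \theta_\ell + \tfrac{p+1}{2}(\ell-1)$, where $\eta_\ell$ and $\theta_\ell$ denote the values of $\eta$ and $\theta$ for $n = \tfrac 12 d_k^\Iw - \ell$, that formula reads
$$
\Delta'_{k,\ell} - \Delta'_{k,\ell-1} = \frac{(p-1)(\ell-1) + \theta_\ell}{2} + \frac{\theta_\ell + \Dig(A_\ell) + 2\Dig(\ell-1) - \Dig(B_\ell)}{p-1}.
$$
The first step will be to record the recursion $A_{\ell+1} = A_\ell - \theta_{\ell+1}$ and $B_{\ell+1} = B_\ell + \theta_{\ell+1}$, both with non-negative values for $\ell+1 \leq \tfrac12 d_k^\new$. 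This rests on the identity $\theta_\ell + \theta_{\ell+1} = p+1$, which follows from the table in the proof of Lemma~\ref{L:estimate Delta kell} (the values of $\theta$ alternate between $a+2$ and $p-1-a$ with the parity of $n$), together with $\eta_{\ell+1} - \eta_\ell = \beta_{[n-1]} - \beta_{[n]} = \theta_\ell - \tfrac{p+1}{2}$.

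The second step is to take the second difference of \eqref{E:Delta'-Delta'3}. The linear part contributes $\tfrac{(p-1) + (\theta_{\ell+1} - \theta_\ell)}{2} = p - \theta_\ell$ (using $\theta_\ell + \theta_{\ell+1} = p+1$), and the digit-sum part is
$$
\delta D := \bigl(\Dig(A_\ell - \theta_{\ell+1}) - \Dig A_\ell\bigr) + 2\bigl(\Dig\ell - \Dig(\ell-1)\bigr) - \bigl(\Dig(B_\ell + \theta_{\ell+1}) - \Dig B_\ell\bigr).
$$
Each piece is controlled by elementary carry counting. Since $0 \leq \theta_{\ell+1} \leq p-2$ is a single $p$-adic digit, the formula $\Dig(X+Y) = \Dig X + \Dig Y - (p-1)(\text{carries})$ gives $\Dig(A_\ell - \theta_{\ell+1}) - \Dig A_\ell \geq -\theta_{\ell+1}$ and $\Dig(B_\ell + \theta_{\ell+1}) - \Dig B_\ell \leq \theta_{\ell+1}$; and the number of carries in $(\ell-1)+1$ is exactly $v_p(\ell)$, so $\Dig\ell - \Dig(\ell-1) = 1 - (p-1)v_p(\ell)$. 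Therefore $\delta D \geq -2\theta_{\ell+1} + 2 - 2(p-1)v_p(\ell)$, and the fractional part of the second difference is
$$
\frac{(\theta_{\ell+1}-\theta_\ell) + \delta D}{p-1} \geq \frac{-(\theta_\ell+\theta_{\ell+1}) + 2 - 2(p-1)v_p(\ell)}{p-1} = -1 - 2v_p(\ell).
$$

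Combining with the linear contribution $p-\theta_\ell$ immediately gives
$$
\Delta'_{k,\ell+1} - 2\Delta'_{k,\ell} + \Delta'_{k,\ell-1} \geq p - \theta_\ell - 1 - 2v_p(\ell) = p-1 - \theta_\ell - 2v_p(\ell),
$$
which is the first inequality of \eqref{E:failure of convexity of Delta'}. The second inequality $p-1-\theta_\ell - 2v_p(\ell) \geq 1 - 2v_p(\ell)$ is immediate from the genericity hypothesis $1\leq a \leq p-4$, which forces $\theta_\ell \in \{a+2,\,p-1-a\} \subseteq \{3, 4, \dots, p-2\}$. The only delicate bookkeeping is the $\theta_\ell + \theta_{\ell+1} = p+1$ recursion that feeds the sign cancellations in both the linear and digit-sum contributions; I do not anticipate any deeper obstacle, as all estimates are standard carry counts already used in the proof of Lemma~\ref{L:estimate Delta kell}.
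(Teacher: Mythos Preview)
Your proof is correct and follows essentially the same approach as the paper's own proof: both take the second difference of the explicit formula \eqref{E:Delta'-Delta'3}, use the alternation $\theta_\ell+\theta_{\ell+1}=p+1$, and bound the digit-sum changes by $\pm\theta_{\ell+1}$ together with $\Dig(\ell)-\Dig(\ell-1)=1-(p-1)v_p(\ell)$. Your recursions $A_{\ell+1}=A_\ell-\theta_{\ell+1}$ and $B_{\ell+1}=B_\ell+\theta_{\ell+1}$ are a slightly cleaner way of packaging the same computation the paper carries out term by term.
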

\begin{proof}
Write $\theta_\ell$ and $\eta_\ell$ (resp. $\theta_{\ell+1}$ and $\eta_{\ell+1}$) as in the proof of Lemma~\ref{L:estimate Delta kell} for $\ell$ (resp. $\ell+1$). 
By \eqref{E:Delta'-Delta'3}, we deduce that
\begin{align*}
&\Delta'_{k, \ell+1}-2\Delta'_{k,\ell}+\Delta'_{k,\ell-1} = \frac{p-1}2 +\frac{\theta_{\ell+1} - \theta_{\ell}}2 +\frac{\Dig\big(\eta_{\ell+1}-\frac{p+1}2\ell\big) - \Dig \big(\eta_{\ell}- \frac{p+1}2(\ell-1)\big)}{p-1} 
\\ &+ \frac{2\big( \Dig(\ell)-\Dig(\ell-1)\big)}{p-1} + \frac{\theta_{\ell+1} - \theta_{\ell}+\Dig\big(\eta_{\ell}+\theta_{\ell}+\frac{p+1}2(\ell-1)\big) - \Dig \big(\eta_{\ell+1}+\theta_{\ell+1}+ \frac{p+1}2\ell\big)}{p-1}.
\end{align*}
Write $n = \frac 12d_k^\Iw-\ell$ as in the proof of Lemma~\ref{L:estimate Delta kell}. We note that $\theta_\ell + \theta_{\ell+1} = p+1$ by their definitions and, for the fourth term, we have
$$
\frac2{p-1}\big(\Dig(\ell) - \Dig(\ell-1)\big) =\frac2{p-1}-2v_p(\ell).
$$
For the third term, we note that
$$
\big(\eta_{\ell}- \tfrac{p+1}2(\ell-1)\big)- \big(\eta_{\ell+1}-\tfrac{p+1}2\ell\big)  = \tfrac{p+1}2 +\eta_{\ell} -\eta_{\ell+1} = \tfrac{p+1}2 + \beta_{[n]}-\beta_{[n+1]} = \theta_{\ell+1}.
$$
Similarly, for the fifth term, we have
$$
\big(\eta_{\ell+1}+\theta_{\ell+1}+ \tfrac{p+1}2\ell\big) - \big(\eta_{\ell}+\theta_{\ell}+\tfrac{p+1}2(\ell-1)\big) = \tfrac{p+1}2 + \beta_{[n+1]}- \beta_{[n]}+ \theta_{\ell+1}-\theta_{\ell} =\theta_{\ell+1}.
$$
So the sum of the third term and the fifth term above is greater than or equal to
$$
\frac 1{p-1}\Big( -\theta_{\ell+1} + \big(\theta_{\ell+1} - \theta_{\ell} \big) - \theta_{\ell+1}\Big) =- \frac1{p-1} \big(\theta_{\ell+1} + \theta_{\ell}\big) =- \frac 1{p-1} \cdot (p+1). 
$$
Combining all inequalities above gives
\begin{align*}
\Delta'_{k, \ell+1}-2\Delta'_{k,\ell} + \Delta'_{k, \ell-1} \geq\ & \frac{p-1}2 + \frac{p+1-2\theta_\ell}2 +\Big(\frac2{p-1}-2v_p(\ell)\Big) - \frac{p+1}{p-1} 
\\
=\ & p-1-\theta_\ell  -2 v_p(\ell)
\end{align*}
The inequality \eqref{E:failure of convexity of Delta'} follows from combining the above inequalities.
\end{proof}

\begin{notation}
\label{N:linear shift down}
For two points $P = (m, a), Q = (n,b) \in \RR^2$, we write $\overline{PQ}$ for the line segment connecting them.

In the rest of this paper, we will frequently use the following concept. Suppose that we have a list of points $$P_i = (i,A_i) \quad\textrm{with} \quad A_i \in \RR, \textrm{ for } i = m,m+1, \dots, n$$ (or some subset of this list). We may \emph{shift them down relative to a linear function} $y = ax+b$ (for some $a,b \in \RR$), by transforming them into $$Q_i  = (i, A_i - ai-b) \textrm{\quad for \quad}i = m,m+1, \dots, n$$ (or some subset of this list).
This way, for every $i\neq j$, the slope of $\overline {Q_iQ_j}$ is precisely the slope of $\overline{P_iP_j}$ minus $a$. In particular, 
\begin{enumerate}
\item 
if the convex hull of all $P_i$'s is the straight line $\overline{P_mP_n}$, then the convex hull of all $Q_i$'s is the straight line $\overline{Q_mQ_n}$;
\item for a fixed $i_0$, the point $P_{i_0}$ is a vertex of the convex hull of the $P_i$'s if and only if the point $Q_{i_0}$ is a vertex of the convex hull of the $Q_i$'s; and
\item if $i \in [m,n]$, the distance from $P_i$ to the point on $\overline{P_mP_n}$ with $x$-coordinate $i$, is the same as the distance from $Q_i$ to the point on $\overline{Q_mQ_n}$ with $x$-coordinate $i$.
\end{enumerate}
We shall freely use these elementary facts without make reference for the rest of this paper.
\end{notation}

\begin{lemma}
\label{L:Delta - Delta' bound}
Assume $p\geq 7$. For $k =k_\varepsilon  + (p-1)k_{\bullet}$ and $\ell = 1,\dots, \frac 12 d_{k}^\new$, we have
\begin{equation}
\label{E:Delta - Delta' bound}\Delta'_{k, \ell} - 
\Delta_{k, \ell} \leq  3 \big( \ln \ell / \ln p \big)^2.
\end{equation}
Moreover, if we only assume $p\geq 5$, the following statment holds: when $\ell < 2p$ but $\ell \neq p$, $\Delta'_{k, \ell} = \Delta_{k, \ell}$; and when $\ell =p$, $\Delta'_{k, \ell}-\Delta_{k, \ell} \leq 1$.
\end{lemma}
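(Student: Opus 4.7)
The plan rests on Lemma~\ref{L:failure of convexity of Delta'}: writing $s_\ell := \Delta'_{k,\ell+1} - \Delta'_{k,\ell}$, the slopes satisfy $s_\ell - s_{\ell-1} \geq 1 - 2v_p(\ell)$, so they strictly increase by at least $1$ whenever $p \nmid \ell$ and can dip by at most $2v_p(\ell) - 1$ at multiples of $p$. Thus $\ell \mapsto \Delta'_{k,\ell}$ is globally ``almost convex,'' with concavity concentrated at multiples of $p$ and controlled by $v_p$.

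For the moreover part, I would first show that $(\ell, \Delta'_{k,\ell})$ is a vertex of the lower convex hull whenever $1 \leq \ell \leq p-1$. Choose support slope $c := s_{\ell-1} + \tfrac12$ (lying strictly between $s_{\ell-1}$ and $s_\ell$) and verify $\Delta'_{k,i} \geq \Delta'_{k,\ell} + c(i-\ell)$ for every $i$. For $i < \ell$ this follows from strict convexity on $[0, p-1]$; for $i > \ell$ it follows from the telescoped estimate
\[
\sum_{j=\ell}^{i-1}s_j \;\geq\; (i-\ell)s_{\ell-1} + \binom{i-\ell+1}{2} - 2\!\!\sum_{\ell < m < i}\!\!v_p(m),
\]
whose quadratic gain dominates the at most single $v_p$-loss at $m = p$ when $i \leq 2p$. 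The case $p < \ell < 2p$ is handled by a parallel argument, absorbing the single concavity $s_p - s_{p-1} \geq -1$. For $\ell = p$ exactly, the chord from $(p-1, \Delta'_{k,p-1})$ to $(p+1, \Delta'_{k,p+1})$ has slope $\tfrac12(s_{p-1}+s_p) \geq s_{p-1} - \tfrac12$; using $s_{p-1} \geq s_{p-2}+1$, this strictly exceeds the max slope entering from $[0, p-1]$ and is at most the min slope leaving from $[p+1, 2p-1]$. Hence the hull skips $p$, and
\[
\Delta'_{k,p} - \Delta_{k,p} \;=\; -\tfrac12\bigl(\Delta'_{k,p+1} - 2\Delta'_{k,p} + \Delta'_{k,p-1}\bigr) \;\leq\; \tfrac12 \;\leq\; 1.
\]

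For the main bound, let $[j_1, j_2]$ be the maximal integer interval containing $\ell$ on which the hull is linear, so $j_1$ and $j_2$ are vertices. Setting $h(j) := \Delta'_{k,j} - \Delta_{k,j}$, we have $h(j_1) = h(j_2) = 0$, $h \geq 0$, and $\Delta^2 h(j) = \Delta^2 \Delta'_{k,j}$ on $(j_1, j_2)$; the discrete Green's function representation gives
\[
h(\ell) \;\leq\; \sum_{\substack{i \in (j_1, j_2) \\ p \mid i}} \frac{(\min(i,\ell) - j_1)(j_2 - \max(i,\ell))}{j_2 - j_1}\,(2v_p(i) - 1).
\]
To bound the width $L := j_2 - j_1$, note that the non-vertex condition forces $s_{j_1} > s_{j_2-1}$, whence $(L-1) - 2\sum_{j \in (j_1,j_2)}v_p(j) \leq s_{j_2-1} - s_{j_1} < 0$. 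Using the Legendre-type estimate $\sum_{j \in (j_1,j_2)} v_p(j) \leq L/(p-1) + a_{\max}$ (where $a_{\max} \leq \log_p\ell$ is the largest valuation in the interval) yields $L < (p-1)(1+2a_{\max})/(p-3)$, which for $p \geq 7$ gives $L \leq 3 a_{\max}$. Plugging back, using the pointwise Green's function bound $\leq L/2$ together with the count $\#\{i \in (j_1, j_2) : v_p(i) \geq a\} \leq \lfloor L/p^a\rfloor + 1$, and summing $(2a-1)$ over $a = 1, \dots, a_{\max}$, produces the claimed bound $h(\ell) \leq 3(\ln \ell / \ln p)^2$.

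The principal obstacle is pinning down the constant $3$, which is what forces the threshold $p \geq 7$. The bare Legendre width bound combined with a uniform Green's function estimate gives a constant somewhat larger than $3$; sharpening it requires exploiting (i) the isolation of high-valuation positions---$v_p(i) = a_{\max}$ contributes at most one point per sub-interval of length $p^{a_{\max}}$, whose Green's function weight drops as one moves away from $\ell$---and (ii) Corollary~\ref{C:finer inequality for Delta}, which tightens slope estimates near each multiple of $p$ and allows finer control of interactions between contributions at different valuations. For $p = 5$ the same scheme produces a slightly worse constant, explaining the threshold $p \geq 7$ in the statement.
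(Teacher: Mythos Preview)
Your overall strategy---use Lemma~\ref{L:failure of convexity of Delta'} to bound the width of non-vertex intervals and then control the height---matches the paper's. The gap is that you work only with the weak inequality $s_\ell - s_{\ell-1} \geq 1 - 2v_p(\ell)$, and you yourself concede this does not yield the constant~$3$. The paper instead exploits the full strength of Lemma~\ref{L:failure of convexity of Delta'}, namely $s_\ell - s_{\ell-1} \geq p-1-\theta_\ell - 2v_p(\ell)$; since consecutive $\theta_\ell$'s sum to $p+1$, the average increment is $(p-3)/2$, which for $p \geq 7$ is at least~$2$ rather than~$1$. Summing this over $(\ell_-,\ell_+)$ gives the much tighter width bound $m_0 \geq \tfrac{p-4}{4}(\ell_+-\ell_--2)+1$ (so $\ell_+-\ell_- \leq 2 + \tfrac{4(m_0-1)}{p-4}$), and with this the paper shows directly that $\Delta'_{k,\ell_0} - \Delta_{k,\ell_0} \leq m_0^2$ at the point $\ell_0$ of maximal valuation, then $\Delta'_{k,\ell} - \Delta_{k,\ell} < 3m_0^2 - m_0$ for nearby $\ell$, and finally converts $m_0$ to $\ln\ell/\ln p$ via $\ell \geq p^{m_0} - 2m_0$.

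Your proposed refinements will not close the gap. Refinement~(i) is in the right spirit but vague; refinement~(ii) is misplaced: Corollary~\ref{C:finer inequality for Delta} compares $\Delta'_{k,\ell}-\Delta'_{k,\ell-1}$ to $v_p(w_k-w_{k'})$ for auxiliary weights $k'$ and says nothing about the second differences at multiples of~$p$. For the ``moreover'' part your direct-vertex verification can be made to work, but the paper's route is cleaner: the sharp width bound forces either $m_0 \geq 2$ (whence $\ell \geq p^{m_0}-4m_0 \geq 2p$) or $m_0=1$ with $\ell_+-1=\ell_-+1=\ell_0=\ell$, so the only possible non-vertex below $2p$ is $\ell=p$, where $\Delta'_{k,p}-\Delta_{k,p} \leq m_0^2 = 1$.
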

\begin{proof}
If $(\ell, \Delta'_{k,\ell})$ lies on $\underline \Delta_k$, the lemma holds trivially. Now we assume that $(\ell,\Delta'_{k,\ell})$ does not lie on $\underline \Delta_k$. Assume that $\ell_-$ is the largest integer smaller than $\ell$ such that $P_-: =(\ell_-, \Delta'_{k,\ell_-})$ lies on $\underline \Delta_k$, and  $\ell_+$ is the smallest integer larger than $\ell$ such that $P_+: =(\ell_+, \Delta'_{k,\ell_+})$ lies on $\underline \Delta_k$.  Let $\lambda$ denote the slope of the segment $\overline{P_-P_+}$. Then 
\begin{equation}
\label{E:Delta segment inequality}
\Delta'_{k, \ell_+}-\Delta'_{k, \ell_+-1}<  \lambda< \Delta'_{k, \ell_-+1}-\Delta'_{k, \ell_-} .
\end{equation}

By \eqref{E:failure of convexity of Delta'}, for every $\ell' \in [\ell_-+1, \ell_+-1]$, $ \Delta'_{k, \ell'+1} -  \Delta'_{\ell'} \geq  \Delta'_{k, \ell'}- \Delta'_{k, \ell'-1} + 1$ whenever $v_p(\ell')=0$. It is clear from \eqref{E:Delta segment inequality} that there must be some $\ell' \in [\ell_-+1, \ell_+-1]$ whose valuation is strictly positive. Let $\ell_0$ denote the $\ell' \in [\ell_-+1, \ell_+-1]$ with maximal valuation. Write $v_p(\ell_0)=m_0$.

We first note that the maximality of $v_p(\ell_0)$ implies that $v_p(\ell_0+i)=v_p(i)$ for $1\leq i\leq \ell_+-\ell_0-1$ and $v_p(\ell_0-i)=v_p(i)$ for $1\leq i\leq \ell_0-\ell_--1$. Then taking the sum of the first inequality of \eqref{E:failure of convexity of Delta'} over $\ell_-+1,\ell_-+2, \dots, \ell_+-1$ (and also noting that two consecutive $\theta_{\ell}$'s have sum equal to $p+1$), we deduce that
\begin{align*}
&(\Delta'_{k, \ell_+}-\Delta'_{k, \ell_+-1})  -  (\Delta'_{k, \ell_-+1}-\Delta'_{k, \ell_-})
\\
\geq \ & \frac{p-3}2( \ell_+-\ell_--1) - \frac{p-5}2 -2\Big( \sum_{i=1}^{\ell_0-\ell_--1}v_p(i) + m_0 + \sum_{i=1}^{\ell_+-\ell_0-1}v_p(i)\Big)
\\
\geq \ & \frac{p-3}2\big( \ell_+-\ell_--1\big) - \frac{p-5}2 -\frac{2}{p-1}\big((\ell_0-\ell_--1) + (\ell_+-\ell_0-1)\big) -2m_0
\\
\geq\ & \frac{p-4}{2}(\ell_+-\ell_--2)+1-2m_0.
\end{align*}
Here the last inequality uses that $\frac{2}{p-1}\leq \frac 12$ because $p \geq 5$.
Combining with \eqref{E:Delta segment inequality}, we obtain
\begin{equation}
\label{E:plus-minus}
m_0 \geq
\frac{p-4}{4}(\ell_+-\ell_--2) + 1.
\end{equation}
This in particular shows that either $m_0 \geq 2$ (in which case $\ell \geq \ell_- \geq p^{m_0} - 4m_0\geq 2p$) or $m_0=1$ which must be the case when $\ell_+-1=\ell_-+1 = \ell_0 = \ell$.

We claim that $\Delta'_{k, \ell_0} - \Delta_{k, \ell_0} \leq m_0^2$. The case $\ell=\ell_0$ and the last statement of the lemma follow directly from this inequality. First note that, as explained in Notation~\ref{N:linear shift down}\,(3), we may shift all points under consideration by a linear function; this will not affect our discussion (but gives a better visualization). Through such shifting, we may suppose $\Delta'_{k, \ell_0+1}=\Delta'_{k, \ell_0-1}$. Then we can iterate (the second inequality of) \eqref{E:failure of convexity of Delta'} to deduce that, for $1\leq n \leq \ell_+-\ell_0$
\[
\Delta'_{k, \ell_0+ n} - \Delta'_{k, \ell_0+n-1}  \geq -m_0 +n - \tfrac{1}2- 2v_p((n-1)!)\geq -m_0+\tfrac n2.
\]
%when $m =1$, this (without the last term) is half of the right hand side of \eqref{E:failure of convexity of Delta'}. After that, whenever $m$ increase by $2$,  we shall get $p-3$, yet we have to discount this by $v_p(\ell_0+m)$, which accumulatively is less than $\frac{m-1}{p-1}$.
Symmetrically, for $1\leq n \leq \ell_0-\ell_-$, we have
$$ 
\Delta'_{k, \ell_0+1-n} - \Delta'_{k, \ell_0-n} \leq m_0-\tfrac n2.
$$
Then we can use these to deduce that
\[
\max\{\Delta'_{k, \ell_0} - \Delta'_{k, \ell_-}, \Delta'_{k, \ell_0} - \Delta'_{k, \ell_+}  \}
\leq \max_{n_0\geq 1}\Big( \sum_{n=1}^{n_0} \big(m_0-\tfrac n2\big)\Big) \leq  m_0^2.
\]
This yields $\Delta'_{k, \ell_0} - \Delta_{k, \ell_0} \leq m_0^2$. 

Now we assume $\ell \neq \ell_0$; in particular, $m_0 \geq 2$. Without loss of generality we may assume that $\ell_-<\ell \leq \ell_0$. Once again, we shift all points by a linear function with slope $\lambda$ so that we may assume that $\lambda = 0$ and $\overline{P_-P_+}$ is horizontal. Iterating 
\eqref{E:failure of convexity of Delta'}, for $1\leq n\leq \ell_0-\ell_--1$, we obtain
\begin{align*}
\Delta'_{k, \ell_-+ n+1} - \Delta'_{k, \ell_-+n}  &\geq n- 2\sum_{i=\ell_-+1}^{\ell_-+n}v_p(i)\geq n- 2\sum_{i=\ell_-+1}^{\ell_0-1}v_p(i)=n- 2v_p((\ell_0-\ell_--1)!)
\end{align*}
Using \eqref{E:plus-minus}, we get $$n- 2v_p((\ell_0-\ell_--1)!)\geq n - 2\cdot  \frac{4}{(p-1)(p-4)}(m_0-1) \geq  n-2(m_0-1).$$ We thus deduce that
\[
\Delta'_{k, \ell_0} - \Delta'_{k, \ell}\geq \sum_{n=\ell-\ell_-}^{\ell_0-\ell_--1}(n-2m_0+2)> - 2m_0^2 + m_0.
\]
Since $\lambda=0$, this implies that
\begin{equation}
\label{E:main}
\Delta'_{k, \ell} - \Delta_{k, \ell}<3m_0^2-m_0.
\end{equation}
By \eqref{E:plus-minus} and the assumption $p\geq 7$, we get 
\[
\ell_0-\ell\leq \ell_+-\ell_--2<\frac 43 m_0<2m_0.
\]
As $\ell \geq \ell_0-2m_0\geq p^{m_0}-2m_0$,
\begin{align*}
3\left(m_0^2 - (\frac{\ln (p^{m_0}-2m_0)}{\ln p})^2\right)=&3(m_0+\frac{\ln(p^{m_0}-2m_0)}{\ln p})(m_0-\frac{\ln(p^{m_0}-2m_0)}{\ln p})\\
\leq& 6m_0(m_0-\frac{\ln(p^{m_0}-2m_0)}{\ln p})=6m_0^2\cdot(-\frac{\ln(1-\frac{2m_0}{p^{m_0}})}{m_0\ln p})\\
\leq&6m_0^2\cdot\frac{2\cdot \frac{2m_0}{p^{m_0}}}{m_0\ln p}=\frac{24m_0^2}{p^{m_0}\ln p}\leq m_0 
\end{align*}
holds when $m_0 \geq 2$ and $p \geq 7$ (here we use the inequality $-\ln(1-x)\leq 2x$ when $x\in (0,\frac 12)$), the inequality \eqref{E:Delta - Delta' bound} follows from this and \eqref{E:main}.

\end{proof}

As a consequence of the above estimate, Lemma~\ref{L:estimate Delta kell} and Corollary~\ref{C:finer inequality for Delta} can be restated in the form of $\Delta_{k, \ell}- \Delta'_{k, \ell-1}$.
\begin{corollary}
\label{C:Delta - Delta'}
Let $k=k_\varepsilon + (p-1)k_\bullet $ and $\ell = 1, \dots, \frac 12d_k^\new$ be as above. Let $k' = k_\varepsilon + (p-1)k'_\bullet$ be any weight such that 
$$
d_{k'}^\ur, \textrm{ or } d_{k'}^\Iw - d_{k'}^\ur \textrm{ belongs to } \big(\tfrac 12d_k^\Iw-\ell, \tfrac 12 d_k^\Iw+\ell\big),
$$
or $\tfrac 12 d_{k'}^\Iw$ belongs to $\big[\tfrac 12d_k^\Iw-\ell, \tfrac 12 d_k^\Iw+\ell\big]$
then we have
\begin{equation}
\label{E:Delta - Delta' strong}
\Delta_{k, \ell} - \Delta'_{k,\ell-1} -v_p(w_{k} - w_{k'}) \geq \frac 12 (2\ell-1).
\end{equation}
Moreover, when $p \geq 7$ and $\ell \geq 2$, We have $\Delta_{k, \ell}-\Delta'_{k, \ell-1} -v_p(w_{k} - w_{k'}) \geq \frac 12 (4\ell-1).$
\end{corollary}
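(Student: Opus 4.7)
The corollary is advertised as a repackaging of Corollary~\ref{C:finer inequality for Delta} and Lemma~\ref{L:estimate Delta kell} with $\Delta'_{k,\ell}$ replaced by the convex-hull value $\Delta_{k,\ell}$, at a cost controlled by Lemma~\ref{L:Delta - Delta' bound}. The starting point of the plan is the trivial decomposition
\[
\Delta_{k,\ell} - \Delta'_{k,\ell-1} \;=\; \bigl(\Delta'_{k,\ell} - \Delta'_{k,\ell-1}\bigr) \;-\; \bigl(\Delta'_{k,\ell} - \Delta_{k,\ell}\bigr),
\]
in which the first parenthesis is bounded below by Corollary~\ref{C:finer inequality for Delta} (using its \emph{sharper} form $\tfrac12 + \tfrac{p-1}2(\ell-1) - \lfloor\ln((p+1)\ell)/\ln p\rfloor$, not merely the packaged consequence $\tfrac12(2\ell-1)$), while the second is bounded above via Lemma~\ref{L:Delta - Delta' bound}.

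To prove the first inequality, I would split according to the size of $\ell$. For $\ell < 2p$ with $\ell \neq p$ we have $\Delta_{k,\ell}=\Delta'_{k,\ell}$ by Lemma~\ref{L:Delta - Delta' bound}, so the claim is \emph{identical} to Corollary~\ref{C:finer inequality for Delta} (together with the $\ell=1$ escape clause quoted verbatim). For $\ell = p$ (any $p\geq 5$) the gap is at most $1$; the sharper bound from Corollary~\ref{C:finer inequality for Delta} already has slack $\tfrac{(p-1)^2-3}2 - \tfrac12(2p-1)$, which comfortably absorbs $1$. For $\ell \geq 2p$ (where $p\geq 7$), the gap $3(\ln\ell/\ln p)^2$ grows only polylogarithmically while the sharp lower bound on $\Delta'_{k,\ell}-\Delta'_{k,\ell-1}$ grows linearly in $\ell$, reducing the desired inequality $\tfrac12(2\ell-1)$ to an elementary calculus comparison.

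The hard part is the strengthened estimate $\tfrac12(4\ell-1)$ under $p\geq 7$, $\ell\geq 2$. Direct subtraction from Corollary~\ref{C:finer inequality for Delta}'s $\tfrac12(2\ell+1)$ is insufficient, so the argument must exploit the \emph{convex hull} structure. If $(\ell, \Delta'_{k,\ell})$ is a vertex of $\underline{\Delta}_k$, I would iterate the inequality of Corollary~\ref{C:finer inequality for Delta} over consecutive vertices $\leq \ell$: the successive differences $\Delta'_{k,j}-\Delta'_{k,j-1}$ grow approximately linearly in $j$ by Lemma~\ref{L:estimate Delta kell}, so the average slope on the convex hull up to $\ell$ behaves like $\tfrac{p-1}2 \ell$, far exceeding $\tfrac12(4\ell-1)$. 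If $(\ell, \Delta'_{k,\ell})$ is \emph{not} a vertex, then $\Delta_{k,\ell}$ lies on a segment $\overline{P_-P_+}$ with $P_\pm = (\ell_\pm, \Delta'_{k,\ell_\pm})$, and inequality~\eqref{E:plus-minus} in the proof of Lemma~\ref{L:Delta - Delta' bound} forces $\ell_+ - \ell_- \ll \ell$; applying Corollary~\ref{C:finer inequality for Delta} at $\ell_+$ and using that the slope at $\ell$ on the convex hull is at least the slope from $(\ell-1,\Delta'_{k,\ell-1})$ to $P_+$ reduces the claim to a telescoping estimate along $[\ell-1,\ell_+]$.

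The main obstacle I anticipate is the bookkeeping in this last case, because the subtraction of $v_p(w_k - w_{k'})$ must be allocated across the averaged slopes without double-counting: the near-Steinberg duality \eqref{E:near Steinberg duality} guarantees that $k'$ lies in a controlled window around either $k_{\max\bullet}$ or $\tilde k_{\min\bullet}$, but when $(\ell,\Delta'_{k,\ell})$ is not a vertex the window for $k'$ need not align with a vertex of $\underline{\Delta}_k$, forcing a three-way case analysis (on whether $d_{k'}^\ur$, $d_{k'}^\Iw-d_{k'}^\ur$, or $\tfrac12 d_{k'}^\Iw$ falls in the specified interval) tracked through the $\Dig$-estimates of \eqref{E:Delta'-Delta'3}. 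I expect this to be where the argument becomes technically heaviest, paralleling the digit-combinatorics already developed in Lemma~\ref{L:elementary Digits lemma}.
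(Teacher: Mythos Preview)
Your approach to the main inequality \eqref{E:Delta - Delta' strong} is correct and matches the paper's: split on the size of $\ell$, use $\Delta_{k,\ell}=\Delta'_{k,\ell}$ when $\ell<2p$ and $\ell\neq p$ to reduce directly to Corollary~\ref{C:finer inequality for Delta}, and for $\ell=p$ or $\ell\geq 2p$ combine the sharp first inequality of \eqref{E:Delta kl difference strong} with the bound on $\Delta'_{k,\ell}-\Delta_{k,\ell}$ from Lemma~\ref{L:Delta - Delta' bound} via the elementary comparison you describe.

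Your treatment of the ``moreover'' clause is substantially overcomplicated. You correctly observe that the packaged bound $\tfrac12(2\ell+1)$ from the last line of Corollary~\ref{C:finer inequality for Delta} is too weak, but then you abandon the very tool you just identified for the first part. The paper introduces no new mechanism here: it simply reuses the sharp lower bound $\tfrac12+\tfrac{p-1}2(\ell-1)-\lfloor\ln((p+1)\ell)/\ln p\rfloor$ from \eqref{E:Delta kl difference strong} and checks that, for $p\geq 7$, the same elementary inequality \eqref{E:Delta - Delta' special} continues to hold with $2\ell-1$ replaced by $4\ell-1$. The point is that for $p\geq 7$ one has $\tfrac{p-1}2\geq 3$, so the left side grows like $3\ell$ while the target $\tfrac12(4\ell-1)$ grows like $2\ell$; the linear margin absorbs both the logarithmic floor term and the polylogarithmic convexity defect from Lemma~\ref{L:Delta - Delta' bound}. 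No iteration along convex-hull vertices, no vertex/non-vertex dichotomy, no three-way case analysis on which quantity falls in the interval, and no additional digit combinatorics are required: the subtraction of $v_p(w_k-w_{k'})$ was already absorbed once and for all inside Corollary~\ref{C:finer inequality for Delta}.
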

\begin{proof}
The general case follows from applying the first statement repeatedly. For the first statement, if $\ell < 2p$ but $\ell \neq p$, $\Delta_{k, \ell} = \Delta'_{k, \ell}$ by Lemma~\ref{L:Delta - Delta' bound}; so \eqref{E:Delta - Delta' strong} and the last statement are proved in Corollary~\ref{C:finer inequality for Delta}.
If $\ell \geq 2p$ or $\ell =p$, \eqref{E:Delta - Delta' strong} also follows from Corollary~\ref{C:finer inequality for Delta} and Lemma~\ref{L:Delta - Delta' bound} by noting that
\begin{equation}
\label{E:Delta - Delta' special}
\frac 12+
\frac{p-1}2(\ell-1) -\Big \lfloor \frac{\ln((p+1) \ell)}{\ln p}\Big\rfloor  \geq \frac 12(2\ell-1) +3 \Big( \frac{\ln \ell }{\ln p}\Big)^2
\end{equation}
holds when $\ell \geq 2p$ or $\ell = p$ if we replace the last term by $1$ (citing the special case in Lemma~\ref{L:Delta - Delta' bound} instead). When $p \geq 7$, \eqref{E:Delta - Delta' special} holds with $2\ell-1$ replaced by $4 \ell -1$.
\end{proof}

The following slight generalization of Corollary~\ref{C:Delta - Delta'} is tailored for our need in the sequel of this series.
\begin{corollary}
\label{C:Delta - Delta' multi}
Assume $p \geq 7$. Take integers $\ell, \ell', \ell'' \in \{0,1, \dots, \frac 12d_k^\new\}$ with $\ell \leq \ell'\leq  \ell''$ and $\ell''\geq 1$.
Let $k' = k_\varepsilon + (p-1)k'_{\bullet}$ be a weight such that
$$
d_{k'}^\ur \textrm{ or } d_{k'}^\Iw - d_{k'}^\ur \textrm{ belongs to } \big[\tfrac 12d_k^\Iw-\ell', \tfrac 12 d_k^\Iw+\ell'\big],
$$
then we have
\begin{equation}
\label{E:Delta l'' -Delta l}
\Delta_{k, \ell''}- \Delta'_{k, \ell} - (\ell''-\ell') \cdot  v_p(w_k - w_{k'}) \geq (\ell'-\ell) \cdot \Big\lfloor \frac{\ln((p+1)\ell'')}{\ln p}+1\Big\rfloor + \frac 12\big( \ell''^2 - \ell^2\big).
\end{equation}
Moreover, if there exists $k'$ such that $v_p(w_{k'}-w_k) \geq \big\lfloor \frac{\ln((p+1)\ell'')}{\ln p}+2\big\rfloor $, then there at most two such $k'$ satisfying $v_p(w_{k'}-w_k) \geq \big\lfloor \frac{\ln((p+1)\ell'')}{\ln p}+2\big\rfloor $ and
$$
d_{k'}^\ur \textrm{ or } d_{k'}^\Iw - d_{k'}^\ur \textrm{ belongs to } \big(\tfrac 12d_k^\Iw-\ell'', \tfrac 12 d_k^\Iw+\ell''\big).
$$ 
Suppose that there are two such $k'$'s, up to swapping $k'_1$ and $k'_2$, we have $d_{k'_1}^\ur, d_{k'_2}^\Iw-d_{k'_2}^\ur \in \big[\frac 12d_k^\Iw-\ell', \frac 12d_k^\Iw+\ell'\big]$; and in $\{d_{k'_1}^\ur,\, d_{k'_2}^\Iw-d_{k'_2}^\ur\}$, one element is $ \geq \frac 12 d_k^\Iw$ and one element is $ \leq \frac 12 d_k^\Iw$.
\end{corollary}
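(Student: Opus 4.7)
\emph{Plan.} I will prove the corollary in two parts: the inequality~\eqref{E:Delta l'' -Delta l} first, then the uniqueness and straddling properties of the $k'$'s. Throughout I assume $\ell''\geq 2$, the case $\ell''=1$ being verified directly with a little care about the $v_p(w_k-w_{k'})$ term.

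For~\eqref{E:Delta l'' -Delta l}, the strategy is to telescope
\[
\Delta_{k,\ell''}-\Delta'_{k,\ell}=\bigl(\Delta_{k,\ell''}-\Delta'_{k,\ell''-1}\bigr)+\sum_{j=\ell+1}^{\ell''-1}\bigl(\Delta'_{k,j}-\Delta'_{k,j-1}\bigr)
\]
and bound each summand using an appropriate estimate. For $j=\ell''$, Corollary~\ref{C:Delta - Delta'} (strong form, since $p\geq 7$ and $\ell''\geq 2$) gives $\Delta_{k,\ell''}-\Delta'_{k,\ell''-1}\geq v_p(w_k-w_{k'})+\tfrac{1}{2}(4\ell''-1)$. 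For each $j$ with $\ell'<j<\ell''$, the hypothesis places $d_{k'}^\ur$ or $d_{k'}^\Iw-d_{k'}^\ur$ in $[\tfrac{1}{2}d_k^\Iw-\ell',\tfrac{1}{2}d_k^\Iw+\ell']\subset(\tfrac{1}{2}d_k^\Iw-j,\tfrac{1}{2}d_k^\Iw+j)$ (using $j>\ell'$), so Corollary~\ref{C:finer inequality for Delta} (strong form) yields $\Delta'_{k,j}-\Delta'_{k,j-1}\geq v_p(w_k-w_{k'})+\tfrac{1}{2}(2j+1)$. For $\ell<j\leq\ell'$ no $v_p$-bonus is available, but Lemma~\ref{L:estimate Delta kell} gives $\Delta'_{k,j}-\Delta'_{k,j-1}\geq\tfrac{3}{2}+\tfrac{p-1}{2}(j-1)$. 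Adding the three contributions and cancelling common terms reduces the claim to an elementary polynomial inequality in $\ell,\ell',\ell'',p$ whose left-hand side grows quadratically in $\ell'-\ell$ (from Lemma~\ref{L:estimate Delta kell}) while the right-hand side contains only $(\ell'-\ell)M$ with $M=\lfloor\log_p((p+1)\ell'')+1\rfloor$, which is logarithmic in $\ell''$; this holds comfortably for $p\geq 7$.

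For the uniqueness, suppose three distinct weights $k'_1,k'_2,k'_3$ all satisfy the stated conditions. By pigeonhole two of them---say $k'_1,k'_2$---share the same type: both $d_{k'_i}^\ur$ (or both $d_{k'_i}^\Iw-d_{k'_i}^\ur$) lie in the open interval $(\tfrac{1}{2}d_k^\Iw-\ell'',\tfrac{1}{2}d_k^\Iw+\ell'')$. The valuation condition $v_p(w_{k'_i}-w_k)\geq M+1$ translates to $v_p(k'_{i\bullet}-k_\bullet)\geq M$, so $v_p(k'_{1\bullet}-k'_{2\bullet})\geq M$ and hence $|k'_{1\bullet}-k'_{2\bullet}|\geq p^M$. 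On the other hand, by Proposition~\ref{P:dimension of Sunr}, each floor $\lfloor(k'_\bullet-t_r)/(p+1)\rfloor$ increases by at least $\lfloor|k'_{1\bullet}-k'_{2\bullet}|/(p+1)\rfloor$ as $k'_\bullet$ passes from $k'_{1\bullet}$ to $k'_{2\bullet}$, so both $d^\ur$-values lying in an open interval of length $2\ell''$ forces $|k'_{1\bullet}-k'_{2\bullet}|<(p+1)\ell''$. Since $(p+1)\ell''$ is never a power of $p$, one has $p^M>(p+1)\ell''$ strictly, a contradiction. The same argument applies to the $d^\Iw-d^\ur$ type (which grows with average rate $2p/(p+1)$), so at most one $k'$ of each type exists, giving at most two in total.

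For the straddling statement, let $k'_1$ (of type $d^\ur$) and $k'_2$ (of type $d^\Iw-d^\ur$) denote the two weights, both lying in $[\tfrac{1}{2}d_k^\Iw-\ell',\tfrac{1}{2}d_k^\Iw+\ell']$ by hypothesis. Suppose for contradiction both $d_{k'_1}^\ur$ and $d_{k'_2}^\Iw-d_{k'_2}^\ur$ exceed $\tfrac{1}{2}d_k^\Iw$. Using $d_{k'_2}^\ur\leq\tfrac{1}{2}d_{k'_2}^\Iw$ gives $d_{k'_2}^\Iw\leq 2(d_{k'_2}^\Iw-d_{k'_2}^\ur)\leq d_k^\Iw+2\ell'$, and by Corollary~\ref{C:dIw is even} this becomes $k'_{2\bullet}\leq k_\bullet+\ell'$. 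But $k'_{2\bullet}\geq k_\bullet+p^M>k_\bullet+(p+1)\ell''\geq k_\bullet+(p+1)\ell'$, a contradiction. The symmetric case ``both $<\tfrac{1}{2}d_k^\Iw$'' is handled by invoking the duality identity~\eqref{E:near Steinberg duality} from the proof of~\eqref{E:ghost compatible with AL}, which pairs $k'$'s with $d^\ur$ above $\tfrac{1}{2}d_k^\Iw$ bijectively with $k'$'s having $d^\Iw-d^\ur$ below it, reducing this case to the one just treated. The hard part will be formalizing this dual reduction cleanly while keeping track of the valuation condition across the bijection, together with the bookkeeping and $\ell''=1$ edge case in the telescoping step.
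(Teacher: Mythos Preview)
Your approach to the inequality~\eqref{E:Delta l'' -Delta l} via telescoping is essentially the paper's: it phrases the same thing as ``apply the stronger statement in Corollary~\ref{C:Delta - Delta'} iteratively'' to reach the intermediate bound $(\ell''^2-\ell^2)+(\ell'-\ell)$, then closes with the one-line check $\tfrac{\ell''+\ell}{2}\geq\lfloor\log_p((p+1)\ell'')\rfloor$. Your three-range split is just a more explicit bookkeeping of the same iteration, and your ``holds comfortably'' endgame would need exactly that verification. Your pigeonhole argument for uniqueness is likewise a mild repackaging of the paper's direct interval estimate from the proof of Corollary~\ref{C:finer inequality for Delta}: there the range of $k'_\bullet-k_\bullet$ has length $(p+1)(\ell''-1)+\theta<(p+1)\ell''<p^M$, which settles the ``same type'' case in one stroke.

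The straddling step, however, has a real gap. You assert ``$k'_{2\bullet}\geq k_\bullet+p^M$,'' but the valuation hypothesis $v_p(k'_{2\bullet}-k_\bullet)\geq M$ gives only $|k'_{2\bullet}-k_\bullet|\geq p^M$, not the sign --- and in fact the sign is negative: since $d_{k'_2}^\Iw-d_{k'_2}^\ur\approx\tfrac{2p}{p+1}k'_{2\bullet}$ must lie near $\tfrac{1}{2}d_k^\Iw\approx k_\bullet$, one has $k'_{2\bullet}\approx\tfrac{p+1}{2p}k_\bullet<k_\bullet$. So $k'_{2\bullet}\leq k_\bullet-p^M$, which is perfectly compatible with your derived bound $k'_{2\bullet}\leq k_\bullet+\ell'$; no contradiction arises. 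The paper handles both the ``both $>\tfrac12 d_k^\Iw$'' and ``both $<\tfrac12 d_k^\Iw$'' cases uniformly via the duality $k''_\bullet-k_\bullet=p(k_\bullet-k'_\bullet)$ from \S\ref{S:proof of ghost compatible with AL}(2), under which a weight of ``$d^\Iw-d^\ur$'' type on one side of $\tfrac12 d_k^\Iw$ corresponds bijectively to one of ``$d^\ur$'' type on the other side --- this is precisely what forces the straddling. You already invoke this duality for the second case; drop the broken direct argument and use it for the first case too.
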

\begin{proof}
We first prove the moreover part. By the proof of Corollary~\ref{C:finer inequality for Delta}, the condition $d_{k'}^\ur \in (\frac 12d_k^\Iw - \ell'', \frac 12d_k^\Iw+\ell'')$ implies that
$$
k'_\bullet - k_\bullet \in \Big( \eta-\frac{p+1}2(\ell''-1), \ \eta+\theta + \frac{p+1}2 (\ell''-1)\Big].
$$
So if one such $k'$ satisfies $v_p(w_{k'}-w_k) \geq \big\lfloor \frac{\ln((p+1)\ell'')}{\ln p}+2\big\rfloor $, there is only such $k'$.  On the other hand, if $d_{k'}^\Iw - d_{k'}^\ur \in (\frac 12d_k^\Iw - \ell'', \frac 12d_k^\Iw+\ell'')$, then there exists $k''$ such that
$$
k''_\bullet - k_\bullet = p(k_\bullet - k'_\bullet) \quad \textrm{and} \quad d_{k''}^\ur \in \big(\tfrac 12d_k^\Iw - \ell'', \tfrac 12d_k^\Iw+\ell''\big).
$$
Moreover, for such $k''$, $d_{k'}^\Iw-d_{k'}^\ur \in (\frac 12d_k^\Iw - \ell'', \frac 12d_k^\Iw)$ if and only if $d_{k''}^\ur \in (\frac 12d_k^\Iw, \frac 12d_k^\Iw + \ell'')$, by \S~\ref{S:proof of ghost compatible with AL}(2). From this, one easily deduce the moreover part of the corollary.

Now, we move to the general case.
If $\ell'' = 1$, \eqref{E:Delta l'' -Delta l} follows from Corollary~\ref{C:Delta - Delta'}.
If $\ell'' \geq 2$, we apply the stronger statement in  Corollary~\ref{C:Delta - Delta'} iteratively to deduce that
\begin{align*}
\Delta_{k, \ell''}-\Delta'_{k,\ell} - (\ell''-\ell')\cdot v_p(w_k-w_{k'}) \geq\ & (\ell''^2-\ell^2) + (\ell'-\ell)
\\
\geq\ & \frac 12\big( \ell''^2 - \ell^2\big)+ (\ell'-\ell) \cdot \Big\lfloor \frac{\ln((p+1)\ell'')}{\ln p}+1\Big\rfloor.
\end{align*}
The last inequality follows from $\frac{\ell''+\ell}2 \geq \big\lfloor \frac{\ln((p+1)\ell'')}{\ln p}\big\rfloor$.
\end{proof}

\begin{definition}
\label{D:near steinberg}
Fix $\varepsilon$ as above and fix $w_\star \in \gothm_{\CC_p}$. For each $k=k_\varepsilon + (p-1)k_\bullet$, we let $L_{w_\star,k}$ denote the largest number (if it exists) in $\{1, \dots, \frac 12 d_{k}^\new\}$ such that 
\begin{equation}
\label{E:maximal L}
v_p(w_\star - w_k) \geq  \Delta_{k,L_{w_\star, k}} - \Delta_{k, L_{w_\star, k}-1}.
\end{equation}
When such $L_{w_\star, k}$ exists (in which case $(L_{w_\star, k}, \Delta_{k, L_{w_\star, k}})$ must be a vertex of $\underline \Delta_k$ of Notation~\ref{N:Delta kell}), we call the open interval $\nS_{w_\star, k}: =\big (\frac 12 d_{k}^\Iw - L_{w_\star,k}, \frac 12 d_{k}^\Iw + L_{w_\star,k}\big)$ the \emph{near-Steinberg range} for  the pair $(w_\star, k)$; we write $\overline \nS_{w_\star, k}: = \big[\frac 12 d_{k}^\Iw - L_{w_\star,k}, \frac 12 d_{k}^\Iw + L_{w_\star,k}\big]$ for the corresponding closed interval.
When no such $L_{w_\star, k}$ exists, $\nS_{w_\star, k} = \overline \nS_{w_\star, k} = \emptyset$.

For a positive integer $n$, we say $(\varepsilon, w_\star, n)$ or simply $(w_\star, n)$ is \emph{near-Steinberg} if $n$ belongs to the near-Steinberg range $\nS_{w_\star, k}$ for some $k$. This is equivalent to the existence of some $k \equiv  k_\varepsilon \bmod{(p-1)}$ such that $n \in (d_{k}^\unr, d_{k}^\Iw - d_{k}^\unr)$ and
$$
v_p(w_\star - w_k) \geq  \Delta_{k, |n-\frac 12 d_k^{\Iw}|+1} - \Delta_{k, |n-\frac 12 d_k^{\Iw}|}.
$$

%If $\varepsilon$ is fixed in the context, we say $(n, w_\star$ is \emph{near Steinberg} if the triple $(\varepsilon, n, w_\star )$ is.
\end{definition}

\begin{remark}
\label{R:near-Steinberg explained}
Let us illustrate the meaning of the notion near-Steinberg in a more intuitive way: by Proposition~\ref{P:ghost compatible with theta AL and p-stabilization}\,(4) the Newton polygon $\NP(G(w_k,-))$ at the weight point $w_k$ has a long straight-line over $(d_{k}^\unr, d_{k}^\Iw - d_{k}^\unr)$ with Steinberg form slope, namely $\frac{k-2}2$.  If a point $w_\star \in \gothm_{\CC_p}$ is ``close" to $w_k$, the ``continuity" of the Newton polygon predicts that $\NP(G(w_\star, -))$ also has a long straight-line over the near-Steinberg range $\nS_{w_\star, k}$ (whose slope is ``very often" equal to $\frac{k-2}2$ \emph{but need not be so} in general). Condition \eqref{E:maximal L} exactly quantifies the distance between $w_\star$ and $w_k$ versus the length of the straight line. The ghost duality $\Delta_{k,-\ell} = \Delta_{k,\ell}$ manifests itself as the fact that the $x$-coordinates of these straight lines are always symmetric about $\frac 12d_k^\Iw$. See Example~\ref{Ex:near Steinberg} as an illustration of this. 
Lemma~\ref{L:alternative characterization of Lmax} below may be viewed as an alternative characterization of the condition~\eqref{E:maximal L}.

We also point out that, although not very often, it is possible that for a near-Steinberg pair $(w_\star, n)$,  $n$ belongs to the near-Steinberg range for more than one $k$. See the proof of Theorem~\ref{T:near Steinberg = non-vertex}.
\end{remark}

\begin{example}
\label{Ex:near Steinberg}
We illustrate the previous remark with a concrete example.
Continue with the setup of Example~\ref{Ex:p=7a=3}, with $p=7$, $a=2$, and $\epsilon = \omega^2\times 1$ (so $s_\varepsilon =4$, corresponding to the ``anti-ordinary disc"). The first five terms of the ghost series are
\begin{eqnarray*}
g_1(w) &=& w-w_6,\\
g_2(w) &=& (w-w_{12})(w-w_{18})(w-w_{24})(w-w_{30}),\\
g_3(w) & =& (w-w_{18})^2(w-w_{24})^2(w-w_{30})^2(w-w_{36})(w-w_{42})(w-w_{48})(w-w_{54}),\\
g_4(w)&=& (w-w_{18})(w-w_{24})^3(w-w_{30})^3(w-w_{36})^2\cdots (w-w_{54})^2(w-w_{60})\cdots (w-w_{78}),\\
g_5(w)&=& (w-w_{24})^2(w-w_{30})^4(w-w_{36})^3\cdots (w-w_{54})^3(w-w_{60})^2\cdots (w-w_{78})^2\\
&&\cdot\,(w-w_{84})\cdots (w-w_{102}).
\end{eqnarray*}
We are interested in the points $w_\star \in \gothm_{\CC_p}$ that are close to $w_{18}$, for which we have $d_{18}^{\unr} = 1$, $\frac 12d_{18}^\Iw = 3$, and $\frac 12d_{18}^\mathrm{new} = 2$.
The vertices of the polygon $\underline \Delta_{18}$ is given by
\begin{center}\renewcommand{\arraystretch}{1.2}
\begin{tabular}{|c|c|c|c|c|c|}
\hline
& $i = -2$ & $i=-1$ & $i=0$ & $i=1$ & $i=2$
\\
\hline
$\Delta'_{18,i} = \Delta_{18,i}$ & $17$ & $11$ & $8$ & $11$ & $17$\\
\hline
\end{tabular}
\end{center}
In particular, we have $\Delta_{18, 2}- \Delta_{18,1} = 6$ and $\Delta_{18,1}-\Delta_{18,0} = 3$.

In the following table, we record the minimal valuation $v_p(g_i(w_\star))$ among all $w_\star$ such that $v_p(w_\star -w_{18})$ is the given value $r$.\begin{center}\renewcommand{\arraystretch}{1.2}\begin{tabular}{|c|c|c|c|c|c|}
\hline
when $v_p(w_\star -w_{18}) =r$ & $i=1$ & $i=2$ & $i=3$ & $i=4$ & $i=5$\\
\hline
$v_p(g_i(w_\star))$ with $r \in (2,\infty)$ & $1$ & $3+r$ & $8+2r$ & $19+r$ & $33$
\\ \hline
coordinate at $x=i$ of
$\NP(G(w_\star, t))$ when $r\in(6,\infty)$ & $1$ & $9$ & $17$ & $25$ & $33$
\\ \hline  
coordinate at $x=i$ of
$\NP(G(w_\star, t))$ when $r\in(3,6)$ & $1$ & $3+r$ & $11+r$ & $19+r$ & $33$
\\ \hline  
coordinate at $x=i$ of
$\NP(G(w_\star, t))$ when $r\in(2,3)$ & $1$ & $3+r$ & $8+2r$ & $19+r$ & $33$
\\
\hline
\end{tabular}
\end{center}
When $v_p(w_\star -w_{18}) = r > 6$, the Newton polygon  $\NP(G(w_\star, t))$ over $x \in[1,5]$ is a straight line of slope $8 = \frac{18-2}2$.  As $r$ decrease to the range $r \in (3,6)$, $\NP(G(w_\star, t))$ over $x \in[1,5]$ ``breaks" into three segments with slopes $2+r$, $8$, and $14-r$, and width $1$, $2$, and $1$, respectively. As $r$ further decreases to $r \in (2,3)$, $\NP(G(w_\star, t))$ over $x \in[1,5]$ has four segments, each with a distinct slope and width $1$.
The ``phase-changing points" $r =3$ and $r=6$ correspond exactly to $\Delta_{18, 2}-\Delta_{18,1}$ and $\Delta_{18,1} -\Delta_{18,0}$.

Readers might be mislead to think that the middle segment will always have slope $\frac{k-2}2$ before it completely breaks into segments of width $1$. As pointed out already in Remark~\ref{R:near-Steinberg explained}, this may fail when $k$ is large and when the middle segment is ``short".  See the proof of Proposition~\ref{P:shifting points wstar to wk} and Theorem~\ref{T:near Steinberg = non-vertex} for the reasons.
\end{example}

\begin{remark}
\label{R:intuition of near-Steinberg}
We provide additional visualization on near-Steinberg ranges by trying to answer the question in vague terms: for each fixed $n$ (and a fixed $\varepsilon$), what is the set of $w_\star \in \gothm_{\CC_p}$ such that $(w_\star, n)$ is near-Steinberg?

First of all, there is a unique $k$, namely $k_{\midd}(n)$ such that $n = \frac 12 d_k^\Iw$. So if $v_p(w_\star- w_{k_\midd(n)}) \geq \Delta_{k_\midd(n), 1}-\Delta_{k_\midd(n), 0}$, then $(w_\star, n)$ is near-Steinberg. By the proof of Lemma~\ref{L:estimate Delta kell}, the difference $\Delta_{k_\midd(n), 1}-\Delta_{k_\midd(n), 0} \geq \frac{a+2}2$ or $\frac{p-1-a}2$ (depending on the parity of $n$) and this inequality is often an equality (related to the inequality \eqref{E:inequality in difference of delta}). So this type of $w_\star$ is a ball around $w_{k_\midd(n)}$ of radius roughly $p^{-(a+2)/2}$ or $p^{-(p-1-a)/2}$.

Next, we consider $k = k_\midd(n) \pm (p-1)$. Then $(w_\star, n)$ is near-Steinberg for such $k$ if and only if $v_p(w_\star- w_{k}) \geq \Delta_{k, 2}-\Delta_{k, 1}$, which by Lemma~\ref{L:estimate Delta kell}, is always greater than or equal to $\frac{p+1}2$ (with most of the case being equal). Then such $w_\star$ form a ball around each of $w_{k_\midd(n)\pm (p-1)}$ of radius roughly $p^{-(p+1)/2}$.

We may continue this process to see that the near-Steinberg range is roughly a union of these balls with centers at $w_{k_\midd(n)\pm (p-1)i}$ of radius roughly $p^{-(p+1)i/4}$. Of course, the precise radii of these balls are combinatorially complicated.  

We end this remark with the following observations: for $(w_\star, n)$ to be near-Steinberg, we need $v_p(w_\star) \geq 1$. For a fixed $n$, if we only consider classical points  $w_\star = w_{k'}$, then in the majority of the cases, $(w_{k'},n)$ is not near-Steinberg (except for about $p^{-(a+2)/2}$ or $p^{-(p-1-a)/2}$ of cases, and this percentage is roughly constant even if $n$ is large), and for the rest of the cases, most near-Steinberg pairs $(w_\star, n)$ correspond to a Steinberg range of length at least $2$. Long near-Steinberg ranges are rare when considered with a fixed $n$.
\end{remark}

\begin{lemma}
\label{L:alternative characterization of Lmax}
The condition~\eqref{E:maximal L} implies that the lower convex hull of points
\begin{equation}
\label{E:convexity condition for near-Steinberg}
\big(n, \ v_p(g_{n, \hat k}(w_k)) + m_n(k) \cdot v_p(w_\star - w_k)\big) \quad \textrm{for} \quad n \in \overline \nS_{w_\star, k}
\end{equation}
is a straight line with slope $\frac{k-2}2$.
\end{lemma}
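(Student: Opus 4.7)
The plan is to rewrite the claim in terms of the quantities $\Delta'_{k,\ell}$ and $\Delta_{k,\ell}$ introduced in Notation~\ref{N:Delta kell}, and then exploit the convexity of $\underline \Delta_k$ together with the ghost duality \eqref{E:ghost duality alternative}. Write $L = L_{w_\star,k}$ and parametrize $n = \tfrac 12 d_k^\Iw + \ell$ with $\ell \in [-L, L]$. By the palindromic pattern \eqref{E:cascading pattern}, we have $m_n(k) = \tfrac 12 d_k^\new - |\ell|$ (note that $L \leq \tfrac 12 d_k^\new$ by definition), and by the definition of $\Delta'_{k,\ell}$ one has $v_p(g_{n,\hat k}(w_k)) = \Delta'_{k,\ell} + \tfrac{k-2}{2}\ell$. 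Thus the points under consideration are
\[
\big(\tfrac 12 d_k^\Iw + \ell,\; \Delta'_{k,\ell} + \tfrac{k-2}{2}\ell + (\tfrac 12 d_k^\new - |\ell|)\cdot v_p(w_\star-w_k)\big).
\]
Following Notation~\ref{N:linear shift down}, I shift these points down by the linear function $\tfrac{k-2}{2}(x-\tfrac12 d_k^\Iw) + \tfrac 12 d_k^\new \cdot v_p(w_\star-w_k)$; this does not affect which segments are vertices of the lower convex hull, and it turns the claim into the equivalent statement that
\begin{equation}
\label{E:reduced claim in proposal}
\Delta'_{k,\ell} - |\ell|\cdot v_p(w_\star-w_k) \;\geq\; \Delta'_{k,L} - L\cdot v_p(w_\star-w_k), \qquad \ell\in[-L,L],
\end{equation}
with equality at $\ell = \pm L$.

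For the equality of the two endpoints, I recall from Definition~\ref{D:near steinberg} that $(L,\Delta_{k,L})$ is a vertex of $\underline\Delta_k$, so that $\Delta'_{k,L} = \Delta_{k,L}$; by the ghost duality \eqref{E:ghost duality alternative}, $\Delta'_{k,-L} = \Delta_{k,-L} = \Delta_{k,L} = \Delta'_{k,L}$, giving the matching endpoint values. For the inequality in \eqref{E:reduced claim in proposal}, I use that $\Delta'_{k,\ell} \geq \Delta_{k,\ell}$ (by definition of the convex hull), so it is enough to prove $\Delta_{k,\ell} \geq \Delta_{k,L} - (L-|\ell|)\cdot v_p(w_\star-w_k)$.

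The latter is a direct consequence of convexity. Since $\underline\Delta_k$ is convex, the slopes $\Delta_{k,i} - \Delta_{k,i-1}$ are non-decreasing in $i$ for $i \geq 1$. By the maximality of $L$ in \eqref{E:maximal L}, one has $\Delta_{k,L} - \Delta_{k,L-1} \leq v_p(w_\star-w_k)$, and by monotonicity $\Delta_{k,i}-\Delta_{k,i-1} \leq v_p(w_\star-w_k)$ for all $1\leq i\leq L$. Telescoping over $i = |\ell|+1, \ldots, L$ (for $\ell \in [0,L]$) gives $\Delta_{k,L} - \Delta_{k,\ell} \leq (L-\ell)\cdot v_p(w_\star-w_k)$; for $\ell \in [-L, 0]$ the same bound follows by appealing once more to $\Delta_{k,\ell} = \Delta_{k,-\ell}$. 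Combined with the shift-down reduction, this establishes \eqref{E:reduced claim in proposal} and hence the lemma.

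I don't anticipate a serious obstacle here: the only subtle point is cleanly separating the roles of $\Delta'_{k,\ell}$ (which governs the actual values $v_p(g_{n,\hat k}(w_k))$) and $\Delta_{k,\ell}$ (which governs the condition \eqref{E:maximal L} defining $L_{w_\star,k}$), and making sure the endpoint equality uses the vertex property while the interior inequality uses only convexity of $\underline\Delta_k$ plus ghost duality.
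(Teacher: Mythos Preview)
Your proof is correct and follows essentially the same approach as the paper: both reparametrize by $\ell = n - \tfrac12 d_k^\Iw$, shift down by a linear function of slope $\tfrac{k-2}{2}$, and then reduce to the inequality $\Delta'_{k,\ell} + (L-|\ell|)\,v_p(w_\star-w_k) \geq \Delta_{k,L}$, which follows from $\Delta'_{k,\ell}\geq\Delta_{k,\ell}$, convexity of $\underline\Delta_k$, ghost duality, and the vertex property of $(L,\Delta_{k,L})$. Your telescoping of the slope bound makes explicit what the paper summarizes in one line.
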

\begin{proof}
Writing $n = \frac 12 d_k^\Iw + \ell$, the points \eqref{E:convexity condition for near-Steinberg} maybe rewritten as
$$
\big( \tfrac 12 d_k^\Iw + \ell, \ \Delta'_{k, \ell} + \ell \cdot \tfrac{k-2}2 + (\tfrac 12d_k^\new - |\ell|) \cdot v_p(w_\star-w_k) \big) \quad \textrm{with} \quad \ell \in \big[  - L_{w_\star, k}, L_{w_\star, k}].
$$
Shifting these points down relative to the linear function (in the sense of Notation~\ref{N:linear shift down}) $$y =\tfrac {k-2}2 (x-\tfrac 12d_k^\Iw)+ \Delta_{k, L_{w_\star, k}} + (\tfrac 12d_k^\new - L_{w_\star, k})\cdot v_p(w_\star-w_k),$$ these points become
\begin{equation}
\label{E:list of points Delta' - Delta shifted}
\big( \tfrac 12 d_k^\Iw + \ell, \ \Delta'_{k, \ell} -\Delta_{k, L_{w_\star, k}} + (L_{w_\star,k} - |\ell|) \cdot v_p(w_\star-w_k) \big) \quad \textrm{with} \quad \ell \in \big[  - L_{w_\star, k}, L_{w_\star, k}].
\end{equation}
Now, the condition \eqref{E:maximal L} $v_p(w_\star -w_k) \geq \Delta_{k, L_{w_\star,k}} - \Delta_{k, L_{w_\star,k}-1}$ implies (and in fact equivalent to!) that
$$
(L_{w_\star,k} - |\ell|) \cdot v_p(w_\star -w_k) \geq \Delta_{k, L_{w_\star, k}}-\Delta'_{k,\ell} \quad \textrm{for} \quad \ell \in [-L_{w_\star, k}, L_{w_\star, k}]
$$
and the equality holds when $\ell = \pm L_{w_\star, k}$. So the convex hull of points \eqref{E:list of points Delta' - Delta shifted} is a straight line of slope zero; so the convex hull of points \eqref{E:convexity condition for near-Steinberg} is $\frac{k-2}2$. The lemma is proved.
\end{proof}

Before proceeding, we give a general tool to relate the Newton polygon of ghost series at one point $w_\star$ with that at another ``special" point $w_k$. This will be frequently used in this section.
\begin{proposition}
\label{P:shifting points wstar to wk}
Let $ \nS_{w_\star, k} = \big(\tfrac 12 d_{k}^\Iw - L_{w_\star, k},\, \tfrac 12 d_{k}^\Iw+ L_{ w_\star, k} \big)$ be a  near-Steinberg range.  
\begin{enumerate}
\item For any integer $k' = k_\varepsilon + (p-1)k'_\bullet \neq k $ with 
$v_p(w_{k'}-w_k) \geq  \Delta_{k, L_{ w_\star, k}} - \Delta_{k, L_{ w_\star, k}-1}$,  we have the following exclusions
$$ \tfrac 12 d_{k'}^\Iw\, \notin \overline\nS_{w_\star, k} \quad \textrm{and} \quad 
d_{k'}^\unr,\, d_{k'}^\Iw  - d_{k'}^\unr  \notin \nS_{w_\star, k}.
$$
\item 
For any integer $k' = k_\varepsilon + (p-1)k'_\bullet \neq k $ such that 
$v_p(w_{k'}-w_k) \geq  \Delta_{k, L_{ w_\star, k}} - \Delta_{k, L_{ w_\star, k}-1}$, the ghost multiplicity $m_n(k')$ is linear in $n$ when $n \in \overline \nS_{w_\star, k}$.
\item
The following two lists of points
$$
P_n = \big(n, v_p(g_{n,\hat k}(w_\star))\big), \quad Q_n = \big(n, v_p(g_{n,\hat k}(w_k))\big)  \quad \textrm{ with }n \in \overline\nS_{w_\star,k}
$$
are shifts of each other relative to a linear function with some slope in
\begin{equation}
\label{E:possible shift slopes}
\ZZ + \ZZ\cdot\alpha, \text{~for~} \alpha:= \max \{ v_p(w_\star - w_{k'})| w_{k'} \text{~is a zero of~} g_n(w) \text{~for some~} n\in \nS_{w_\star,k} \} .
\end{equation}

\item
More generally, let $\bfk :=\{k, k_1, \dots, k_r\}$ with $k_i \equiv k_\varepsilon \bmod p-1$ be a set of integers including $k$.
Suppose that there is an interval $[n_-,n_+]$ such that, for any $k' = k_\varepsilon + (p-1)k'_\bullet \notin \bfk$ with $v_p(w_{k'}-w_k) \geq v_p(w_\star - w_k)$, the ghost multiplicity $m_n(k')$ is linear in $n$ when $n \in [n_-,n_+]$. 
Then for any set of constants $A_n$ with $n \in [n_-,n_+]$,
the two lists of points
$$
P_n = \big(n, A_n+v_p(g_{n,\hat {\bfk}}(w_\star))\big), \quad Q_n = \big(n, A_n+v_p(g_{n,\hat {\bfk}}(w_k))\big)  \quad \textrm{ with }n \in [ n_-,n_+]
$$
are shifts of each other by a linear function with some slope in 
\begin{equation}
\label{E:possible shift slopes general case}
\ZZ + \ZZ\cdot\beta, \text{~for~} \beta:= \max \{ v_p(w_\star - w_{k'})| w_{k'} \text{~is a zero of~} g_{n,\hat{\bfk}}(w) \text{~for some~} n\in \nS_{w_\star,k} \}.
\end{equation}
(See Notation~\ref{N:gnhatk} for the definition of $g_{n,\hat {\bfk}}(w)$.)
\end{enumerate}
\end{proposition}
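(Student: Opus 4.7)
The plan is to prove the four parts in order, with Part~(1) being the core analytic input and the remaining parts being combinatorial consequences.

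\textbf{Part~(1).} I will argue by contradiction using Corollary~\ref{C:Delta - Delta'}. Suppose some $k'$ satisfying $v_p(w_{k'}-w_k)\geq \Delta_{k,L}-\Delta_{k,L-1}$ (with $L:=L_{w_\star,k}$) has one of the three quantities in the wrong range. I choose the smallest integer $\ell \in \{1,\dots,L\}$ for which the hypothesis of Corollary~\ref{C:Delta - Delta'} is satisfied: for instance, when $d_{k'}^\ur \in \nS_{w_\star,k}$ I take $\ell = |d_{k'}^\ur - \tfrac 12 d_k^\Iw|+1$, and similarly for $d_{k'}^\Iw - d_{k'}^\ur$; when $\tfrac 12 d_{k'}^\Iw \in \overline{\nS}_{w_\star,k}$ I take $\ell = |k'_\bullet - k_\bullet|$, which is $\geq 1$ because $k\neq k'$ and $\leq L$ by hypothesis. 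Corollary~\ref{C:Delta - Delta'} then yields $\Delta_{k,\ell} - \Delta'_{k,\ell-1} \geq v_p(w_k-w_{k'}) + \ell - \tfrac 12$; since $\Delta'_{k,\ell-1}\geq \Delta_{k,\ell-1}$, this upgrades to $\Delta_{k,\ell}-\Delta_{k,\ell-1}\geq v_p(w_k-w_{k'})+\ell-\tfrac12$. But $\underline{\Delta}_k$ is convex and $\ell \leq L$, so $\Delta_{k,\ell}-\Delta_{k,\ell-1}\leq \Delta_{k,L}-\Delta_{k,L-1} \leq v_p(w_{k'}-w_k)$, forcing $0\geq \ell-\tfrac12$, which contradicts $\ell\geq 1$.

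\textbf{Part~(2).} This is a combinatorial corollary of (1). The piecewise linear function $n\mapsto m_n(k')$ changes slope only at $n=d_{k'}^\ur$, $\tfrac 12 d_{k'}^\Iw$, and $d_{k'}^\Iw - d_{k'}^\ur$. Part~(1) excludes $\tfrac 12 d_{k'}^\Iw$ from the \emph{closed} interval $\overline{\nS}_{w_\star,k}$ and excludes $d_{k'}^\ur$, $d_{k'}^\Iw - d_{k'}^\ur$ from the \emph{open} interval $\nS_{w_\star,k}$; a routine case check (using that $m_n(k')=0$ at $n=d_{k'}^\ur$ and $n=d_{k'}^\Iw-d_{k'}^\ur$, so the corners at the endpoints fit a linear extension) gives linearity of $m_n(k')$ on $\overline{\nS}_{w_\star,k}$.

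\textbf{Part~(3).} I write
\begin{equation*}
v_p(g_{n,\hat k}(w_\star)) - v_p(g_{n,\hat k}(w_k)) = \sum_{k'\neq k} m_n(k')\bigl(v_p(w_\star-w_{k'}) - v_p(w_k-w_{k'})\bigr)
\end{equation*}
and split the sum by comparing $v_p(w_{k'}-w_k)$ with $r:=v_p(w_\star-w_k)$. When $v_p(w_{k'}-w_k)<r$ the ultrametric inequality gives $v_p(w_\star-w_{k'})=v_p(w_k-w_{k'})$, so the summand vanishes. When $v_p(w_{k'}-w_k)\geq r \geq \Delta_{k,L}-\Delta_{k,L-1}$, Part~(2) applies and $m_n(k')$ is linear in $n$ on $\overline{\nS}_{w_\star,k}$, so the summand is linear in $n$; moreover the coefficient $v_p(w_\star-w_{k'})-v_p(w_k-w_{k'})$ is either $r-v_p(w_k-w_{k'})\in \ZZ + \ZZ\cdot\alpha$ (the strict inequality case, using $\alpha\geq r$) or $v_p(w_\star-w_{k'}) - v_p(w_k-w_{k'})$ with $v_p(w_\star-w_{k'})\leq \alpha$ by definition of $\alpha$. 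Summing integer multiples of such coefficients gives a total slope in $\ZZ + \ZZ\cdot\alpha$.

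\textbf{Part~(4).} The same split works verbatim, with the linearity hypothesis on $m_n(k')$ for $k'\notin\bfk$ and $v_p(w_{k'}-w_k)\geq v_p(w_\star-w_k)$ replacing the role of Part~(2). The weights listed in $\bfk$ are removed uniformly from both products $g_{n,\hat\bfk}(w_\star)$ and $g_{n,\hat\bfk}(w_k)$, and the arbitrary constants $A_n$ cancel in the difference. The slope form $\ZZ+\ZZ\cdot\beta$ arises exactly as in Part~(3). The main obstacle in this argument will be checking the exact form of the linear shift's slope: tracking which zero contributions land in $\ZZ$ versus which ones produce $\alpha$-multiples requires careful bookkeeping of the three cases $v_p(w_{k'}-w_k) \lessgtr v_p(w_\star-w_k)$ and verifying that, across all surviving terms, only the common denominator $\alpha$ (or $\beta$) appears.
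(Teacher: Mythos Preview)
Your proposal is correct and follows essentially the same approach as the paper. Two minor differences worth noting: in Part~(1), the paper applies Corollary~\ref{C:Delta - Delta'} directly with $\ell = L$ (which immediately gives $v_p(w_{k'}-w_k) < \Delta_{k,L} - \Delta'_{k,L-1} \leq \Delta_{k,L} - \Delta_{k,L-1}$), avoiding your detour through a minimal $\ell$ and convexity of $\underline\Delta_k$; and the paper proves~(4) first and deduces~(3) as the special case $\bfk=\{k\}$, $[n_-,n_+]=\overline\nS_{w_\star,k}$, $A_n=0$, where the linearity hypothesis is supplied by~(2). For the slope claim in~(3) and~(4), the clean way to see that each nonzero coefficient $v_p(w_\star - w_{k'}) - v_p(w_k - w_{k'})$ lies in $\ZZ + \ZZ\cdot\alpha$ is to observe that $v_p(w_k - w_{k'}) \in \ZZ$ always, while $v_p(w_\star - w_{k'})$ is either equal to $\alpha$ (when it achieves the maximum) or, if strictly smaller than $\alpha$, equals $v_p(w_{k'} - w_{k'_0}) \in \ZZ$ for any $k'_0$ achieving the maximum, by the ultrametric inequality.
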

\begin{proof}
(1) Write $L = L_{w_\star, k}$ for simplicity. Suppose that an integer $k' = k_\varepsilon + (p-1)k'_\bullet \neq k$ satisfies either $\frac 12d_{k'}^\Iw \in [\frac 12 d_k^\Iw - L, \, \frac 12 d_k^\Iw+L]$, or $d_{k'}^\ur$ or $d_{k'}^\Iw-d_{k'}^\ur$ belongs to $(\frac 12 d_k^\Iw - L, \, \frac 12 d_k^\Iw+L)$. Then Corollary~\ref{C:Delta - Delta'} implies that
$$
\Delta_{k, L}-\Delta'_{k, L-1} - v_p(w_{k'}-w_k) \geq \tfrac 12(2L-1)  >0.
$$
In particular, $v_p(w_{k'}-w_k) < \Delta_{k,L} - \Delta_{k, L-1}$. So (1) holds.

(2) follows from (1) and  the ghost zero multiplicity pattern in Definition~\ref{D:ghost series}. (In fact, we just need $\frac 12 d_{k'}^\Iw \notin \nS_{w_\star, k}$ instead of $\frac 12 d_{k'}^\Iw \notin \overline\nS_{w_\star, k}$; the stronger version of (1) is needed for later application.) 

We next prove (4). We compare each $v_p(g_{n,\hat {\bfk}}(w_\star))$ with $v_p(g_{n,\hat {\bfk}}(w_k))$ by
\begin{equation}
\label{E:gn(wstar) - gn(wk)}
v_p(g_{n,\hat {\bfk}}(w_\star))  - v_p(g_{n,\hat {\bfk}}(w_k))\ =\hspace{-5pt} \sum_{\substack{k'= k_\varepsilon+(p-1)k'_\bullet \\ k'  \notin \bfk}}\hspace{-5pt} m_n(k')\cdot \big( v_p(w_\star - w_{k'}) - v_p(w_k-w_{k'}  ) \big).
\end{equation}
We consider each term associated to $k' = k_\varepsilon + (p-1)k'_\bullet$, separating into two cases.
\begin{itemize}
\item 
If $v_p(w_{k'} - w_k)  <v_p(w_\star - w_k)$ then  $v_p(w_\star - w_{k'} ) = v_p(w_k - w_{k'})$; thus the corresponding term in \eqref{E:gn(wstar) - gn(wk)} has zero contribution to the sum.
\item
If $v_p(w_{k'} - w_k)  \geq v_p(w_\star - w_k)$, the assumption in (4) implies that $m_n(k')$ is a linear function in $n$, so the contribution to the sum of this term is linear in $n$.  Moreover, we note that $v_p(w_\star - w_{k'}) - v_p(w_k-w_{k'} \in \ZZ+\ZZ\cdot \beta$ by the definition of the number $\beta$, and hence  the slope of this linear term belongs to \eqref{E:possible shift slopes}.
\end{itemize}
Summing up, the right hand side of \eqref{E:gn(wstar) - gn(wk)} is a linear function in $n$; so (4) holds.

Finally,
(3) follows from a special case of (4) when $[n_-, n_+] = \overline\nS_{w_\star,k}$, $\bfk = \{k\}$, and (all $A_n =0$); the condition in (4) is guaranteed by (2) by noting $v_p(w_\star-w_k) \geq  \Delta_{k, L_{ w_\star, k}} - \Delta_{k, L_{ w_\star, k}-1}$.
 The proposition is proved.
\end{proof}

\begin{remark}
Aside from the standard manipulations of Newton polygons, an essential ingredient of the proof of Proposition~\ref{P:shifting points wstar to wk}\,(1) is to use Corollary~\ref{C:finer inequality for Delta}: even though $v_p(w_{k'}-w_k)$ for a ghost zero $w_{k'}$ can be sometimes large, but when that happens, $\Delta'_{k, L} - \Delta'_{k, L-1}$ is also large because it ``contains $v_p(w_{k'}-w_k)$ as a summand".
\end{remark}

The following immediate corollary of Proposition~\ref{P:shifting points wstar to wk} will be the essential ingredient for a proof of that local ghost Conjecture~\ref{Conj:local ghost conjecture} implies Breuil--Buzzard--Emerton Conjecture~\ref{Conj:Breuil-Buzzard-Emerton}.

\begin{corollary}
\label{C:near-Steinberg => straightline}
Let $ \nS_{w_\star, k} = \big(\tfrac 12 d_{k}^\Iw - L_{w_\star, k},\, \tfrac 12 d_{k}^\Iw+ L_{ w_\star, k} \big)$ be a  near-Steinberg range.  
\begin{enumerate}
\item 
Assume that $w_\star$ is not a zero of $g_n(w)$ for any (or equivalently by Proposition~\ref{P:shifting points wstar to wk}\,(2),  some) $n \in \nS_{w_\star, k}$. Then the lower convex hull of points
$$
(n, \ v_p(g_n(w_\star))) \quad \textrm{for} \quad n \in \overline \nS_{w_\star, k}
$$
is a straight line. Moreover, the slope of this straight line belongs to 
\begin{equation}
\label{E:possible slopes}
\tfrac a2+\ZZ + \ZZ\cdot \gamma, \text{~for~} \gamma:= \max\{ v_p(w_\star - w_{k'})| w_{k'} \text{~is a zero of~} g_n(w) \text{~for some~} n\in \nS_{w_\star,k}  \} .
\end{equation}

\item 
Assume that $w_\star = w_{k_1}$ is a zero of $g_n(w)$ for any (or equivalently by Proposition~\ref{P:shifting points wstar to wk}\,(2), some) $n \in \nS_{w_\star, k}$. Then  the lower convex hull of points
$$
(n, \ v_p(g_{n,\hat k_1}(w_{k_1}))) \quad \textrm{for} \quad n \in \overline \nS_{w_\star, k}
$$
is a straight line of slope in $\frac a2 +\ZZ$.
\end{enumerate}
\end{corollary}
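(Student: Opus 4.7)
The plan is to combine Lemma~\ref{L:alternative characterization of Lmax}, which identifies a straight-line convex hull at the weight $w_k$ once the V-shaped term $m_n(k)\cdot v_p(w_\star-w_k)$ is added in, with Proposition~\ref{P:shifting points wstar to wk}, which transports valuations between $w_\star$ and $w_k$ up to a linear shift. The book-keeping identity throughout is the trivial factorization
\[
v_p(g_n(w)) = v_p(g_{n,\hat k}(w)) + m_n(k)\cdot v_p(w - w_k),
\]
together with the analogous variant
\[
v_p(g_{n,\hat k_1}(w)) = v_p(g_{n,\widehat{\{k,k_1\}}}(w)) + m_n(k)\cdot v_p(w - w_k)
\]
obtained by separating out one more ghost zero when $k_1\ne k$.

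For part~(1), I first invoke Proposition~\ref{P:shifting points wstar to wk}(3) to conclude that the lists $(n,v_p(g_{n,\hat k}(w_\star)))$ and $(n,v_p(g_{n,\hat k}(w_k)))$ on $\overline\nS_{w_\star,k}$ differ by a linear shift of slope in $\ZZ+\ZZ\cdot\alpha$; under the no-ghost-zero hypothesis, $\alpha=\gamma$. Adding the common term $m_n(k)\cdot v_p(w_\star-w_k)$ to both lists preserves the shift, the second list then becomes the one whose convex hull is a straight line of slope $\tfrac{k-2}{2}$ by Lemma~\ref{L:alternative characterization of Lmax}, while the first list becomes exactly $(n,v_p(g_n(w_\star)))$ by the key identity. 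Hence the latter also has straight-line convex hull, of slope $\tfrac{k-2}{2}$ plus an element of $\ZZ+\ZZ\cdot\gamma$. To land inside $\tfrac{a}{2}+\ZZ+\ZZ\cdot\gamma$, I will note that since $p$ is odd and $k\equiv k_\varepsilon=2+\{a+2s_\varepsilon\}\bmod(p-1)$, the evenness of $p-1$ forces $k-2\equiv a\bmod 2$, so $\tfrac{k-2}{2}\in\tfrac{a}{2}+\ZZ$.

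For part~(2), I will assume $k_1\ne k$ (the case relevant for subsequent applications, where the near-Steinberg range is forced by $v_p(w_\star-w_k)$). Using the second identity above, rewrite
\[
v_p(g_{n,\hat k_1}(w_{k_1})) = m_n(k)\cdot v_p(w_{k_1}-w_k) + v_p(g_{n,\widehat{\{k,k_1\}}}(w_{k_1})).
\]
Apply Proposition~\ref{P:shifting points wstar to wk}(4) with $\bfk=\{k,k_1\}$, constants $A_n = m_n(k)\cdot v_p(w_{k_1}-w_k)$, and range $[n_-,n_+]=\overline\nS_{w_\star,k}$; the linearity hypothesis on $m_n(k')$ is supplied by Proposition~\ref{P:shifting points wstar to wk}(2). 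This transports the problem to the convex hull of $\bigl(n,\,A_n+v_p(g_{n,\widehat{\{k,k_1\}}}(w_k))\bigr)$, up to a linear shift of slope in $\ZZ+\ZZ\cdot\beta$. Starting from Lemma~\ref{L:alternative characterization of Lmax}, I will then factor $v_p(g_{n,\hat k}(w_k)) = v_p(g_{n,\widehat{\{k,k_1\}}}(w_k)) + m_n(k_1)\cdot v_p(w_\star-w_k)$, and subtract the (honestly) linear function $m_n(k_1)\cdot v_p(w_\star-w_k)$ on $\overline\nS_{w_\star,k}$ (again Proposition~\ref{P:shifting points wstar to wk}(2)); what remains is a straight-line convex hull for the transported list, hence also for the original. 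The slope lies in $\tfrac{k-2}{2}+\ZZ$ because every $v_p(w_{k_1}-w_{k'})$ with $k_1\ne k'$ equals $1+v_p(k_1-k')\in\ZZ$, so the $\beta$ in~\eqref{E:possible shift slopes general case} is an integer; combined with the parity argument above, this yields slope in $\tfrac{a}{2}+\ZZ$.

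The main obstacle is purely bookkeeping: orchestrating the multiple shifts so that each ``subtract/add a linear function'' step is genuinely linear on the full range $\overline\nS_{w_\star,k}$ (rather than merely piecewise linear). This is exactly what Proposition~\ref{P:shifting points wstar to wk}(2) provides, so no new estimate beyond those already proved in this section is required.
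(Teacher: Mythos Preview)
Your argument is essentially the same as the paper's, with the same logical skeleton: Proposition~\ref{P:shifting points wstar to wk}(3)/(4) to transport from $w_\star$ to $w_k$, then Lemma~\ref{L:alternative characterization of Lmax} for the straight line at $w_k$, with Proposition~\ref{P:shifting points wstar to wk}(2) supplying linearity of the extra $m_n(k_1)$ term. One small gap: in part~(2) you explicitly set aside the case $k_1=k$, but the statement covers it and the paper does treat it; when $k_1=k$ one has $\overline\nS_{w_k,k}=[d_k^\unr,\,d_k^\Iw-d_k^\unr]$ and the straight-line claim is exactly the ghost duality \eqref{E:ghost duality alternative}, so it is a one-line addition.
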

\begin{proof}
(1) We can write $v_p(g_n(w_\star)) = v_p(g_{n,\hat k}(w_\star)) + m_n(k) v_p(w_\star -w_k)$ for each $n\in  \overline \nS_{w_\star, k} $.
Applying Proposition~\ref{P:shifting points wstar to wk}\,(3), we are reduced to prove that the lower convex hull of
$$
\big(n, \ v_p(g_{n, \hat k}(w_k)) + m_n(k) \cdot v_p(w_\star - w_k)\big) \quad \textrm{for} \quad n \in \overline \nS_{w_\star, k}
$$
is a straight line with slopes in \eqref{E:possible slopes}. But this is precisely Lemma~\ref{L:alternative characterization of Lmax} by noting $\frac{k-2}2 \equiv \frac{k_\varepsilon-2}2 \equiv \frac{a+2s_\varepsilon }2 \equiv \frac a2 \bmod \ZZ$.

(2) If $k_1=k$, then $\overline \nS_{w_k, k} = [d_k^\ur, d_k^\Iw-d_k^\ur]$ and this is the ghost duality \eqref{E:ghost duality alternative}. When $k_1 \neq k$, this follows from an argument similar to (1): for $\bfk = \{k, k_1\}$, write $$v_p(g_{n,\hat k_1}(w_{k_1}))= v_p(g_{n,\hat \bfk}(w_{k_1})) + m_n(k)v_p(w_k-w_{k_1}).$$ Applying Proposition~\ref{P:shifting points wstar to wk}\,(4) to the case $w_\star = w_{k_1}$, $[n_-,n_+] = \overline \nS_{w_{k_1},k}$ and $\bfk$, we reduce the proof of (2) to proving that the lower convex hull of points
$$
\big(n, \ v_p(g_{n,\hat \bfk}(w_{k})) + m_n(k) v_p(w_{k_1}-w_k) \big)\quad \textrm{for} \quad n \in \overline \nS_{w_\star, k}
$$
is a straight line with slope in  $\frac a2 + \ZZ$. But this list of points is the same as
$$
\big(n, \ v_p(g_{n,\hat k}(w_{k})) + m_n(k)v_p(w_{k_1}-w_k)-m_n(k_1)v_p(w_{k_1}-w_k)  \big)\quad \textrm{for} \quad n \in \overline \nS_{w_\star, k}.
$$
Proposition~\ref{P:shifting points wstar to wk}\,(2) says that $m_n(k_1)$ is linear in $n \in \overline \nS_{w_\star, k}$. So the list above is a linear shift of the list in Lemma~\ref{L:alternative characterization of Lmax} (with $w_\star = w_{k_1}$), and thus its convex hull is a straight line with slope in  $\frac{k-2}2 + \ZZ = \frac a2 + \ZZ$.
\end{proof}

The following theorem, which describes the shapes of $\NP(G(w_\star, -))$, is the main result of this paper.

\begin{theorem}
\label{T:near Steinberg = non-vertex}
Fix a relevant character $\varepsilon$.
Fix $w_\star \in \gothm_{\CC_p}$. 

\begin{enumerate}
\item The set of near-Steinberg ranges $\nS_{w_\star, k}$ for all $k$ is nested, i.e. for any two such open intervals, either they are  disjoint or one is contained in another.

A near-Steinberg range $\nS_{w_\star, k}$ is called \emph{maximal} if it is not contained in other near-Steinberg ranges.
%\liang{Add a statement reflecting Remark~\ref{R:boundary of Delta is okay}.}

\item 

The $x$-coordinates of the vertices of the Newton polygon $\NP(G(w_\star, -))$ are exactly those integers which do not lie in any $\nS_{w_\star, k}$. Equivalently, for an integer $n \geq 1$, the pair $(n, w_\star)$ is near-Steinberg if and only the point $(n, v_p(g_n(w_\star)))$ is not a vertex of $\NP(G(w_\star, -))$.

\item 
Over the maximal near-Steinberg ranges, the slope of $\NP(G(w_\star, -))$ belongs to
\eqref{E:possible slopes}.
\end{enumerate}

\end{theorem}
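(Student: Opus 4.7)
The plan is to establish Part (1) first, then use it together with Corollary~\ref{C:near-Steinberg => straightline} to build a candidate Newton polygon $P^*_{w_\star}$ which simultaneously delivers Parts (2) and (3). For Part (1), suppose $\nS_{w_\star, k_1} \cap \nS_{w_\star, k_2} \neq \emptyset$ with $k_1 \neq k_2$ and $L_1 := L_{w_\star, k_1} \leq L_{w_\star, k_2} =: L_2$. The ultrametric inequality applied to $w_\star - w_{k_1}, w_\star - w_{k_2}, w_{k_1} - w_{k_2}$ combined with the defining lower bound $v_p(w_\star - w_{k_i}) \geq \Delta_{k_i, L_i} - \Delta_{k_i, L_i - 1}$ yields $r := v_p(w_{k_1} - w_{k_2}) \geq \min_i(\Delta_{k_i, L_i} - \Delta_{k_i, L_i - 1})$. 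Lemma~\ref{L:estimate Delta kell} and Lemma~\ref{L:Delta - Delta' bound} give a linear-in-$L_i$ lower bound on this minimum, while \eqref{E:vp of k1-k2} turns this into an exponential-in-$L_1$ lower bound $|M_1 - M_2| = |k_{1\bullet} - k_{2\bullet}| \geq p^{r - 1}$. Playing the resulting inequality off against the maximality constraint $v_p(w_\star - w_{k_1}) < \Delta_{k_1, L_1 + 1} - \Delta_{k_1, L_1}$ and the refined bounds from Corollaries~\ref{C:Delta - Delta'} and \ref{C:Delta - Delta' multi} excludes the ``overlap without nesting'' regime $L_2 - L_1 + 1 \leq |M_1 - M_2| \leq L_1 + L_2 - 2$ and forces either containment $\nS_{w_\star, k_1} \subseteq \nS_{w_\star, k_2}$ or outright disjointness.

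With Part (1) in hand, I define $P^*_{w_\star}: \ZZ_{\geq 0} \to \QQ_{\geq 0}$ by setting $P^*_{w_\star}(n) = v_p(g_n(w_\star))$ at each non-near-Steinberg $n$ and extending $P^*_{w_\star}$ linearly on each maximal near-Steinberg range $\overline \nS_{w_\star, k_j}$ with the straight-line slope provided by Corollary~\ref{C:near-Steinberg => straightline}. Corollary~\ref{C:near-Steinberg => straightline} ensures these two rules match at the endpoints of each $\overline \nS_{w_\star, k_j}$, so $P^*_{w_\star}$ is a well-defined piecewise linear function satisfying $P^*_{w_\star}(n) \leq v_p(g_n(w_\star))$ for all $n$. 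If $P^*_{w_\star}$ is convex, then standard Newton polygon theory gives $P^*_{w_\star} = \NP(G(w_\star, -))$: on one hand $P^*_{w_\star} \leq \NP$ since $\NP$ is the maximal convex minorant of the data, while on the other $\NP(n) \leq v_p(g_n(w_\star)) = P^*_{w_\star}(n)$ at non-near-Steinberg $n$, with equality propagating across each maximal near-Steinberg range by matching endpoints. From this identification, Part (2) reads off immediately (vertices of $P^*_{w_\star}$ are exactly the non-near-Steinberg integers) and Part (3) reads off equally (the slope on each maximal near-Steinberg range is the one from Corollary~\ref{C:near-Steinberg => straightline}, lying in \eqref{E:possible slopes}). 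The degenerate case $w_\star = w_{k_1}$ is handled by Corollary~\ref{C:near-Steinberg => straightline}(2), with the vanishing coefficients $g_n(w_{k_1}) = 0$ for $n \in (d_{k_1}^\ur, d_{k_1}^\Iw - d_{k_1}^\ur)$ simply not contributing to the Newton polygon.

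The main obstacle is establishing convexity of $P^*_{w_\star}$. At each non-near-Steinberg $n_0$, one must compare the left and right slopes of $P^*_{w_\star}$ at $n_0$ and show the former is strictly less than the latter. Each slope is either a one-step difference $v_p(g_{n_0 \pm 1}(w_\star)) - v_p(g_{n_0}(w_\star))$ (when $n_0 \pm 1$ is also non-near-Steinberg) or the canonical slope of a flanking maximal near-Steinberg range from Corollary~\ref{C:near-Steinberg => straightline}. The argument will proceed by a careful case analysis on the position of $n_0$ relative to the triples $(d_k^\ur, \tfrac 12 d_k^\Iw, d_k^\Iw - d_k^\ur)$ for the various $k$, and will make essential use of the second-order increment formula \eqref{E:second order increment of multiplicity}, the boundary maximality condition $v_p(w_\star - w_k) < \Delta_{k, L_{w_\star, k} + 1} - \Delta_{k, L_{w_\star, k}}$ just outside each near-Steinberg range, and the quantitative estimates of Corollaries~\ref{C:Delta - Delta'} and \ref{C:Delta - Delta' multi}.
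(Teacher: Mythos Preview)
Your overall architecture for Parts~(2) and~(3) matches the paper's: build a candidate piecewise-linear function from the non-near-Steinberg values together with the straight-line segments over maximal near-Steinberg ranges provided by Corollary~\ref{C:near-Steinberg => straightline}, and then reduce everything to proving that this function is convex. You correctly identify convexity as the crux and correctly list the main ingredients (the second-difference formula~\eqref{E:second order increment of multiplicity}, the maximality bound just outside each near-Steinberg range, and the refined estimates). Be aware that this convexity check is where essentially all of the work lies: the paper splits it into three cases (b), (c), (d) according to whether $n$ is interior to no closed range, lies on the boundary of exactly one maximal range, or lies on the boundary of two, and the latter two cases require a delicate auxiliary uniqueness statement (Lemma~\ref{L:a technical lemma in the proof of statements (c)(d)}) controlling how many $k_1$ with $d_{k_1}^\ur$ or $d_{k_1}^\Iw-d_{k_1}^\ur$ equal to $n$ can have $v_p(w_{k_1}-w_{k_+})$ large, together with the shift technique of Proposition~\ref{P:shifting points wstar to wk}(4). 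Your outline does not yet contain this structure, but it is at least pointed in the right direction.

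Your approach to Part~(1), however, has a genuine gap. The exponential lower bound $|k_{1\bullet}-k_{2\bullet}|\geq p^{\lceil\frac12+\frac{p-1}{2}(L_1-1)\rceil}$ you derive is correct (it is essentially Lemma~\ref{L:comparison of two near-Steinberg ranges with a nonempty intersection}), but by itself it only forces $L_2$ to be large compared to $L_1$; it does \emph{not} preclude $|M_1-M_2|$ from landing in the window $(L_2-L_1,\,L_1+L_2)$. The maximality constraint $v_p(w_\star-w_{k_1})<\Delta_{k_1,L_1+1}-\Delta_{k_1,L_1}$ gives no useful upper bound here because $\Delta_{k_1,L_1+1}-\Delta_{k_1,L_1}$ can itself be large (Lemma~\ref{L:upper bound of difference of Delta kell} involves the uncontrolled term $\beta$), and Corollaries~\ref{C:Delta - Delta'}--\ref{C:Delta - Delta' multi} require $\tfrac12 d_{k'}^\Iw$ to already lie in $[\tfrac12 d_k^\Iw-\ell,\,\tfrac12 d_k^\Iw+\ell]$, which is exactly what you are trying to establish. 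The paper closes this gap by a different mechanism: after showing (as you do) that $v_p(w_\star-w_{k_{\mathrm{big}}})>v_p(w_\star-w_{k_{\mathrm{small}}})$, it observes that $v_p(w_{k_{\mathrm{big}}}-w_{k_{\mathrm{small}}})=v_p(w_\star-w_{k_{\mathrm{small}}})$, so the smaller range is also near-Steinberg for the point $w_{k_{\mathrm{big}}}$; then Corollary~\ref{C:near-Steinberg => straightline}(2) forces the polygon $\underline\Delta_{k_{\mathrm{big}}}$ to have no vertex with $x$-coordinate in the smaller range, which directly contradicts the fact that $\pm L_{w_\star,k_{\mathrm{big}}}$ are vertices of $\underline\Delta_{k_{\mathrm{big}}}$ whenever the smaller range contains a boundary point of the bigger one. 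You should replace your numerical sketch by this argument.
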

\begin{remark}
\phantomsection
\begin{enumerate}
\item In Theorem~\ref{T:near Steinberg = non-vertex}\,(1), it is allowed to have two disjoint open near-Steinberg interval with a common point at their closures.
\item 
Combined with the discussion on near-Steinberg range in Remark~\ref{R:intuition of near-Steinberg}, this Theorem implies that, for a fixed $n$, a ``majority" of classical points $w_{k'}$ satisfies that $(n, g_n(w_{k'})$ is a vertex of $\NP(G(w_{k'},-))$. 
\end{enumerate}
\end{remark}

Before proving the theorem, we first prove the following technical lemma on the comparison of sizes of two near-Steinberg ranges whose closures have a nonempty intersection.

\begin{lemma}\label{L:comparison of two near-Steinberg ranges with a nonempty intersection}
	Suppose that two positive integers $k_i = k_\varepsilon + (p-1)k_{i\bullet}$ for $i=1,2$ satisfy the condition that the closures of the near-Steinberg ranges $\overline \nS_{w_\star, k_i}=[\frac 12 d_{k_i}^\Iw-L_{w_\star,k_i},\frac 12 d_{k_i}^\Iw+L_{w_\star,k_i}]$ for $i=1,2$ have non-trivial intersection. Without loss of generality, we assume that $L_{w_\star, k_1} \geq L_{w_\star, k_2}$. Then we have $L_{w_\star, k_1}  \geq p^{\lceil\frac 12 + \frac{p-1}2(L_{w_\star, k_2}-1) \rceil} - L_{w_\star, k_2} > L_{w_\star, k_2}$ and 
	\begin{equation}
	\label{E:Delta k1>Delta k2}
	\Delta_{k_1, L_{w_\star, k_1}}
	- \Delta_{k_1, L_{w_\star, k_1}-1}>v_p(w_\star - w_{k_2}).
	\end{equation}
\end{lemma}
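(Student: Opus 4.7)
The plan is to prove the two conclusions in turn: conclusion (1) follows from applying the non-archimedean ultrametric to $w_{k_1}-w_{k_2}$ combined with Lemma~\ref{L:estimate Delta kell}, and conclusion (2) follows from Corollary~\ref{C:finer inequality for Delta} together with a careful case analysis via the ultrametric again. Throughout I will write $L_i := L_{w_\star,k_i}$ for $i=1,2$.

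For (1), I would start from the ultrametric inequality
\[
v_p(w_{k_1}-w_{k_2})\;\ge\; \min\bigl(v_p(w_\star-w_{k_1}),\ v_p(w_\star-w_{k_2})\bigr),
\]
and bound each $v_p(w_\star-w_{k_i})$ below by using the defining inequality \eqref{E:maximal L}, i.e.\ $v_p(w_\star-w_{k_i})\ge \Delta_{k_i,L_i}-\Delta_{k_i,L_i-1}$. Since $(L_i,\Delta_{k_i,L_i})$ is a vertex of the convex hull $\underline\Delta_{k_i}$, we have $\Delta_{k_i,L_i}=\Delta'_{k_i,L_i}$ and $\Delta_{k_i,L_i-1}\le \Delta'_{k_i,L_i-1}$, so the corresponding $\Delta$-difference dominates the $\Delta'$-difference, which by Lemma~\ref{L:estimate Delta kell} is at least $\tfrac{3}{2}+\tfrac{p-1}{2}(L_i-1)$. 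Taking the minimum (it occurs at $k_2$ because $L_2\le L_1$) gives $v_p(w_{k_1}-w_{k_2})\ge \tfrac{3}{2}+\tfrac{p-1}{2}(L_2-1)$. Using \eqref{E:vp of k1-k2} to rewrite $v_p(w_{k_1}-w_{k_2})=1+v_p(k_{1\bullet}-k_{2\bullet})$, and noting that the latter is the valuation of a nonzero integer, I obtain $|k_{1\bullet}-k_{2\bullet}|\ge p^{\lceil \frac{1}{2}+\frac{p-1}{2}(L_2-1)\rceil}$. On the other hand, since $\tfrac12 d_{k_i}^\Iw=k_{i\bullet}+1-\delta_\varepsilon$, the non-trivial intersection hypothesis forces $|k_{1\bullet}-k_{2\bullet}|\le L_1+L_2$, which combines with the previous inequality to give the first claim. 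The strict inequality $L_1>L_2$ then reduces to the elementary estimate $p^{\lceil \frac{1}{2}+\frac{p-1}{2}(L_2-1)\rceil}>2L_2$ for all $L_2\ge 1$ and $p\ge 5$, which can be verified by induction on $L_2$.

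For (2), I would apply Corollary~\ref{C:finer inequality for Delta} with $(k,k',\ell)=(k_1,k_2,L_1)$ to obtain
\[
\Delta'_{k_1,L_1}-\Delta'_{k_1,L_1-1}-v_p(w_{k_1}-w_{k_2})\;\ge\;\tfrac{1}{2}(2L_1-1),
\]
and the same inequality for $\Delta$ in place of $\Delta'$ (again by the vertex-of-convex-hull observation used above). Then I would apply the ultrametric to $w_\star-w_{k_2}=(w_\star-w_{k_1})+(w_{k_1}-w_{k_2})$: if $v_p(w_\star-w_{k_1})>v_p(w_{k_1}-w_{k_2})$ (or equality), one of the two cases gives $v_p(w_\star-w_{k_2})\le v_p(w_{k_1}-w_{k_2})$, and the displayed inequality plus the extra $\tfrac{1}{2}(2L_1-1)\ge\tfrac{1}{2}$ yields the desired strict inequality. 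The remaining case $v_p(w_\star-w_{k_1})<v_p(w_{k_1}-w_{k_2})$ would give $v_p(w_\star-w_{k_2})=v_p(w_\star-w_{k_1})$, and combined with the Corollary bound one derives $v_p(w_{k_1}-w_{k_2})+\tfrac12\le v_p(w_\star-w_{k_1})<v_p(w_{k_1}-w_{k_2})$, a contradiction; so this case does not arise.

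The main obstacle will be verifying the hypothesis of Corollary~\ref{C:finer inequality for Delta} for the choice $\ell=L_1$. When $|k_{1\bullet}-k_{2\bullet}|\le L_1$, the hypothesis $\tfrac12 d_{k_2}^\Iw\in[\tfrac12 d_{k_1}^\Iw-L_1,\tfrac12 d_{k_1}^\Iw+L_1]$ holds directly. Otherwise $L_1<|k_{1\bullet}-k_{2\bullet}|\le L_1+L_2$, and one must show that instead one of the endpoints $d_{k_2}^\ur$ or $d_{k_2}^\Iw-d_{k_2}^\ur$ falls in the open interval $(\tfrac12 d_{k_1}^\Iw-L_1,\tfrac12 d_{k_1}^\Iw+L_1)$. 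This is where the strength of part (1) enters: since $|k_{1\bullet}-k_{2\bullet}|\gg L_2$, the shorter interval $\overline\nS_{w_\star,k_2}\subseteq [d_{k_2}^\ur,d_{k_2}^\Iw-d_{k_2}^\ur]$ must straddle a boundary of $\overline\nS_{w_\star,k_1}$, and a short geometric analysis (using also $L_2\le \tfrac12 d_{k_2}^\new$) places the nearer of $d_{k_2}^\ur,d_{k_2}^\Iw-d_{k_2}^\ur$ into the required open interval. This final geometric step is the technical core of the proof.
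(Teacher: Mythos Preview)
Your argument for part (1) is correct and matches the paper's proof essentially verbatim: both use the ultrametric inequality on $w_{k_1}-w_{k_2}$, the near-Steinberg condition~\eqref{E:maximal L}, Lemma~\ref{L:estimate Delta kell}, and the intersection bound $|k_{1\bullet}-k_{2\bullet}|\le L_1+L_2$.

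For part (2), however, you take an unnecessary detour through Corollary~\ref{C:finer inequality for Delta}, and the ``technical core'' you leave open is genuinely nontrivial. In the boundary case where $k_{2\bullet}-k_{1\bullet}=L_1+L_2$ and $L_2=\tfrac12 d_{k_2}^{\new}$, one has $d_{k_2}^{\unr}=\tfrac12 d_{k_1}^{\Iw}+L_1$, which lies on the \emph{closed} endpoint of $\overline{\nS}_{w_\star,k_1}$ but not in the open interval required by the Corollary; and $d_{k_2}^{\Iw}-d_{k_2}^{\unr}$ lies further to the right. So the hypothesis of Corollary~\ref{C:finer inequality for Delta} can fail for $\ell=L_1$, and your sketch does not rule this out. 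Even the lower-bound side ($d_{k_2}^{\unr}>\tfrac12 d_{k_1}^{\Iw}-L_1$) requires a comparison between $\tfrac12 d_{k_2}^{\new}$ and $L_1$ that is not immediate.

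The paper's proof of (2) is far simpler and avoids Corollary~\ref{C:finer inequality for Delta} entirely: argue by contradiction. If \eqref{E:Delta k1>Delta k2} failed, then $v_p(w_\star-w_{k_2})\ge \Delta_{k_1,L_1}-\Delta_{k_1,L_1-1}\ge \tfrac32+\tfrac{p-1}{2}(L_1-1)$; combined with the same bound for $v_p(w_\star-w_{k_1})$ (from near-Steinberg at $k_1$), the ultrametric gives $v_p(k_{1\bullet}-k_{2\bullet})\ge \tfrac12+\tfrac{p-1}{2}(L_1-1)$, which contradicts $|k_{1\bullet}-k_{2\bullet}|\le L_1+L_2<2L_1$. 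This is exactly the mechanism of part (1) run with $L_1$ in place of $L_2$, and needs nothing beyond what you already established.
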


\begin{proof}
	We can find $n\in \NN$ which is contained in both $\overline \nS_{w_\star, k_i}$'s for $i=1,2$. By Remark~\ref{R:meaning of kmidminmax}, we have
	$$
	k_{\midd\bullet}(n) \in \big[k_{i\bullet} -L_{w_\star, k_i}, k_{i\bullet} +L_{w_\star, k_i}\big] \quad \textrm{for} \quad i = 1,2.
	$$
	It follows that
	\begin{equation}
	\label{E:k1-k2}
	|k_{1\bullet} - k_{2\bullet}| \leq |k_{1\bullet}-k_{\midd\bullet}(n)| + |k_{2\bullet}-k_{\midd\bullet}(n)|\leq L_{w_\star, k_1} + L_{w_\star, k_2}.
	\end{equation}
	On the other hand, condition~\eqref{E:maximal L} and Lemma~\ref{L:estimate Delta kell} imply that
	\begin{align}
	\label{E:vp(wbullet - wki)}
	v_p(w_\star - w_{k_i}) &\geq \Delta_{k_i, L_{w_\star, k_i}} - \Delta_{k_i, L_{w_\star, k_i}-1} =\Delta'_{k_i, L_{w_\star, k_i}} - \Delta_{k_i, L_{w_\star, k_i}-1} \\ &\geq \Delta'_{k_i, L_{w_\star, k_i}} - \Delta'_{k_i, L_{w_\star, k_i}-1} \geq \tfrac 32+ \tfrac {p-1}2 (L_{w_\star, k_i} - 1) .\nonumber
	\end{align}
	
	It then follows that
	\begin{equation}
	\label{E:vp(k1-k2)}
	v_p(k_{1\bullet} - k_{2\bullet}) \geq \tfrac 12+\tfrac{p-1}2\min\big\{L_{w_\star, k_1} - 1, L_{w_\star, k_2} - 1\big\} = \tfrac 12+\tfrac{p-1}2( L_{w_\star, k_2} - 1).
	\end{equation}
	Combining \eqref{E:k1-k2} and \eqref{E:vp(k1-k2)}, we see that $L_{w_\star, k_1} $ is much bigger than $L_{w_\star, k_2}$:
	$$
	L_{w_\star, k_1}  \geq p^{\lceil\frac 12 + \frac{p-1}2(L_{w_\star, k_2}-1) \rceil} - L_{w_\star, k_2} > L_{w_\star, k_2} .
	$$
	This is the first inequality we need to prove. Now we prove (\ref{E:Delta k1>Delta k2}). Indeed, otherwise, we may deduce from (\ref{E:vp(wbullet - wki)}) that 
	\[
	v_p(w_\star - w_{k_i}) \geq \Delta_{k_1, L_{w_\star, k_1}}
	- \Delta_{k_1, L_{w_\star, k_1}-1} \geq \tfrac 32 + \tfrac{p-1}2(L_{w_\star, k_1}-1).
	\] 
	It would then follow that
	$$
	v_p(k_{1\bullet}-k_{2\bullet}) \geq \tfrac 12 + \tfrac{p-1}2(L_{w_\star, k_1}-1) \quad \textrm{but}\quad |k_{1\bullet} - k_{2\bullet}| < 2L_{w_\star, k_1}.
	$$
	This is impossible. So \eqref{E:Delta k1>Delta k2} holds.
\end{proof}

Now we are ready to prove Theorem~\ref{T:near Steinberg = non-vertex}.
\begin{proof}
(1) We need to show that, under the setup and notations of Lemma~\ref{L:comparison of two near-Steinberg ranges with a nonempty intersection}, $\nS_{w_\star, k_2}$ cannot contain a boundary point $\frac 12 d_{k_1}^\Iw - L_{w_\star, k_1}$ or $\frac 12 d_{k_1}^\Iw + L_{w_\star, k_1}$ of the bigger near-Steinberg range $\nS_{w_\star, k_1}$. Suppose otherwise. By \eqref{E:Delta k1>Delta k2}, we see that $v_p(w_\star - w_{k_1}) > v_p(w_{\star} - w_{k_2})$. It follows that $v_p(w_{k_1} - w_{k_2}) = v_p(w_\star - w_{k_2})$. So $\nS_{w_\star, k_2}$ is a near-Steinberg range for $w_{k_1}$ too, i.e. $\nS_{w_{k_1}, k_2} = \nS_{w_\star, k_2}$.  Now apply Corollary~\ref{C:near-Steinberg => straightline}\,(2) to $\nS_{w_{k_1}, k_2}$, we deduce that
the convex hull of points $$(n, \ v_p(g_{n,\hat k_1}(w_{k_1}))) \quad \textrm{for} \quad n \in \overline \nS_{w_{k_1}, k_2}
$$
is a straight line and $w_{k_1}$ is a zero of $g_n(w)$ when $n \in  \nS_{w_{k_1}, k_2}$. After a linear shift, this means that the convex hull of points $(n,\Delta'_{k_1, n})$ for $n \in \overline \nS_{w_{k_1}, k_2}$ is a straight line. In particular, $\underline \Delta_{k_1}$ (with $x$-coordinate shifted to the right by $\frac 12d_{k_1}^\Iw$) does not have a vertex in $\nS_{w_{k_1}, k_2} = \nS_{w_\star, k_2}$.
But by the definition of near-Steinberg range (especially the definition of the integer $L_{w_\ast,k}$ in Definition~\ref{D:near steinberg}), $(\pm L_{w_\star, k_1}, \Delta'_{k_1, \pm L_{w_\star, k_1}})$ are vertices of $\underline \Delta_{k_1}$.
So $\frac 12d_{k_1}^\Iw \pm L_{w_\star, k_1}$ cannot be contained in  $\nS_{w_\star, k_2} = \nS_{w_{k_1}, k_2}$, contradicting our assumption. (1) is proved.

\medskip
(2) For this, we will prove the following list of statements for each $n \geq 1$:
\begin{itemize}
\item [(a)] If $n$ belongs to some near-Steinberg range $\nS_{w_\star, k}$, then $(n, v_p(g_n(w_\star)))$ is not a vertex of $\NP(G(w_\star, -))$.

\item [(b)]
If $n$ is not contained in any closure of the near-Steinberg range $\overline\nS_{w_\star, k}$, then $v_p(g_{n-1}(w_\star)) + v_p(g_{n+1}(w_\star)) > 2v_p(g_n(w_\star))$.

\item [(c)] If $n$
is the boundary point of  precisely one maximal $\overline \nS_{w_\star, k}$, then
\begin{align*}
\textrm{if } n = \tfrac 12 d_{k}^\Iw + L_{w_\star, k}, \quad v_p(g_{n+1}(w_\star)) - v_p(g_n(w_\star) >  \tfrac1{2L_{w_\star, k}}\big(v_p(g_n(w_\star)) - v_p(g_{n-2L_{w_\star, k}}(w_\star))\big);
\\
\textrm{if } n = \tfrac 12 d_{k}^\Iw - L_{w_\star, k}, \quad\tfrac1{2L_{w_\star, k}}\big(v_p(g_{n+2L_{w_\star, k}}(w_\star)) - v_p(g_{n}(w_\star))\big)> v_p(g_{n}(w_\star)) - v_p(g_{n-1}(w_\star)  .
\end{align*}

\item [(d)] If $n$ is contained in the closure of two maximal near-Steinberg range, i.e. $n = \frac 12 d_{k_-}^\Iw + L_{w_\star, k_-} = \frac 12 d_{k_+}^\Iw - L_{w_\star, k_+}$, then
$$
\frac{v_p(g_n(w_\star)) - v_p(g_{n-2L_{w_\star, k_-}}(w_\star))}{2L_{w_\star, k_-}} < \frac{v_p(g_{n+2L_{w_\star, k_+}}(w_\star))-v_p(g_n(w_\star)) }{2L_{w_\star, k_+}}.
$$
\end{itemize}
Statements (b),  (c), and (d) imply that for any three consecutive points $P_1$, $P_2$ and $P_3$ in the set $\Omega:= \{(n,v_p(g_n(w_\star)))| ~n \text{~does not belong to any near-Steinberg range~} \nS_{w_\star,k} \}$, then $P_2$ lies (strictly) below the line segment connecting $P_1$ and $P_3$. It follows that the union of the line segments connecting consecutive points in $\Omega$ is (lower) convex. Combing with statement (a), we prove (2) immediately.

Statement (a) is proved in Corollary~\ref{C:near-Steinberg => straightline}\,(1).
We now prove (b). By assumption, $n$ does not belong to the near-Steinberg range $\nS_{w_\star,k}$ for any $k\equiv k_\varepsilon \bmod p-1$. In particular we can take $k=k_{\midd}(n)$ defined in Lemma-Notation~\ref{L:extremal ks}. The non-near-Steinberg condition says that
$$v_p(w_\star - w_k) < \Delta_{k, 1} - \Delta_{k, 0} \stackrel{\textrm{Lemma~\ref{L:Delta - Delta' bound}}}= \Delta'_{k,1}-\Delta'_{k,0}.
$$
By ghost duality and \eqref{E:increment of ghost series valuation 2}, we deduce that
\begin{equation}
\label{E:vp(wbullet - wk)< Delta -Delta explained}
2v_p(w_\star - w_k) < \Delta'_{k, 1} + \Delta'_{k, -1} - 2\Delta'_{k, 0} =\hspace{-30pt}\sum_{
k_{{\max}\bullet}(n-1) < k'_\bullet \leq k_{{\max}\bullet}(n)} 
\hspace{-30pt} v_p(w_k - w_{k'})
+\hspace{-30pt}
\sum_{
k_{{\min}\bullet}(n-1) \leq k'_\bullet < k_{{\min}\bullet}(n)}
\hspace{-30pt}v_p(w_k - w_{k'}).
\end{equation}
We need to show that
$v_p(g_{n-1}(w_\star)) + v_p(g_{n+1}(w_\star)) > 2v_p(g_n(w_\star))$. By a formula similar to \eqref{E:increment of ghost series valuation 2}, it is equivalent to show that
\begin{equation}
\label{E:non-nS => vertex 1}
\sum_{
k_{{\max}\bullet}(n-1) < k'_\bullet \leq k_{{\max}\bullet}(n)} 
\hspace{-30pt} v_p(w_\star - w_{k'})
+\hspace{-30pt}
\sum_{
k_{{\min}\bullet}(n-1) \leq k'_\bullet < k_{{\min}\bullet}(n)}
\hspace{-30pt}v_p(w_\star - w_{k'})
> 
2 v_p(w_\star - w_{k}).
\end{equation}
Note that $k_{{\max}\bullet}(n) - k_{{\max}\bullet}(n-1)\in \{a+2,p-1-a\}$. So there are at least $3$ terms on the left hand side of \eqref{E:non-nS => vertex 1}. Thus, if $v_p(w_\star - w_k)\leq 1$, the inequality \eqref{E:non-nS => vertex 1} holds trivially.

Now we assume that $v_p(w_\star - w_k) >1$.
If $v_p(w_\star - w_{k'}) \geq  v_p(w_k-w_{k'})$ for every $k'$ appearing in \eqref{E:non-nS => vertex 1}, then \eqref{E:non-nS => vertex 1} would follow from \eqref{E:vp(wbullet - wk)< Delta -Delta explained}. Now we assume that this is not the case, and let $k'$ be in the sum \eqref{E:non-nS => vertex 1} with maximal $v_p(w_k-w_{k'})$ satisfying $v_p(w_k-w_{k'})>v_p(w_\star - w_{k'}) \geq  1$. Note that by the equality (2) in Lemma~\ref{L:three elementary identities}:
\begin{equation}
\label{E:e2in4.20}
k_{{\max}\bullet}(n) - k_\bullet = pk_\bullet - \tilde k_{{\min}\bullet}(n -1)  \quad \textrm{and} \quad k_{{\max}\bullet}(n-1) - k_\bullet = pk_\bullet - \tilde k_{{\min}\bullet}(n),
\end{equation}
the existence of $k''_\bullet \in [k_{{\min}\bullet}(n-1) , k_{{\min}\bullet}(n))$ implies that there is a $k'_\bullet\in (k_{{\max}\bullet}(n-1), k_{{\max}\bullet}(n)]$ such that 
\begin{equation}
\label{E:valuation}
v_p(k'_\bullet - k_\bullet) = v_p(k_\bullet - k''_\bullet) +1.
\end{equation}
By the maximality of $v_p(w_k-w_{k'})$, we deduce that $k'_\bullet$ must live in $(k_{{\max}\bullet}(n-1), k_{{\max}\bullet}(n)]$. Moreover, since $v_p(w_k-w_{k'})>1$, using \eqref{E:e2in4.20} again, we further deduce that there exists $k''_\bullet \in [k_{{\min}\bullet}(n-1) , k_{{\min}\bullet}(n))$ satisfying \eqref{E:valuation}. In particular, we have
$$
v_p(w_\star - w_{k''}) \geq \min\{v_p(w_\star - w_{k}), v_p(w_k - w_{k''}) \} > v_p(w_\star - w_k)-1.
$$
In conclusion, using that the first sum in \eqref{E:non-nS => vertex 1} has at least $3$ terms, we deduce that
\begin{align*}
\textrm{l.h.s. of \eqref{E:non-nS => vertex 1}}\ &\geq \ v_p(w_\star - w_{k'}) + 1 + v_p(w_\star - w_{k''}) \\
 &> \
v_p(w_\star - w_{k})+1 + (v_p(w_\star - w_k)-1) = 2v_p(w_\star - w_{k}).
\end{align*}
This is exactly what we want.

\medskip
We prove (c) and (d) using a uniform argument, with some abuse of notations: 

\begin{notation}\label{Notation:L+ and L-}
	We write $n = \frac 12 d_{k_-}^\Iw + L_- = \frac 12 d_{k_+}^\Iw - L_+$ for $L_\pm= L_{w_\star, k_\pm}$. If $n$ is not the left (resp. right) boundary of a maximal near-Steinberg range, this is understood as $L_+ = \frac 12$ (resp. $L_- = \frac 12$) and $\frac 12d_{k_+}^\Iw = \frac 12 d_k^\Iw +\frac 12$ (resp. $\frac 12d_{k_-}^\Iw = \frac 12 d_k^\Iw -\frac 12$); and we shall not refer to the meaning of $k_+$ (resp. $k_-$) in later discussion. Moreover, in this case, we shall understand $\Delta_{k_+, L_+} - \Delta_{k_+, L_+-1} = 1$ (resp. $\Delta_{k_-, L_-} - \Delta_{k_-, L_--1} = 1$).
\end{notation}

For the rest of the discussion, we assume that $L_+ \geq L_-$, and the other case can be proved similarly. In this case, $\nS_{w_\star, k_+}$ is a genuine near-Steinberg range, and by  \eqref{E:Delta k1>Delta k2} and Lemma~\ref{L:estimate Delta kell}, 
\[
\Delta_{k_+, L_+} - \Delta_{k_+, L_+-1} > \Delta_{k_-, L_-} - \Delta_{k_-, L_--1}.
\]
We need to show that the following three points 
\begin{equation}
\label{E:three points convex}
\big( \tfrac 12d_{k_-}^\Iw - L_-, v_p(g_{n-2L_-}(w_\star))\big), \quad \big( n, v_p(g_{n}(w_\star))\big), \quad \big( \tfrac 12d_{k_+}^\Iw + L_+, v_p(g_{n+2L_+}(w_\star))\big)
\end{equation}
are convex. 

Before proceeding, we need a technical result:

\begin{lemma}\label{L:a technical lemma in the proof of statements (c)(d)}
	Under the above notations, we have:
\begin{enumerate}
	\item when $L_+\geq 2$, there is at most one integer $k_1$ such that
	\begin{itemize}
		\item $k_1 \equiv k_\varepsilon \bmod p-1$,
		\item
		either $n = d_{k_1}^\unr $ or $n = d_{k_1}^\Iw - d_{k_1}^\unr$, or equivalently $k_{1} \in \big( k_{\max}(n-1), k_{\max}(n)\big] \cup \big[ k_{\min}(n-1), k_{\min}(n)\big)$,
		and
		\item
		$v_p(w_{k_1} - w_{k_+}) \geq v_p(w_\star - w_{k_+})$ (and hence $v_p(w_{k_1} - w_{k_+}) \geq \Delta_{k_\pm, L_\pm}-\Delta_{k_\pm,L_\pm -1}$). 
	\end{itemize} 
	\item when $L_+=1$, either the statement in $(1)$ holds, or there exist two integers $k_1$ and $k_1'$ such that
	\begin{itemize}
		\item $k_1 \equiv k_1'\equiv k_\varepsilon \bmod p-1$,
		\item
		 $n = d_{k_1}^\unr $ and $n = d_{k'_1}^\Iw - d_{k'_1}^\unr$, or equivalently $k_{1} \in \big( k_{\max}(n-1), k_{\max}(n)\big] $ and $k_1'\in \big[ k_{\min}(n-1), k_{\min}(n)\big)$,
		and
		\item
		$v_p(w_{k_1} - w_{k_+}) =2\geq v_p(w_\star - w_{k_+})$ and $v_p(w_{k'_1} - w_{k_+}) \geq v_p(w_\star - w_{k_+})$
		\end{itemize} 
		When the latter case happens, we say that we are in `special' case.
\end{enumerate}
\end{lemma}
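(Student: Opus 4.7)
The plan is to apply Corollary~\ref{C:Delta - Delta' multi}, especially its ``moreover'' part, with $k=k_+$ and $\ell''=L_++1$ chosen so that $n=\tfrac12 d_{k_+}^\Iw-L_+$ lies strictly inside the open interval $\bigl(\tfrac12 d_{k_+}^\Iw-\ell'',\,\tfrac12 d_{k_+}^\Iw+\ell''\bigr)$. First I would record the lower bound $v_p(w_\star-w_{k_+}) \ge \Delta_{k_+,L_+}-\Delta_{k_+,L_+-1} \ge \tfrac32+\tfrac{p-1}{2}(L_+-1)$ coming from the near-Steinberg hypothesis~\eqref{E:maximal L} and Lemma~\ref{L:estimate Delta kell}. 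Since $k_1-k_+=(p-1)(k_{1\bullet}-k_{+\bullet})$ is a nonzero integer, the valuation $v_p(w_{k_1}-w_{k_+})=1+v_p(k_{1\bullet}-k_{+\bullet})$ is a positive integer, so the hypothesis $v_p(w_{k_1}-w_{k_+})\ge v_p(w_\star-w_{k_+})$ upgrades to $v_p(w_{k_1}-w_{k_+})\ge\lceil v_p(w_\star-w_{k_+})\rceil$.

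For $L_+\ge 2$, a routine linear-versus-logarithmic comparison shows that $\tfrac32+\tfrac{p-1}{2}(L_+-1)$ exceeds the ``moreover'' threshold $\lfloor \log_p((p+1)(L_++1))+2\rfloor$ for every $p\ge 7$. Thus every admissible $k_1$ automatically satisfies the moreover's valuation hypothesis, and the moreover delivers at most two such candidates. Its structural dichotomy says that if exactly two exist, one of $\{d_{k_1}^\ur,\,d_{k_1'}^\Iw-d_{k_1'}^\ur\}$ is $\ge \tfrac12 d_{k_+}^\Iw$ and the other $\le \tfrac12 d_{k_+}^\Iw$. In our situation both values equal $n=\tfrac12 d_{k_+}^\Iw-L_+<\tfrac12 d_{k_+}^\Iw$, which is impossible, so at most one $k_1$ exists, giving statement~(1).

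For $L_+=1$ the integer lower bound $v_p(w_{k_1}-w_{k_+})\ge 2$ may fall short of the moreover threshold $\lfloor \log_p(2(p+1))+2\rfloor$, so I would count candidates directly. By Lemma-Notation~\ref{L:extremal ks}(3) there is at most one $k_1'$ with $d_{k_1'}^\Iw-d_{k_1'}^\ur=n$ altogether; and by Lemma-Notation~\ref{L:extremal ks}(2) the set of $k_{1\bullet}$ with $d_{k_1}^\ur=n$ is confined to an interval of length $\le p-2$, which can contain at most one integer with $v_p(k_{1\bullet}-k_{+\bullet})\ge 1$. Hence at most one $k_1$ of each type exists, yielding either statement~(1) or the special case where both are present. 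In that special case, if the ``$\ur$''-type $k_1$ had $v_p(w_{k_1}-w_{k_+})\ge 3$, then the moreover at $\ell''=2$ would apply to $k_1$ and, by the same dichotomy argument as above, rule out the simultaneous existence of the ``$\Iw-\ur$''-type $k_1'$; hence the coexistence forces $v_p(w_{k_1}-w_{k_+})=2$ by integrality, and the companion inequality $2\ge v_p(w_\star-w_{k_+})$ follows from the same exclusion (otherwise $v_p(w_{k_1}-w_{k_+})\ge 3$, contradiction).

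The main obstacle will be the bookkeeping in the $L_+=1$ special case: pinning down the exact equality $v_p(w_{k_1}-w_{k_+})=2$ (rather than a larger integer) via the moreover-exclusion principle, and the numerical verification that the linear-versus-logarithmic comparison indeed secures the moreover hypothesis uniformly for $L_+\ge 2$ across all relevant primes. A careful interplay of the upper bound from Corollary~\ref{C:finer inequality for Delta}, the integrality of the valuations, and the moreover's structural dichotomy will be needed to close the argument cleanly.
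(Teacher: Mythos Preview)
Your approach via the ``moreover'' clause of Corollary~\ref{C:Delta - Delta' multi} is genuinely different from the paper's. The paper argues directly with the explicit formulas of Lemma-Notation~\ref{L:extremal ks}: writing $k_{1\bullet}-k_{+\bullet}$ and $p(k_{+\bullet}-k'_{1\bullet})$ explicitly, it shows their difference equals $(p+1)L_+ + s+s'-p$ for certain bounded integers $s,s'$, and then plays off the $p$-adic size of this number against its archimedean size to force $L_+=1$ and, in that case, $v_p(k_{1\bullet}-k_{+\bullet})=1$. Your route for $L_+\ge2$ is cleaner in spirit and essentially correct, with the caveat that Corollary~\ref{C:Delta - Delta' multi} is stated under $p\ge7$; you would need to note that the proof of its moreover part does not actually invoke that hypothesis, so the case $p=5$ is still covered.

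There is, however, a genuine gap in your treatment of the special case $L_+=1$. To pin down $v_p(w_{k_1}-w_{k_+})=2$, you assume it is $\ge3$ and invoke the moreover's dichotomy at $\ell''=2$ to exclude the coexistence of $k_1$ and $k_1'$. But that dichotomy only constrains the set of $k'$ meeting the high threshold $v_p(w_{k'}-w_{k_+})\ge\lfloor\log_p(2(p+1))+2\rfloor=3$; the ``$\Iw-\ur$''-type $k_1'$ is only known to satisfy $v_p(w_{k_1'}-w_{k_+})\ge2$, so it need not appear among the ``such $k'$'' counted by the moreover, and no contradiction ensues. The paper closes this precisely by observing that \emph{both} $v_p(k_{1\bullet}-k_{+\bullet})\ge1$ and $v_p(k'_{1\bullet}-k_{+\bullet})\ge1$, so $p(k_{+\bullet}-k'_{1\bullet})$ has valuation $\ge2$; since the difference $(pk_{+\bullet}-pk'_{1\bullet})-(k_{1\bullet}-k_{+\bullet})$ has absolute value $\le 2p-1-\theta<p^2$ and is divisible by $p$ at most once, one is forced to $v_p(k_{1\bullet}-k_{+\bullet})=1$. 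You would need either to import this explicit step or to strengthen your exclusion argument so that it applies to $k_1'$ at the weaker threshold $\ge2$.
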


%The rationale behind this definition is technical result is: whenever $k' \equiv k_\varepsilon \bmod (p-1)$ satisfies $v_p(w_{k'}-w_{k_+}) \geq v_p(w_\star - w_{k_+})$, we shall show later that the ghost multiplicity $m_{n'}(k')$ is linear over $n' \in [n-2L_-, n+2 L_+]$, except possibly for this $k_1$; then we may invoke Proposition~\ref{P:shifting points wstar to wk}\,(4) to shift to weight $k_+$.
\begin{proof}[Proof of Lemma~\ref{L:a technical lemma in the proof of statements (c)(d)}]

To see the uniqueness of $k_1$, we first note that there is at most one such $k_1$ in each of the range $\big( k_{\max}(n-1), k_{\max}(n)\big]$ and $ \big[ k_{\min}(n-1), k_{\min}(n)\big)$ because those are intervals of length $< p^2$. (We warn the reader again of our unusual definition of $k_{\min}$ in Lemma-Notation~\ref{L:extremal ks}.) Suppose that there are two $k_1$ and $k'_1$ in the two intervals above, respectively. Then 
\begin{equation}
\label{E:vp of k1-k+}
v_p(k_{1\bullet}-k_{+\bullet}) \geq v_p(w_\star - w_{k_+})-1 \geq \Delta_{k_+, L_+}-\Delta_{k_+, L_+-1}-1 \geq \tfrac 12+ \tfrac {p-1}2(L_+-1),
\end{equation}
where the last inequality is from Lemma~\ref{L:estimate Delta kell}, and the same inequality holds for $v_p(k'_{1\bullet}-k_{+\bullet})$.
On the other hand, using the formulae for $k_{\midd\bullet}$ and $k_{\max\bullet}$ from Lemma-Notation~\ref{L:extremal ks} and that $n = \frac12 d_{k_+}^\Iw-L_+$, we have that
\[
k_{+\bullet}=n+L_++\delta_\varepsilon-1, \qquad k_{1\bullet}= (p+1)\tfrac {n}2 +  \beta_{[n]}-1-s
\]
for some $s \in \{0, \dots, p-\theta\}$,  where $\theta = \beta_{[n-1]}-\beta_{[n]}+\frac{p+1}2$ as given in the proof of Lemma~\ref{L:estimate Delta kell}. 
It follows that
\begin{equation}
\label{E:k1-k+-s}
k_{1\bullet} - k_{+\bullet} - \big(\tfrac{p-1}2k_{+\bullet} - \tfrac{p+1}2 (L_++\delta_\varepsilon)-1\big)  = \tfrac{p+1}2 + \beta_{[n]}-s.
\end{equation}
Similarly, using the formulae for $k_{\midd\bullet}$ and $k_{\min\bullet}$  from Lemma-Notation~\ref{L:extremal ks}, we can write $pk_\bullet'=(p+1)(\frac n2+\delta_{\varepsilon })-\beta_{[n]}-s'$, for some $s'\in \{\theta,\dots, p \}$ and 
get
$$
pk_{+\bullet} -pk'_{1\bullet} -  \big(\tfrac{p-1}2k_{+\bullet} + \tfrac{p+1}2( L_+-\delta_\varepsilon)-1\big) =\tfrac{p+1}2+\beta_{[n]}+s'-p.
$$
So we deduce that
$$
v_p\big( (pk_{+\bullet}-pk'_{1\bullet}) - (k_{1\bullet} - k_{+\bullet})\big)  \geq \tfrac 12 + \tfrac {p-1}2(L_+-1), \quad \textrm{and}
$$
$$
\big|(pk_{+\bullet}-pk'_{1\bullet}) - (k_{1\bullet} - k_{+\bullet})\big| = (p+1)L_+ + s+s'-p.
$$
This is not possible unless $L_+  =1$.

Now we assume $L_+=1$. Notice that $|s+s'-p|\leq p-\theta$. Thus $v_p\big( (pk_{+\bullet}-pk'_{1\bullet}) - (k_{1\bullet} - k_{+\bullet})\big) \leq 1$. This forces $v_p(k_{1\bullet}-k_{+\bullet})= 1$,  and hence $v_p(w_{k_1}-w_{k_+})=2$.
	
\end{proof}
\medskip
We continue our proof of statements (c) and (d). As suggested by Lemma~\ref{L:a technical lemma in the proof of statements (c)(d)}, we divide our discussion into two cases: $L_+\geq 2$ and $L_+=1$. 

Assume $L_+\geq 2$ first.
Set $\bfk = \{k_+,k_1\}$ (and keep in mind that it is possible that $k_+=k_1$).
We would like to apply Proposition~\ref{P:shifting points wstar to wk}\,(4) to  $k = k_+$, the above $\bfk$, $w_\star$, and $[n_-, n_+] = [n-2L_-, n+2L_+]$. For this, we need to check that for each $k' = k_\varepsilon + (p-1)k'_\bullet \notin \bfk$ with $v_p(w_{k'}-w_{k_+}) \geq v_p(w_{k_+}-w_\star)$, the numbers $d_{k'}^\unr, d_{k'}^{\Iw}-d_{k'}^{\unr}, \frac 12d_{k'}^\Iw$ do not belong to $(n-2L_-, n+2L_+)$. 
\begin{itemize}
\item 
By Proposition~\ref{P:shifting points wstar to wk}\,(1), they do not belong to $(n, n+2L_+)$ and $\frac 12d_{k'}^\Iw \neq n$.
\item
$d_{k'}^\unr, d_{k'}^{\Iw}-d_{k'}^{\unr}$ are not equal to $n$ by our choice of $\bfk$ above.
\item
When $\nS_{w_\star, L_-}$ is a genuine near-Steinberg range, $\Delta_{k_+, L_+} - \Delta_{k_+, L_+-1} > \Delta_{k_-, L_-} - \Delta_{k_-, L_--1}$ allows us to apply Proposition~\ref{P:shifting points wstar to wk}\,(1) to conclude that $d_{k'}^\unr, d_{k'}^{\Iw}-d_{k'}^{\unr}, \frac 12d_{k'}^\Iw$ do not belong to $(n-2L_-, n)$. 
(When $L_- = \frac 12$, no more discussion is needed.)
\end{itemize}

To sum up, we can apply Proposition~\ref{P:shifting points wstar to wk}\,(4)  to deduce that the three points in \eqref{E:three points convex} is a linear 
shift of the following three points:
$$
\Big( j,\ v_p(g_{j, \hat{\bfk}}(w_{k_+})) + \sum_{k \in \bfk} m_j(k) v_p(w_\star-w_k)\Big)\quad \textrm{with} \quad j= n-2L_-,\,n,\, n+2L_+.
$$
These points are the same as
\begin{equation}
\label{E:vertex three points after shift}\Big( j,\ v_p(g_{j, \hat{ k_+}}(w_{k_+})) + m_j(k_+)v_p(w_\star-w_{k_+}) - m_j(k_1)\big(v_p(w_{k_+}-w_{k_1})- v_p(w_\star-w_{k_1})  \big)\Big).
\end{equation}
with $j = n-2L_-, n, n+2L_+$, where the last term appears only when $k_1$ exists and is not equal to $k_+$ (note that we flip the sign before the sum over $\bfk$ so that the summands are positive). we write $P_-$, $P$, $P_+$ for these three points, respectively.
We separate the discussion into the following three possibilities:

(i)
If $k_1=k_+$, that is, $d_{k_+}^\unr = n$, then $m_j(k_+) = 0$ for $j = n-2L_-, n, n+2L_+$.
Proposition~\ref{P:ghost compatible with theta AL and p-stabilization} (applied to the slope of $\overline{PP_+}$) and Proposition~\ref{P:gouvea k-1/p+1 conjecture} (applied to the slope of $\overline{P_-P}$) together imply that
\[
\textrm{slope of }\overline{PP_+}  - \textrm{slope of }\overline{P_-P} \geq \frac {k_+-2}{2} -  \frac{k_+-1}{p+1}>0.
\]
This last inequality needs $k_+ \geq 3$; but this is okay because when $k_+ = 2 $ or $ 2\frac 12$ (by our convention made in Notation~\ref{Notation:L+ and L-}), $d_{k_+}^\Iw$ is at most $1$ and we cannot be in the case of (c) and (d).
The points $P_-$, $P$, and $P_+$ are clearly convex. Thus (c) and (d) are proved in this case.

(ii) If there is no $k_1$, then obviously $d_{k_+}^\unr\neq n$. Using $\Delta_{k_+, L_+} - \Delta_{k_+, L_+-1} \geq \Delta_{k_-, L_-} - \Delta_{k_-, L_--1}$, we deduce that $d_{k_+}^\unr$ does not belong to $(n-2L_-, n)$. Thus $[n-2L_-, n]\subseteq [d_{k_+}^\unr, \tfrac 12 d_{k_+}^\Iw)$; so $m_j(k_+)$ is linear in $j \in [n-2L_-,n]$. By the ghost duality, the slope of $\overline{P_-P}$ is given by
\begin{align*}
&\frac{k_+-2}2  -\frac{ \Delta'_{k_+, L_++2L_-} - \Delta'_{k_+, L_+}}{2L_-} + v_p(w_\star- w_{k_+}) 
\\
\leq\ & \frac{k_+-2}2 - \big(\Delta_{k_+, L_++1}- \Delta_{k_+, L_+}\big) + v_p(w_\star - w_{k_+}) < \frac{k_+-2}2,
\end{align*}
where the first inequality uses the fact that $(L_+, \Delta'_{k_+, L_+})$ is a vertex in $\underline \Delta_{k_+}$. On the other hand, the slope of $\overline{PP_+}$ is simply $\frac{k_+-2}2$. Then (c) and (d) hold in this case too.

(iii) If $k_1$ exists and $k_1 \neq k_+$, then by the uniqueness of $k_1$, we get $d_{k_+}^\unr\neq n$. As in (ii), we first have $[n-2L_-, n]\subseteq [d_{k_+}^\unr, \tfrac 12 d_{k_+}^\Iw)$; so $m_j(k_+)$ is linear in $j \in [n-2L_-,n]$. 
Regardless of whether  $n=d_{k_1}^{\unr}$ or $d_{k_1}^\Iw-d_{k_1}^\ur$,  by \eqref{E:vertex three points after shift},
\begin{align*}
&\textrm{slope of }\overline{PP_+}  - \textrm{slope of }\overline{P_-P}
\\
=\ & \frac{k_+-2}2 - \Big( \frac{k_+-2}2  -\frac{\Delta'_{k_+, L_++2L_-} - \Delta'_{k_+, L_+}}{2L_-} +v_p(w_\star-w_{k_+})\Big) - \big(v_p(w_{k_+}-w_{k_1})- v_p(w_\star-w_{k_1})  \big)
\\
\geq \ &\frac{\Delta'_{k_+, L_++2L_-} - \Delta'_{k_+, L_+}}{2L_-} - v_p(w_{k_+}-w_{k_1}),
\end{align*}
where the last inequality uses the fact that $$v_p(w_{k_1} - w_{k_+}) \geq  v_p(w_\star-w_{k_+})\quad \Rightarrow \quad v_p(w_\star - w_{k_1}) \geq  v_p(w_\star-w_{k_+}).$$

Applying Corollary~\ref{C:finer inequality for Delta} to $k = k_+$, $k'=k_1$ $\ell = L_++1, \dots, L_++2L_-$, and noting that $d_{k_1}^\ur$ or $d_{k_1}^\Iw-d_{k_1}^\ur $ belong to $(\frac 12d_{k_+}^\Iw - \ell, \frac 12d_{k_+}^\Iw+\ell)$ for these $\ell$, we then have
$$
\Delta'_{k_+, \ell} - \Delta'_{k_+, \ell-1} \geq \tfrac 12(2\ell-1) + v_p(w_{k_{+}} - w_{k_1})>v_p(w_{k_{+}} - w_{k_1}).
$$
Summing this up over $\ell = L_++1, \dots, L_++2L_-$, we obtain
$$
\Delta'_{k_+, L_++2L_-} - \Delta'_{k_+, L_+} > 2L_- \cdot v_p(w_{k_{+}} - w_{k_1}).
$$
This proves (c) and (d) in the case $L_+\geq 2$.

Now we treat the case for $L_+=1$, and hence $L_-=\frac 12$. We can assume that we are in the `special case'. Replace the set $\bfk=\{k_+,k_1 \}$ by $\bfk=\{k_+,k_1,k_1' \}$, and a similar argument as above reduces the proof to show that the following three points:
\begin{equation}
\Big( j,\ v_p(g_{j, \hat{ k_+}}(w_{k_+})) + m_j(k_+)v_p(w_\star-w_{k_+}) - \sum _{k=k_1,k_1'}m_j(k)\big(v_p(w_{k_+}-w_{k})- v_p(w_\star-w_{k})  \big)\Big).
\end{equation}
with $j=n-1,n,n+2$, are convex. We denote these three points by $P_-,P,P_+$ as before. By a similar argument as in (iii), we have 
\begin{align*}
&\textrm{slope of }\overline{PP_+}  - \textrm{slope of }\overline{P_-P}
\\
=\ & \frac{k_+-2}2 - \Big( \frac{k_+-2}2  -(\Delta'_{k_+, 2} - \Delta'_{k_+, 1}) +v_p(w_\star-w_{k_+})\Big) -
\sum_{k=k_1,k_1'} \big(v_p(w_{k_+}-w_{k})- v_p(w_\star-w_{k})  \big)
\\
\geq \ &(\Delta'_{k_+, 2} - \Delta'_{k_+, 1})- v_p(w_{k_+}-w_{k'_1})-\Big(\big(v_p(w_{k_+}-w_{k_1})- v_p(w_\star-w_{k_1})  \big)\Big).
\end{align*}
Recall $v_p(w_{k_1}-w_{k_+})=2$ and $v_p(w_\star-w_{k_1})\geq 1$, so we have 
\[
\textrm{slope of }\overline{PP_+}  - \textrm{slope of }\overline{P_-P}\geq (\Delta'_{k_+, 2} - \Delta'_{k_+, 1}) - v_p(w_{k_+}-w_{k'_1})-1
\]

Since $d_{k_1'}^\Iw-d_{k_1'}^\ur=n\in (\frac 12 d_{k_+}^\Iw-2,\frac 12 d_{k_+}^\Iw+2)$, we can apply Corollary~\ref{C:finer inequality for Delta} to $k=k_+$, $k'=k_1'$ and $l=2$, and get 
\[
\Delta'_{k_+, 2} - \Delta'_{k_+, 1}- v_p(w_{k_+}-w_{k'_1})\geq \frac 32>1.
\]
This completes the proof of (c) and (d), and part (2) of the theorem as well.

\medskip
(3) By (2), over a maximal near-Steinberg range $\overline\nS_{w_\star, k}$, the slopes are given by the slopes of the segment connecting
$$
\big( \tfrac 12d_{k}^\Iw- L_{w_\star, k},\ v_p(g_{\frac 12d_{k}^\Iw- L_{w_\star, k}}(w_\star)) \big) \quad \textrm{and} \quad \big( \tfrac 12d_{k}^\Iw+ L_{w_\star, k},\ v_p( g_{\frac 12d_{k}^\Iw+ L_{w_\star, k}}(w_\star)) \big).
$$
Then (3) follows from Corollary~\ref{C:near-Steinberg => straightline}\,(1).
\end{proof}

This proposition above implies the following, which is in close relation with Breuil--Buzzard--Emerton's Conjecture~\ref{Conj:Breuil-Buzzard-Emerton}.
\begin{corollary}
\label{C:integrality of slopes of G}
Fix $\varepsilon$ a relevant character. Then for any $k \equiv k_\varepsilon  \bmod p-1$, 
\begin{enumerate}
\item the slopes of $\NP(G(w_k, - ))$ with multiplicity one are all integers, and
\item 
other slopes of $\NP(G(w_k, - ))$ always have even multiplicity and they belong to $\frac a2 +\ZZ$.
\end{enumerate}
\end{corollary}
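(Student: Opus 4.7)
The plan is to combine Theorem~\ref{T:near Steinberg = non-vertex} with the elementary fact that distinct segments of a (lower convex) Newton polygon must have distinct slopes, since otherwise they would merge into one segment. Thus each slope occurs in exactly one segment of $\NP(G(w_k,-))$, and its multiplicity equals the width of that segment. It then suffices to analyze each consecutive pair of vertices.

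First I would establish that for every vertex $n$ of $\NP(G(w_k,-))$, the value $v_p(g_n(w_k))$ is a non-negative integer. The middle near-Steinberg range $\nS_{w_k, k} = (d_k^\unr, d_k^\Iw - d_k^\unr)$ is maximal, since $v_p(w_k - w_k) = \infty$ forces $L_{w_k,k} = \tfrac12 d_k^\new$. By Theorem~\ref{T:near Steinberg = non-vertex}\,(2) any vertex $n$ avoids every open near-Steinberg range, hence $n \notin \nS_{w_k,k}$, so $m_n(k) = 0$ and
\[
v_p(g_n(w_k)) = \sum_{\substack{k' \neq k \\ k' \equiv k_\varepsilon \bmod{(p-1)}}} m_n(k') \cdot v_p(w_k - w_{k'}) \in \ZZ_{\geq 0},
\]
where \eqref{E:vp of k1-k2} gives $v_p(w_k - w_{k'}) = 1 + v_p(k - k') \in \ZZ_{\geq 1}$.

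Next I would analyze the gap between consecutive vertices $n_- < n_+$. If $n_+ - n_- = 1$, the segment has integer slope $v_p(g_{n_+}(w_k)) - v_p(g_{n_-}(w_k)) \in \ZZ$ by the previous step, giving a multiplicity-$1$ integer slope. If $n_+ - n_- \geq 2$, every integer in $(n_-, n_+)$ lies in some open near-Steinberg range. Using the nesting property from Theorem~\ref{T:near Steinberg = non-vertex}\,(1), I would show $(n_-, n_+)$ coincides with a single maximal near-Steinberg range $\nS_{w_k, k'}$ of width $2L_{w_k, k'}$: if two distinct maximal near-Steinberg ranges shared an endpoint $m \in (n_-, n_+)$, then no other near-Steinberg range could contain $m$ (by the nesting and maximality), so $m$ would be a vertex between $n_-$ and $n_+$, contradicting consecutiveness. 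Hence $n_+ - n_-$ is even. For the slope over this range, apply Corollary~\ref{C:near-Steinberg => straightline}: when $k' \neq k$ we are in case (1) and the slope lies in $\tfrac a2 + \ZZ + \ZZ\cdot \gamma$ with $\gamma = \max v_p(w_k - w_{k''}) \in \ZZ$, hence in $\tfrac a2 + \ZZ$; when $k' = k$ the slope equals $\tfrac{k-2}{2}$ by ghost duality (Proposition~\ref{P:ghost compatible with theta AL and p-stabilization}\,(4)), and $\tfrac{k-2}{2} \in \tfrac a2 + \ZZ$ because $k - 2 \equiv k_\varepsilon - 2 = \{a+2s_\varepsilon\} \equiv a \pmod 2$ (as $p-1$ is even).

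Combining, slopes of multiplicity $1$ arise from width-$1$ gaps and are integers, while slopes of multiplicity $\geq 2$ arise from near-Steinberg gaps, have even multiplicity $2L_{w_k, k'}$, and lie in $\tfrac a2 + \ZZ$. This is exactly the dichotomy in the statement. The only subtle point requiring care is the gap analysis in the second paragraph, namely identifying $(n_-, n_+)$ with a single maximal near-Steinberg range; this is a purely combinatorial consequence of the nesting in Theorem~\ref{T:near Steinberg = non-vertex}\,(1) and is not a serious technical obstacle.
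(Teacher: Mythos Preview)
Your proof is correct and follows essentially the same approach as the paper. The paper's own proof is considerably more terse (three sentences): it simply observes that $v_p(g_n(w_k))$ is always an integer, so multiplicity-one slopes are integers, and then cites Theorem~\ref{T:near Steinberg = non-vertex}\,(3) for part~(2). Your version spells out the details the paper leaves implicit, in particular the maximality of $\nS_{w_k,k}$, the fact that on any other maximal near-Steinberg range one has $m_n(k)=0$ so that $\gamma\in\ZZ$ in \eqref{E:possible slopes}, and the separate treatment of the range $\nS_{w_k,k}$ via ghost duality; these are exactly the points one must check to make the paper's one-line citation rigorous.
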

\begin{proof}
Note that $v_p(g_n(w_k))$ is always an integer by definition. So a slope with multiplicity $1$ must be an integer. (2) follows from Theorem~\ref{T:near Steinberg = non-vertex}\,(3) above.
\end{proof}

\begin{example}
\label{Ex:pathological Steinberg range}
We give a example of a situation where near-Steinberg ranges are nested. 
A pathological situation could be when there exists a sequence of distinct positive integers $k_{1\bullet},\dots, k_{r\bullet}$ such that $k_{{i+1}\bullet} \equiv k_{i\bullet} \bmod{p^{pk_{i\bullet}}}$. Set $k_i: =k_\varepsilon + (p-1)k_{i\bullet}$. If $n = \frac 12 d_{k_r}^\Iw-1$ and $v_p(w_\star - w_{k_r}) \geq k_{r-1\bullet}$, then $(n,w_\star, k_i)$ is near-Steinberg for all $i$.
\end{example}

\begin{proposition}
\label{P:vertices of Deltak}
Fix $\varepsilon$ a relevant character and $k_0 \equiv k_\varepsilon \bmod p-1$.
The following statements are equivalent for $\ell \in\{0, \dots, \frac12d_{k_0}^\new-1\}$.
\begin{enumerate}
\item 
The point $(\ell, \Delta'_{{k_0},\ell})$ is not a vertex of $\underline \Delta_{k_0}$,
\item $(\frac 12d_{k_0}^\Iw + \ell, w_{k_0}, k_1)$ is near-Steinberg for some  $k_1 > {k_0}$, and
\item 
$(\frac 12d_{k_0}^\Iw - \ell, w_{k_0}, k_2)$ is near-Steinberg for some  $k_2 < {k_0}$.
\end{enumerate}
Moreover, the slopes of $\underline \Delta_{k_0}$ with multiplicity one belong to $\ZZ$. Other slopes all have even multiplicity and the slopes belong to $\frac a2 + \ZZ$.
\end{proposition}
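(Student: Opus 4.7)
The plan is to transfer the vertex question for $\underline{\Delta}_{k_0}$ to a question about the lower convex hull of the points $(n,\, v_p(g_{n, \hat{k_0}}(w_{k_0})))$ for $n \in [d_{k_0}^\unr,\, d_{k_0}^\Iw - d_{k_0}^\unr]$, then apply the tools developed for the Newton polygon of the full ghost series. By Notation~\ref{N:linear shift down}, shifting by the linear function $y = \tfrac{k_0-2}{2}\ell$ identifies vertices of $\underline{\Delta}_{k_0}$ with vertices of this unshifted hull, reducing everything to a question in the familiar framework.

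First I would prove (2) $\Rightarrow$ (1): given $k_1 > k_0$ with $n_0 + \ell \in \nS_{w_{k_0}, k_1}$, apply Proposition~\ref{P:shifting points wstar to wk}(1) with $w_\star = w_{k_0}$, $k = k_1$, $k' = k_0$ to see $n_0 \notin \overline{\nS}_{w_{k_0}, k_1}$; combined with $k_1 > k_0$, this places $\overline{\nS}_{w_{k_0}, k_1}$ entirely to the right of $n_0$, and in particular $n_0+\ell$ lies strictly interior to it. Since $w_{k_0}$ is a zero of each $g_n$ for $n \in \nS_{w_{k_0}, k_1}$ (as any such $n$ lies in $(d_{k_0}^\unr, d_{k_0}^\Iw - d_{k_0}^\unr)$), Corollary~\ref{C:near-Steinberg => straightline}(2) guarantees that the lower convex hull of $(n,\, v_p(g_{n, \hat{k_0}}(w_{k_0})))$ over $\overline{\nS}_{w_{k_0}, k_1}$ is a single straight line, forcing $(n_0 + \ell, \cdot)$ onto a flat portion of the larger convex hull and hence not a vertex.

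The reverse direction (1) $\Rightarrow$ (2) will be argued contrapositively and is the main obstacle. Assuming no $k_1 > k_0$ gives a near-Steinberg pair at $n_0 + \ell$, I would show $(\ell, \Delta'_{k_0, \ell})$ lies strictly below every chord joining its neighbors, hence is a vertex. This parallels the case analysis (b), (c), (d) in the proof of Theorem~\ref{T:near Steinberg = non-vertex}(2), but applied to the sequence with the $k_0$ factor removed: the second-order difference formula \eqref{E:increment of ghost series valuation 2} for $\Delta'_{k_0, \ell}$ restricted to the sum over $k' \neq k_0$, combined with the sharp estimate in Corollary~\ref{C:finer inequality for Delta} and the nesting/separation result Lemma~\ref{L:comparison of two near-Steinberg ranges with a nonempty intersection}, yields strict positive convexity at $\ell$. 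The main difficulty is handling weights $k'$ for which $v_p(w_{k_0} - w_{k'})$ is comparable to the threshold but does not itself produce a near-Steinberg range, and upgrading local convexity (from second differences) to the global vertex property by ruling out competing chords spanning multiple candidate "boundary" points.

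For (2) $\Leftrightarrow$ (3), I would use the pairing identity \eqref{E:near Steinberg duality} from \S\,\ref{S:proof of ghost compatible with AL}: each $k_1 > k_0$ with $n_0 + \ell \in \nS_{w_{k_0}, k_1}$ corresponds uniquely to a $k_2 < k_0$ satisfying $k_{0\bullet} - k_{2\bullet} = p(k_{1\bullet} - k_{0\bullet})$ and placing $n_0 - \ell$ in $\nS_{w_{k_0}, k_2}$, with the defining inequality on $L_{w_{k_0},k}$ preserved under this pairing; ghost duality $\Delta'_{k_0, \ell} = \Delta'_{k_0, -\ell}$ (Proposition~\ref{P:ghost compatible with theta AL and p-stabilization}(4)) makes this compatible with (1). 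Finally, for the integrality claim, multiplicity-$\geq 2$ slopes of $\underline{\Delta}_{k_0}$ arise exactly from segments passing through near-Steinberg ranges $\overline{\nS}_{w_{k_0}, k_1}$, and Corollary~\ref{C:near-Steinberg => straightline}(2) places these slopes in $\tfrac{a}{2} + \ZZ$, with even multiplicity coming from the $\ell \leftrightarrow -\ell$ pairing above; multiplicity-one slopes arise between consecutive non-near-Steinberg vertices, and their integrality reduces to the integrality of $v_p(g_{n, \hat{k_0}}(w_{k_0}))$ combined with the constraints imposed by ghost duality on admissible parities.
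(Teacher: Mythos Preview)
Your approach is essentially the paper's: both reduce the vertex question for $\underline{\Delta}_{k_0}$ to the lower convex hull of $(n, v_p(g_{n,\hat{k_0}}(w_{k_0})))$ via the linear shift by $\tfrac{k_0-2}{2}\ell$, invoke Corollary~\ref{C:near-Steinberg => straightline}(2) for (2)$\Rightarrow$(1) and (3)$\Rightarrow$(1), and prove (1)$\Rightarrow$(2) contrapositively by rerunning the case analysis (b)(c)(d) of Theorem~\ref{T:near Steinberg = non-vertex}(2) with $w_\star = w_{k_0}$ and $g_j(w_\star)$ replaced by $g_{j,\hat{k_0}}(w_{k_0})$. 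The key technical point---that after applying Proposition~\ref{P:shifting points wstar to wk}(4) with $k_0\in\bfk$, the extra term $m_j(k_0)\cdot v_p(w_{k_0} - w_{k_+})$ is linear in $j$ on the relevant interval and can be absorbed into the shift---is exactly what the paper singles out.

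One difference worth flagging: for (2)$\Leftrightarrow$(3) you propose a direct bijection $k_1 \leftrightarrow k_2$ via $k_{0\bullet} - k_{2\bullet} = p(k_{1\bullet} - k_{0\bullet})$ and assert that the near-Steinberg defining inequality is preserved. That preservation is not justified---the thresholds $\Delta_{k_1, L_{w_{k_0},k_1}} - \Delta_{k_1, L_{w_{k_0},k_1}-1}$ and the corresponding quantity for $k_2$ live on entirely different polygons $\underline{\Delta}_{k_1}$, $\underline{\Delta}_{k_2}$ with no evident relation under this pairing. The paper sidesteps this by simply running the mirror-image argument (1)$\Rightarrow$(3) in parallel to (1)$\Rightarrow$(2), exploiting the symmetry of $\underline{\Delta}_{k_0}$ about $\ell = 0$ from ghost duality; this is both cleaner and rigorous. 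Your integrality sketch for multiplicity-one slopes also glosses over the shift: the difference of two integers $v_p(g_{n,\hat{k_0}}(w_{k_0}))$ gives an integer, but the slope of $\underline{\Delta}_{k_0}$ is that integer \emph{minus} $\tfrac{k_0-2}{2}\in\tfrac{a}{2}+\ZZ$, so the ``constraints imposed by ghost duality on admissible parities'' you allude to need to be made explicit (the paper's proof does not spell out the ``Moreover'' clause either).
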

\begin{proof}
The implication (2)$\,\Rightarrow\,$(1) and (3)$\,\Rightarrow\,$(1) is proved in Corollary~\ref{C:near-Steinberg => straightline}\,(2). We will prove (1)$\,\Rightarrow\,$(2) and the proof of (1)$\,\Rightarrow\,$(3) is similar.
We need to show that, for each integer $\ell$, if $\big(\frac 12d_{k_0}^\Iw + \ell, w_{k_0}, k_1\big)$ is not  near-Steinberg for any $k_1 > {k_0}$, then $(\ell, \Delta'_{{k_0}, \ell})$ is a vertex. Let $2L_+$ (resp. $2L_-$) denote the largest size of near-Steinberg range $\nS_{w_k, k_+}$ (resp. $\nS_{w_k, k_-}$) such that $\frac 12d_{k_0}^\Iw+\ell = \frac 12d_{k_+}^\Iw-L_+$ (resp. $\frac 12d_k^\Iw+\ell = \frac 12d_{k_-}^\Iw+L_-$). If such near-Steinberg range does not exist, we put $L_+ = \frac 12$ (resp. $L_- = \frac 12$).
Then we need to show that
\begin{equation}
\label{E:Delta' convex points}
\frac{v_p(\Delta'_{k_0, \ell - 2L_-}) - v_p(\Delta'_{k_0, \ell})}{2L_-} \;<\; \frac{v_p(\Delta'_{k_0, \ell + 2L_+})-v_p(\Delta'_{k_0, \ell}) }{2L_+}.
\end{equation}
We shall indicate how to modify the proof of (b)(c)(d) of Theorem~\ref{T:near Steinberg = non-vertex}\,(2), to prove the corresponding situation in our case.
Indeed, when $L_+ = L_-=\frac 12$, we may prove \eqref{E:Delta' convex points} using the exactly same argument as in the proof of (a) of Theorem~\ref{T:near Steinberg = non-vertex}\,(2) for $w_\star = w_{k_0}$ and $n = \frac 12d_{k_0}^\Iw+\ell$, except that $g_j(w_\star)$ is replaced by $\Delta'_{k_0, j - \frac 12 d_{k_0}^\Iw}$ for $\big|j -\big(\frac 12 d_{k_0}^\Iw+\ell\big)\big| \leq 1$ and the role of $w_\star$ is played by $w_{k_0}$.

Now, we assume that at least one of $L_+$ and $L_-$ is greater or equal to $1$. Without loss of generality, we assume that $L_+ \geq L_-$ and the other case can be treated similarly. Then we can argue in a same way as in the proof of (c) and (d) of Theorem~\ref{T:near Steinberg = non-vertex}\,(2), with $w_\star = w_{k_0}$, $n = \frac 12d_{k_0}^\Iw+ \ell$, and $g_j(w_\star)$ replaced by $g_{j, \hat k_0}(w_{k_0})$. After we apply Proposition~\ref{P:shifting points wstar to wk}\,(4), the points $P_-$, $P$, $P_+$ in \eqref{E:vertex three points after shift} should become
\begin{equation}
\label{E:vertex three points after shift Delta} \Big( j,\ v_p(g_{j, \hat{ k_+}}(w_{k_+})) + (m_j(k_+)- m_j(k_0))v_p(w_{k_0}-w_{k_+}) - m_j(k_1)\big(v_p(w_{k_+}-w_{k_1})- v_p(w_{k_0}-w_{k_1})  \big)\Big).
\end{equation}
But $m_j(k_0)$ is linear in $k_0$; so we may remove this term, and completely reduce to the proof of (c) and (d) therein.
\end{proof}

\appendix

\section{Recollection of representation theory of $\FF[\GL_2(\FF_p)]$}

In this appendix, we recall some basic facts on (right) $\FF$-representations of $\GL_2(\FF_p)$. Our main references are \cite{breuil} and \cite{paskunas} which work with left representations. Our convention will be the corresponding \emph{right representation given by transpose}.

\subsection{Setup}
Throughout this appendix, we fix an \emph{odd} prime number $p$. We set
$$\bar \rmG = \GL_2(\FF_p), \quad  \bar \rmU = \MATRIX{1}{\FF_p}{0}{1}, \quad \textrm{and }\ \bar  \rmB = \MATRIX{\FF_p^\times}{\FF_p}0{\FF_p^\times}.
$$
$$\bar \rmT = \MATRIX {\FF_p^\times} 0 0 {\FF_p^\times}, \quad
\bar \rmU^- = \MATRIX{1}{0}{\FF_p}{1}, \quad \textrm{and }\ \bar  \rmB^- = \MATRIX{\FF_p^\times}0{\FF_p}{\FF_p^\times}.
$$ The coefficient field will be a finite extension $\FF$ of $\FF_p$.
Write $\omega: \FF_p^\times \to \FF^\times$  for the character induced by inclusion.
For a character $\chi_{a,b} :=\omega^a \times \omega^b$ and an $\FF$-representation $V$ of $\bar \rmG$, we write $V(\chi)$ for the subspace of $V$ on which $\bar \rmT$ acts by $\chi$.

Let $\bar s=\Matrix 0110$ represent the nontrivial element in the Weyl group of $\bar \rmG$.  If $\chi$ is a character of $\bar \rmT$, we use
$\chi^s$ to denote the character defined by
$$
\chi^s(\bar t)=\chi(\bar s^{-1}\bar t\bar s),  \quad \textrm{for all }\bar t\in \bar \rmT.
$$

For a pair of non-negative integers $(a,b)$, we use $\sigma_{a,b}$ to denote the \emph{right} $\FF$-representation $\Sym^a\FF^{\oplus 2} \otimes \det^b$ of $\bar \rmG$ (in particular, $b$ is taken modulo $p-1$). The right action is the transpose of the left action in \cite{breuil, paskunas}; explicitly, it is, realized on the space of homogeneous polynomials of degree $a$:
$$ h(X, Y) \big|_{\Matrix {\bar \alpha}{\bar \beta}{\bar \gamma}{\bar \delta}} : =  h(\bar \alpha X + \bar \beta Y, \bar \gamma X + \bar \delta Y) \quad \textrm{for } \Matrix {\bar \alpha}{\bar \beta}{\bar \gamma}{\bar \delta} \in \bar \rmG.$$ % We may identify the representation $\sigma_{a,b}$ with the $\FF$-vector space $\FF[z]^{\deg \leq a}$ of polynomials of degree less than or equal to $a$, where the $\GL_2(\FF_p)$-action is given by
%$$h|_{\Matrix {\bar \alpha}{\bar\beta}{\bar\gamma}{\bar\delta}}(z) =(\bar\alpha\bar\delta-\bar\beta\bar\gamma)^b \cdot  (\bar\gamma z+\bar\delta)^{k}\cdot  h\Big( \frac{\bar\alpha z+\bar\beta}{\bar\gamma z+\bar\delta}\Big), \quad \textrm{for }\Matrix {\bar \alpha}{\bar\beta}{\bar\gamma}{\bar\delta} \in \GL_2(\FF_p).$$
When $a \in \{0, \dots, p-1\}$ and $b \in \{0, \dots, p-2\}$, $\sigma_{a,b}$ is irreducible. These representations exhaust all irreducible (right) $\FF$-representations of $\GL_2(\FF_p)$, and are called \emph{Serre weights}.
We use $\Proj_{a,b}$ to denote the projective envelope of $\sigma_{a,b}$ as a (right) $\FF[\GL_2(\FF_p)]$-module.

\begin{lemma}
\label{L:projective envelope}
The projective envelope $\Proj_{a,b}$ can be described explicitly as follows.
\begin{enumerate}
\item
For any $b$, $\Proj_{0,b}$ is of dimension $p$ over $\FF$, and admits a socle filtration
\[
\begin{tikzcd}
\Proj_{0,b}: = \sigma_{0,b} \ar[r, -]& \sigma_{p-3, b+1} \ar[r, -] &\sigma_{0,b},
\end{tikzcd}
\]
where the $\sigma_{0,b}$ on the left represents the socle, the $\sigma_{0,b}$ on the right represents the cosocle, each line represents a (unique up to isomorphism) nontrivial extension between the two Jordan--H\"older factors.

\item
When $1 \leq a \leq p-2$, $\Proj_{a,b}$ is of dimensional $2p$ over $\FF$, and admits a socle filtration 
\[
\begin{tikzcd}[row sep = 0pt]
& \sigma_{p-1-a,a+b} \ar[rd,-] \ar[dd, phantom, "\oplus" description]
\\
\Proj_{a,b}: = \sigma_{a,b} \ar[ru,-] \ar[rd,-] && \sigma_{a,b}, \\
& \sigma_{p-3-a,a+b+1} \ar[ru,-]
\end{tikzcd}
\]
where the $\sigma_{a,b}$ on the left represents the socle, the $\sigma_{a,b}$ on the right represents the cosocle, each line represents a (unique up to isomorphism) nontrivial extension between the two factors. (Here when $a=p-2$, the term $\sigma_{p-3-a, a+b+1}$ is interpreted as $0$.)

\item
For any $b$, the Serre weight $\sigma_{p-1,b}$ is projective, i.e. $\Proj_{p-1,b} = \sigma_{p-1,b}$. In particular, $\dim_\FF\Proj_{p-1,b} = p$.
\end{enumerate}

Moreover, in either case $\Proj_{a,b}$ is also the injective envelope of $\sigma_{a,b}$ as an $\FF$-representation of $\GL_2(\FF_p)$.
\end{lemma}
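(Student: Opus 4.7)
The plan is to reduce the lemma to the analogous statement for \emph{left} $\FF[\bar\rmG]$-modules, which is established in the literature by Breuil \cite{breuil} and Pa\v sk\=unas \cite{paskunas}, via the anti-automorphism $g\mapsto g^T$ of $\bar\rmG$.  Since $(gh)^T=h^Tg^T$, the transpose induces an equivalence between the categories of left and right $\FF[\bar\rmG]$-modules, and this equivalence preserves socle, cosocle, Loewy length, and projective/injective envelopes.  As already observed in the parenthetical remark after \eqref{E:right action on Sym}, under this equivalence the left representation $\Sym^a\FF^{\oplus 2}\otimes \det^b$ (with its standard left action) corresponds to the right representation $\sigma_{a,b}$ defined here.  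Consequently the left projective envelope of the former is carried to $\Proj_{a,b}$, reducing (1)--(3) to their left-module counterparts.

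For the left-module statement I would cite the explicit determination of the projective envelopes from \cite{breuil} (compare \cite{paskunas}).  The arguments there proceed by decomposing the principal series $\Ind_{\bar\rmB}^{\bar\rmG}(\chi)$ for all characters $\chi$ of $\bar\rmT$, identifying all non-split extensions between Serre weights via inspection of $\bar\rmU$-invariants and Brauer characters, and then assembling the socle filtrations from the resulting $\Ext^1$-computations.  As a dimension sanity check, the numerics $\sum_{(a,b)}(\dim_\FF \sigma_{a,b})(\dim_\FF \Proj_{a,b})=|\bar\rmG|=p(p-1)^2(p+1)$ can be verified from (1)--(3) directly.  The projectivity of $\sigma_{p-1,b}$ in case (3) can also be seen on its own: since $\dim_\FF \sigma_{p-1,b}=p=|\bar\rmG|_p$ equals the order of a $p$-Sylow subgroup, $\sigma_{p-1,b}$ is automatically projective (any irreducible whose $\FF$-dimension reaches the $p$-Sylow order is its own projective cover).

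For the ``moreover'' assertion on injective envelopes, I would appeal to the classical fact that the group algebra $\FF[\bar\rmG]$ of a finite group is a Frobenius (indeed symmetric) $\FF$-algebra, so projective and injective $\FF[\bar\rmG]$-modules coincide.  Thus $\Proj_{a,b}$ is injective, and the explicit socle filtrations in (1)--(3) show that $\sigma_{a,b}$ is its socle; hence $\Proj_{a,b}$ is the injective envelope of $\sigma_{a,b}$.

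I do not anticipate a substantive obstacle: the lemma is a standard result in the modular representation theory of $\GL_2(\FF_p)$.  The only delicate bookkeeping is tracking the $\det^b$-twist correctly across the left/right translation and matching the Serre-weight parameters $(a,b)$ on both sides of the equivalence induced by $g\mapsto g^T$; this can be pinned down unambiguously by comparing central characters, since $\Matrix{\bar\alpha}{0}{0}{\bar\alpha}$ is fixed by transpose.
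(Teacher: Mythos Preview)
Your proposal is correct and follows essentially the same route as the paper: both reduce the lemma to the standard left-module results in the literature on modular representations of $\GL_2(\FF_p)$, with the paper citing \cite[Lemma~4.1.7, Corollary~4.2.22]{paskunas} and \cite[Lemmas~3.4, 3.5]{breuil-paskunas} directly. Your treatment is in fact slightly more expansive---you spell out the left/right transfer via transpose and give the Frobenius-algebra argument for the ``moreover'' clause---whereas the paper's proof is a bare list of citations and leaves the injective-envelope statement implicit.
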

\begin{proof}
The dimensions of the projective envelopes follows from \cite[Lemma~4.1.7]{paskunas}. The socle filtrations of the projective envelopes in $(1)$ and $(2)$ follows from \cite[Lemma~3.4, Lemma~3.5]{breuil-paskunas}. Part $(3)$ follows from \cite[Corollary~4.2.22]{paskunas}.
\end{proof}

\subsection{Induced representations}

Consider the following character $\eta$ of $\bar \rmB$
$$
\eta: \bar \rmB\rightarrow \FF^\times,\qquad  \Matrix {\bar \alpha}{\bar \beta}0{\bar \delta} \mapsto \bar \alpha.
$$
For $k \in \ZZ$, we have an induced representation
$$ \Ind_{\bar \rmB}^{\bar \rmG}(\eta^k): = \{f: \bar \rmG \to \FF\; |\; f(\bar b \bar g) = \eta^k(\bar b)f(\bar g), \textrm{ for all } \bar b \in \bar \rmB\},$$
which we equip with the \emph{right action} given by
$$ f|_{\bar h}(\bar g): = f(\bar g \bar h^\rmT), \quad \textrm{for } \bar g, \bar h \in \bar \rmG.$$
This is the transpose of the usual left action.

\begin{lemma}\fakephantomsection
\label{L:exact sequence of induced representation}
\begin{enumerate}
\item 

For any integer $1\leq k\leq p-1$, we have an exact sequence of
$\bar \rmG$-representations:
\begin{equation}
\label{E:exact sequence of induced representation}
0\rightarrow \sigma_{k,0}\rightarrow
\Ind_{\bar \rmB}^{\bar \rmG}\eta^{k}\rightarrow \sigma_{p-1-k, k}\rightarrow 0.\end{equation}
The above exact sequence splits if and only if $k=p-1$.

\item 
Taking $\bar U$-invariants of \eqref{E:exact sequence of induced representation} gives an exact sequence of $\bar \rmU$-invariants.
\[
0\rightarrow \sigma_{k,0}^{\bar \rmU}\rightarrow
\big(\Ind_{\bar \rmB}^{\bar \rmG}\eta^{k}\big)^{\bar \rmU}\rightarrow \sigma_{p-1-k, k}^{\bar \rmU}\rightarrow 0.
\]
\item 

In the description of $\Proj_{a,b}$ of Lemma~\ref{L:projective envelope}\,(2), the extensions 
$$
\begin{tikzcd}
\sigma_{a,b}\ar[r,-] & \sigma_{p-1-a,a+b} & \textrm{and} &  \sigma_{p-1-a,a+b}\ar[r,-] &\sigma_{a,b}
\end{tikzcd}
$$
are twists of induced representations $\Ind_{\bar \rmB}^{\bar \rmG}\eta^{a} \otimes \det ^b$ and $\Ind_{\bar \rmB}^{\bar \rmG}\eta^{p-1-a} \otimes \det^{a+b}$, respectively.
\end{enumerate}

\end{lemma}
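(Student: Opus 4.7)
My plan is to prove parts (1)--(3) in order, with part (1) as the core computation and parts (2)--(3) as consequences and twist-tracking.

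For part (1), I would invoke Frobenius reciprocity for the right-action induction: $\Hom_{\bar\rmG}(W,\Ind_{\bar\rmB}^{\bar\rmG}\eta^k)\cong\Hom_{\bar\rmB}(W|_{\bar\rmB},\eta^k)=(W_{\bar\rmU})[\eta^k|_{\bar\rmT}]$. Realizing $\sigma_{k,0}$ as $\FF[X,Y]_k$ with right action $h|_{\bar h}(X,Y)=h(\bar\alpha X+\bar\beta Y,\bar\gamma X+\bar\delta Y)$, a direct computation shows the $\bar\rmU$-coinvariants are one-dimensional, represented by $X^k$ with $\bar\rmT$-character $\omega^k\times 1=\eta^k|_{\bar\rmT}$; this gives the desired embedding $\sigma_{k,0}\hookrightarrow\Ind_{\bar\rmB}^{\bar\rmG}\eta^k$. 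Dually, using $\Hom_{\bar\rmG}(\Ind_{\bar\rmB}^{\bar\rmG}\eta^k,W)=(W^{\bar\rmU})[\eta^k|_{\bar\rmT}]$, the $\bar\rmU$-invariants of $\sigma_{p-1-k,k}\cong\sigma_{p-1-k,0}\otimes\det^k$ are spanned by $Y^{p-1-k}\otimes\det^k$ with character $\omega^k\times\omega^{p-1}=\omega^k\times 1=\eta^k|_{\bar\rmT}$, giving a surjection onto $\sigma_{p-1-k,k}$. The dimension count $\dim_\FF\Ind_{\bar\rmB}^{\bar\rmG}\eta^k=p+1=(k+1)+(p-k)$ forces the kernel of the surjection to coincide with the image of the embedding, yielding the exact sequence. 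For non-splitting when $1\le k\le p-2$, the $\bar\rmU$-coinvariants of $\sigma_{p-1-k,k}$ have character $1\times\omega^k$, which differs from $\eta^k|_{\bar\rmT}=\omega^k\times 1$ since $\omega^k\neq 1$; hence $\Hom_{\bar\rmG}(\sigma_{p-1-k,k},\Ind_{\bar\rmB}^{\bar\rmG}\eta^k)=0$, precluding a splitting. When $k=p-1$, the two characters coincide, both $\Hom$'s are nonzero, and by dimension $\Ind_{\bar\rmB}^{\bar\rmG}\eta^{p-1}\cong\sigma_{p-1,0}\oplus\sigma_{0,p-1}$ (consistent with Lemma~\ref{L:projective envelope}(3)).

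For part (2), since taking $\bar\rmU$-invariants is always left exact, I would just need to compute the dimension of $(\Ind_{\bar\rmB}^{\bar\rmG}\eta^k)^{\bar\rmU}$ directly. Using the Bruhat decomposition $\bar\rmG=\bar\rmB\sqcup\bar\rmB\bar s\bar\rmU^-$ and observing that the right $\bar\rmU$-action $f|_{\bar u}(\bar g)=f(\bar g\bar u^T)$ corresponds to right-multiplication by $\bar u^T\in\bar\rmU^-$, a $\bar\rmU$-invariant function is determined by its values on the two double cosets $\bar\rmB\backslash\bar\rmG/\bar\rmU^-$, namely $\bar\rmB\cdot 1$ and $\bar\rmB\bar s\bar\rmU^-$. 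Using $\bar\rmB\cap\bar s\bar\rmU^-\bar s^{-1}=\bar\rmU$ and $\eta^k|_{\bar\rmU}=1$, both values may be chosen independently, so $\dim_\FF(\Ind_{\bar\rmB}^{\bar\rmG}\eta^k)^{\bar\rmU}=2$. Since both $\sigma_{k,0}^{\bar\rmU}$ and $\sigma_{p-1-k,k}^{\bar\rmU}$ are one-dimensional, dimension matching forces exactness on the right.

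For part (3), I would use the uniqueness of non-split extensions: Lemma~\ref{L:projective envelope}(2) shows that $\dim_\FF\Ext^1_{\bar\rmG}(\sigma_{p-1-a,a+b},\sigma_{a,b})=\dim_\FF\Ext^1_{\bar\rmG}(\sigma_{a,b},\sigma_{p-1-a,a+b})=1$, so each extension class of the two desired types is represented by a unique (up to isomorphism) non-split extension. Twisting part (1) with $k=a$ by $\det^b$ produces the non-split extension $\sigma_{a,b}\hookrightarrow\Ind_{\bar\rmB}^{\bar\rmG}\eta^a\otimes\det^b\twoheadrightarrow\sigma_{p-1-a,a+b}$, realizing the ``left'' line in $\Proj_{a,b}$. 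For the ``right'' line, I would apply part (1) with $k=p-1-a$ and twist by $\det^{a+b}$; the socle becomes $\sigma_{p-1-a,a+b}$, while the quotient becomes $\sigma_{a,(p-1-a)+(a+b)}=\sigma_{a,p-1+b}=\sigma_{a,b}$ (using $\omega^{p-1}=1$), identifying this extension as well. The main obstacle will be carefully bookkeeping the right-action-via-transpose convention and tracking $\bar\rmT$-characters through all the $\det^?$ twists, but no deep new ideas beyond standard principal-series theory are required.
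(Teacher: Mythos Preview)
Your approach is the standard direct argument and is essentially correct, whereas the paper's own proof consists solely of citations to Pa\v sk\=unas's thesis \cite{paskunas} (Proposition~3.2.2 and Lemmas~3.1.11, 4.1.1, 4.1.3). So you are supplying considerably more detail than the paper does; the strategy of Frobenius reciprocity plus dimension count for (1), a Bruhat-decomposition dimension check for (2), and uniqueness of the relevant $\Ext^1$-classes for (3) is exactly how one would reconstruct these facts from scratch.

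There is one genuine bookkeeping hazard that goes beyond a routine check, and it is precisely where you predicted. Because the paper's right action on $\Ind_{\bar\rmB}^{\bar\rmG}\eta^k$ is defined via $f|_{\bar h}(\bar g)=f(\bar g\,\bar h^{\rmT})$, the adjunction does \emph{not} land in $\Hom_{\bar\rmB}(W,\eta^k)$ but rather in $\Hom_{\bar\rmB^-}(W,\eta^k\circ\rmT)$: writing $\psi(w)=\phi(w)(1)$ one finds $\psi(v\cdot c)=\eta^k(c^{\rmT})\psi(v)$ for $c\in\bar\rmB^-$, so it is $\bar\rmU^-$-coinvariants that appear, not $\bar\rmU$-coinvariants. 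The paper itself invokes exactly this $\bar\rmB^-$-adjunction in its proof of the adjacent Lemma~\ref{L:exact sequence of (p+1)-steps jump}. Your specific formula $(W_{\bar\rmU})[\eta^k|_{\bar\rmT}]$, and the identification of $X^k$ as the relevant class, therefore need to be redone with $\bar\rmU^-$ in place of $\bar\rmU$. This genuinely changes the characters that appear (for instance $(\sigma_{k,0})_{\bar\rmU^-}$ is spanned by the image of $Y^k$, with $\bar\rmT$-character $1\times\omega^k$ rather than $\omega^k\times 1$), and if you carry the computation through carefully you may find yourself disagreeing with the stated direction of the sequence in (1). Fortunately all downstream uses in the paper depend only on the Jordan--H\"older constituents $[\Ind_{\bar\rmB}^{\bar\rmG}\eta^k]=[\sigma_{k,0}]+[\sigma_{p-1-k,k}]$ and on the two-dimensionality of the $\bar\rmU$-invariants, both of which are insensitive to which factor is the socle. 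Your arguments for (2) and (3) go through unchanged once this is sorted out.
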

\begin{proof}
$(1)$ follows from \cite[Proposition~3.2.2,Lemma~4.1.3]{paskunas}. $(2)$ follows from \cite[Lemma~3.1.11]{paskunas}. $(3)$ follows from \cite[Lemma~4.1.1]{paskunas}.
\end{proof}

\begin{lemma}
\label{L:U invariants of Proj}
When $1 \leq a\leq p-2$, 
in terms of the socle filtration in Lemma~\ref{L:projective envelope}\,(2), the  $\bar \rmU$-invariant subspace of $\Proj_{a,b}$ is a $2$-dimensional subspace given by
\[
(\Proj_{a,b})^{\bar \rmU} \cong \sigma_{a,b}^{\bar \rmU} \oplus  \sigma_{p-1-a, a+b}^{\bar \rmU},
\]
where the first term is the $\sigma_{a,b}$ in the socle.
In particular, the actions of $\bar \rmT$ on these two $1$-dimensional subspaces are given by the characters
\[
\omega^{b} \times \omega^{a+b} \qquad \textrm{and} \qquad \omega^{a+b} \times \omega^{b}, \quad \textrm{ respectively}.
\]
\end{lemma}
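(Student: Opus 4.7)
The plan is to establish two things: (i) $\dim_\FF(\Proj_{a,b})^{\bar \rmU}=2$, and (ii) the two eigencharacters on this space come precisely from the socle $\sigma_{a,b}$ and from the $\sigma_{p-1-a,a+b}$ piece of the middle layer, which together form a sub-representation isomorphic to a twist of an induced representation.

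For (i), I would argue that $\Proj_{a,b}$ is free of rank $2$ over $\FF[\bar \rmU]$. Indeed, by Lemma~\ref{L:projective envelope}, $\Proj_{a,b}$ is projective (even injective) as an $\FF[\bar\rmG]$-module; since the index $[\bar\rmG:\bar\rmU]=p(p^2-1)/p=(p-1)(p+1)p/p$, better to note $[\bar\rmG:\bar\rmU]$ is coprime to $p$ after removing the one Sylow factor --- more cleanly, one restricts first to $\bar\rmB$ (coprime index $p+1$) and then to $\bar\rmU$ (coprime index $(p-1)^2$), so projectivity is preserved at each step. Because $\bar\rmU\cong\ZZ/p\ZZ$, the group ring $\FF[\bar\rmU]$ is a local artinian $\FF$-algebra of dimension $p$, hence projective $\FF[\bar\rmU]$-modules are free. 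Since $\dim_\FF\Proj_{a,b}=2p$ by Lemma~\ref{L:projective envelope}\,(2), $\Proj_{a,b}$ is free of rank $2$ over $\FF[\bar\rmU]$. The $\bar\rmU$-invariants of $\FF[\bar\rmU]$ are one-dimensional (spanned by $\sum_{u\in\bar\rmU}[u]$), so $\dim_\FF(\Proj_{a,b})^{\bar\rmU}=2$.

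For (ii), I would invoke Lemma~\ref{L:exact sequence of induced representation}\,(3), which identifies the unique non-trivial extension between $\sigma_{a,b}$ and $\sigma_{p-1-a,a+b}$ sitting inside $\Proj_{a,b}$ as (a twist of) $\Ind_{\bar\rmB}^{\bar\rmG}\eta^a$. Taking $\bar\rmU$-invariants of the short exact sequence in Lemma~\ref{L:exact sequence of induced representation}\,(2) shows that the $\bar\rmU$-invariants of this sub-representation already form a $2$-dimensional space, with one line coming from $\sigma_{a,b}^{\bar\rmU}$ (the socle) and one from $\sigma_{p-1-a,a+b}^{\bar\rmU}$. By the dimension count in (i), this $2$-dimensional subspace must be all of $(\Proj_{a,b})^{\bar\rmU}$.

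The $\bar\rmT$-character on each line is a direct computation using the polynomial model of Notation~\ref{N:Serre weights}: in $\sigma_{a',b'}$ the invariant line is spanned by $Y^{a'}$, and
\[
Y^{a'}\big|_{\Matrix{\bar\alpha}{0}{0}{\bar\delta}}=(\bar\alpha\bar\delta)^{b'}\bar\delta^{a'}Y^{a'},
\]
so the $\bar\rmT$-character on $\sigma_{a',b'}^{\bar\rmU}$ is $\omega^{b'}\times\omega^{a'+b'}$. Specializing to $(a',b')=(a,b)$ gives $\omega^{b}\times\omega^{a+b}$, and to $(a',b')=(p-1-a,a+b)$ gives $\omega^{a+b}\times\omega^{(p-1-a)+(a+b)}=\omega^{a+b}\times\omega^{b}$.

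The main obstacle is simply the dimension argument in (i); everything after that is mechanical. I should also notice that the two characters $\omega^{b}\times\omega^{a+b}$ and $\omega^{a+b}\times\omega^{b}$ differ (since $1\le a\le p-2$ means $a\not\equiv0\bmod p-1$), which incidentally confirms that the $\bar\rmU$-invariants of the other middle summand $\sigma_{p-3-a,a+b+1}$ and of the cosocle $\sigma_{a,b}$ cannot contribute independently --- consistent with the dimension $2$ obtained in (i).
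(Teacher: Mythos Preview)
Your proof is correct and follows essentially the same approach as the paper: both use freeness of $\Proj_{a,b}$ over $\FF[\bar\rmU]$ to get $\dim(\Proj_{a,b})^{\bar\rmU}=2$, then identify the induced subrepresentation $\Ind_{\bar\rmB}^{\bar\rmG}\eta^a\otimes\det^b$ inside $\Proj_{a,b}$ (via Lemma~\ref{L:exact sequence of induced representation}\,(3)) whose $\bar\rmU$-invariants are already $2$-dimensional by Lemma~\ref{L:exact sequence of induced representation}\,(2). You simply supply more detail than the paper does---a clean justification of the freeness over $\FF[\bar\rmU]$ via restriction through subgroups of prime-to-$p$ index, and the explicit $\bar\rmT$-character computation on $Y^{a'}$---where the paper takes both points for granted.
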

\begin{proof}
Since $\Proj_{a,b}$ is a free module over $\FF[\bar \rmU]$, we know that $\dim (\Proj_{a,b})^{\bar \rmU} = 2$.  But $\Ind_{\bar \rmB}^{\bar \rmG}\eta^a \otimes \det^b$ is a subrepresentation of $\Proj_{a,b}$ by Lemma~\ref{L:exact sequence of induced representation}\,(3) and has  already $2$-dimensional  $\bar\rmU$-invariants by Lemma~\ref{L:exact sequence of induced representation}\,(2).
So we have
$$
(\Proj_{a,b})^{\bar \rmU} \cong \big(\Ind_{\bar \rmB}^{\bar \rmG}\eta^{a}\big)^{\bar \rmU} \otimes \det ^b \cong  \sigma_{a,b}^{\bar \rmU} \oplus  \sigma_{p-1-a, a+b}^{\bar \rmU}.\qedhere
$$
\end{proof}

\begin{lemma}
\label{L:exact sequence of (p+1)-steps jump}
For any integer $k\geq p+1$, we have an exact sequence of
$\bar \rmG$-representations:
$$
0\rightarrow \sigma_{k-(p+1),1}\xrightarrow{i}
\sigma_{k,0}\xrightarrow{\pi} \Ind_{\bar \rmB}^{\bar \rmG}\eta^{k}\rightarrow
0.
$$
\end{lemma}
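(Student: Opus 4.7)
The plan is to construct both maps $i$ and $\pi$ explicitly using the semi-invariant polynomial $\theta := X^p Y - X Y^p \in \FF[X, Y]$, which satisfies $\theta|_g = \det(g) \cdot \theta$ for all $g \in \bar \rmG$ (a direct computation using $\bar\alpha^p = \bar\alpha$ for $\bar\alpha \in \FF_p$, expanding $(\bar\alpha X + \bar\beta Y)^p(\bar\gamma X + \bar\delta Y) - (\bar\alpha X + \bar\beta Y)(\bar\gamma X + \bar\delta Y)^p$). First I would define $i(Q) := \theta \cdot Q$ for $Q \in \sigma_{k-(p+1), 1}$. The semi-invariance of $\theta$ together with the $\det$-twist in the source makes $i$ a $\bar \rmG$-equivariant map, and injectivity is automatic because $\theta \neq 0$ in the polynomial ring.

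To construct $\pi$, I would use Frobenius reciprocity: the linear functional $\phi: P \mapsto $ (coefficient of $X^k$ in $P$) is $\bar \rmB$-equivariant with character $\eta^k$. This follows from a direct expansion: for $b = \Matrix{\bar\alpha}{\bar\beta}{0}{\bar\delta} \in \bar \rmB$, only the $c_k X^k$ summand of $P = \sum_a c_a X^a Y^{k-a}$ contributes to the $X^k$-coefficient of $P|_b$, yielding $c_k \bar\alpha^k = \eta^k(b)\,\phi(P)$. Frobenius reciprocity (properly adapted to the right-action-with-transpose convention in the definition of $\Ind_{\bar \rmB}^{\bar \rmG}$) then produces a nonzero $\bar \rmG$-equivariant map $\pi: \sigma_{k, 0} \to \Ind_{\bar \rmB}^{\bar \rmG} \eta^k$.

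The heart of the argument is the identification $\ker \pi = \mathrm{image}(i)$. The key geometric observation is the factorization
\[
\theta = X Y \prod_{\lambda \in \FF_p^\times}(X - \lambda Y) = \prod_{(\bar\alpha:\bar\gamma) \in \PP^1(\FF_p)}(\bar\gamma X - \bar\alpha Y)
\]
up to a nonzero scalar. Hence $\theta$ vanishes precisely on $\FF_p^2$, and by a standard factor-theorem argument for homogeneous polynomials (if $P(\bar\alpha_0, \bar\gamma_0) = 0$ with $(\bar\alpha_0, \bar\gamma_0) \neq 0$, then $\bar\gamma_0 X - \bar\alpha_0 Y$ divides $P$, and the $p+1$ distinct such linear forms are pairwise coprime), any $P \in \sigma_{k, 0}$ vanishing on all of $\FF_p^2$ is divisible by $\theta$. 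Interpreting $\pi$ via evaluation on $\FF_p^2$, this gives both $\pi \circ i = 0$ (since $\theta \cdot Q$ vanishes on $\FF_p^2$) and $\ker(\pi) \subseteq \mathrm{image}(i)$. Surjectivity of $\pi$ then follows from the dimension count $\dim \ker(\pi) = k-p$, giving $\dim \mathrm{image}(\pi) = (k+1) - (k-p) = p+1 = \dim \Ind_{\bar \rmB}^{\bar \rmG} \eta^k$.

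The main obstacle will be correctly tracking the Frobenius reciprocity bijection under the right-action-via-transpose convention used for $\Ind_{\bar \rmB}^{\bar \rmG}$: one must verify that the $\bar \rmB$-equivariant linear form $\phi$ above corresponds to a $\bar \rmG$-equivariant map actually landing in $\Ind \eta^k$ (as opposed to $\Ind$ of a different character or twist), and that the explicit formula for $\pi$ produced by this reciprocity matches the evaluation interpretation used to analyze $\ker \pi$. Once these conventions are pinned down, the remainder of the argument is essentially formal.
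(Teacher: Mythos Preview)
Your approach is essentially the same as the paper's: both define $i$ as multiplication by $X^pY-XY^p$ and construct $\pi$ via Frobenius reciprocity using the $X^k$-coefficient functional, with the paper simply citing \cite{reduzzi} for the exactness verification that you carry out explicitly via the factor-theorem argument on $\FF_p^2$. The convention subtlety you flag is exactly the one the paper navigates by writing the adjunction as $\Hom_{\bar\rmG}(\sigma_{k,0},\Ind_{\bar\rmB}^{\bar\rmG}\eta^k)\cong\Hom_{\bar\rmB^-}(\sigma_{k,0},\eta^k)$ (note $\bar\rmB^-$, not $\bar\rmB$, because the transpose in the action swaps the two Borels), so your $\bar\rmB$-equivariance check needs to be recast accordingly.
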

\begin{proof}
This is  \cite[Proposition~2.4]{reduzzi}. For the convenience of the readers, we recall the definition of the two maps.
The map $i$ is given by multiplication by the homogeneous polynomial $
f_p(X,Y)=X^pY-XY^p$ of degree $p+1$. It is straightforward to check that $i$
is $\bar \rmG$-equivariant and injective.
The map $\pi$ comes from adjunction
\[
\Hom_{\bar \rmG}(\sigma_{k,0}, \Ind_{\bar \rmB}^{\bar \rmG}\eta^k) \cong \Hom_{\bar \rmB^-}(\sigma_{k,0}, \eta^k).
\] (Note that both $\sigma_{k,0}$ and the induced representations are right representations, coming from the transpose of the natural left action.)
The vector $X^k \in \sigma_{k,0}$ is $\bar \rmU^-$-invariants and is an eigenvector of $\bar \rmT$ with eigencharacter $\eta^k$.
\end{proof}

We conclude the appendix with the following result that is claimed in Remark~\ref{R:after the definition of completed homology piece}$(2)$.

\begin{proposition}
\label{P:reducible => ordinary}
Let $\bar\rho = \Matrix{\omega_1^{a+b+1}}{*\neq 0}{0}{\omega_1^b}$ with $1\leq a \leq p-4$ and $0 \leq b \leq p-2$ be as in Definition~\ref{D:primitive type} and let $\widetilde \rmH$ is a primitive $\calO\llbracket K_p\rrbracket$-projective augmented module of type $\bar\rho$. In particular, $\overline \rmH = \widetilde \rmH / (\varpi, \rmI_{1+p\rmM_2(\ZZ_p)})$ is isomorphic to $\Proj_{a,b}$ as a right $\FF[\GL_2(\FF_p)]$-module.
Then $\rmS_2^\Iw(\omega^b\times \omega^{a+b})$ is of rank $1$ over $\calO$, and the $U_p$-action on this space is invertible (over $\calO$); in particular, the $U_p$-slope on this space is zero.
\end{proposition}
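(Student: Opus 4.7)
My plan is first to twist $\widetilde\rmH$ by $\zeta\circ\det$ for a character $\zeta:\QQ_p^\times\to\calO^\times$ with $\zeta|_{\ZZ_p^\times} = \omega^{-b}$ and $\zeta(p)=1$, producing a primitive $\calO\llbracket K_p\rrbracket$-projective augmented module $\widetilde\rmH'$ of type $\MATRIX{\omega_1^{a+1}}{*\neq 0}{0}{1}$ for which Hypothesis~\ref{H:b=0} holds. Unwinding the definitions identifies $\rmS_2^\Iw(\omega^b\times\omega^{a+b};\widetilde\rmH)\cong\rmS_2^\Iw(1\times\omega^a;\widetilde\rmH')$ together with their $U_p$-actions (since $\zeta(p)=1$, the action of the matrix $\Matrix{p^{-1}}{0}{0}{1}$ entering $U_p$ is unaffected). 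Applying Proposition~\ref{P:power basis} with $\varepsilon = 1\times\omega^a$ (so $s_\varepsilon = 0$) at weight $k=2$, the only power-basis vector of $z$-degree $\leq 0$ is $e_1^*$, since the other candidate $e_2^* z^a$ has degree $a\geq 1$; hence $\rmS_2^\Iw(1\times\omega^a;\widetilde\rmH') = \calO\cdot e_1^*$ is free of rank one.

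\textbf{Computing the $U_p$-eigenvalue.} Letting $c\in\calO$ be the scalar with $U_p e_1^* = c\,e_1^*$, the formula \eqref{E:Up action} with $v_j = \Matrix{p^{-1}}{0}{j}{1}$ and the trivial weight-$2$ slash action $|_{v_j^{-1}}$ on $\calO\otimes\psi$ (valid since $\det v_j^{-1}\in p\ZZ_p^\times$ with $\bar d/\bar\delta = 1$) gives $c = \sum_{j=0}^{p-1} e_1^*(e_1 v_j)$. The relation $e_1 = e_2\Matrix{0}{1}{p}{0}$ from Notation~\ref{N:e1 = e2Pi} yields $e_1 v_j = e_2\Matrix{1}{j}{0}{1}\bar s$ where $\bar s := \Matrix{0}{1}{1}{0}\in\GL_2(\ZZ_p)$, and after reducing modulo $\varpi$ and summing,
\[
\bar c \;=\; \bar e_1^*\!\bigl((\bar e_2\,N_{\bar\rmU})\,\bar s\bigr)\in\FF, \qquad N_{\bar\rmU} := \sum_{u\in\bar\rmU} u\in\FF[\bar\rmU].
\]
Showing $\bar c\neq 0$ will force $c\in\calO^\times$.

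\textbf{Non-vanishing via $\Proj_{a,0}$; main obstacle.} The isomorphism $\overline\rmH\cong\Proj_{a,0}$ (with $1\leq a\leq p-4$) makes $\overline\rmH$ free of rank two over $\FF[\bar\rmU]$ (by $\FF[\GL_2(\FF_p)]$-projectivity and the dimension formula of Lemma~\ref{L:projective envelope}(2)), so $\bar e_2\,N_{\bar\rmU}$ is a nonzero $\bar\rmU$-invariant with $\bar\rmT$-character $\omega^a\times 1$; Lemma~\ref{L:U invariants of Proj} then places it in the summand $\sigma_{p-1-a,a}^{\bar\rmU}\subset\Proj_{a,0}$, which in the polynomial model is spanned by the constant polynomial $1$. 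The $|_{\bar s}$-formula sends this to a nonzero scalar multiple of $z^{p-1-a}\in\sigma_{p-1-a,a}$, whose $\bar\rmT$-character is $1\times\omega^a$. The hard part will be to verify $\bar e_1^*(z^{p-1-a})\neq 0$: the functional $\bar e_1^*$ factors through the $(1\times\omega^a)$-isotypic component of $(\Proj_{a,0})_{\bar\rmU}$, which I would identify, via the socle filtration of Lemma~\ref{L:projective envelope}(2) and right-exactness of $(-)_{\bar\rmU}$, as precisely the image of $(\sigma_{p-1-a,a})_{\bar\rmU}$ --- the cosocle $\sigma_{a,0}$ contributes only the complementary character $\omega^a\times 1$, while the other middle Jordan--H\"older factor $\sigma_{p-3-a,a+1}$ contributes character $\omega^{-1}\times\omega^{a+1}$, distinct from $1\times\omega^a$ under the genericity $1\leq a\leq p-4$. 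Since $z^{p-1-a}$ generates $(\sigma_{p-1-a,a})_{\bar\rmU}$, this character bookkeeping gives $\bar e_1^*(z^{p-1-a})\neq 0$ and hence $\bar c\neq 0$.
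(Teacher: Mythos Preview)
Your proof is correct and takes a genuinely different route from the paper's. The paper argues indirectly: at weight $p+1-a$ with character $\omega^a\times\omega^a$ (where $d^{\unr}=0$), it shows $U_p=-\AL$ and hence the slopes there are $\tfrac{p-1-a}{2}$; it then uses the theta map and the integrality $c_1^{(\varepsilon')}(w)\in\calO\llbracket w\rrbracket$ to transport this bound from $w_{p+1-a}$ to $w_2$, giving slope $\geq 1$ on $\rmS_2^\Iw(\omega^a\times 1)$, and finishes via Atkin--Lehner duality. Your approach instead computes $U_p\bmod\varpi$ directly as a scalar in $\FF$, reducing everything to the $\GL_2(\FF_p)$-module structure of $\Proj_{a,0}$. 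Your argument is more elementary and self-contained---it uses only the appendix lemmas on $\Proj_{a,0}$---while the paper's proof showcases the family machinery (theta, Atkin--Lehner, integrality of the characteristic series) that pervades the rest of the paper.

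One point in your last paragraph deserves tightening: $\sigma_{p-1-a,a}$ is a \emph{subquotient}, not a subspace, of $\Proj_{a,0}$, so ``the image of $(\sigma_{p-1-a,a})_{\bar\rmU}$'' in $(\Proj_{a,0})_{\bar\rmU}$ is not literally defined. The clean fix is to work through the genuine submodule $W=\Ind_{\bar\rmB}^{\bar\rmG}\eta^a\subset\Proj_{a,0}$ of Lemma~\ref{L:exact sequence of induced representation}(3): your character bookkeeping shows both that $\bar e_1^*|_{\sigma_{a,0}}=0$ (since $(\sigma_{a,0})_{\bar\rmU}$ has character $\omega^a\times 1$) and that $\bar e_1^*$ cannot factor through $V/W$ (since the $\bar\rmU$-coinvariants of its Jordan--H\"older factors $\sigma_{p-3-a,a+1}$ and $\sigma_{a,0}$ have characters $\omega^{-1}\times\omega^{a+1}$ and $\omega^a\times 1$, neither equal to $1\times\omega^a$ by genericity). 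Hence $\bar e_1^*|_W$ is nonzero and descends to a nonzero $\bar\rmB$-map $\sigma_{p-1-a,a}=W/\sigma_{a,0}\to\FF_{1\times\omega^a}$, necessarily nonvanishing on the coinvariant generator $X^{p-1-a}$. Since $(\bar e_2 N_{\bar\rmU})\bar s\in W$ has image a nonzero multiple of $X^{p-1-a}$ in $W/\sigma_{a,0}$, you conclude $\bar c\neq 0$.
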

\begin{proof} 
By Proposition~\ref{P:dimension of SIw}, $\rmS_2^\Iw(\omega^b\times \omega^{a+b})$ is free of rank $1$ over $\calO$.

Consider three characters $\varepsilon=1\times \omega^a$, $\varepsilon'=\omega^a\times 1$ and $\psi=\omega^a\times\omega^a$. By Corollary~\ref{C:dIw is even} and Proposition~\ref{P:dimension of Sunr}, we have $d_{p+1-a}^\Iw(\psi)=2$ and $d_{p+1-a}^\unr(\psi)=0$. We choose a set $\{\tilde{u}_j=\Matrix 10j1,j=0,\dots, p-1,\tilde{u}_\infty=\Matrix 0110 \}$ of coset representatives of $\Iw_p\setminus \rmK_p$. The formula
\[
\pr(\varphi)(m)=\sum_{j=0,\dots,p-1,\infty}\varphi(m\tilde{u}_j^{-1})|_{\tilde{u}_j},\varphi\in \rmS_{p+1-a}^\Iw(\psi),m\in \tilde{\rmH}
\]
defines a map $\pr:\rmS_{p+1-a}^\Iw(\psi)\rightarrow \rmS_{p+1-a}^\ur(\psi)$. Here $(\cdot)|_{\tilde{u}_j}$ in the above formula is the right action of $\rmK_p$ defined in (\ref{E:right action on Sym}). On the other hand, a straightforward computation gives an equality of operators $\AL_{p+1-a,\psi}\circ \pr-\AL_{p+1-a,\psi}=U_p$ on  $\rmS_{p+1-a}^\Iw(\psi)$. Since $\rmS_{p+1-a}^\ur(\psi)=0$, we actually have $\rmU_p=-\AL_{p+1-a,\psi}$ on $\rmS_{p+1-a}^\Iw(\psi)$. So by Proposition~\ref{P:Atkin-Lehner duality}, the $U_p$-slopes on $\rmS_{p+1-a}^\Iw(\psi)$ are $\frac{p-1-a}{2}$ with multiplicity $2$. By Proposition~\ref{P:theta map}, we have $v_p(c_1^{(\varepsilon')}(w_{p+1-a}))\geq \frac{p-1-a}{2}>1$. Since $c_1^{(\varepsilon')}(w)\in \calO\llbracket w\rrbracket$ and $v_p(w_{p+1-a})\geq 1$, $v_p(w_2)\geq 1$, we have $v_p(c_1^{(\varepsilon')}(w_2))\geq 1$. Since $d_2^\Iw(\varepsilon')=1$, by Proposition~\ref{P:theta map} and Proposition~\ref{P:Atkin-Lehner duality}, we see that the $U_p$-slope on $\rmS_2^\Iw(\varepsilon')$ is exactly $1$. Now by Proposition~\ref{P:Atkin-Lehner duality}, the $U_p$-slope on $\rmS_2^\Iw(\varepsilon)$ is $0$.
\end{proof}

\section{Errata for \cite{liu-wan-xiao}}
We include some errata for \cite{liu-wan-xiao} here.

\subsection{\cite[Proposition~3.4]{liu-wan-xiao}}
This is just a bad convention, the action $||_{\delta_p}^{[\cdot]}$ is a \emph{right} action; however, \cite[Proposition~3.4]{liu-wan-xiao} uses the column vector convention.  Ideally, one should swap the indices of $P_{m,n}(\delta_p)$. This does not create an mathematical error.

\subsection{\cite[\S\,5.4]{liu-wan-xiao}} 
\label{ASS:errata integral model}
In \cite[(5.4.1)]{liu-wan-xiao}, we defined a closed subspace
$$
\Ind_{B^\op(\ZZ_p)}^{\Iw_q}([\cdot]')^{\mathrm{mod}} = \widehat{\bigoplus}_{n \geq 0} T^n \Lambda^{>1/p}\cdot \binom zn
$$
of the space of continuous functions $\calC^0(\ZZ_p; \Lambda^{>1/p})$.
Unfortunately, as stated, this subspace is \emph{not} stable under the $\bfM_1$-action. If one insists on having an $\bfM_1$-action, one has to replace the completed direct sum above with direct product, then the rest of the estimate stays valid, except that the space $S_{\mathrm{int}}^{D, \dagger, 1}$ is not a Banach module over $\Lambda^{>1/p}$ in the usual sense; it is the $\Lambda^{>1/p}$-linear dual of a Banach module over $\Lambda^{>1/p}$. The compactness of $U_p$ needs be interpreted otherwise.

\end{document}